\newcommand{\inv}{\mathrm{inv}}
\newcommand{\ms}{\mathfrak{s}}
\newcommand{\ep}{\epsilon}
\newcommand{\xyza}{\begin{tikzpicture}[scale=0.7,baseline=-1mm]
\draw (4,-0.5) circle (3pt);
\draw (5,0.5) circle (3pt);
\fill[black]  (6,0.5) circle (3pt);
\draw (7,0.5) circle (3pt);
\draw  (8,1.5) circle (3pt);
\draw [thick] (3.5,-1) -- (5.5,1) -- (6.5,0) -- (8.5,2);
\draw [dashed] (3,-1.5) -- (9,-1.5);
\fill[red] (3.5,-1) circle (2pt);
\fill[red] (4.5,0) circle (2pt);
\fill[red] (5.5,1) circle (2pt);
\fill[red] (6.5,0) circle (2pt);
\fill[red] (7.5,1) circle (2pt);
\fill[red] (8.5,2) circle (2pt);
\draw [thick] (3,-2) -- (9,-2);
\draw (3.5,-2) circle (3pt);
\draw (4.5,-2) circle (3pt);
\fill[black] (5.5,-2) circle (3pt);
\draw (6.5,-2) circle (3pt);
\draw (7.5,-2) circle (3pt);
\draw (8.5,-2) circle (3pt);
\draw (5.5,-2.5) node {$x$};
\end{tikzpicture}}
\newcommand{\xyzao}{\begin{tikzpicture}[scale=0.7,baseline=-1mm]
\draw (4,-0.5) circle (3pt);
\draw (5,0.5) circle (3pt);
\fill[black]  (7,1.5) circle (3pt);
\draw (6,1.5) circle (3pt);
\draw  (8,1.5) circle (3pt);
\draw [thick] (3.5,-1) -- (5.5,1) -- (6.5,2) -- (7.5,1) -- (8.5,2);
\draw [dashed] (3,-1.5) -- (9,-1.5);
\fill[red] (3.5,-1) circle (2pt);
\fill[red] (4.5,0) circle (2pt);
\fill[red] (5.5,1) circle (2pt);
\fill[red] (6.5,2) circle (2pt);
\fill[red] (7.5,1) circle (2pt);
\fill[red] (8.5,2) circle (2pt);
\draw [thick] (3,-2) -- (9,-2);
\draw (3.5,-2) circle (3pt);
\draw (4.5,-2) circle (3pt);
\fill[black] (6.5,-2) circle (3pt);
\draw (5.5,-2) circle (3pt);
\draw (7.5,-2) circle (3pt);
\draw (8.5,-2) circle (3pt);
\draw (6.5,-2.5) node {$x$};
\end{tikzpicture}}
\newcommand{\xyzb}{\begin{tikzpicture}[scale=0.7,baseline=-1mm]
\draw (4,-0.5) circle (3pt);
\draw (5,0.5) circle (3pt);
\fill[black]  (6,0.5) circle (3pt);
\draw (7,0.5) circle (3pt);
\draw  (8,1.5) circle (3pt);
\draw [thick] (3.5,-1) -- (5.5,1) -- (6.5,0) -- (8.5,2);
\draw [dashed] (3,-1.5) -- (9,-1.5);
\fill[red] (3.5,-1) circle (2pt);
\fill[red] (4.5,0) circle (2pt);
\fill[red] (5.5,1) circle (2pt);
\fill[red] (6.5,0) circle (2pt);
\fill[red] (7.5,1) circle (2pt);
\fill[red] (8.5,2) circle (2pt);
\draw [thick] (3,-2) -- (9,-2);
\draw (5.5,-2) circle (3pt);
\draw (4.5,-2) circle (3pt);
\fill[black] (6.5,-2) circle (3pt);
\draw (3.5,-2) circle (3pt);
\draw (7.5,-2) circle (3pt);
\draw (8.5,-2) circle (3pt);
\draw (6.5,-2.5) node {$x+1$};
\end{tikzpicture}}
\newcommand{\xyzbp}{\begin{tikzpicture}[scale=0.7,baseline=-1mm]
\draw (4,-0.5) circle (3pt);
\draw (5,0.5) circle (3pt);
\fill[black]  (6,0.5) circle (3pt);
\draw (7,0.5) circle (3pt);
\draw  (8,1.5) circle (3pt);
\draw [thick] (3.5,-1) -- (5.5,1) -- (6.5,0) -- (8.5,2);
\draw [dashed] (3,-1.5) -- (9,-1.5);
\fill[red] (3.5,-1) circle (2pt);
\fill[red] (4.5,0) circle (2pt);
\fill[red] (5.5,1) circle (2pt);
\fill[red] (6.5,0) circle (2pt);
\fill[red] (7.5,1) circle (2pt);
\fill[red] (8.5,2) circle (2pt);
\draw [thick] (3,-2) -- (9,-2);
\draw (5.5,-2) circle (3pt);
\draw (4.5,-2) circle (3pt);
\fill[black] (7.5,-2) circle (3pt);
\draw (3.5,-2) circle (3pt);
\draw (6.5,-2) circle (3pt);
\draw (8.5,-2) circle (3pt);
\draw (7.5,-2.5) node {$x+1$};
\end{tikzpicture}}
\newcommand{\xyzbo}{\begin{tikzpicture}[scale=0.7,baseline=-1mm]
\draw (4,-0.5) circle (3pt);
\draw (5,0.5) circle (3pt);
\fill[black]  (6,0.5) circle (3pt);
\draw (7,0.5) circle (3pt);
\draw  (8,1.5) circle (3pt);
\draw [thick] (3.5,-1) -- (5.5,1) -- (6.5,0) -- (8.5,2);
\draw [dashed] (3,-1.5) -- (9,-1.5);
\fill[red] (3.5,-1) circle (2pt);
\fill[red] (4.5,0) circle (2pt);
\fill[red] (5.5,1) circle (2pt);
\fill[red] (6.5,0) circle (2pt);
\fill[red] (7.5,1) circle (2pt);
\fill[red] (8.5,2) circle (2pt);
\draw [thick] (3,-2) -- (9,-2);
\draw (5.5,-2) circle (3pt);
\draw (4.5,-2) circle (3pt);
\fill[black] (6.5,-2) circle (3pt);
\draw (3.5,-2) circle (3pt);
\draw (7.5,-2) circle (3pt);
\draw (8.5,-2) circle (3pt);
\draw (6.5,-2.5) node {$x$};
\end{tikzpicture}}
\newcommand{\xyzz}{\begin{tikzpicture}[scale=0.7,baseline=-1mm]
\draw (4,-0.5) circle (3pt);
\draw (5,0.5) circle (3pt);
\fill[black]  (6,0.5) circle (3pt);
\draw (7,0.5) circle (3pt);
\draw  (8,1.5) circle (3pt);
\draw [thick] (3.5,-1) -- (5.5,1) -- (6.5,0) -- (8.5,2);
\draw [dashed] (3,-1.5) -- (9,-1.5);
\fill[red] (3.5,-1) circle (2pt);
\fill[red] (4.5,0) circle (2pt);
\fill[red] (5.5,1) circle (2pt);
\fill[red] (6.5,0) circle (2pt);
\fill[red] (7.5,1) circle (2pt);
\fill[red] (8.5,2) circle (2pt);
\draw [thick] (3,-2) -- (9,-2);
\draw (3.5,-2) circle (3pt);
\draw (5.5,-2) circle (3pt);
\fill[black] (4.5,-2) circle (3pt);
\draw (6.5,-2) circle (3pt);
\draw (7.5,-2) circle (3pt);
\draw (8.5,-2) circle (3pt);
\draw (4.5,-2.5) node {$x-1$};
\end{tikzpicture}}
\newcommand{\xyzzp}{\begin{tikzpicture}[scale=0.7,baseline=-1mm]
\draw (4,-0.5) circle (3pt);
\draw (5,0.5) circle (3pt);
\fill[black]  (6,0.5) circle (3pt);
\draw (7,0.5) circle (3pt);
\draw  (8,1.5) circle (3pt);
\draw [thick] (3.5,-1) -- (5.5,1) -- (6.5,0) -- (8.5,2);
\draw [dashed] (3,-1.5) -- (9,-1.5);
\fill[red] (3.5,-1) circle (2pt);
\fill[red] (4.5,0) circle (2pt);
\fill[red] (5.5,1) circle (2pt);
\fill[red] (6.5,0) circle (2pt);
\fill[red] (7.5,1) circle (2pt);
\fill[red] (8.5,2) circle (2pt);
\draw [thick] (3,-2) -- (9,-2);
\draw (3.5,-2) circle (3pt);
\draw (4.5,-2) circle (3pt);
\fill[black] (5.5,-2) circle (3pt);
\draw (6.5,-2) circle (3pt);
\draw (7.5,-2) circle (3pt);
\draw (8.5,-2) circle (3pt);
\draw (5.5,-2.5) node {$x-1$};
\end{tikzpicture}}
\newcommand{\aayxaa}{
\begin{tikzpicture}[scale=0.7,baseline=-1mm]
\draw [thick] (0,0) -- (6,0);
\draw (0.5,0) circle (3pt);
\draw (1.5,0) circle (3pt);
\fill[black] (3.5,0) circle (3pt);
\draw (2.5,0) circle (3pt);
\draw (4.5,0) circle (3pt);
\draw (5.5,0) circle (3pt);
\draw (3.5,0) node (yo)  {};
\draw (2.5,0) node (oy) {};
\draw (3.5,-0.5) node {$x$};
\draw (yo) edge[in=90,out=90,->,dashed] (oy);
\end{tikzpicture}
}
\newcommand{\aaxyaa}{
\begin{tikzpicture}[scale=0.7,baseline=-1mm]
\draw [thick] (0,0) -- (6,0);
\draw (0.5,0) circle (3pt);
\draw (1.5,0) circle (3pt);
\fill[black] (3.5,0) circle (3pt);
\draw (2.5,0) circle (3pt);
\draw (4.5,0) circle (3pt);
\draw (5.5,0) circle (3pt);
\draw (3.5,0) node (yo)  {};
\draw (4.5,0) node (oy) {};
\draw (3.5,-0.5) node {$x$};
\draw (yo) edge[in=90,out=90,->,dashed] (oy);
\end{tikzpicture}
}
\newcommand{\ayxaaa}{
\begin{tikzpicture}[scale=0.7,baseline=-1mm]
\draw [thick] (0,0) -- (6,0);
\draw (0.5,0) circle (3pt);
\draw (1.5,0) circle (3pt);
\fill[black] (2.5,0) circle (3pt);
\draw (3.5,0) circle (3pt);
\draw (4.5,0) circle (3pt);
\draw (5.5,0) circle (3pt);
\draw (2.5,0) node (yo)  {};
\draw (1.5,0) node (oy) {};
\draw (2.5,-0.5) node {$x$};
\draw (yo) edge[in=90,out=90,->,dashed] (oy);
\end{tikzpicture}
}
\newcommand{\axyaaa}{
\begin{tikzpicture}[scale=0.7,baseline=-1mm]
\draw [thick] (0,0) -- (6,0);
\draw (0.5,0) circle (3pt);
\draw (1.5,0) circle (3pt);
\fill[black] (2.5,0) circle (3pt);
\draw (3.5,0) circle (3pt);
\draw (4.5,0) circle (3pt);
\draw (5.5,0) circle (3pt);
\draw (2.5,0) node (yo)  {};
\draw (3.5,0) node (oy) {};
\draw (2.5,-0.5) node {$x$};
\draw (yo) edge[in=90,out=90,->,dashed] (oy);
\end{tikzpicture}
}
\newcommand{\axy}{
\begin{tikzpicture}[scale=0.7,baseline=-1mm]
\draw [thick] (1,0) -- (4,0);
\draw (1.5,0) circle (3pt);
\fill[black] (2.5,0) circle (3pt);
\draw (3.5,0) circle (3pt);
\draw (2.5,0) node (yo)  {};
\draw (3.5,0) node (oy) {};
\draw (2.5,-0.5) node {$x$};
\draw (yo) edge[in=90,out=90,->,dashed] (oy);
\end{tikzpicture}
}
\newcommand{\axz}{
\begin{tikzpicture}[scale=0.7,baseline=-1mm]
\draw [thick] (1,0) -- (4,0);
\draw (1.5,0) circle (3pt);
\fill[black] (2.5,0) circle (3pt);
\fill[black]  (3.5,0) circle (3pt);
\draw (2.5,0) node (yo)  {};
\draw (1.5,0) node (oy) {};
\draw (yo) edge[in=90,out=90,->,dashed] (oy);
\end{tikzpicture}
}
\newcommand{\yxa}{
\begin{tikzpicture}[scale=0.7,baseline=-1mm]
\draw [thick] (1,0) -- (4,0);
\draw (1.5,0) circle (3pt);
\fill[black] (2.5,0) circle (3pt);
\draw (3.5,0) circle (3pt);
\draw (2.5,0) node (yo)  {};
\draw (1.5,0) node (oy) {};
\draw (2.5,-0.5) node {$x$};
\draw (yo) edge[in=90,out=90,->,dashed] (oy);
\end{tikzpicture}
}
\newcommand{\aaa}{
\begin{tikzpicture}[scale=0.7,baseline=9mm]
\draw [thick] (0,0) -- (4,0);
\draw (0.5,0) circle (3pt);
\fill[black] (1.5,0) circle (3pt);
\fill[black] (2.5,0) circle (3pt);
\draw (3.5,0) circle (3pt);
\draw [dashed] (0,0.5) -- (4,0.5);
\draw [thick] (3.5,2.5) -- (2.5,1.5) -- (1.5,2.5) -- (0.5,1.5);
\foreach \y in {1}{
\draw (0+\y,1+\y) circle (3pt);
}
\draw (3,2) circle (3pt);
\fill[black]  (2,2) circle (3pt);
\fill[red] (0.5,1.5) circle (3pt);
\fill[red] (1.5,2.5) circle (3pt);
\fill[red] (2.5,1.5) circle (3pt);
\fill[red] (3.5,2.5) circle (3pt);
\end{tikzpicture}
}
\newcommand{\bbb}{
\begin{tikzpicture}[scale=0.7,baseline=9mm]
\draw [thick] (0,0) -- (4,0);
\draw (0.5,0) circle (3pt);
\fill[black] (1.5,0) circle (3pt);
\fill[black] (2.5,0) circle (3pt);
\draw (3.5,0) circle (3pt);
\draw [dashed] (0,0.5) -- (4,0.5);
\draw [thick] (3.5,2.5) -- (1.5,0.5) -- (0.5,1.5);
\foreach \y in {1,2}{
\draw (1+\y,0+\y) circle (3pt);
}
\fill[black]  (1,1) circle (3pt);
\fill[red] (0.5,1.5) circle (3pt);
\fill[red] (1.5,0.5) circle (3pt);
\fill[red] (2.5,1.5) circle (3pt);
\fill[red] (3.5,2.5) circle (3pt);
\end{tikzpicture}
}
\newcommand{\bbbx}{
\begin{tikzpicture}[scale=0.7,baseline=9mm]
\draw [thick] (0,0) -- (4,0);
\draw (1.5,0) circle (3pt);
\draw (3.5,0) circle (3pt);
\fill[black] (0.5,0) circle (3pt);
\fill[black] (2.5,0) circle (3pt);
\draw [dashed] (0,0.5) -- (4,0.5);
\draw [thick] (3.5,2.5) -- (1.5,0.5) -- (0.5,1.5);
\foreach \y in {1}{
\draw (1+\y,0+\y) circle (3pt);
}
\draw (3,2) circle (3pt);
\fill[black]  (1,1) circle (3pt);
\fill[red] (0.5,1.5) circle (3pt);
\fill[red] (1.5,0.5) circle (3pt);
\fill[red] (2.5,1.5) circle (3pt);
\fill[red] (3.5,2.5) circle (3pt);
\end{tikzpicture}
}
\newcommand{\ccc}{
\begin{tikzpicture}[scale=0.7,baseline=9mm]
\draw [thick] (0,0) -- (4,0);
\draw (0.5,0) circle (3pt);
\draw (2.5,0) circle (3pt);
\fill[black] (1.5,0) circle (3pt);
\fill[black] (3.5,0) circle (3pt);
\draw [dashed] (0,0.5) -- (4,0.5);
\draw [thick] (3.5,2.5) -- (1.5,0.5) -- (0.5,1.5);
\foreach \y in {1}{
\draw (1+\y,0+\y) circle (3pt);
}
\draw (3,2) circle (3pt);
\fill[black]  (1,1) circle (3pt);
\fill[red] (0.5,1.5) circle (3pt);
\fill[red] (1.5,0.5) circle (3pt);
\fill[red] (2.5,1.5) circle (3pt);
\fill[red] (3.5,2.5) circle (3pt);
\end{tikzpicture}
}
\newcommand{\axa}{
\begin{tikzpicture}[scale=0.7,baseline=9mm]
\draw [thick] (0,0) -- (3,0);
\draw (0.5,0) circle (3pt);
\fill[black] (1.5,0) circle (3pt);
\draw (2.5,0) circle (3pt);
\draw (1.5,-0.5) node {$x$};
\draw [dashed] (0,0.5) -- (3,0.5);
\draw [thick] (3,1) -- (0,4);
\foreach \y in {0,1,2,3}{
\fill[black] (3-\y,1+\y) circle (3pt);
}
\foreach \y in {1,2,3}{
\fill[red] (3.5-\y,0.5+\y) circle (3pt);
}
\end{tikzpicture}
}
\newcommand{\aax}{
\begin{tikzpicture}[scale=0.7,baseline=9mm]
\draw [thick] (0,0) -- (3,0);
\draw (0.5,0) circle (3pt);
\fill[black] (2.5,0) circle (3pt);
\draw (1.5,0) circle (3pt);
\draw (2.5,-0.5) node {$x+1$};
\draw [dashed] (0,0.5) -- (3,0.5);
\draw [thick] (3,1) -- (0,4);
\foreach \y in {0,1,2,3}{
\fill[black] (3-\y,1+\y) circle (3pt);
}
\foreach \y in {1,2,3}{
\fill[red] (3.5-\y,0.5+\y) circle (3pt);
}
\end{tikzpicture}
}
\newcommand{\xaa}{
\begin{tikzpicture}[scale=0.7,baseline=9mm]
\draw [thick] (0,0) -- (3,0);
\draw (1.5,0) circle (3pt);
\fill[black] (0.5,0) circle (3pt);
\draw (2.5,0) circle (3pt);
\draw (0.5,-0.5) node {$x-1$};
\draw [dashed] (0,0.5) -- (3,0.5);
\draw [thick] (3,1) -- (0,4);
\foreach \y in {0,1,2,3}{
\fill[black] (3-\y,1+\y) circle (3pt);
}
\foreach \y in {1,2,3}{
\fill[red] (3.5-\y,0.5+\y) circle (3pt);
}
\end{tikzpicture}
}
\newcommand{\axao}{
\begin{tikzpicture}[scale=0.7,baseline=9mm]
\draw [thick] (0,0) -- (3,0);
\draw (0.5,0) circle (3pt);
\fill[black] (1.5,0) circle (3pt);
\draw (2.5,0) circle (3pt);
\draw (1.5,-0.5) node {$x$};
\draw [dashed] (0,0.5) -- (3,0.5);
\draw [thick] (3,4) -- (0,1);
\foreach \y in {0,1,2,3}{
\draw (0+\y,1+\y) circle (3pt);
}
\foreach \y in {0,1,2}{
\fill[red] (0.5+\y,1.5+\y) circle (3pt);
}
\end{tikzpicture}
}
\newcommand{\aaxo}{
\begin{tikzpicture}[scale=0.7,baseline=9mm]
\draw [thick] (0,0) -- (3,0);
\draw (0.5,0) circle (3pt);
\fill[black] (2.5,0) circle (3pt);
\draw (1.5,0) circle (3pt);
\draw (2.5,-0.5) node {$x+1$};
\draw [dashed] (0,0.5) -- (3,0.5);
\draw [thick] (3,4) -- (0,1);
\foreach \y in {0,1,2,3}{
\draw (0+\y,1+\y) circle (3pt);
}
\foreach \y in {0,1,2}{
\fill[red] (0.5+\y,1.5+\y) circle (3pt);
}
\end{tikzpicture}
}
\newcommand{\xaao}{
\begin{tikzpicture}[scale=0.7,baseline=9mm]
\draw [thick] (0,0) -- (3,0);
\draw (1.5,0) circle (3pt);
\fill[black] (0.5,0) circle (3pt);
\draw (2.5,0) circle (3pt);
\draw (0.5,-0.5) node {$x-1$};
\draw [dashed] (0,0.5) -- (3,0.5);
\draw [thick] (3,4) -- (0,1);
\foreach \y in {0,1,2,3}{
\draw (0+\y,1+\y) circle (3pt);
}
\foreach \y in {0,1,2}{
\fill[red] (0.5+\y,1.5+\y) circle (3pt);
}
\end{tikzpicture}
}
\newtheorem{theorem}{Theorem}[section]
\newtheorem{prop}[theorem]{Proposition}
\newtheorem{lemma}[theorem]{Lemma}
\newtheorem{corollary}[theorem]{Corollary}
\theoremstyle{remark}
\newtheorem{remark}{Example}
\newtheorem{reemark}{Remark}
\newtheorem{definition}[theorem]{Definition}
\DeclareMathOperator{\b|}{\boldsymbol{|}}
\begin{document}

\title{Stochastic Fusion of Interacting Particle Systems and Duality Functions}

\author{Jeffrey Kuan}

\date{}

\maketitle

\abstract{We introduce a new method, which we call stochastic fusion, which takes an exclusion process and constructs an interacting particle systems in which more than one particle may occupy a lattice site. The construction only requires the existence of stationary measures of the original exclusion process on a finite lattice. If the original exclusion process satisfies Markov duality on a finite lattice, then the construction produces Markov duality functions (for some initial conditions) for the fused exclusion process. The stochastic fusion construction is based off of the Rogers--Pitman intertwining.

In particular, we have results for three types of models:

1. For symmetric exclusion processes, the fused process and duality functions are inhomogeneous generalizations of those in \cite{GKRV}. The construction also allows a general class of open boundary conditions: as an application of the duality, we find the hydrodynamic limit and stationary measures of the generalized symmetric simple exclusion process SSEP$(m/2)$ on $\mathbb{Z}_+$ for open boundary conditions. 

2. For the asymmetric simple exclusion process, the fused process and duality functions are inhomogeneous generalizations of those found in \cite{CGRS} for the ASEP$(q,j)$. As a by-product of the construction, we show that the multi--species ASEP$(q,j)$ preserves $q$--exchangeable measures, and use this to find new duality functions for the ASEP, ASEP$(q,j)$ and $q$--Boson. Additionally, the construction leads to duality for ASEP on the half--line with open boundary conditions. As an application of the latter duality, we find the hydrodynamic limit.

3. For dynamic models, we fuse the dynamic ASEP from \cite{BorodinDyn}, and produce a dynamic and inhomogeneous version of ASEP$(q,j)$. We also apply stochastic fusion to IRF models and compare them to previously found models.

We include an appendix, co-authored with Amol Aggarwal, which explains 
the algebraic roots of the model for the interested reader, as well as 
the relationship between fusion for interacting particle systems and 
fusion for stochastic vertex models. We compare the fused dynamical 
vertex weights with previously found dynamical vertex weights.
}

\tableofcontents

\section{Introduction}
The asymmetric exclusion process (ASEP), introduced in \cite{Spit70} and \cite{MGP68}, is an interacting particle system on a one--dimensional lattice where at most one particle may occupy each lattice site. Due to the simplicity of the model, it is often viewed as the ``canonical model for transport phenomena'' (\cite {HTYauAnnals}). If one wishes to construct so--called ``higher spin'' exclusion processes, where more than one particle may occupy a site, there is not necessarily an obviously ``canonical'' construction. In this paper, we present a new method to construct ``fused'' exclusion processes, using only the stationary measures of the original exclusion process. We call this method ``stochastic fusion.'' 

First, let us describe the construction in general. The construction of the fused exclusion process is based on Rogers--Pitman intertwining \cite{lrjp1}. Roughly speaking, Rogers--Pitman give an intertwining relation for which a function $\phi:S\rightarrow \hat{S}$ of a Markov chain $X_t$ on $S$ is Markov (for some initial conditions $X_0)$. If $P_t$ is the semigroup of transition probabilities of $X_t$, then the semigroup of transition probabilities of $\phi(X_t)$ is given by $\Lambda P_t \Phi$ where $\Lambda$ is a stochastic $\hat{S} \times S$ matrix and $\Phi$ is the $S \times \hat{S}$ matrix induced by $\phi$. Here, we will construct $\Lambda$ from the stationary measures of $X_t$. See Figure \ref{LP} for an illustration.

One way to view this fused process as the ``canonical'' fused process is through Markov duality. One application of Markov duality is to show weak asymmetry convergence to stochastic partial differential equations (see \cite{CST},\cite{YLKPZ},\cite{CGMDyn}) that describe the universality class. (In the case of ASEP, the stochastic PDE is the KPZ equation). Another application is to find asymptotics \cite{BCSDuality}. We would expect the fused process to be in the same universality class with the same asymptotics, and as such, Markov duality is a natural property that we expect it to have. And indeed, Markov duality can be described as an intertwining between transition semigroups; we will show that the aforementioned Rogers--Pitman intertwining leads naturally to the Markov duality intertwining for $\phi(X_t)$, for some initial conditions, if the original process $X_t$ satisfies Markov duality on a finite lattice. The precise statements will be found in Theorem \ref{RPThm} for the construction in general, and Proposition \ref{RPInter} for the specific case of symmetric processes. 

With the generalities described, we move on to describe the specific models considered here. There are three classes of models investigated here: symmetric exclusion processes, the asymmetric simple exclusion processes, and dynamic models. For the symmetric exclusion processes, the construction actually begins with multi--species symmetric exclusion processes, where there is exactly one particle of each species and all lattice sites are occupied. Such a process is also known as an \textit{interchange process}. It may seem counterintuitive to begin with the interchange process; but in several earlier works (\cite{BWColor}, \cite{KuanAHP}, \cite{BorodinBufetovCP}) it was seen that such processes can be easier to work with. After projecting to single--species models, we obtain inhomogeneous versions of the so--called SEP($m/2$) found in \cite{GKRV}. (Here, inhomogeneous in the sense that the maximum number $m_x$ of particles occupying a site $x$ can differ from site to site). By taking some $m_x$ to infinity, we can obtain lattice sites with infinitely many particles that can jump into the remainder of the lattice. This can be viewed as a symmetric exclusion process with open boundary conditions. We then find duality functions for the symmetric exclusion processes, for both closed boundary and a range of open boundary conditions (Proposition \ref{aaaa} and Theorem \ref{bbbb}), generalizing results in \cite{GKRV}, which required more specific open boundary conditions; and also generalizing results in the very recent paper \cite{CGRConsistent}, which considered specific initial conditions. As an application, we have results about the hydrodynamic limit and stationary measures of the SEP$(m/2)$ with open boundary conditions: see Theorem \ref{Opened}.

For the asymmetric simple exclusion process, the construction leads to an inhomogeneous generalization of the so-called ASEP$(q,j)$ of \cite{CGRS}; see Theorem \ref{SFA} below. (It is worth noting the results of \cite{Matsui2015} and \cite{Nachtergaele2004}, which give different constructions of interacting particle systems allowing more than one particle per site). We also recover their duality function through this construction as well, in Proposition \ref{FusedDuality}. As a byproduct of the construction, we show that the multi--species ASEP$(q,j)$ satisfies $q$--exchangeability, and after projecting to single--species models, we produce several new duality functions for the ASEP, ASEP$(q,j)$ and $q$--Boson: see section \ref{MSM}. Additionally, for a specific open boundary condition, the ASEP satisfies a duality, which is used to find the hydrodynamic limit; see Theorems \ref{OD} and \ref{Addend}.

For the dynamic models, we construct a dynamic version of ASEP$(q,j)$, starting from the dynamic ASEP constructed in \cite{BorodinDyn}. While there is Markov duality for dynamic ASEP on the infinite lattice \cite{BorCorIMRN}, there does not appear to be a Markov duality on the finite lattice. Note that the non--dynamic models have an underlying multi--species (or interchange process) model, unlike the dynamic ASEP, and this is reflected in the stochastic fusion construction; see remarks \ref{Constant} and \ref{Constant2} for an elaboration. We also apply the stochastic fusion construction to dynamical stochastic vertex weights (also called interaction-round-a-face, or IRF weights) and compare the weights to previous models in \cite{AmolIRF},\cite{BorodinDyn}: see the appendix.

We conclude the introduction by briefly comparing this method to some other previously described methods. The general method of \cite{CGRS} requires an underlying algebra of symmetries of the original process $X_t$, and produces duality and stationary measures. Here, we only require stationary measures of $X_t$, \textit{without} requiring an underlying algebra, which is a strictly weaker requirement. While this does come at the cost of not automatically proving a duality (for all initial conditions), it does provide reasonable Ans\"{a}tze which seem to work. 

Previous works of \cite{CorPetCMP} and \cite{BP} used fusion in a different context of stochastic vertex models, whereas we consider continuous--time interacting particle systems here. While the stochastic six vertex model degenerates to ASEP, this degeneration does not hold for higher spin models (due to non--negativity not holding -- see Remark 5.2 of \cite{CorPetCMP}), so vertex models and particle systems require different constructions. Another distinction is that stochastic vertex models require weights that solve the Yang--Baxter equation, whereas interacting particles do not. Note that the connection between Rogers--Pitman intertwining and fusion for vertex models was briefly remarked upon in section 3.2 of \cite{KuanCMP}. See the appendix for the algebraic background behind fusion.

The paper is organized as follows. Section 2 gives the background and notations. Section 3 gives the generalities of the stochastic fusion construction. To avoid proving theorems twice, the results for the symmetric exclusion process and the asymmetric simple exclusion process are combined in section 4.   Section 5 covers the dynamic models. The appendix explains the algebraic roots of the model for the interested reader, as well as the relationship between fusion for interacting particle systems and fusion for stochastic vertex models.

\textbf{Acknowledgements.} The author was supported by NSF grant DMS--1502665 and the Minerva Foundation. The author thanks Alexei Borodin and Ivan Corwin for valuable insights, and would also like to especially thank Amol Aggarwal for comments throughout the paper, and particularly for section 4.1.3. Both J.K. and A.A. were supported by NSF grant DMS--1664650. The author also thanks Axel S\'{a}enz for pointing out an initial mistake in the hydrodynamic limit of open ASEP. 

\section{Background and Notations}

\subsection{Rogers--Pitman Intertwining}
Recall the setup of Rogers and Pitman \cite{lrjp1}. Below we will write the operators will be composed from left to right, rather than from right to left. 

Let $X_t$ be a Markov Process on state space $S$ with transition semigroup $P_t$. Let $\Lambda$ be a Markov kernel from $\hat{S}$ to $S$; i. e. $\Lambda$ is a stochastic matrix with rows indexed by $\hat{S}$ and columns indexed by $S$ (assuming $S, \hat{S}$ are countable and discrete). For any map $\phi:S \rightarrow \hat{S}$, let $\Phi$ be the Markov kernel from $S$ to $\hat{S}$ defined by
$$
\Phi f = f \circ \phi.
$$

\begin{figure}
\begin{center}
\begin{tikzpicture}
\draw (1,2) circle (3pt);
\draw (4,2) circle (3pt);
\fill[black] (5,2) circle (3pt);
\draw (7,2) circle (3pt);
\fill[black] (2,2) circle (3pt);
\fill[red] (3,2) circle (3pt);
\fill[black] (6,2) circle (3pt);
\draw [very thick](0.5,2) -- (7.5,2);
\draw (8,2) node (a) { $ S$};
\draw (8,0) node (b) { $ \hat{S}$};
\draw (0,2) node (c) { $$};
\draw (0,0) node (d) { $$};
\draw (a) edge[out=-45,in=45,->, line width=0.5pt] (b);
\draw (d) edge[out=135,in=-135,->, line width=0.5pt] (c);
\draw (0.7,2) node {\Huge $[$};
\draw (2.3,2) node {\Huge $]$};
\draw (2.7,2) node {\Huge $[$};
\draw (5.3,2) node {\Huge $]$};
\draw (5.7,2) node {\Huge $[$};
\draw (7.3,2) node {\Huge $]$};
\draw (0.7,0) node  {\Huge $[$};
\draw (2.3,0) node {\Huge $]$};
\draw (2.7,0) node {\Huge $[$};
\draw (5.3,0) node {\Huge $]$};
\draw (5.7,0) node {\Huge $[$};
\draw (7.3,0) node {\Huge $]$};
\draw [very thick](0.5,-0) -- (7.5,-0);
\draw (1.5,0.2) circle (3pt);
\fill[black] (4,0) circle (3pt);
\draw (4,0.4) circle (3pt);
\draw (6.5,0.2) circle (3pt);
\fill[black] (1.5,-0.2) circle (3pt);
\fill[red] (4,-0.4) circle (3pt);
\fill[black] (6.5,-0.2) circle (3pt);
\draw (9,1) node {$\Phi$};
\draw (-1,1) node {$\Lambda$};
\end{tikzpicture}
\end{center}
\caption{The fusion map $\Phi$ and the fission map $\Lambda$. Red particles are species 1 and black particles are species 2. If $\mathfrak{s}(x)$ represents the particle configuration on the top line, then $\mathfrak{s}(0)=\mathfrak{s}(3)=\mathfrak{s}(6)=0$, $\mathfrak{s}(2)=1$ and $\mathfrak{s}(1)=\mathfrak{s}(4)=\mathfrak{s}(5)=2$.}
\label{LP}
\end{figure}
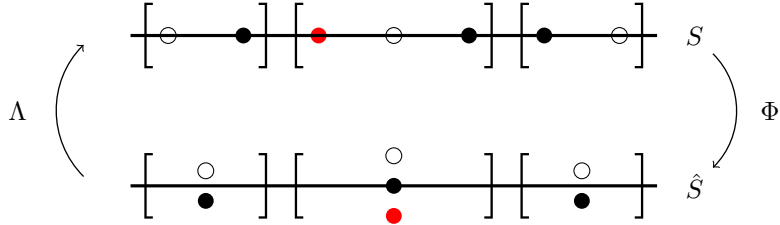

\begin{theorem}[\cite{lrjp1}]
Suppose 
\begin{itemize}
\item
$\Lambda\Phi$ is the identity on $\hat{S}$.
\item
$\Lambda P _ { t } = \Lambda P _ { t } \Phi \Lambda$ on $\hat{S}.$ 
\end{itemize}
Fix $y\in \hat{S}$. Let the initial condition of the Markov process $X_t$ be $X_{ 0} = \Lambda \left( y ,\cdot \right)$. Then $\phi(X_t)$ is a Markov process starting from $y$, with transition probabilities
$$
Q _ { t } := \Lambda P _ { t } \Phi.
$$
\end{theorem}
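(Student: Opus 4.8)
The plan is to reduce the whole statement to one induction on the number of time marginals, after two short preliminary observations. The first observation is about the support of $\Lambda$: the hypothesis $\Lambda\Phi=\Id$ on $\hat S$ forces, for every $\hat y\in\hat S$, the probability vector $\Lambda(\hat y,\cdot)$ to be supported on $\phi^{-1}(\hat y)$. Indeed $(\Lambda\Phi)(\hat y,\hat y')=\sum_{x:\,\phi(x)=\hat y'}\Lambda(\hat y,x)=\delta_{\hat y,\hat y'}$, so for $\hat y'\neq\hat y$ a sum of nonnegative numbers vanishes, whence $\Lambda(\hat y,x)=0$ whenever $\phi(x)\neq\hat y$. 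Applying this to $\hat y=y$ and the initial law $X_0\sim\Lambda(y,\cdot)$ already gives $\phi(X_0)=y$ almost surely, i.e.\ the projected process starts at $y$.

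The second observation rewrites the intertwining hypothesis in a pointwise form convenient for induction. Since $Q_t=\Lambda P_t\Phi$, the relation $\Lambda P_t=\Lambda P_t\Phi\Lambda$ is the same as $\Lambda P_t=Q_t\Lambda$. Evaluating the right-hand side at $(\hat y,x)$ and using the support property above (so $\Lambda(\hat y',x)=0$ unless $\hat y'=\phi(x)$), the sum over $\hat y'$ collapses to a single term, giving
\begin{equation}
(\Lambda P_t)(\hat y,x)\;=\;Q_t\big(\hat y,\phi(x)\big)\,\Lambda\big(\phi(x),x\big),\qquad \hat y\in\hat S,\ x\in S.\tag{$\star$}
\end{equation}

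Now fix times $0=t_0\le t_1\le\cdots\le t_n$, put $s_i=t_i-t_{i-1}$, $y_0=y$, and write $\mathbb P$ for the law of $X$ started from $\Lambda(y,\cdot)$. I would prove by induction on $n$ that for all $y_1,\dots,y_n\in\hat S$ and $x\in S$,
\[
\mathbb P\big(X_{t_n}=x,\ \phi(X_{t_1})=y_1,\dots,\phi(X_{t_n})=y_n\big)\;=\;\Big(\prod_{i=1}^n Q_{s_i}(y_{i-1},y_i)\Big)\,\Lambda(y_n,x).
\]
The case $n=0$ is $\mathbb P(X_0=x)=\Lambda(y,x)$. For the step from $n$ to $n+1$, the last event $\phi(X_{t_{n+1}})=y_{n+1}$ is just the deterministic constraint $\mathbf 1[\phi(x)=y_{n+1}]$ given $X_{t_{n+1}}=x$; applying the ordinary Markov property of $X$ at time $t_n$ and the inductive hypothesis produces the factor $(\Lambda P_{s_{n+1}})(y_n,x)\,\mathbf 1[\phi(x)=y_{n+1}]$, which by $(\star)$ and the support property of $\Lambda(y_{n+1},\cdot)$ equals $Q_{s_{n+1}}(y_n,y_{n+1})\,\Lambda(y_{n+1},x)$. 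Summing over $x$ and using that $\Lambda$ is stochastic gives $\mathbb P\big(\phi(X_{t_1})=y_1,\dots,\phi(X_{t_n})=y_n\big)=\prod_{i=1}^n Q_{s_i}(y_{i-1},y_i)$, which are exactly the finite-dimensional distributions of the Markov chain on $\hat S$ started at $y$ with one-step kernels $Q_t$; consistency of this family is the Chapman--Kolmogorov relation $Q_sQ_t=Q_{s+t}$, which follows by regrouping $Q_sQ_t=(\Lambda P_s\Phi\Lambda)P_t\Phi=\Lambda P_sP_t\Phi=\Lambda P_{s+t}\Phi=Q_{s+t}$, again from $\Lambda P_s\Phi\Lambda=\Lambda P_s$. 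Hence $\phi(X_t)$ is a Markov process from $y$ with transition probabilities $Q_t$.

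The genuinely delicate point is closing the induction: the term $(\Lambda P_{s_{n+1}})(y_n,x)\,\mathbf 1[\phi(x)=y_{n+1}]$ produced by the Markov property of $X$ must be rewritten as $Q_{s_{n+1}}(y_n,y_{n+1})\,\Lambda(y_{n+1},x)$, and this rewriting is exactly identity $(\star)$, i.e.\ the second hypothesis — it is the precise mechanism by which the conditional law of $X_{t_{n+1}}$ given the $\phi$-history ``collapses'' back to $\Lambda(\phi(X_{t_{n+1}}),\cdot)$, and without it the projected process need not be Markov. Everything else is bookkeeping with stochastic matrices, the only caveat being that, under the paper's ``operators act from left to right'' convention, $\Lambda$, $P_t$ and $\Phi$ must be composed in the order written.
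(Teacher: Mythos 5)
Your proof is correct: the support observation, the pointwise rewriting $(\star)$ of $\Lambda P_t=Q_t\Lambda$, and the induction showing that the conditional law of $X_{t_n}$ given the $\phi$-history collapses to $\Lambda(y_n,\cdot)$ together constitute exactly the Rogers--Pitman argument. The paper does not reprove this theorem but simply cites \cite{lrjp1}, and your argument is essentially the (countable-state) version of the proof given there, so there is nothing further to compare.
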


The Rogers--Pitman only allows one to conclude that $\phi(X_t)$ is Markov for some initial conditions $X_0$. To conclude that $\phi(X_t)$  is Markov for every initial condition $X_0$, stronger assumptions are needed.
\begin{corollary}
[\cite{KS60}] Suppose that $\Lambda P_t = \Lambda P_t \Phi \Lambda$ holds for every $\Lambda$ satisfying $\Lambda\Phi = \mathrm{Id}_{\hat{S}}$. Then $\phi(X_t)$ is a Markov process for every initial condition $X_0$, with transition probabilities $Q _ { t } := \Lambda P _ { t } \Phi$, which are independent of the choice of $\Lambda$.

\end{corollary}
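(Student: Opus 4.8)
The plan is to reduce the corollary to the classical strong‑lumpability criterion of Kemeny--Snell and then to run, for an \emph{arbitrary} initial law, the very same induction that proves the Rogers--Pitman Theorem stated above. First I would unpack the standing hypothesis $\Lambda\Phi=\mathrm{Id}_{\hat S}$: since the fibres $\phi^{-1}(\hat s)$ partition $S$ and $\Lambda$ is nonnegative, the identity $\sum_{s\in\phi^{-1}(\hat s')}\Lambda(\hat s,s)=\delta_{\hat s,\hat s'}$ forces each row $\Lambda(\hat s,\cdot)$ to be a probability measure supported on the single fibre $\phi^{-1}(\hat s)$. Hence for \emph{any} probability measure $\mu$ on $S$ there is a canonical ``valid'' kernel realising $\mu$ as a $\Lambda$‑image: put $\nu:=\mu\Phi$ (the law of $\phi(X_0)$) and $\Lambda_\mu(\hat s,\cdot):=\mu(\,\cdot\mid\phi^{-1}(\hat s))$ whenever $\nu(\hat s)>0$, with any valid choice otherwise; then $\Lambda_\mu\Phi=\mathrm{Id}_{\hat S}$ and $\nu\Lambda_\mu=\mu$.

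Next, fix $X_0\sim\mu$ and apply the hypothesis to $\Lambda=\Lambda_\mu$ (legitimate, since $\Lambda_\mu$ is valid). The induction in the proof of the Theorem above uses the intertwining $\Lambda P_t=\Lambda P_t\Phi\Lambda$ only for that single kernel, and it carries over once the base case is checked: because $\mu=\nu\Lambda_\mu$, one computes $\mu P_t=\nu\Lambda_\mu P_t=\nu\Lambda_\mu P_t\Phi\Lambda_\mu=(\nu Q_t)\Lambda_\mu$ with $Q_t:=\Lambda_\mu P_t\Phi$, which exhibits the law of $X_t$ as a mixture of the rows $\Lambda_\mu(\hat s,\cdot)$ with weights $(\nu Q_t)(\hat s)$; hence $\phi(X_t)$ has law $\nu Q_t$, and conditionally on $\phi(X_t)=\hat s$ one has $X_t\sim\Lambda_\mu(\hat s,\cdot)$, which restores the hypothesis of the induction step at time $t$. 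Iterating along a grid $0<t_1<\dots<t_n$, and using $Q_{t+s}=Q_tQ_s$ (immediate from $P_{t+s}=P_tP_s$ together with the intertwining), yields the finite‑dimensional distributions of a Markov chain started from $\nu$ with semigroup $Q_t$, so $\phi(X_t)$ is Markov.

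The remaining point --- and the one I expect to be the crux --- is that $Q_t=\Lambda P_t\Phi$ does not depend on the valid $\Lambda$. I would obtain this by proving the Kemeny--Snell identity $P_t\Phi=\Phi R_t$ for some (necessarily unique, stochastic) $\hat S\times\hat S$ matrix $R_t$, i.e.\ that $s\mapsto(P_t\Phi)(s,\hat s')=\mathbb{P}(X_t\in\phi^{-1}(\hat s')\mid X_0=s)$ is constant on each fibre. To extract this from the operator hypothesis, apply $\Lambda P_t=\Lambda P_t\Phi\Lambda$ to kernels $\Lambda$ whose row over a chosen class $\phi(s_0)$ is the point mass $\delta_{s_0}$: reading off that row gives $P_t(s_0,\cdot)=\sum_{\hat s'}(P_t\Phi)(s_0,\hat s')\,\Lambda(\hat s',\cdot)$, and restricting this identity to the entries indexed by fibres $\hat s'\neq\phi(s_0)$ --- while letting the remaining rows of $\Lambda$ range over all probability vectors on their fibres --- pins $(P_t\Phi)(s_0,\cdot)$ down in terms of $\phi(s_0)$ alone. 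Granting $P_t\Phi=\Phi R_t$, every valid $\Lambda$ satisfies $\Lambda P_t\Phi=\Lambda\Phi R_t=R_t$, so $Q_t=R_t$ independently of $\Lambda$, which also identifies the lumped semigroup. (As a sanity check, this last step shows the hypothesis is in fact rather rigid, consistent with the paper's reliance, for the actual stochastic fusion construction, on the weaker Theorem above, which needs the intertwining only for the single $\Lambda$ built from the stationary measures.)
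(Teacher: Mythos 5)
The paper offers no proof of this corollary at all---it is attributed to Kemeny--Snell and simply cited---so your argument has to stand entirely on its own. Your first two steps are sound: $\Lambda\Phi=\mathrm{Id}_{\hat S}$ does force each row $\Lambda(\hat s,\cdot)$ to be a probability measure on the fibre $\phi^{-1}(\hat s)$; the kernel $\Lambda_\mu$ built from the conditional distributions of $\mu$ on fibres is valid and satisfies $\nu\Lambda_\mu=\mu$; and rerunning the Rogers--Pitman induction from the base case $\mu=\nu\Lambda_\mu$ (conditioning on $\phi(X_0)$ and mixing over $\nu$) correctly yields Markovianity with semigroup $\Lambda_\mu P_t\Phi$ for that initial law.

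The gap is in the step you yourself identify as the crux. Reading off the row of $\Lambda P_t=\Lambda P_t\Phi\Lambda$ indexed by $\phi(s_0)$, with that row of $\Lambda$ set to $\delta_{s_0}$, gives, for $s\in\phi^{-1}(\hat s')$ with $\hat s'\neq\phi(s_0)$,
$$
P_t(s_0,s)=\Big(\textstyle\sum_{u\in\phi^{-1}(\hat s')}P_t(s_0,u)\Big)\,\Lambda(\hat s',s).
$$
Here the left-hand side and the bracketed coefficient are both independent of $\Lambda$, while $\Lambda(\hat s',\cdot)$ ranges over \emph{all} probability vectors on $\phi^{-1}(\hat s')$. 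If that fibre contains at least two points, this identity does not pin $(P_t\Phi)(s_0,\cdot)$ down as a function of $\phi(s_0)$; it forces the coefficient to vanish, i.e.\ $P_t(s_0,u)=0$ for every $u$ in that fibre, and the entries with $\hat s'=\phi(s_0)$ likewise force $P_t(s_0,v)=0$ for every $v\neq s_0$ in the fibre of $s_0$. So the hypothesis, taken literally with point-mass rows allowed, is degenerate (for an irreducible chain and a non-injective $\phi$ it can never hold, since $P_t(x,y)>0$ for $t>0$), and your manipulation exposes this rather than producing the lumpability relation $P_t\Phi=\Phi R_t$. The condition Kemeny--Snell actually impose is the transposed one, $P_t\Phi=\Phi\Lambda P_t\Phi$ for one (equivalently every) valid $\Lambda$, i.e.\ the rows of $P_t\Phi$ are constant on fibres. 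Under that hypothesis everything you want is immediate and the detour through $\Lambda_\mu$ is unnecessary: $Q_t=\Lambda P_t\Phi=\Lambda\Phi R_t=R_t$ is independent of $\Lambda$, and $\mathbb{P}\big(X_{t_{n+1}}\in\phi^{-1}(\hat s')\mid X_{t_1},\dots,X_{t_n}\big)=R_{t_{n+1}-t_n}(\phi(X_{t_n}),\hat s')$ depends on the past only through $\phi(X_{t_n})$, which is the Markov property for every initial law. You should either prove the corollary from that (correct) hypothesis, or state explicitly that the hypothesis as printed only matches the Kemeny--Snell condition after transposing the roles of $\Lambda$ and $\Phi$; as written, your step three would fail.
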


\subsection{Notation}
 
 \subsubsection{$q$--Notation}

Define the $q$--deformed integers, factorials, binomials, and multinomials by
\begin{align*}
(n)_q &= 1 + q + \ldots + q^{n-1} = \frac{1-q^n}{1-q}, \\
(n)_q^! &= (1)_q \cdots (n)_q, \\
\binom{n}{k}_q &= \frac{(n)_q^!}{(k)_q^!(n-k)_q^!}, \\
\binom{n}{k_1,\ldots,k_r}_q &= \frac{ (n)_q^!}{ (k_1)_q^! \cdots (k_r)_q^! (n-k_1-\cdots-k_r)_q^!}.
\end{align*}
The $q$--binomials satisfy
$$
\binom{m-1}{k-1}_q + q^k \binom{m-1}{k}_q = \binom{m}{k}_q, \quad \quad q^{m-k}\binom{m-1}{k-1}_q +  \binom{m-1}{k}_q = \binom{m}{k}_q.
$$
Recall the $q$--binomial theorem \cite{AndrewsBook}
\begin{equation}\label{qBin}
\sum_{0 \leq x_k < \ldots < x_1 \leq m-1} q^{x_1+\ldots+x_k} = q^{(k-1)k/2}\binom{m}{k}_q.
\end{equation}

\subsubsection{State Space Notation}
There are two different ways of denoting a particle configuration: \textit{particle variables} or \textit{occupation variables}. Let us describe the former first.  

For each $1 \leq i \leq n$, let $\ldots, m_{-1}^{(i)},m_0^{(i)},m_1^{(i)},\ldots$ be a doubly infinite sequence of nonnegative integers. For brevity of notation let $\vec{m}^{(i)}=(m^{(i)}_x)_{x \in \mathbb{Z}}$ denote this sequence. Set
$$
\vert \vec{m}^{(i)} \vert = \sum_{x\in \mathbb{Z}} m^{(i)}_x, \quad \vert \vec{m} \vert = \sum_{i=1}^n \vert \vec{m}^{(i)} \vert,
$$
where $\infty$ is a permissible value for $\vert \vec{m}^{(i)}\vert$. Also set $m_x=\sum_{i=1}^n m_x^{(i)}$.

Here, let $S^{(n)}$ be the state space consisting of particle configurations on a lattice with $\vert \vec{m}\vert$ sites, when all $m_x=1$. In other words,
$$
S^{(n)} = \{0,1,\ldots,n\}^{\vert \vec{m}\vert}.
$$
We write $\mathfrak{s}(x)$ for an element of $S^{(n)}$. See Figure \ref{LP} for an example.

Let $\hat{S}^{(n)}$ be the state space of particle configurations on $\mathbb{Z}$, where up to $m_x$ particles may occupy lattice site $x$. In other words, 
$$
 \hat{S}^{(n)} =  \prod_{x \in \mathbb{Z} } \{(k_x^{(1)},\ldots,k_x^{(n)}): k_x^{(1)} + \ldots + k_x^{(n)} \leq m_x, \ \ k_x^{(i)} \in \mathbb{N}\}.
$$
We write $\hat{\mathfrak{s}}(x)$ for an element of $\hat{S}^{(n)}$, where each $\hat{\mathfrak{s}}(x)=(k_x^{(1)},\ldots,k_x^{(n)})$ is a sequence of integers, rather than a single integer. Note that if only finitely many $m_x$ are nonzero, then the particle configurations on $S^{(n)}$ and $\hat{S}^{(n)}$ live on a finite lattice.

For any sequence of nonnegative integers $\mathbf{N}=(N_1,\ldots,N_n)$, define 
$
S^{(n)}(\mathbf{N})\subseteq S^{(n)}
$
by 
$$
S^{(n)}(\mathbf{N}) = \{ \mathfrak{s} \in S^{(n)} :  \sum_{x \in \mathbb{Z}} 1_{\mathfrak{s}(x) = i } = N_i \text{ for } 1 \leq i \leq n \}.
$$
Similarly define $\hat{S}^{(n)}(\mathbf{N}) \subseteq \hat{S}^{(n)}$ by 
$$
\hat{S}^{(n)}(\mathbf{N}) = \{ \hat{\mathfrak{s}} \in \hat{S}^{(n)} :  \sum_{x \in \mathbb{Z}} k_x^{(i)} = N_i \text{ for } 1 \leq i \leq n \}.
$$

There are also natural projection maps between $S^{(n)}$ for different values of $n$. Let $\Pi$ be a partition of $\{0,1,\ldots,n\}$ into $p+1$ parts of consecutive integers. In other words, 
$$
\Pi = \{ \{0,\ldots,N_0\}, \{N_0+1,\ldots,N_1\}, \{N_{p-1}+1,\ldots,N_p\}\},
$$
where $N_p=n$. There is a corresponding map $S^{(n)}$ to $S^{(p)}$ defined by sending $i$ to the block to which it belongs. For example, if $n=9$ and $\Pi=\{\{0\},\{1,2\},\{3,4,5\},\{6,7,8,9\}\}$, then the projection from $S^{(9)}$ to $S^{(3)}$ sends $(8,6,7,5,3,0,9)$ to $(3,3,3,2,2,0,3)$. The partition $\Pi$ also maps $\hat{S}^{(n)}$ to $\hat{S}^{(p)}$ by sending
$$
(k_x^{(1)},\ldots,k_x^{(n)}) \rightarrow (k_x^{(N_0+1)}+ \ldots + k_x^{(N_1)},\ldots,k_x^{(N_{p-1}+1)} + \ldots + k_x^{(N_p)} ).
$$

The other convenient\footnote{This notation is common in the literature on Markov duality, where the original process is written in particle variables and the dual process is written in occupation variables.} notation for describing a particle configuration is with \textit{occupation variables}, assuming the configuration has only finitely many particles. For each $1\leq j\leq n$, if there are $N_j$ particles of species $j$, then write their locations as $\vec{x}^{(j)} = (x_{N_j}^{(j)} \leq \ldots \leq x_1^{(j)})$. Let $\vec{x}= (\vec{x}^{(1)},\ldots,\vec{x}^{(n)})$ denote the entire particle configuration.

A single--species (i.e. $n=1$) particle configuration can also be viewed as a function on $s:\mathbb{Z} \rightarrow \mathbb{Z}$. If $s(x+1)=s(x)-1$, then place a particle at lattice site $x$. Conversely, if $s(x+1)=s(x)+1$, then there is no particle at lattice site $x$. See Figure \ref{sf} for an example. Particle configurations do not determine a function $s(x)$ uniquely: observe that two functions $s,s'$ define the same particle configuration if and only if $s-s'$ is a constant. This results in an additional parameter, which we will be called the dynamic parameter.

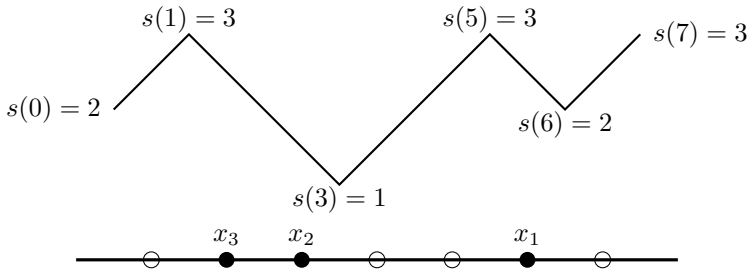
\begin{figure}
\caption{In this example, $x_3=1,x_2=2,x_1=5$.}
\begin{center}
\begin{tikzpicture}
\draw (-0.3,2) node {$s(0)=2$};
\draw (1.5,3.2) node {$s(1)=3$};
\draw (3.5,0.8) node {$s(3)=1$};
\draw (5.5,3.2) node {$s(5)=3$};
\draw (6.5,1.8) node {$s(6)=2$};
\draw (8.3,3) node {$s(7)=3$};
\draw (2,0.3) node {$x_3$};
\draw (3,0.3) node {$x_2$};
\draw (6,0.3) node {$x_1$};
\draw [very thick](0,0) -- (8,0);
\draw (1,0) circle (3pt);
\draw (4,0) circle (3pt);
\draw (5,0) circle (3pt);
\draw (7,0) circle (3pt);
\fill[black] (2,0) circle (3pt);
\fill[black] (3,0) circle (3pt);
\fill[black] (6,0) circle (3pt);
\draw [thick](0.5,2) -- (1.5,3) -- (3.5,1) -- (5.5,3) -- (6.5,2) -- (7.5,3);
\end{tikzpicture}
\end{center}
\label{sf}
\end{figure}

\subsection{Single--species interacting particle systems} 

\subsubsection{Dynamic ASEP}
The dynamic ASEP was first defined in \cite{BorodinDyn}. The state space of dynamic ASEP on $\mathbb{Z}$ is the set of functions $s: \mathbb{Z}\rightarrow \mathbb{Z}$ such that $s(x+1) = s(x) \pm 1$ for all $x\in \mathbb{Z}$. If $s,s'$ are two states such that $s(y)=s'(y)$ for all $y\neq x$, the jump rates from $s$ to $s'$ are
\begin{align*}
\frac{q(1+\alpha q^{-s(x)})}{1 + \alpha q^{-s(x)+1}}, &  \text{ if } s'(x) = s_x - 2, \\
\frac{1+\alpha q^{-s(x)}}{1 + \alpha q^{-s(x)-1}}, &  \text{ if } s'(x) = s_x + 2.
\end{align*}
All other jump rates are $0$. For the jump rates from $s$ to $s'$ to be nonzero, we must have that $s(x-1)=s(x+1)$, the value of which we write as $s(x\pm 1)$. It will be more convenient to write the jump rates as 
\begin{align*}
q\cdot \frac{(1+\alpha q^{-s(x)})}{1 + \alpha q^{-s(x \pm 1)}}, &  \text{ if } s'(x) = s_x - 2, \\
\frac{1+\alpha q^{-s(x)}}{1 + \alpha q^{-s(x \pm 1)}}, &  \text{ if } s'(x) = s_x + 2.
\end{align*}

\begin{figure}
\begin{center}
\begin{tikzpicture}
\draw [thick] (-1.5,0) -- (0,-1.5) -- (1.5,0);
\draw [dashed] (-1.5,0) -- (0,1.5) -- (1.5,0); 
\draw (0,-1.5) node (bottom)  {};
\draw (0,1.5) node (top) {};
\draw (bottom) edge[out=120,in=-120,->,dashed] (top);
\draw (0,-1.75) node {$s(x)$};
\draw (-2.25,0) node {$s(x-1)$};
\draw (2.25,0) node {$s(x+1)$};
\draw [dashed] (6,0) -- (7.5,-1.5) -- (9,0);
\draw [thick] (6,0) -- (7.5,1.5) -- (9,0); 
\draw (7.5,-1.5) node (bottom9)  {};
\draw (7.5,1.5) node (top9) {};
\draw (top9) edge[in=60,out=-60,->,dashed] (bottom9);
\draw (0.4,0) node {$\tfrac{1+\alpha q^{-s(x)}}{1 + \alpha q^{-s(x \pm 1)}}$};
\draw (7.1,0) node {$ \tfrac{q(1+\alpha q^{-s(x)})}{1 + \alpha q^{-s(x \pm 1)}}$};
\draw (0+7.5,1.75) node {$s(x)$};
\draw (-2.25+7.5,0) node {$s(x-1)$};
\draw (2.25+7.5,0) node {$s(x+1)$};
\end{tikzpicture}
\end{center}
\end{figure}

A stationary measure was found in \cite{BorCorIMRN}. This measure is defined by
\begin{align*}
\mathbb{P}(s(0)=2n) &= \frac{\alpha^{-2n}q^{n(2n-1)}(1+\alpha^{-1}q^{2n})}{(-\alpha^{-1},-q\alpha,q;q)_{\infty}},\\
\mathbb{P}(s(x-1)=s(x) + 1) &= \frac{q^{s(x)}}{\alpha + q^{s(x)}} \text{ for all } x\in \mathbb{Z},\\
\mathbb{P}(s(x-1)=s(x) - 1) &= \frac{\alpha}{\alpha + q^{s(x)}} \text{ for all } x\in \mathbb{Z}.
\end{align*}
Note that this is not the only stationary measure, since another stationary measure is the deterministic measure defined by $s(x)=x$ for all $x\in \mathbb{Z}$.

If $\alpha\rightarrow 0$, then dynamic ASEP becomes a usual ASEP with right jump rates $1$ and left jump rates $q$. If $\alpha\rightarrow \infty$, then dynamic ASEP becomes (after rescaling in time) a usual ASEP with right jump rates $q$ and left jump rates $1$. In this way, dynamic ASEP interpolates between two ASEPs. To keep the notation consistent with ASEP$(q,m/2)$, defined in the next subsection, we will call ASEP$_{1,q}$ the ASEP with left jump rates $1$ and right jump rates $q$. Similarly, ASEP$_{q,1}$ is the ASEP with left jump rates $q$ and right jump rates $1$.

\subsubsection{Dynamic SSEP}\label{dynSym}
Define the shifted sequences $\tilde{s}(x) := s(x) - \log_q \vert \alpha\vert$, which still satisfy $\tilde{s}(x+1) - \tilde{s}(x) \in \{-1,1\}$ but do not need to be integer valued. The jump rates them become 
$$
\tilde{s}(x) \mapsto \tilde{s}(x)-2 \text{ with rate } \frac{ q(1 \pm q^{-\tilde{s}(x)})}{1 \pm q^{-\tilde{s}(x\pm 1)}}, \quad \quad \tilde{s}(x) \mapsto \tilde{s}(x)+2 \text{ with rate } \frac{ 1 \pm q^{-\tilde{s}(x)}}{1 \pm q^{-\tilde{s}(x\pm 1)}}, 
$$
where the $\pm$ in the ``$1 \pm$'' is chosen to be $+$ if $\alpha>0$ and chosen to be $-$ if $\alpha <0$. If $\alpha<0$ and the $q\rightarrow 1$ limit is taken, the jump rates become 
$$
\tilde{s}(x) \mapsto \tilde{s}(x)-2 \text{ with rate } \frac{ \tilde{s}(x) }{ \tilde{s}(x\pm 1) }, \quad \quad \tilde{s}(x) \mapsto \tilde{s}(x)+2 \text{ with rate } \frac{ \tilde{s}(x) }{ \tilde{s}(x\pm 1) }.
$$
This can be equivalently written in terms of the unshifted sequences $s(x) := \tilde{s}(x) + \lambda$ as  
$$
{s}(x) \mapsto {s}(x)-2 \text{ with rate } \frac{ {s}(x) - \lambda }{ {s}(x\pm 1) - \lambda}, \quad \quad {s}(x) \mapsto {s}(x)+2 \text{ with rate } \frac{ {s}(x) - \lambda }{ {s}(x\pm 1)- \lambda  }.
$$
The parameter $\lambda$ is chosen so that the jump rates are always non--negative. This happens, for example, if $\lambda <c$ where $c$ is a constant such that the initial conditions satisfy $s(x) \geq \vert x \vert + c$ for all $x \in \mathbb{Z}$. 

When $\lambda$ is taken to $-\infty$, the jump rates simplify to the usual SSEP (symmetric simple exclusion process). However, note that the while dynamic SSEP satisfies spatial symmetry, it is not symmetric in the sense that $L_{dSSEP} \neq L_{dSSEP}^*$. Indeed, for a fixed value of $s(x)$,
$$
 \{ s(x)\mapsto s(x)-2\ \  \text{jump rates}\}  > 1 >  \{ s(x)\mapsto s(x)+2\ \  \text{jump rates}\}.
$$

\subsubsection{ASEP$(q,\vec{m})$ and $q$--Boson}

We define the following continuous--time Markov process on $\hat{S}$, which we call ASEP$(q,\vec{m})$. At each lattice site $x \in \mathbb{Z}$, suppose that there are $k_x$ particles, where $0 \leq k_x \leq m_x$. At most one particle may leave a site at a time, and when it does, it either jumps one step to the right or one step to the left. The jump rates for a particle to leave $x-1$ and enter $x$ are
$$
q\cdot q ^ { m_{x-1} - k _ { x - 1} } \cdot q ^ { k _ { x} } \left( \frac { 1- q ^ { k _ { x - 1} } } { 1- q ^ { m_{x-1} } } \right) \left( \frac { 1- q ^ { m_{x} - k _ { x} } } { 1- q ^ { m_{x} } } \right),
$$
and the jump rates for a particle to leave $x$ and enter $x-1$ are
$$
\left( \frac { 1- q ^ { m _{x-1}- k _ { x-1} } } { 1- q ^ { m_{x-1} } } \right) \left( \frac { 1- q ^ { k _ { x} } } { 1- q ^ { m_x } } \right).
$$
When all $m_x$ are the same integer $m$, then the process reduces to one introduced in \cite{CGRS}\footnote{In \cite{CGRS}, the jump rates are written using the $q$--integers $[n]_q=(q^n-q^{-n})/(q-q^{-1}) = q^{-n+1}(n)_{q^2}$. The right jump rates were defined as
$$
q^{k_x-k_{x+1}-(m+1)} [k_x]_q [m-k_{x+1}]_q = q^{-2m+1} (k_x)_{q^2} (m-k_{x+1})_{q^2} 
$$ 
and the left jump rates were defined as
$$
q^{k_{x-1}-k_x+(m+1)} [m-k_{x-1}]_q [k_x]_q = q^{2k_{x-1}-2k_x+3} (m-k_{x-1})_{q^2} (k_x)_{q^2} .
$$ 
Rescaling the time by $q^{2m-1}/(m)_{q^2}^2$, reversing the left--right directions, and replacing $q^2$ by $q$ yields the jump rates defined in the present paper.
}.

We note a few degenerations. In the $q\rightarrow 1$ limit, the jump rates become
$$
\frac{k_{x-1}}{m_{x-1}} \frac{ m_{x} - k_{x}}{m_{x}} \quad \text{  and  } \quad \frac{m_{x-1}-k_{x-1}}{m_{x-1}} \frac{k_x}{m_x}.
$$

If all $m_x=1$, then ASEP$(q,\vec{m})$ is the usual ASEP. 

In the $m_x\rightarrow \infty$ limit, and assuming $0<q<1$, the jump rates for right jumps converge to $0$, and the jump rates for left jumps converge to $1-q^{k_x}$. These are the jump rates for the $q$--Boson, introduced in \cite{SW98}.

\subsubsection{SEP($\vec{m}$)}\label{alm}
Here, we define the symmetric exclusion process; note that the word ``simple'' is not included, so the SEP($\vec{m}$) is not merely a special case of ASEP$(q,\vec{m})$. Suppose that $\mathcal{G}$ is a complete graph, and $p$ is a symmetric stochastic matrix on $\mathcal{G}$. At each vertex $x\in \mathcal{G}$, up to $m_x$ particles may occupy the site; i.e. $0 \leq \hat{\mathfrak{s}}(x) \leq m_x$, where $n=1$. Given a state $\mathfrak{s}(x)$, the jump rate for a particle from vertex $x$ to vertex $y$ is given by
$$
p(x,y) \hat{\mathfrak{s}}(x) (m_y - \hat{\mathfrak{s}}(y)). 
$$
This processes was introduced in \cite{GKRV}. When all $m_x=1$, this is just the symmetric exclusion process defined in \cite{Spit70}. 

There are also multi--species versions of the ASEP, ASEP$(q,j)$ and $q$--Boson, introduced in \cite{KuanJPhysA,BS,BS2}, \cite{KIMRN} and \cite{Take15}, respectively. The two--species ASEP was introduced in \cite{Ligg76}. Let us now describe the multi--species ASEP in the next section.

\subsection{Multi--species particle systems}
We define the meaning of the word ``multi--species'' in this paper. Given any partition $\Pi$ of $\{0,1,\ldots,n\}$ into $p+1$ blocks, the corresponding map from $\hat{S}^{(n)}$ to $\hat{S}^{(p)}$ defines a $\hat{S}^{(n)} \times \hat{S}^{(p)}$ matrix, which by abuse of notation is also denoted $\Pi$. If $Q_t$ is a semigroup of transition probabilities on $\hat{S}^{(1)}$, then a \textit{multii--species} version of $Q_t$ is a family of transition probabilities on $Q_t^{(1)},\ldots,Q_t^{(n)}$ on $\hat{S}^{(1)},\ldots,\hat{S}^{(n)}$ such that
$$
\Pi Q_t^{(p)} = Q_t^{(n)} \Pi
$$
for every partition $\Pi$ of $\{0,1,\ldots,n\}$ into $p+1$ blocks.

Similarly, given a function $D$ on $\hat{S}^{(1)} \times \hat{S}^{(1)}$, a \textit{multi--species} version of $D$ is a family of functions $D^{(p,n)}$ on $\hat{S}^{(p)} \times \hat{S}^{(n)}$ for $p,n \in \mathbb{Z}$ such that
$$
D^{(p,p)} \Pi^* = D^{(p,n)},
$$
where $\Pi = \{0,\{1,\ldots,n-p+1\},2,\ldots,n\}$ and the $^*$ denotes transposition.

Note that if $Q_t^{(p)} D^{(p,p)} = D^{(p,p)} Q_t^{(p)*}$, then
\begin{align*}
Q_t^{(p)} D^{(p,n)}&= Q_t^{(p)} D^{(p,p)} \Pi^*  = D^{(p,p)} Q_t^{(p)*}\Pi^* \\
&= D^{(p,p)} (\Pi Q_t^{(p)})^* = D^{(p,p)} (Q_t^{(n)} \Pi)^* = D^{(p,p)}\Pi^* Q_t^{(n)*} = D^{(p,n)} (   Q_t^{(n)}    )^*.
\end{align*}

Additionally, if $Q_t^{(p)}$ and $Q_t^{(n)}$ are symmetric, and $\Pi,\bar{\Pi}$ are two partitions of $\{0,1,\ldots,n\}$ into $p+1$ blocks, then
\begin{align*}
Q_t^{(n)} D^{(n,n)} = D^{(n,n)}Q_t^{(n)^*} &\Longrightarrow \bar{\Pi}^* Q_t^{(n)} D^{(n,n)} \Pi = \bar{\Pi}^*D^{(n,n)}Q_t^{(n)}\Pi \\
& \Longrightarrow Q_t^{(p)} \cdot \bar{\Pi}^* D^{(n,n)} \Pi = \bar{\Pi}^* D^{(n,n)} \Pi \cdot Q_t^{(p)*}.
\end{align*}
In general, multi--species versions of processes and dualities do not need to be unique.
\subsubsection{ASEP} 

The state space of $n$--species ASEP$_{1,q}$ consists of particle configurations where at most one particle may occupy a site. There are $n$ different species of particles, labeled $1,\ldots,n$. For $i<j$, we think of particles of species $i$ as lighter than particles of species $j$. Each particle independently tries to jump left at rate $1$ and right at rate $q$. If the particle attempts to jump to a site occupied by a heavier particle, the jump is blocked. If the particle attempts to jump to a site occupied by a lighter particle, the two particles switch places. 

The $n$--species ASEP$_{1,q}$ can also be viewed as $n$ coupled copies of ASEP$_{1,q}$ (this is how it was initially described in \cite{Ligg76}). For each $j$, consider the projection that sends particles of species $\geq j$ to particles and particles of species $<j$ to holes. (See Figure \ref{MS} for an example). Then each projection is Markov and evolves as an ($1$--species) ASEP$_{1,q}$, for a total of $n$ copies of ASEP$_{1,q}$.

Reversible measures of $n$--species ASEP$_{1,q}$ on a finite interval were found in \cite{BS3,KIMRN}. (See also \cite{Martin18} for stationary measures on an infinite line or on a ring). The expression is
$$
\pi(\mathfrak{s}) \propto \prod_{y<x} q^{1_{\{ \mathfrak{s}(y) < \mathfrak{s}(x)\}}}.
$$
Note that if $n=1$, then $\pi$ can be written in terms of particle variables as
$$
\pi(\vec{x}) \propto q^{x_1 + \ldots + x_N}.
$$

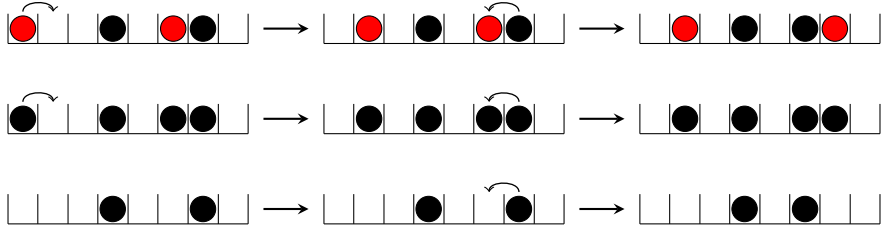
\begin{figure}
\caption{Red particles are species $1$ (lighter) and black particles are species $2$ (heavier). The first line shows a two--species ASEP, the second line shows the evolution of particles of species $\geq 1$, and the third line shows the evolution of particles of species $\geq 2$.}
\begin{center}
\begin{tikzpicture}[scale=0.6, every text node part/.style={align=center}]
\usetikzlibrary{arrows}
\usetikzlibrary{shapes}
\usetikzlibrary{shapes.multipart}
\tikzstyle{arrow}=[->,>=stealth,thick,rounded corners=4pt]

\draw (-1.66,0)--(3.66,0);
\draw (-1.66,0)--(-1.66,0.66);
\draw (-1,0)--(-1,0.66);
\draw (-0.33,0)--(-0.33,0.66);
\draw (0.33,0)--(0.33,0.66);
\draw (1,0)--(1,0.66);
\draw (1.66,0)--(1.66,0.66);
\draw (2.33,0)--(2.33,0.66);
\draw (3,0)--(3,0.66);
\draw (3.66,0)--(3.66,0.66);
\draw[fill=red] (-1.33,0.33) circle (8pt);
\draw[fill=black] (0.66,0.33) circle (8pt);
\draw[fill=red] (2,0.33) circle (8pt);
\draw[fill=black] (2.66,0.33) circle (8pt);

\node at (-1.33,0.5) (abc) {};
\node at (-.66,0.5) (abd) {};
\node at (2,0.5) {};
\node at (2.66,0.5) {};
\draw (abc) edge[out=90,in=90,->, line width=0.5pt] (abd);

\draw[arrow] (4,0.33)--(5,0.33);

\draw (7-1.66,0)--(7+3.66,0);
\draw (7+-1.66,0)--(7+-1.66,0.66);
\draw (7+-1,0)--(7+-1,0.66);
\draw (7+-0.33,0)--(7+-0.33,0.66);
\draw (7+0.33,0)--(7+0.33,0.66);
\draw (7+1,0)--(7+1,0.66);
\draw (7+1.66,0)--(7+1.66,0.66);
\draw (7+2.33,0)--(7+2.33,0.66);
\draw (7+3,0)--(7+3,0.66);
\draw (7+3.66,0)--(7+3.66,0.66);
\draw[fill=red] (7-0.66,0.33) circle (8pt);
\draw[fill=black] (7+0.66,0.33) circle (8pt);
\draw[fill=red] (7+2,0.33) circle (8pt);
\draw[fill=black] (7+2.66,0.33) circle (8pt);

\node at (7+2.66,0.5) (XYZ) {};
\node at (7+2,0.5) (xyw) {};
\node at (7+2,0.5) {};
\node at (7+2.66,0.5) {};
\draw (XYZ) edge[out=90,in=90,->, line width=0.5pt] (xyw);

\draw[arrow] (7+4,0.33)--(7+5,0.33);

\draw (14-1.66,0)--(14+3.66,0);
\draw (14+-1.66,0)--(14+-1.66,0.66);
\draw (14+-1,0)--(14+-1,0.66);
\draw (14+-0.33,0)--(14+-0.33,0.66);
\draw (14+0.33,0)--(14+0.33,0.66);
\draw (14+1,0)--(14+1,0.66);
\draw (14+1.66,0)--(14+1.66,0.66);
\draw (14+2.33,0)--(14+2.33,0.66);
\draw (14+3,0)--(14+3,0.66);
\draw (14+3.66,0)--(14+3.66,0.66);
\draw[fill=red] (14-0.66,0.33) circle (8pt);
\draw[fill=black] (14+0.66,0.33) circle (8pt);
\draw[fill=black] (14+2,0.33) circle (8pt);
\draw[fill=red] (14+2.66,0.33) circle (8pt);

\node at (14+2.66,0.5)  {};
\node at (14+2,0.5)  {};
\node at (14+2,0.5) {};
\node at (14+2.66,0.5) {};

\draw (-1.66,-2)--(3.66,-2);
\draw (-1.66,0-2)--(-1.66,0.66-2);
\draw (-1,0-2)--(-1,0.66-2);
\draw (-0.33,0-2)--(-0.33,0.66-2);
\draw (0.33,0-2)--(0.33,0.66-2);
\draw (1,0-2)--(1,0.66-2);
\draw (1.66,0-2)--(1.66,0.66-2);
\draw (2.33,0-2)--(2.33,0.66-2);
\draw (3,0-2)--(3,0.66-2);
\draw (3.66,0-2)--(3.66,0.66-2);
\draw[fill=black] (-1.33,0.33-2) circle (8pt);
\draw[fill=black] (0.66,0.33-2) circle (8pt);
\draw[fill=black] (2,0.33-2) circle (8pt);
\draw[fill=black] (2.66,0.33-2) circle (8pt);

\node at (-1.33,0.5-2) (abc) {};
\node at (-.66,0.5-2) (abd) {};
\node at (2,0.5-2) {};
\node at (2.66,0.5-2) {};
\draw (abc) edge[out=90,in=90,->, line width=0.5pt] (abd);

\draw[arrow] (4,0.33-2)--(5,0.33-2);

\draw (7-1.66,0-2)--(7+3.66,0-2);
\draw (7+-1.66,0-2)--(7+-1.66,0.66-2);
\draw (7+-1,0-2)--(7+-1,0.66-2);
\draw (7+-0.33,0-2)--(7+-0.33,0.66-2);
\draw (7+0.33,0-2)--(7+0.33,0.66-2);
\draw (7+1,0-2)--(7+1,0.66-2);
\draw (7+1.66,0-2)--(7+1.66,0.66-2);
\draw (7+2.33,0-2)--(7+2.33,0.66-2);
\draw (7+3,0-2)--(7+3,0.66-2);
\draw (7+3.66,0-2)--(7+3.66,0.66-2);
\draw[fill=black] (7-0.66,0.33-2) circle (8pt);
\draw[fill=black] (7+0.66,0.33-2) circle (8pt);
\draw[fill=black] (7+2,0.33-2) circle (8pt);
\draw[fill=black] (7+2.66,0.33-2) circle (8pt);

\node at (7+2.66,0.5-2) (XYZ) {};
\node at (7+2,0.5-2) (xyw) {};
\node at (7+2,0.5-2) {};
\node at (7+2.66,0.5-2) {};
\draw (XYZ) edge[out=90,in=90,->, line width=0.5pt] (xyw);

\draw[arrow] (7+4,0.33-2)--(7+5,0.33-2);

\draw (14-1.66,0-2)--(14+3.66,0-2);
\draw (14+-1.66,0-2)--(14+-1.66,0.66-2);
\draw (14+-1,0-2)--(14+-1,0.66-2);
\draw (14+-0.33,0-2)--(14+-0.33,0.66-2);
\draw (14+0.33,0-2)--(14+0.33,0.66-2);
\draw (14+1,0-2)--(14+1,0.66-2);
\draw (14+1.66,0-2)--(14+1.66,0.66-2);
\draw (14+2.33,0-2)--(14+2.33,0.66-2);
\draw (14+3,0-2)--(14+3,0.66-2);
\draw (14+3.66,0-2)--(14+3.66,0.66-2);
\draw[fill=black] (14-0.66,0.33-2) circle (8pt);
\draw[fill=black] (14+0.66,0.33-2) circle (8pt);
\draw[fill=black] (14+2,0.33-2) circle (8pt);
\draw[fill=black] (14+2.66,0.33-2) circle (8pt);

\node at (14+2.66,0.5-2)  {};
\node at (14+2,0.5-2)  {};
\node at (14+2,0.5-2) {};
\node at (14+2.66,0.5-2) {};

\draw (-1.66,-4)--(3.66,-4);
\draw (-1.66,0-4)--(-1.66,0.66-4);
\draw (-1,0-4)--(-1,0.66-4);
\draw (-0.33,0-4)--(-0.33,0.66-4);
\draw (0.33,0-4)--(0.33,0.66-4);
\draw (1,0-4)--(1,0.66-4);
\draw (1.66,0-4)--(1.66,0.66-4);
\draw (2.33,0-4)--(2.33,0.66-4);
\draw (3,0-4)--(3,0.66-4);
\draw (3.66,0-4)--(3.66,0.66-4);
\draw[fill=black] (0.66,0.33-4) circle (8pt);
\draw[fill=black] (2.66,0.33-4) circle (8pt);

\draw[arrow] (4,0.33-4)--(5,0.33-4);

\draw (7-1.66,0-4)--(7+3.66,0-4);
\draw (7+-1.66,0-4)--(7+-1.66,0.66-4);
\draw (7+-1,0-4)--(7+-1,0.66-4);
\draw (7+-0.33,0-4)--(7+-0.33,0.66-4);
\draw (7+0.33,0-4)--(7+0.33,0.66-4);
\draw (7+1,0-4)--(7+1,0.66-4);
\draw (7+1.66,0-4)--(7+1.66,0.66-4);
\draw (7+2.33,0-4)--(7+2.33,0.66-4);
\draw (7+3,0-4)--(7+3,0.66-4);
\draw (7+3.66,0-4)--(7+3.66,0.66-4);
\draw[fill=black] (7+0.66,0.33-4) circle (8pt);
\draw[fill=black] (7+2.66,0.33-4) circle (8pt);

\node at (7+2.66,0.5-4) (xyw) {};
\node at (7+2,0.5-4) (XYZ) {};
\node at (7+2,0.5-4) {};
\node at (7+2.66,0.5-4) {};
\draw (xyw) edge[out=90,in=90,->, line width=0.5pt] (XYZ);

\draw[arrow] (7+4,0.33-4)--(7+5,0.33-4);

\draw (14-1.66,0-4)--(14+3.66,0-4);
\draw (14+-1.66,0-4)--(14+-1.66,0.66-4);
\draw (14+-1,0-4)--(14+-1,0.66-4);
\draw (14+-0.33,0-4)--(14+-0.33,0.66-4);
\draw (14+0.33,0-4)--(14+0.33,0.66-4);
\draw (14+1,0-4)--(14+1,0.66-4);
\draw (14+1.66,0-4)--(14+1.66,0.66-4);
\draw (14+2.33,0-4)--(14+2.33,0.66-4);
\draw (14+3,0-4)--(14+3,0.66-4);
\draw (14+3.66,0-4)--(14+3.66,0.66-4);
\draw[fill=black] (14+0.66,0.33-4) circle (8pt);
\draw[fill=black] (14+2,0.33-4) circle (8pt);

\node at (14+2.66,0.5-4)  {};
\node at (14+2,0.5-4)  {};
\node at (14+2,0.5-4) {};
\node at (14+2.66,0.5-4) {};
\end{tikzpicture}

\end{center}
\label{MS}
\end{figure}

\subsubsection{$q$--Boson}

The $n$--species $q$--Boson can similarly be viewed as $n$ coupled single--species $q$--Bosons. At each lattice site, an arbitrary number of particles of species $j$ (for $1\leq j \leq n$) may occupy that site. If a lattice site has $k^{(j)}$ particles of species $j$, then the jump rate for a particle of species $j$ is given by $q^{ k^{(j+1)} + \cdots k^{(n)} } ( 1 - q^{k^{(j)}} )$. As before, the jump rates of the heavier particles do not depend on the positions of the lighter particles. To see that the Markov projection to particles of species $\geq j$ is itself a $q$--Boson, note the telescoping sum
$$
\left(1 - q^{k^{(n)}}\right) + q^{k^{(n)}}\left(1 - q^ { k^{ (n-1) }} \right) + q^{k^{(n-1)} + k^{(n)}}\left(1 - q^{k^{(n-2)}}\right)  + \ldots + q^{k^{(j+1)} + \ldots +  k^{(n)}}  \left(1 - q^{k^{(j)}}\right)  = 1 - q^{k^{(j)} + \ldots +  k^{(n)}} 
$$

\subsubsection{ASEP$(q,m/2)$}

Now we turn to the definition of the $n$--species ASEP$(q,m/2)$. Up to $m$ particles may occupy each lattice site. There are $n$ species of particles, and suppose that the lattice site $x$ has $k^{(j)}_x$ particles of species $j$. There are two types of jumps to consider: one is when a particle of species $j$ at lattice site $x$ jumps to the right and switches places with a particle of species $i<j$, and the second is when a particle of species $j$ at lattice site $x+1$ jumps to the left and switches places with a particle of species $i<j$. In the first case, the jump rates are 
$$
q\cdot q ^ { k_{x}^{(1)} + \cdots + k_x^{(j-1)} } \cdot q ^ { k^{(i+1)}_{ x+1} + \cdots + k^{(n+1)}_{ x+1} } \left( \frac { 1- q ^ { k^{(j)} _ { x} } } { 1- q ^ { m } } \right) \left( \frac { 1- q^{  k^{(i)}_{x+1} } } { 1- q^m  } \right),
$$
and in the second case the jump rates are
$$
q ^ { k_{x}^{(1)} + \cdots + k_x^{(i-1)} } \cdot q ^ { k^{(j+1)}_{ x+1} + \cdots + k^{(n+1)}_{ x+1} }  \left( \frac { 1- q ^{ k^{(i)}_{ x} } } { 1- q^m  } \right) \left( \frac { 1- q ^ { k^{(j)}_{x+1} } }{ 1- q^m } \right).
$$

\subsection{$q$--exchangeable and reversible measures}

To describe the reversible measures of these processes, we first describe a class of measures called $q$--exchangeable measures, which were introduced in \cite{GO,GO2}. Intuitively, this is a measure on particle configurations such that whenever a heavier particle moves to the right of a lighter particle, the probability of the particle configuration is multiplied by $q$. This is related to the jump rates of ASEP -- namely, the jump rates for a heavy particle to the right are $q$ times the jump rates for a heavy particle to the left.

Before giving the rigorous definition of $q$--exchangeable measures, first we define some notation and terminology. The basic intuition behind the notation comes from two symmetries on particle configurations. First, if there are two particles of the \textit{same} species at \textit{different} lattice sites, then switching the \textit{particle locations} of the two particles preserves the particle configuration. Second, if there are two particles of \textit{different} species at the \textit{same} lattice site, then switching the \textit{species numbers} of the two particles also preserves the particle configuration. This will be interpreted as the right action and the left action of two subgroups of $S(N)$, where $N$ is the number of particles. 

Given any permutation $\sigma \in S(N)$, let $\inv(\sigma)$ denote its number of inversions, which is the number of pairs $(i,j)$ such that $i<j$ and $\sigma(i) > \sigma(j)$. For any sequence of integers $ \mathbf{N}=(N_1,\ldots,N_n)$ such that $N_1 + \ldots + N_n=n$, define the Young subgroup $S(\mathbf{N}) = S(N_1) \times \cdots \times S(N_n)$ by letting $S(N_1)$ act on $\{1,\ldots,N_1\}$, and $S(N_2)$ act on $\{N_1 + 1,\ldots,N_1+N_2\}$, and so forth. Each left coset of a Young subgroup $H\leq S(N)$ has a unique representative with the fewest number of inversions (see e.g. Proposition 2.3.3 of \cite{Carter}); let $D_H$ denote this set of distinguished coset representatives. Every $\sigma \in S(N)$ has a unique decomposition $\sigma = \sigma^0 \bar{\sigma}$, where $\sigma^0 \in D_H$ and $\bar{\sigma} \in H$, which satisfies $\inv(\sigma) = \inv(\sigma^0) + \inv(\bar{\sigma})$ (see also Proposition 2.3.3 of \cite{Carter}.)

An equivalent definition of $\inv(\sigma)$ and Young subgroups will be useful. The symmetric group $S(N)$ is generated by transpositions $s_1,\ldots,s_{N-1}$, where $s_i$ is the transposition $(i \ i+1)$. The number of inversions of $\sigma$ is the minimal number $l$ such that $\sigma$ is the product of $l$ transpositions. A Young subgroup is any subgroup generated by $\{s_j: j\in J\}$ for some subset $J\subseteq \{1,\ldots,N-1\}$.

We will also need information about double cosets. Recall that there is a representation $\tau$ of $S_n$ acts on a $(N-1)$--dimensional vector space with basis $u_1,\ldots,u_{N-1}$ by mapping
$$
\tau_{s_i}(v) = v - 2\langle u_i,v\rangle u_i,
$$
where the bilinear form is defined by
$$
\langle u_i,u_j\rangle 
= 
\begin{cases}
1, &\text{ if } i=j, \\
-1/2, &\text{ if } i = j \pm 1,\\
0, &\text{ else.}
\end{cases}
$$
Given two Young subgroups $H'$ and $H$, let $D_{H',H}=D_{H'}^{-1} \cap D_H$.  Every double coset $H' \sigma H$ contains a unique element of $D_{H',H}$, and every $\sigma \in D_{H',H}$ is the unique element with minimal inversions in its double coset $H'\sigma H$. Conversely, given any $\sigma\in D_{H',H}$, let $K$ be the Young subgroup $K$ generated by  
$$
H' \cap \{s_k : \tau_{\sigma}(u_j)=u_k \text{ for } s_j \in H   \}.
$$
Then every element of the double set $H'\sigma H$ can be written uniquely as $a\sigma b$ for $a\in H' \cap D_K,\sigma \in D_{H',H},b\in H$, and this decomposition satisfies
\begin{equation}\label{Sum}
\inv(a \sigma b) = \inv(a) + \inv(\sigma) + \inv(b)
\end{equation}

\begin{remark}\label{AHPEx}
Let $H= S(1) \times S(2) \times S(2) \times S(3)$ and $H' = S(1) \times S(2) \times S(2) \times S(2) \times S(1)$ be Young subgroups of $S(8)$. The element $\sigma = s_5s_4s_3s_1s_6s_5 = 21467358$ is in $D_{H',H}$ and
\begin{align*}
\tau_{\sigma} (u_2) &= u_1 + u_2 + u_3 + u_4 + u_5, \\
\tau_{\sigma} (u_4) &= u_3 + u_4 + u_5 + u_6 ,\\
\tau_{\sigma} (u_6) &= u_4, \\
\tau_{\sigma} (u_7) &= u_5 + u_6 + u_7, 
\end{align*}
so therefore $K$ is generated by $s_4$ and $H'\cap D_K = \{s_2,s_6\}$.
\end{remark}

The $q$--exchangeable measure on $D_{H',H}$ is defined by 
$$
\mathrm{Prob}(\sigma) = \frac{q^{\inv(\sigma)}}{Z_{H',H}},
$$
where $Z_{H',H}$ is a normalizing factor. 

More generally, a $q$--exchangeable measure can be defined on the disjoint union across $D_{H',H}$. For
$$
\mathbf{x} \in \mathcal{W}_N := \{(x_1,\ldots,x_N): x_N \leq \cdots \leq x_1\} \subset \mathbb{Z}^N,
$$
define $\mathbf{m}(\mathbf{x})=(m_1,\ldots,m_r)$ so that
$$
x_1 = \cdots = x_{m_1} > x_{m_1+1} = \cdots = x_{m_1+m_2} > x_{m_1+m_2+1} = \cdots = \cdots 
> x_{m_1 + \cdots + m_{r-1}+1}  = \cdots = x_N,
$$
where $m_r$ is defined by $m_1 + \cdots + m_r=N$. Now, for a fixed Young subgroup $H' \subseteq S(N)$, a probability measure on 
$$
\coprod_{\mathbf{x} \in \mathcal{W}_N} \{\mathbf{x}\} \times D_{H', S(\mathbf{m}(\mathbf{x}))  },
$$
is $q$--exchangeable if it is of the pseudo--factorized form
$$
\mathrm{Prob}(\mathbf{x},\sigma) = p(\mathbf{x}) \frac{q^{\inv(\sigma)}}{Z_{H', S(\mathbf{m}(\mathbf{x}))}}
$$
for some probability measure $p(\cdot)$ on $\mathcal{W}_N$. An equivalent definition is that
\begin{equation}\label{edef}
q^{-\inv(\sigma)} \cdot \mathrm{Prob}(\mathbf{x},\sigma) = q^{-\inv(\sigma')}  \cdot \mathrm{Prob}(\mathbf{x},\sigma') 
\end{equation}
for all $\mathbf{x} \in \mathcal{W}_N$ and $\sigma,\sigma' \in D_{H',S(\mathbf{m}(\mathbf{x}))}$. 

Each $(\mathbf{x},\sigma)$ uniquely defines a particle configuration (by the uniqueness properties mentioned above); see Figure \ref{State} for an example. More precisely, using the $\mathbf{m}(\mathbf{x})$ notation, then the number of species $j$ particles at lattice site $x:=x_{m_1+\ldots+m_s+1} = \cdots = x_{m_1+\ldots+m_{s+1}}$ is given by 
$$
k_x^{(j)} = \left|  \{ \sigma(m_1+\ldots+m_s+1),\ldots,\sigma(  m_1+\ldots+m_{s+1} ) \} \cap \{ N_{ j-1 } +1,\ldots,N_j \}  \right|.
$$
Let $\mathfrak{M}(\mathbf{x},\sigma) \in \hat{S}^{(n)}$ denote the corresponding particle configuration.

\begin{remark}\label{ParticleExample} Consider the particle configuration shown in the top of Figure \ref{State}. There is more than one $\sigma\in S(N)$ which defines this particle configuration, and it is not hard to see that $\sigma = 21467358$ has the fewest inversions. In fact, this $\sigma$ is the element $s_5s_4s_3s_1s_6s_5 \in D_{H',H}$ from Example \ref{AHPEx}, where $H=S(\mathbf{m}(\mathbf{x}))=S(1)\times S(2) \times S(2) \times S(3)$ and $H'=S(\mathbf{N})=S(1) \times S(2) \times S(2) \times S(2) \times S(1)$.

It is straightforward to see that the permutations $21476358,21567438, 35178426$ also describe the same particle configuration as $\sigma$, but have more inversions. These turn out to be in the double coset $H'\sigma H$ of $\sigma$, because it can be seen through direct calculation that the decompositions $a \sigma b$ take the form
\begin{align*}
21476358 &= e\cdot s_5s_4s_3s_1s_6s_5 \cdot s_6,\\
21567438 &= s_6 \cdot s_5s_4s_3s_1s_6s_5 \cdot s_4,\\
35178426 &= s_6s_2 \cdot s_5s_4s_3s_1s_6s_5 \cdot s_7s_6s_4s_2.
\end{align*}
Note that $21476358$ is also equal to $s_4 \cdot s_5s_4s_3s_1s_6s_5 \cdot e$, demonstrating that $s_4 \notin D_K$. 
\end{remark}

\begin{remark}
Suppose that $H$ is replaced with $\hat{H}=S(1) \times S(2) \times S(5)$. Then $\hat{H}$ contains $s_5$ and $s_6$, and the particle configuration is represented by $\sigma = 21456378 = s_5s_4s_3s_1$. See the bottom of Figure \ref{State}.
\end{remark}

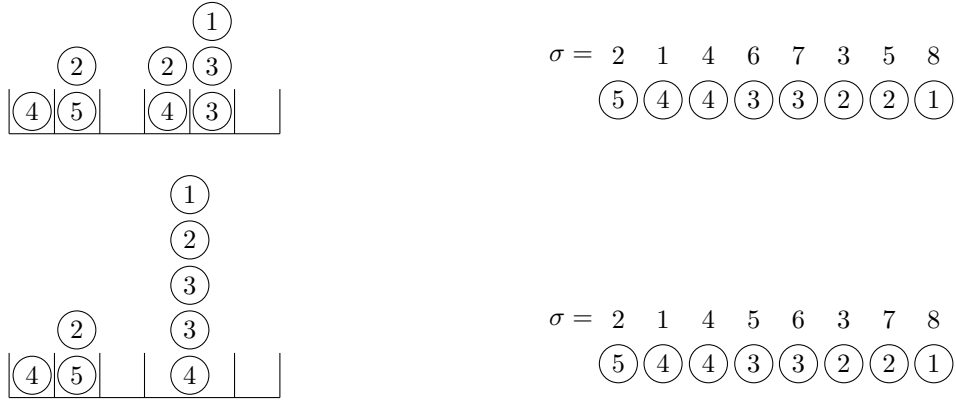
\begin{figure}
\begin{center}
\begin{tikzpicture}[scale=0.9, every text node part/.style={align=center}]
\usetikzlibrary{arrows}
\usetikzlibrary{shapes}
\usetikzlibrary{shapes.multipart}

\draw (--1,0)--(-3,0);
\draw (--1,0)--(--1,0.66);
\draw (--0.33,0)--(--0.33,0.66);
\draw (-0.33,0)--(-0.33,0.66);
\draw (-1,0)--(-1,0.66);
\draw (-1.66,0)--(-1.66,0.66);
\draw (-2.33,0)--(-2.33,0.66);
\draw (-3,0)--(-3,0.66);
\draw (-0.66,0.33) circle (8pt);
\draw (-0,0.33) circle (8pt);
\draw (-0,1) circle (8pt);
\draw (-0,1.66) circle (8pt);
\draw (-0.66,1) circle (8pt);
\draw (-2,1) circle (8pt);
\draw (-2,0.33) circle (8pt);
\draw (-2.66,0.33) circle (8pt);

\node at (-0,1.66) {$1$};
\node at (-0,1) {$3$};
\node at (-0,0.33) {$3$};
\node at (-0.66,0.33) {$4$};
\node at (-0.66,1)  {$2$};
\node at (-2,1) {$2$};
\node at (-2,0.33) {$5$};
\node at (-2.66,0.33) {$4$};

\draw  (6,0.5) circle (8pt);
\draw  (6.66,0.5) circle (8pt);
\draw  (7.33,0.5) circle (8pt);
\draw  (8,0.5)  circle (8pt);
\draw  (8.66,0.5) circle (8pt);
\draw  (9.33,0.5) circle (8pt);
\draw  (10,0.5) circle (8pt);
\draw  (10.66,0.5) circle (8pt);

\node at (5.3,1.16) {$\sigma=$};
\node at (6,1.16) {$2$};
\node at (6.66,1.16) {$1$};
\node at (7.33,1.16) {$4$};
\node at (8,1.16)  {$6$};
\node at (8.66,1.16) {$7$};
\node at (9.33,1.16) {$3$};
\node at (10,1.16) {$5$};
\node at (10.66,1.16) {$8$};

\node at (6,0.5) {$5$};
\node at (6.66,0.5) {$4$};
\node at (7.33,0.5) {$4$};
\node at (8,0.5)  {$3$};
\node at (8.66,0.5) {$3$};
\node at (9.33,0.5) {$2$};
\node at (10,0.5) {$2$};
\node at (10.66,0.5) {$1$};

\end{tikzpicture}

\vspace{0.2in}

\begin{tikzpicture}[scale=0.9, every text node part/.style={align=center}]
\usetikzlibrary{arrows}
\usetikzlibrary{shapes}
\usetikzlibrary{shapes.multipart}

\draw (--1,0)--(-3,0);
\draw (--1,0)--(--1,0.66);
\draw (--0.33,0)--(--0.33,0.66);
\draw (-1,0)--(-1,0.66);
\draw (-1.66,0)--(-1.66,0.66);
\draw (-2.33,0)--(-2.33,0.66);
\draw (-3,0)--(-3,0.66);
\draw (-0.33,0.33) circle (8pt);
\draw (-0.33,1) circle (8pt);
\draw (-0.33,3) circle (8pt);
\draw (-0.33,1.66) circle (8pt);
\draw (-0.33,2.33) circle (8pt);
\draw (-2,1) circle (8pt);
\draw (-2,0.33) circle (8pt);
\draw (-2.66,0.33) circle (8pt);

\node at (-0.33,1.66) {$3$};
\node at (-0.33,3) {$1$};
\node at (-0.33,2.33) {$2$};
\node at (-0.33,0.33) {$4$};
\node at (-0.33,1)  {$3$};
\node at (-2,1) {$2$};
\node at (-2,0.33) {$5$};
\node at (-2.66,0.33) {$4$};

\draw  (6,0.5) circle (8pt);
\draw  (6.66,0.5) circle (8pt);
\draw  (7.33,0.5) circle (8pt);
\draw  (8,0.5)  circle (8pt);
\draw  (8.66,0.5) circle (8pt);
\draw  (9.33,0.5) circle (8pt);
\draw  (10,0.5) circle (8pt);
\draw  (10.66,0.5) circle (8pt);

\node at (5.3,1.16) {$\sigma=$};
\node at (6,1.16) {$2$};
\node at (6.66,1.16) {$1$};
\node at (7.33,1.16) {$4$};
\node at (8,1.16)  {$5$};
\node at (8.66,1.16) {$6$};
\node at (9.33,1.16) {$3$};
\node at (10,1.16) {$7$};
\node at (10.66,1.16) {$8$};

\node at (6,0.5) {$5$};
\node at (6.66,0.5) {$4$};
\node at (7.33,0.5) {$4$};
\node at (8,0.5)  {$3$};
\node at (8.66,0.5) {$3$};
\node at (9.33,0.5) {$2$};
\node at (10,0.5) {$2$};
\node at (10.66,0.5) {$1$};

\end{tikzpicture}
\end{center}
\caption{The particle configuration referenced in Example \ref{ParticleExample}.
}
\label{State}
\end{figure}

The $q$--Binomial theorem can also be stated in terms of these cosets. Given a subgroup $G$ of $S(N)$, let
$$
\vert G\vert_q = \sum_{\sigma \in G} q^{\inv(\sigma)}.
$$
Then for any Young subgroup $H = S(N_1) \times \cdots \times S(N_r) \leq S(N)$, where $N_1 + \ldots + N_r=N$, we have (see e.g. Proposition 2.6 of \cite{KuanAHP})
\begin{equation}\label{qBin2}
\sum_{\sigma \in D_H} q^{\inv(\sigma)} = \binom{N}{N_1,\ldots,N_r}_q = \frac{ | S(N)|_q}{ \vert H\vert_q}
\end{equation}

In \cite{KuanAHP}, it is proved that the multi--species ASEP and $q$--Boson preserve $q$--exchangeability. In other words, if the initial conditions are $q$--exchangeable, then the probability measures at all future times are $q$--exchangeable. 

In \cite{KIMRN} and \cite{BS3} , it is shown that the multi--species ASEP on a finite lattice with closed boundary conditions has $q$--exchangeable reversible measures. The reversible measures of multi--species ASEP$(q,m/2)$ were also found in \cite{KIMRN}; this result will be generalized in Theorem \ref{SFA}(i) below.

We mention one more identity. Suppose that $\vec{m}=(m_x)_{x \in \mathbb{Z}}$ is of the form 
$$
m_x 
= \begin{cases}
1, \quad & 0 \leq m_x \leq L-1, \\
0, \quad & \text{else}.
\end{cases}
$$
Let $H=S(N_1) \times \cdots \times S(N_n) \leq S(N)$ where $N_1 + \ldots + N_n=N$. Then by \eqref{qBin} and \eqref{qBin2},
\begin{align}\label{qBin3}
q^{-(N-1)N/2} \sum_{0 \leq x_N < \ldots < x_1 \leq L-1 } q^{x_1 + \ldots + x_N} \sum_{\sigma \in D_H} q^{\inv(\sigma)} &= \binom{L}{N}_q \binom{N}{N_1,\ldots,N_n}_q  \notag \\
&= \binom{L}{L-N,N_1,\ldots,N_n}_q
\end{align}

\subsection{Definition and background on duality}

Recall the definition of stochastic duality:
\begin{definition} Two Markov processes (either discrete or continuous time) $X(t)$ and $Y(t)$ on state spaces $\mathfrak{X}$ and $\mathfrak{Y}$ are dual with respect to a function $D$ on $ \mathfrak{X}\times \mathfrak{Y}$ if
$$
\mathbb{E}_x\left[  D(X(t),y) \right] = \mathbb{E}_y \left[ D(x,Y(t))\right] \text{ for all } (x,y) \in \mathfrak{X}\times \mathfrak{Y} \text{ and all } t \geq 0.
$$
On the left--hand--side, the process $X(t)$ starts at $X(0)=x$, and on the right--hand--side the process $Y(t)$ starts at $Y(0)=y$.

An equivalent definition (for continuous--time processes and discrete state spaces) is that if the generator $\mathcal{L}_X$ of $X(t)$ is viewed as a $\mathfrak{X} \times \mathfrak{X}$ matrix, the generator $\mathcal{L}_Y$ of $Y(t)$ is viewed as a $\mathfrak{Y}\times \mathfrak{Y}$ matrix, and $D$ is viewed as a $\mathfrak{X} \times \mathfrak{Y}$ matrix, then $\mathcal{L}_XD = D\mathcal{L}_Y^*$, where the $^*$ denotes transpose.  For discrete--time chains with transition matrices $P_X$ and $P_Y$ also viewed as $\mathfrak{X}\times \mathfrak{X}$ and $\mathfrak{Y}\times \mathfrak{Y}$ matrices, an equivalent definition is
$$
P_X D = D P_Y^*.
$$
If $X(t)$ and $Y(t)$ are the same process, in the sense that $\mathfrak{X}=\mathfrak{Y}$ and $\mathcal{L}_{{X}} = \mathcal{L}_{{Y}}$ (for continuous time) or $P_X=P_Y$ (for discrete--time), then we say that $X(t)$ is self--dual with respect to the function $D$. 

\end{definition}

\begin{reemark}\label{StatDual}
The duality function can be related to stationary measures. In particular, take the $t\rightarrow \infty$ limit in the equality above. If there are unique stationary measures $X(\infty)$ and $Y(\infty)$, then we obtain
$$
\mathbb{E}[D(X(\infty),y)] = \mathbb{E}[D(x,Y(\infty))].
$$
Since the left--hand--side does not depend on $x$, then neither does the right--hand--side. Therefore, $\mathbb{E}[D(x,Y(\infty))]$ does not depend on $x\in \mathfrak{X}$, and similarly $\mathbb{E}[D(X(\infty),y)]$ does not depend on $y\in \mathfrak{Y}$.
\end{reemark}

\begin{reemark}
Note that if $c(x,y)$ is a function on $\mathfrak{X} \times \mathfrak{Y}$ which is constant under the dynamics of $X(t)$ and $Y(t)$, then $c(x,y)D(x,y)$ is also a duality function. This can be used to simplify the expression for $D(x,y)$. In previous works (such as \cite{KIMRN}) $c(x,y)$ was a function which only depended on the number of particles of each species, which is a constant assuming particle number conservation. In the present case, stochastic fusion does \textit{not} require particle number conservation, which allows for an easier analysis of open boundary conditions.
\end{reemark}

\begin{reemark}
Note that all Markov processes are dual to each other with respect to constant functions. More generally, if $\pi(y)$ is a stationary measure on $\mathfrak{Y}$, meaning that $\mathbb{E}_y[\pi(Y(t))]=\pi(y)$ for all $y\in \mathfrak{Y}$ and $t \geq 0$, then $D(x,y) = \pi(y)$ is a duality function between any Markov process $X(t)$ and $Y(t)$, because 
$$
\mathbb{E}_x[D(X(t),y)] = \pi(y) = \mathbb{E}_y[\pi(Y(t))] = \mathbb{E}_y[D(x,Y(t))].
$$
\end{reemark}

\begin{reemark}\label{ZCentral}
If $Z_X$ is a $\mathfrak{X}\times \mathfrak{X}$ matrix which commutes with $\mathcal{L}_X$, and $D$ is a duality function between $X(t)$ and $Y(t)$, then so is $Z_XD$, because
$$
\mathcal{L}_X Z_XD = Z_X \mathcal{L}_X D = Z_XD \mathcal{L}_Y^*.
$$
Similarly, if $Z_Y$ is a $\mathfrak{Y} \times \mathfrak{Y}$ matrix such that $Z_Y^*$ commutes with $\mathcal{L}_Y$, then
$$
\mathcal{L}_X DZ_Y = D \mathcal{L}_Y^* Z_Y = DZ_Y \mathcal{L}_Y^*,
$$
so $DZ_Y$ is also a duality function between $X(t)$ and $Y(t)$. More generally, $Z_X D Z_Y$ is a duality function as well. 
\end{reemark}

\begin{reemark}\label{QInter}
The previous remark can be generalized. Suppose that there is an additional process $W(t)$ on a state space $\mathfrak{W}$ with generator $L_W$. If $\mathcal{Q}$ is a $\mathfrak{W} \times \mathfrak{Y}$ matrix which intertwines with $\mathcal{L}_Y$ and $\mathcal{L}_W$, meaning that $\mathcal{Q} \mathcal{L}_Y = \mathcal{L}_W \mathcal{Q}$, then
\begin{align*}
\mathcal{L}_X D = D\mathcal{L}_Y^* &\Longrightarrow \mathcal{L}_X D \mathcal{Q}^* = D\mathcal{L}_Y^* \mathcal{Q}^*  = D (\mathcal{Q} \mathcal{L}_Y)^* = D ( \mathcal{L}_W \mathcal{Q})^* = D \mathcal{Q}^*\mathcal{L}_W^* \\
& \Longrightarrow  \mathcal{L}_X D \mathcal{Q}^* = D \mathcal{Q}^*\mathcal{L}_W^* 
\end{align*}

Similarly, if $\mathcal{Q}^*\mathcal{L}_X = \mathcal{L}_W \mathcal{Q}^*$, then
\begin{align*}
\mathcal{L}_X D = D\mathcal{L}_Y^* &\Longrightarrow  \mathcal{Q}^*\mathcal{L}_X D = \mathcal{Q}^*D\mathcal{L}_Y^*   \\
&\Longrightarrow \mathcal{L}_W \mathcal{Q}^* D = \mathcal{Q}^*D\mathcal{L}_Y^*  .
\end{align*}
\end{reemark}


\subsubsection{ASEP dualities}
There are a few previously known dualities for the ASEP, that we will mention here. (See also \cite{IS}). To describe the duality functional, we introduce some notation. For a particle configuration $\mathfrak{s} \in S^{(1)}$ and $x\in \mathbb{Z}$, let $N_x(\mathfrak{s})$ denote
$$
N_x(\mathfrak{s}):= \left| \{ z \geq x: \mathfrak{s}(z) = 1   \} \right|.
$$ 
If $\mathfrak{s}_t$ denotes an ASEP$_{1,q}$ written in particle variables, and $\vec{x}(t)$ denotes another ASEP$_{1,q}$ written in occupation variables with $N$ particles, then equation (3.11) of \cite{Sch97} shows that $\mathfrak{s}_t$ and $\vec{x}(t)$ are dual with respect to the functional
$$
D_{\mathrm{Sch}}(\mathfrak{s},\vec{x}) = \prod_{k=1}^N 1_{\{\mathfrak{s}(x_k)=1\}} q^{-x_{k} -N_{x_k}(\mathfrak{s})}.
$$
This duality holds on either a finite lattice (with closed boundary conditions) or an an infinite lattice. The multi--species version of $D_{\mathrm{Sch}}$ was found in Theorem 3.5 of \cite{BS3} and Theorem 2.5(a) \cite{KIMRN}. To state it, introduce the notation
$$
N_x^{(j)}( \mathfrak{s}) =  \left| \{ z \geq x: \mathfrak{s}(z) \geq j   \} \right| .
$$
Then
$$
D_{\mathrm{Sch}}^{(n,p)}( \mathfrak{s}, \mathfrak{s}') = \prod_x 1_{\{\mathfrak{s}(x) \geq \mathfrak{s}'(x) \geq 1\}} q^{-x - N_x^{\mathfrak{s}'(x)}(\mathfrak{s})}.
$$

In Theorem 4.2 of \cite{BCSDuality}, it is shown that if $\mathfrak{s}_t$ evolves as an ASEP$_{1,q}$, and $\vec{x}(t)$ evolves as a ASEP$_{q,1}$ with $N$ particles, then \cite{BCSDuality} shows that $\mathfrak{s}_t$ and $\vec{x}(t)$ are dual with respect to the functional
$$
D_{\mathrm{BCS}}(\mathfrak{s},\vec{x}) = \prod_{k=1}^N q^{ -N_{x_k}(\mathfrak{s})}.
$$
However, this duality only holds on an \textit{infinite lattice}. 

Another duality was found in Theorem 2.5(b) of \cite{KIMRN}: if $\mathfrak{s}_t$ evolves as an ASEP$_{1,q}$, and $\vec{x}(t)$ evolves as a ASEP$_{q,1}$ with $N$ particles, then \cite{BCSDuality} shows that $\mathfrak{s}_t$ and $\vec{x}(t)$ are dual with respect to the functional
$$
D_{\mathrm{Kua}}(\mathfrak{s},\vec{x}) = \prod_{k=1}^N 1_{\{\mathsf{s}(x_k)=0\}}q^{ -N_{x_k}(\mathfrak{s})}.
$$

\subsubsection{ASEP$(q,m/2)$ and $q$--Boson duality}
From equation (34) of Theorem 3.2 in \cite{CGRS}, we a duality between ASEP$(q,m/2)$ and itself. If $\mathfrak{s}_{t}$ and $\mathfrak{s}'_{t}$ each evolve as an ASEP$(q,m/2)$, then 
$$
\prod_{x \in \mathbb{Z}} \frac{ q^{\mathfrak{s'}(x)(\mathfrak{s'}(x)-\mathfrak{s}(x))/2} \binom{\mathfrak{s}(x)}{\mathfrak{s}'(x)}_q }{ q^{\mathfrak{s}'(x)(\mathfrak{s'}(x)-m)/2} \binom{m}{\mathfrak{s}'(x)}_q } q^{ \mathfrak{s}(x) \mathfrak{s}'(x)/2 + \mathfrak{s}'(x) \sum_{z <x} \mathfrak{s}(z)  - m \mathfrak{s}'(x) x} 1_{\{  \mathfrak{s}(x) \geq \mathfrak{s}'(x)  \}}
$$
is a duality function \footnote{Note that this paper uses a slightly different definition of $q$--binomials than \cite{CGRS}.}. Because the quantity $q^{m\sum_{x \in \mathbb{Z}} \mathfrak{s}'(x)/2}$ is constant with respect to the dynamics, an equivalent duality function is
\begin{equation}\label{CGRSd}
D_{\text{CGRS}}( \mathfrak{s}, \mathfrak{s}' ) := \prod_{x \in \mathbb{Z}} \frac{  \binom{\mathfrak{s}(x)}{\mathfrak{s}'(x)}_q }{  \binom{m}{\mathfrak{s}'(x)}_q } q^{ \mathfrak{s}'(x) \sum_{z <x} \mathfrak{s}(z)  - m \mathfrak{s}'(x) x} 1_{\{  \mathfrak{s}(x) \geq \mathfrak{s}'(x)  \} }.
\end{equation}
Another duality is when $\mathfrak{s}'_t$ evolves as an ASEP$(q,m/2)$ and $\mathfrak{s}_t$ evolves as a space--reversed ASEP$(q,m/2)$. Then the duality function is
\begin{equation}\label{Kuad}
D_{\text{Kua}}( \mathfrak{s}, \mathfrak{s}' ) := \prod_{x \in \mathbb{Z}} \frac{  \binom{m-\mathfrak{s}(x)}{\mathfrak{s}'(x)}_q }{  \binom{m}{\mathfrak{s}'(x)}_q } q^{ -\mathfrak{s}'(x) \sum_{z \geq x} \mathfrak{s}(z)} 1_{\{ m-  \mathfrak{s}(x) \geq \mathfrak{s}'(x)  \} }.
\end{equation}

Taking $m\rightarrow\infty$ yields a duality for the $q$--Boson, which was proven in \cite{KIMRN}. If $\vec{x}_t$ evolves as a $q$--Boson and $\mathfrak{s}_t$ evolves as a space--reversed $q$--Boson, then $\mathfrak{s}_t$ and $\mathfrak{s}_t'$ are dual with respect to the function
$$
\bar{D}_{\mathrm{BCS}}(\mathfrak{s},\vec{x}) := \prod_{k=1}^N q^{N_{x_k}(\mathfrak{s})}.
$$

\subsubsection{SEP($m/2$) duality}
In Theorem 4.2(b) of \cite{GKRV}, it is shown, using $SU(2)$ symmetry, that
$$
D_{\mathrm{GKRV}}(\mathfrak{s},\mathfrak{s}') = \prod_{x \in \mathcal{G}} \frac{  \binom{\mathfrak{s}(x)}{\mathfrak{s}'(x)} }{  \binom{m}{\mathfrak{s}'(x)} } 1_{\{\mathfrak{s}(x) \geq \mathfrak{s}'(x) \}}  .
$$
is a self-duality function for the SEP$(m/2)$. If $m=1$, this reduces to the \cite{Spit70} duality for the SEP:
$$
D_{\mathrm{Spi}}(\mathfrak{s},\mathfrak{s}') = \prod_{x \in \mathcal{G}}  1_{\{\mathfrak{s}(x) \geq \mathfrak{s}'(x) \}}
$$

\subsubsection{SSEP$(m/2)$ duality}
There are multiple ways to take the $q\rightarrow 1$ limit to obtain nontrivial duality functions in the symmetric case. Setting $q=1$ in $D_{\mathrm{CGRS}}$ yields the function $D_{\mathrm{GKRV}}$ in the case that $\mathcal{G}$ is an interval in $\mathbb{Z}$.

It is straightforward to see that
$$
\lim_{q \rightarrow 1} \frac{ D_{\text{BCS}}(\mathfrak{s},\vec{x}) -1}{q-1} = \lim_{q \rightarrow 1} \frac{ q^{-\sum_{k=1}^N N_{x_k}(\mathfrak{s})}- 1}{q-1} = -\sum_{k=1}^N N_{x_k}(\mathfrak{s}) := D_{\text{BCS}}'(\mathfrak{s},\vec{x}).
$$

\subsubsection{Dynamic ASEP duality}
In Theorem 2.3 of \cite{BorCorIMRN}, it is shown that if $\vec{x}(t)$ evolves as an ASEP$_{1,q}$, and $\mathfrak{s}(x)$ evolves as a dynamic ASEP with parameter $\alpha$, then the function
$$
 D_{\text{BC}}(\mathfrak{s},\vec{x}) = \prod _ { k = 1 } ^ { N } (  q^{\frac{-s({x_k})-x_k}{2}}  + \alpha^{-1} q^{k-1} )(   q^{\frac{s({x_k})-x_k}{2}} - q^{k-1})
$$
is a duality function. Using the $\mathfrak{s}$ notation, this can be written as 

$$
 \prod _ { k = 1 } ^ { N } ( q^{ -N_{x_k}(\mathfrak{s}) - {x_k}} + \alpha^{-1} q^{k-1})  ( q ^ { k - 1 } - q ^ { N _ { x _ { k } }(\mathfrak{s}) } ),\\
$$
if $N_x(\mathfrak{s})$ is always finite. We note that the function $N_x$ counts the number of particles to the right of $x$, while $-N_x-x$ counts the number of holes to the right of $x$.

If  $\alpha\rightarrow 0$ and we divide by constants, we obtain a duality function between a ASEP$_{q,1}$ and another ASEP$_{1,q}$:
\begin{align}\label{lim1}
\prod_{k=1}^N (q^{k-1} - q^{N_{x_k}(\mathfrak{s})}) &= q^{(N-1)N/2} \prod_{k=1}^N (1 - q^{N_{x_k}(\mathfrak{s}) - k +1  }) \notag \\
&= q^{(N-1)N/2} \sum_{I \subseteq \{1,\ldots,N\}} (-1)^{\vert I\vert} \prod_{i \in I} q^{N_{x_i}(\mathfrak{s})-i+1}.
\end{align}
If $\alpha \rightarrow \infty$, we obtain a duality function between ASEP$_{1,q}$ and ASEP$_{1,q}$:
\begin{align}\label{lim2}
\prod _ { k = 1 } ^ { N } ( q^{ -N_{x_k}(\mathfrak{s}) - {x_k}  } )  ( q ^ { k - 1 } - q ^ { N _ { x _ { k } }(\mathfrak{s}) } ) &= \prod_{k=1}^N (q^{ -N_{x_k}(\mathfrak{s}) - {x_k} +k-1 } - q^{-x_k})\notag\\
&= \sum_{I \subseteq \{1,\ldots,N\}} (-1)^{N - \vert I\vert} q^{-x_1 - \cdots - x_N} \prod_{i \in I} q^{-N_{x_i}(\mathfrak{s})+i-1}.
\end{align}
We will see in Corollary \ref{newdual} how to obtain these duality results directly using the interchange process. 

\subsubsection{Dynamic SSEP duality}
In the $q\rightarrow 1$ limit of $D_{BC}$, as done in Corollary 10.8 of \cite{BorodinDyn}, we obtain 
\begin{multline*}
\prod_{k=1}^N \left(  (k-1)^2 - (k-1)(\lambda - x_k) - \frac{s_{x_k}(t) - x_k}{2} \left( \frac{s_{x_k}(t) + x_k}{2} - \lambda \right) \right) \\
= \prod_{k=1}^N \left((k-1)-\lambda + \frac{s_{x_k}(t)+x_k}{2} \right)\left((k-1) + \frac{ x_k - s_{x_k}(t) }{2} \right).
\end{multline*}
This is a self--duality function for the dynamic SSEP. When $\lambda\rightarrow\infty$, the limit is (up to constants)
$$
\prod_{k=1}^N \left((k-1) - \frac{s_{x_k}(t) - x_k}{2}  \right) = \prod_{k=1}^N \left((k-1) - N_{x_k}(\mathfrak{s}) \right).
$$

\section{A modified Pitman--Rogers intertwining and duality}

We use the Pitman--Rogers theorem as a starting point for this section.

\begin{theorem}\label{RPThm}
(a) Assume that
\begin{itemize}
\item
$\Lambda\Phi$ is the identity on $\hat{S}$.
\item There exists a measure $\pi_{\hat{S}}$ on $\hat{S}$ such that 
$
\pi _ { \hat{S}   } \Lambda P _ { t } = \pi _ { \hat{S} } \Lambda P _ { t } \Phi \Lambda.
$
\end{itemize}

(i)  Let the initial condition of the Markov process $X_t$ be $X_0=\pi_S := \pi_{\hat{S}}\Lambda$.  Then $\phi(X_t)$ is Markov with initial condition $\pi_{\hat{S}}$ and transition probabilities
$$
Q _ { t } := \Lambda P _ { t } \Phi.
$$

(ii) Suppose that $(\tilde{P}_t: t\geq 0)$ is a semigroup of stochastic matrices on $S$, and $D$ is an operator on $S$ satisfying the intertwining
$$
P_tD = D\tilde{P}_t^* \text{ for all } t \geq 0.
$$
Then
$$
\pi_{\hat{S}} Q_t \hat{D} = \pi_{\hat{S}}\hat{D}  \tilde{P}_t^* \text{ for all } t \geq 0,
$$
where $\hat{D}:=\Lambda D$ is a $\hat{S} \times S$ matrix.

(b) Conversely, suppose that
$$
\pi_{\hat{S}} Q_t \hat{D} = \pi_{\hat{S}}\hat{D}  \tilde{P}_t^* 
$$ 
and 
$$
\pi _ { \hat{S}   } \Lambda P _ { t } = \pi _ { \hat{S} } \Lambda P _ { t } \Phi \Lambda,
$$
where as before $Q _ { t } = \Lambda P _ { t } \Phi$ and $\hat{D}=\Lambda D$. Then
$$
\pi _ { \hat{S} }  { \Lambda }  P _ { t } D = \pi _ { \hat{S} } \Lambda D \tilde{P} _ { t } ^ { * }
$$
\end{theorem}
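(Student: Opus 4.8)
The plan is to prove part (b) by running the chain of matrix identities from part~(a)(ii) in reverse; no idea beyond unwinding the definitions $Q_t=\Lambda P_t\Phi$, $\hat D=\Lambda D$ and invoking the hypothesis $\pi_{\hat S}\Lambda P_t=\pi_{\hat S}\Lambda P_t\Phi\Lambda$ is needed.

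First I would rewrite the left-hand side of the assumed duality $\pi_{\hat S}Q_t\hat D=\pi_{\hat S}\hat D\tilde P_t^*$ using $Q_t=\Lambda P_t\Phi$ and $\hat D=\Lambda D$, obtaining $\pi_{\hat S}Q_t\hat D=\pi_{\hat S}\Lambda P_t\Phi\Lambda D$. Then the hypothesis $\pi_{\hat S}\Lambda P_t=\pi_{\hat S}\Lambda P_t\Phi\Lambda$, multiplied on the right by the operator $D$, collapses this to $\pi_{\hat S}\Lambda P_t D$. Substituting $\hat D=\Lambda D$ into the right-hand side gives $\pi_{\hat S}\hat D\tilde P_t^*=\pi_{\hat S}\Lambda D\tilde P_t^*$, and equating the two expressions yields $\pi_{\hat S}\Lambda P_tD=\pi_{\hat S}\Lambda D\tilde P_t^*$, which is the claim. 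Each step is valid for each fixed $t\ge 0$, so the identity holds for all $t\ge 0$.

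I do not expect a genuine obstacle: the argument is purely formal. The only point to watch is the composition convention fixed at the start of Section~2.1 (operators act left to right), under which $\pi_{\hat S}$ is a row object indexed by $\hat S$, $\Lambda$ is $\hat S\times S$, $\Phi$ is $S\times\hat S$, and $P_t,D,\tilde P_t^*$ are $S\times S$; with these types every product written above is well defined and both sides of each display are row objects indexed by $S$. It is worth emphasising that (b) is only a partial converse to (a)(ii): what is recovered is the $\pi_{\hat S}\Lambda$-weighted intertwining $\pi_{\hat S}\Lambda P_tD=\pi_{\hat S}\Lambda D\tilde P_t^*$, not the full operator intertwining $P_tD=D\tilde P_t^*$ — but this weaker statement is precisely what the duality arguments in the later sections require.
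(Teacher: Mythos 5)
Your argument for part (b) is correct and is essentially the paper's own proof written in the forward direction: expand $Q_t\hat D=\Lambda P_t\Phi\Lambda D$, use the hypothesis $\pi_{\hat S}\Lambda P_t=\pi_{\hat S}\Lambda P_t\Phi\Lambda$ (right-multiplied by $D$) to delete $\Phi\Lambda$, and identify $\hat D=\Lambda D$ on the other side. The type-checking remark and the observation that (b) only recovers the $\pi_{\hat S}\Lambda$-weighted intertwining, not the full operator identity $P_tD=D\tilde P_t^*$, are both accurate and worth keeping.

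The gap is coverage: the statement has three parts and you prove only one. Part (a)(ii) is not actually established anywhere in your write-up, even though you lean on it rhetorically ("running the chain from (a)(ii) in reverse"); note that (a)(ii) has a genuinely different hypothesis (the full intertwining $P_tD=D\tilde P_t^*$ on $S$) rather than the weighted one, so it needs its own two-line argument: left-multiply $P_tD=D\tilde P_t^*$ by $\pi_{\hat S}\Lambda$, then use the second assumption to rewrite $\pi_{\hat S}\Lambda P_tD$ as $\pi_{\hat S}\Lambda P_t\Phi\Lambda D=\pi_{\hat S}Q_t\hat D$. Part (a)(i) is untouched; the paper disposes of it by noting the argument is identical to Rogers--Pitman, the only modification being that the intertwining $\Lambda P_t=\Lambda P_t\Phi\Lambda$ is assumed to hold only after left-multiplication by the fixed row vector $\pi_{\hat S}$, which is why the Markov property of $\phi(X_t)$ is claimed only for the initial condition $X_0=\pi_{\hat S}\Lambda$. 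You should at minimum say this explicitly rather than leaving (a) entirely implicit.
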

\begin{proof}
(a) The proof of (i) is essentially identical to the proof in \cite{lrjp1}.

To see (ii), note that by the second assumption,
$$
\pi_{\hat{S}} Q_t \hat{D} = \pi_{\hat{S}} \Lambda P_t \Phi \Lambda {D} = \pi_{\hat{S}} \Lambda P_t  {D}.
$$
By the intertwining of $D$ between $P_t$ and $\tilde{P}_t$, we have that this equals
$$
\pi_{\hat{S}} \Lambda D\tilde{P}_t^* = \pi_{\hat{S}} \hat{ D }\tilde{P}_t^*,
$$
as needed.

Now turn to (b). By the intertwining relation,
$$
\pi _ { \hat{S} } \Lambda P _ { t } D = \pi _ { S } \Lambda D \tilde{P} _ { t } ^ {  * }.
$$
Therefore, by the second assumption,
$$
\pi _ { \hat{S} } \Lambda P _ { t } \Phi\Lambda D = \pi _ { S } \Lambda D \tilde{P} _ { t } ^ {  * }.
$$
Inserting the definitions of $Q_t$ and $\hat{D}$ finishes the proof.

\end{proof}

\begin{corollary}\label{MainCor}
Suppose
\begin{itemize}
\item
$\Lambda\Phi$ is the identity on $\hat{S}$.
\item There exists a probability measure $\pi_{\hat{S}}$ on $\hat{S}$ such that $\pi_{\hat{S}}\Lambda$ is a stationary measure for the Markov process $X_t$.
\end{itemize}
Then (i) and (ii) from the Theorem hold. Furthermore, 
$$
\pi_{\hat{S}}\hat{D}  \tilde{P}_{t_1}^* = \pi_{\hat{S}}\hat{D}  \tilde{P}_{t_2}^* \text{ for all } t_1,t_2 \geq 0.
$$
\end{corollary}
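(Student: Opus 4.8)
The plan is to deduce everything directly from the two bulleted hypotheses of Theorem \ref{RPThm}(a), which I claim follow from the weaker-looking hypotheses stated here. The first bullet, $\Lambda\Phi = \mathrm{Id}_{\hat{S}}$, is assumed verbatim. So the only genuine step is to verify the second bullet, namely that $\pi_{\hat{S}}\Lambda P_t = \pi_{\hat{S}}\Lambda P_t\Phi\Lambda$ for all $t\geq 0$, using only that $\pi_S := \pi_{\hat{S}}\Lambda$ is a stationary measure for $X_t$.

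First I would record that stationarity of $\pi_S$ means $\pi_S P_t = \pi_S$ for every $t\geq 0$. Substituting this, and then inserting $\Lambda\Phi = \mathrm{Id}_{\hat{S}}$, gives the chain
$$
\pi_{\hat{S}}\Lambda P_t = \pi_S P_t = \pi_S = \pi_{\hat{S}}\Lambda = \pi_{\hat{S}}(\Lambda\Phi)\Lambda = (\pi_S P_t)\Phi\Lambda = \pi_{\hat{S}}\Lambda P_t\Phi\Lambda,
$$
which is exactly the second bullet. Hence Theorem \ref{RPThm}(a) applies, giving conclusions (i) and (ii) directly.

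For the ``furthermore'' claim I would reuse the computation already performed in the proof of part (ii): there it is shown that $\pi_{\hat{S}}Q_t\hat{D} = \pi_{\hat{S}}\Lambda P_t D$. By stationarity $\pi_{\hat{S}}\Lambda P_t = \pi_{\hat{S}}\Lambda$, so $\pi_{\hat{S}}Q_t\hat{D} = \pi_{\hat{S}}\Lambda D = \pi_{\hat{S}}\hat{D}$, which is manifestly independent of $t$. On the other hand, part (ii) gives $\pi_{\hat{S}}Q_t\hat{D} = \pi_{\hat{S}}\hat{D}\tilde{P}_t^*$. Combining the two, $\pi_{\hat{S}}\hat{D}\tilde{P}_t^* = \pi_{\hat{S}}\hat{D}$ for every $t\geq 0$, and evaluating at $t_1$ and at $t_2$ yields the stated equality $\pi_{\hat{S}}\hat{D}\tilde{P}_{t_1}^* = \pi_{\hat{S}}\hat{D}\tilde{P}_{t_2}^*$.

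There is no serious obstacle; the content is purely an unwinding of definitions. The only points that warrant a line of care — which I would flag — are (a) checking that all the products of kernels and measures above are well-defined, in particular that $\pi_S = \pi_{\hat{S}}\Lambda$ is genuinely a probability measure since $\Lambda$ is stochastic and $\pi_{\hat{S}}$ is a probability measure; and (b) ensuring the intertwining $P_tD = D\tilde{P}_t^*$ is invoked only through part (ii) of Theorem \ref{RPThm}, so that the (possibly unbounded) operator $D$ is treated exactly as in the proof of that theorem and no new convergence issues are introduced.
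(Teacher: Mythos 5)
Your proof is correct and follows essentially the same route as the paper's: verify the second hypothesis of Theorem \ref{RPThm} by combining stationarity of $\pi_{\hat{S}}\Lambda$ with $\Lambda\Phi = \mathrm{Id}_{\hat{S}}$, then show $\pi_{\hat{S}}Q_t\hat{D} = \pi_{\hat{S}}\hat{D}$ is independent of $t$ and combine with part (ii). The only cosmetic difference is that you pass through $\pi_{\hat{S}}\Lambda P_t D$ where the paper cancels $\Lambda P_t\Phi$ to $\Lambda\Phi$ directly; these are the same computation.
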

\begin{proof}
It suffices to show that
$
\pi _ { S ^ { \prime } } \Lambda P _ { t } = \pi _ { \hat{S} } \Lambda P _ { t } \Phi \Lambda.
$
Since $\pi_{\hat{S}}\Lambda$ is stationary, by definition $\pi_{\hat{S}}\Lambda P_t = \pi_{\hat{S}}\Lambda$. Therefore, using the fact that $\Lambda\Phi$ is the identity,
$$
 \pi _ { \hat{S} } \Lambda P _ { t } \Phi \Lambda =  \pi _ { \hat{S} } \Lambda \Phi \Lambda =  \pi _ { \hat{S} } \Lambda  =  \pi _ { \hat{S} } \Lambda P_t,
$$
showing that (i) and (ii) hold. 

By (ii),
$$
\pi_{\hat{S}} Q_t \hat{D} = \pi_{\hat{S}}\hat{D}  \tilde{P}_t^* \text{ for all } t \geq 0.
$$
But because $Q_t:=\Lambda P_t \Phi $ and $\pi_{\hat{S}}\Lambda$ is stationary and $\Lambda\Phi$ is the identity, then the left--hand--side equals
$$
\pi_{\hat{S}}Q_t \hat{D} =  \pi_{\hat{S}}\Lambda P_t \Phi \hat{D} =  \pi_{\hat{S}}\Lambda \Phi \hat{D} =  \pi_{\hat{S}}\hat{D}.
$$
Because this does not depend on $t$, then neither does the right--hand--side, showing the result. 
\end{proof}

For the remainder of this article, we will set $Q_t = \Lambda P_t \Phi$ and $\hat{D} = \Lambda D$.

In Theorem \ref{RPThm}, there are intertwining relations involving $\hat{D} = \Lambda D$. With some more assumptions, we can also show intertwining relations involving $\tilde{D} := \Lambda D \Phi$.
\begin{prop}\label{RPInter}
Suppose that the conditions of Theorem \ref{RPThm} hold. Suppose that $X(t)$ is reversible: i.e., there exists a diagonal $S\times S$ matrix $V$ such that 
$$
V^{-1} P_t V = P_t^*.
$$
Additionally, suppose that there exists a constant $\mathfrak{c}$ such that $\Lambda = \mathfrak{c}\Phi^*$. If $P_tD = D {P}_t^*$, and $\pi_S D\Phi \Lambda = \pi_S D$, then setting $\tilde{D} = \Lambda D \Phi$, we have 
$$
\pi_{\hat{S}} Q_t \tilde{D} = \pi_{\hat{S}}\tilde{D} Q_t^*.
$$


\end{prop}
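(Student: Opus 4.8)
The plan is to reduce \emph{both} sides of the asserted identity to the common expression $\pi_{\hat{S}}\hat{D}P_t^*\Phi$, using the two structural hypotheses $\Lambda=\mathfrak{c}\Phi^*$ and $\pi_S D\Phi\Lambda=\pi_S D$ together with the intertwining already established in Theorem \ref{RPThm}(a)(ii). Throughout I would keep in mind that $Q_t=\Lambda P_t\Phi$, $\hat{D}=\Lambda D$, $\pi_S=\pi_{\hat{S}}\Lambda$, and the simple observation that $\tilde{D}=\Lambda D\Phi=\hat{D}\Phi$.

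First I would dispatch the left-hand side. Since $P_tD=DP_t^*$ by hypothesis, we may take $\tilde{P}_t=P_t$ in Theorem \ref{RPThm}(a)(ii), which yields $\pi_{\hat{S}}Q_t\hat{D}=\pi_{\hat{S}}\hat{D}P_t^*$. Multiplying on the right by $\Phi$ and using $\tilde{D}=\hat{D}\Phi$ gives
$$\pi_{\hat{S}}Q_t\tilde{D}=\pi_{\hat{S}}\hat{D}P_t^*\Phi.$$

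Next I would massage the right-hand side. Transposing $Q_t=\Lambda P_t\Phi$ gives $Q_t^*=\Phi^*P_t^*\Lambda^*$, and the hypothesis $\Lambda=\mathfrak{c}\Phi^*$ gives $\Phi^*=\mathfrak{c}^{-1}\Lambda$ and, upon transposing, $\Lambda^*=\mathfrak{c}\Phi$; hence $\Phi\Phi^*=\mathfrak{c}^{-1}\Phi\Lambda$. Therefore
$$\pi_{\hat{S}}\tilde{D}Q_t^*=\pi_{\hat{S}}\Lambda D\Phi\,\Phi^*P_t^*\Lambda^*=\pi_{\hat{S}}\Lambda D\,(\mathfrak{c}^{-1}\Phi\Lambda)\,P_t^*\,(\mathfrak{c}\Phi)=\pi_S D\Phi\Lambda\,P_t^*\Phi.$$
Applying the hypothesis $\pi_S D\Phi\Lambda=\pi_S D$ collapses this to $\pi_S DP_t^*\Phi=\pi_{\hat{S}}\hat{D}P_t^*\Phi$, which is exactly the expression obtained for $\pi_{\hat{S}}Q_t\tilde{D}$ above. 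Chaining the two displays proves the claim.

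As for where the work is: there is no genuinely hard analytic step — the argument is bookkeeping of transposes and of the order in which $\Phi,\Phi^*,\Lambda,\Lambda^*$ occur. The one point to get right is recognizing that the hypothesis $\pi_S D\Phi\Lambda=\pi_S D$ is precisely the ``correction'' needed to absorb the projector $\Phi\Lambda$ that the substitution $\Lambda=\mathfrak{c}\Phi^*$ forces into the right-hand side. I would also note that the reversibility assumption $V^{-1}P_tV=P_t^*$ is not literally consumed in this chain of equalities; its role in the surrounding development is presumably to guarantee that $Q_t=\mathfrak{c}\Phi^*P_t\Phi$ is itself (conjugate to) a reversible Markov semigroup, so that $\pi_{\hat{S}}Q_t\tilde{D}=\pi_{\hat{S}}\tilde{D}Q_t^*$ genuinely expresses a self-duality of the fused process $\phi(X_t)$, and—if one wished to deduce $\pi_S D\Phi\Lambda=\pi_S D$ from more primitive data—reversibility together with $\Lambda=\mathfrak{c}\Phi^*$ would be what enters there.
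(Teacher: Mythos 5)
Your proof is correct, and it reaches the conclusion by a route that is organized differently from the paper's. The paper argues in a single chain that passes through the reversibility matrix $V$: from $P_tD = DP_t^*$ and $V^{-1}P_t = P_t^*V^{-1}$ it writes $\pi_S P_t D V^{-1} = \pi_S D V^{-1} P_t$, inserts the projector $\Phi\Lambda$ on each side using $\pi_S P_t = \pi_S P_t\Phi\Lambda$ and $\pi_S D = \pi_S D\Phi\Lambda$, then right-multiplies by $V\Phi$ so that the factors of $V$ cancel, and only at the last step invokes $\Lambda = \mathfrak{c}\Phi^*$ to recognize $\Lambda P_t^*\Phi$ as $Q_t^*$. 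You instead reduce both sides independently to the common expression $\pi_{\hat S}\hat D P_t^*\Phi$, handling the left side by citing Theorem \ref{RPThm}(a)(ii) with $\tilde P_t = P_t$ and the right side by the substitutions $\Phi^* = \mathfrak{c}^{-1}\Lambda$, $\Lambda^* = \mathfrak{c}\Phi$ followed by the absorption identity $\pi_S D\Phi\Lambda = \pi_S D$. Both arguments consume exactly the same three ingredients (the duality $P_tD=DP_t^*$, the two $\Phi\Lambda$-absorption identities, and $\Lambda=\mathfrak{c}\Phi^*$), but your organization makes explicit something the paper's presentation obscures: the reversibility hypothesis $V^{-1}P_tV=P_t^*$ is never actually used --- in the paper's proof the factors of $V$ it introduces cancel identically --- so your closing observation on this point is accurate and is a small but genuine gain in clarity. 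One minor caveat: your parenthetical guess that reversibility is what one would use to establish $\pi_S D\Phi\Lambda = \pi_S D$ in applications does not match how the paper later verifies that identity (in Proposition \ref{aaaa} it is deduced from the stationarity considerations of Remark \ref{StatDual}), but this side remark does not affect the validity of your proof.
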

\begin{proof}
Starting with $P_tD=DP_t^*$ and using $V^{-1}P_t = P_t^*V^{-1},$ we obtain
$$
\pi_{{S}} P_t D V^{-1} = \pi_{{S}}DV^{-1}P_t.
$$
By using $\pi_S D= \pi_S D\Phi \Lambda$ on the right--hand--side, and $\pi_S P_t = \pi_S P_t \Phi\Lambda$ (from Theorem \ref{RPThm}) on the left--hand--side, we obtain
$$
\pi_{{S}} P_t \Phi \Lambda  D V^{-1} = \pi_{{S}} D \Phi \Lambda V^{-1} P_t .
$$
Multiplying on the right by $V\Phi$, and using $V^{-1}P_t = P_t^* V^{-1}$, we have
$$
\pi_{\hat{S}} \Lambda P_t \Phi \Lambda  D \Phi = \pi_{\hat{S}}  \Lambda D \Phi \Lambda P_t^* \Phi.
$$
Replacing $\Lambda$ with $\mathfrak{c}\Phi^*$ and $\Phi^*$ with $\mathfrak{c}^{-1}\Lambda$ gives exactly the  condition that $\pi_{\hat{S}} Q_t \tilde{D} = \pi_{\hat{S}} \tilde{D} Q_t^*$.

\end{proof}

\begin{reemark}\label{Constant}
Because $\Phi$ and $\Lambda$ are both Markov kernels, if $\Lambda = \mathfrak{c}\Phi^*$, we must have that
$$
\mathfrak{c} = \left( \sum_{x} \Phi(x,y) \right)^{-1} = \sum_y \Lambda(y,x).
$$ 
Since $\Phi$ is deterministic, this implies that
$$
\mathfrak{c} = \vert \phi^{-1}(y) \vert^{-1}
$$
does not depend on $y$. If $S$ and $\hat{S}$ are finite, then this implies that $\mathfrak{c} = \vert \hat{S} \vert / \vert S \vert$. If $\Lambda$ is defined by
$$
\Lambda(y,x) = 1_{\{\phi(x) = y\}} \cdot \vert \phi^{-1}(y) \vert^{-1},
$$
then $\mathfrak{c}$ is well defined and $\Lambda\Phi$ is the identity.
\end{reemark}

\begin{reemark}\label{Constant2}
The condition $\vert \phi^{-1}(y) \vert \cdot \vert \hat{S} \vert= \vert S \vert $ for all $y\in \hat{S}$ holds for the interchange process, but not for single--species models. See Figure \ref{ic} for an example.

\begin{figure}
\begin{center}
\begin{tikzpicture}
\draw (1,2) circle (3pt);
\draw (3,2) circle (3pt);
\fill[black] (5,2) circle (3pt);
\fill[black] (2,2) circle (3pt);
\fill[red] (4,2) circle (3pt);
\draw [very thick](0.5,2) -- (5.5,2);
\draw (0,2) node (c) { $$};
\draw (0,0) node (d) { $$};
\draw (0.7,2) node {\Huge $[$};
\draw (2.3,2) node {\Huge $]$};
\draw (2.7,2) node {\Huge $[$};
\draw (5.3,2) node {\Huge $]$};
\draw (0.7,0) node  {\Huge $[$};
\draw (2.3,0) node {\Huge $]$};
\draw (2.7,0) node {\Huge $[$};
\draw (5.3,0) node {\Huge $]$};
\draw [very thick](0.5,-0) -- (5.5,-0);
\draw (1.5,0.2) circle (3pt);
\fill[black] (4,0) circle (3pt);
\draw (4,0.4) circle (3pt);
\fill[black] (1.5,-0.2) circle (3pt);
\fill[red] (4,-0.4) circle (3pt);
\end{tikzpicture}
\hspace{1in}
\begin{tikzpicture}
\draw (1,2) circle (3pt);
\fill[red] (4,2) circle (3pt);
\fill[black] (5,2) circle (3pt);
\draw (2,2) circle (3pt);
\fill[black] (3,2) circle (3pt);
\draw [very thick](0.5,2) -- (5.5,2);
\draw (0,2) node (c) { $$};
\draw (0,0) node (d) { $$};
\draw (0.7,2) node {\Huge $[$};
\draw (2.3,2) node {\Huge $]$};
\draw (2.7,2) node {\Huge $[$};
\draw (5.3,2) node {\Huge $]$};
\draw (0.7,0) node  {\Huge $[$};
\draw (2.3,0) node {\Huge $]$};
\draw (2.7,0) node {\Huge $[$};
\draw (5.3,0) node {\Huge $]$};
\draw [very thick](0.5,-0) -- (5.5,-0);
\draw (1.5,0.2) circle (3pt);
\fill[black] (4,0) circle (3pt);
\fill[black] (4,0.4) circle (3pt);
\draw (1.5,-0.2) circle (3pt);
\fill[red] (4,-0.4) circle (3pt);
\end{tikzpicture}
\end{center}
\caption{On the left, $\phi^{-1}(y)$ has $2!\cdot 3!$ elements, whereas on the right, $\phi^{-1}(y)$ only has $3$ elements. However, if there is only one particle of each species, then $\phi^{-1}(y)$ always has $2!\cdot 3!$ elements.}
\label{ic}
\end{figure}
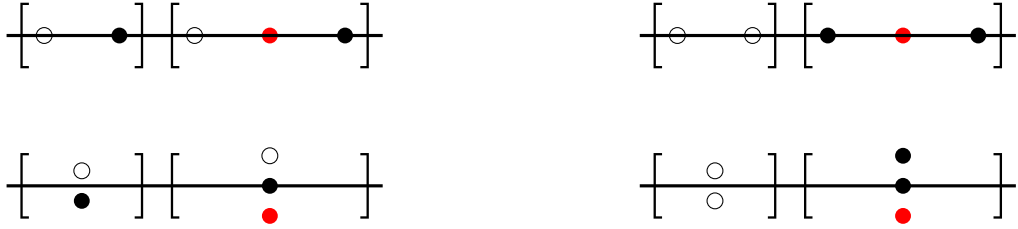

\end{reemark}

\section{Multi--species ASEP$(q,\vec{m})$ and SSEP$(\vec{m})$}
In this section, we will use the generalization of Rogers--Pitman intertwining from the previous section. This intertwining will be used to construct inhomogeneous multi--species ASEP$(q,\vec{m})$ and SSEP$(\vec{m})$, as well as to construct duality functionals. Section \ref{First} focuses on the ASEP$(q,\vec{m})$ and Section \ref{Second} will focus on SSEP$(\vec{m})$.

\subsection{ASEP$(q,\vec{m})$}\label{First}

\subsubsection{Stochastic Fusion Construction of Multi--species ASEP$(q,\vec{m})$}

Let $\Phi: S^{(n)} \rightarrow \hat{S}^{(n)}$ be the (deterministic) map defined by $\Phi := \bigotimes_{x\in \mathbb{Z}} \Phi_x$, where
\begin{align*}
\Phi_x: \{0,1,\ldots,n\}^{m_x} & \rightarrow \{(k_x^{(1)},\ldots,k_x^{(n)}): k_x^{(1)} + \ldots + k_x^{(n)} \leq m_x, \ \ k_x^{(i)} \in \mathbb{N}\} \\
\Phi_x(\epsilon_0,\ldots,\epsilon_{m_x-1}) &= (k_x^{(1)},\ldots,k_x^{(n)}),
\end{align*}
where each $\epsilon_*$ is in $\{ 0,1,\ldots,n\}$, and $k_x^{(i)}$ is the number of $\epsilon_*$ which are equal to $i$.

We will define a Markov kernel $\Lambda: \hat{S}^{(n)} \rightarrow S^{(n)}$ such that $\Lambda\Phi$ is the identity on $\hat{S}^{(n)}$. The map $\Lambda$ is defined by $\Lambda := \bigotimes_{x\in \mathbb{Z}} \Lambda_x$, where
\begin{align*}
\Lambda_x:  \{(k_x^{(1)},\ldots,k_x^{(n)}): k_x^{(1)} + \ldots + k_x^{(n)} \leq m_x, \ \ k_x^{(i)} \in \mathbb{N}\} &\rightarrow \{0,1,\ldots,n\}^{m_x} \\
\mathbb{P}( \Lambda_x(k_x^{(1)},\ldots,k_x^{(n)}) = \mathfrak{M}((x_1,\ldots,x_k),\sigma) ) &= \frac{q^{x_1+\ldots+x_k} q^{\inv(\sigma)} }{q^{(k_x-1)k_x/2}\binom{m_x}{k_x^{(1)},\ldots,k_x^{(n)}}_q}.
\end{align*}
Here, $k_x = k_x^{(1)} + \ldots + k_x^{(n)}$ and $\sigma \in S(k)$. It is straightforward that $\Lambda\Phi$ is the identity on $\hat{S}^{(n)}$. The map $\Lambda$ is a Markov operator by \eqref{qBin3}.

\begin{lemma}\label{qexc}
The maps $\Lambda$ and $\Phi$ preserve $q$--exchangeability.
\end{lemma}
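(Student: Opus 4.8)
The plan is to reduce everything to the product structure $\Phi=\bigotimes_x\Phi_x$, $\Lambda=\bigotimes_x\Lambda_x$ and then to the additivity identity \eqref{Sum} together with the $q$-binomial theorems \eqref{qBin}, \eqref{qBin2}, \eqref{qBin3}. First, on a fixed particle-number sector the $q$-exchangeable measures form a simplex whose extreme points are the ``pure'' ones (position-law $p$ a point mass), and both $\Lambda$ and $\Phi$ are linear and sector-preserving, so it is enough to treat pure $q$-exchangeable measures; equivalently one checks the criterion \eqref{edef} for the image measure, i.e.\ that for each fixed occupied-position datum the configuration weights are proportional to $q^{\inv(\cdot)}$. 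A further simplification is the compatibility $\Lambda(\hat{\mathfrak{s}},\mathfrak{s})\neq 0\Rightarrow\Phi(\mathfrak{s})=\hat{\mathfrak{s}}$ (each $\Lambda_x$ is supported on configurations with the prescribed occupation numbers), which gives $(\nu\Lambda)(\mathfrak{s})=\nu(\Phi(\mathfrak{s}))\prod_x\Lambda_x(\,\cdot\,)$ with $\Lambda_x$ the explicit kernel, and $(\mu\Phi)(\hat{\mathfrak{s}})=\sum_{\mathfrak{s}\in\Phi^{-1}(\hat{\mathfrak{s}})}\mu(\mathfrak{s})$ where the fiber ranges over the ways of placing the species counts $(k^{(1)}_x,\dots,k^{(n)}_x)$ among the $m_x$ sites inside each coarse site $x$.

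Writing a fine configuration as $\mathfrak{M}(\mathbf{x},\sigma)$ on the fine lattice and its coarsening as $\mathfrak{M}(\mathbf{x}',\sigma')$, the within-coarse-site data split into (i) the choice of which fine sites are occupied, whose $q$-weight sums, by \eqref{qBin}, to $q^{(k_x-1)k_x/2}\binom{m_x}{k_x}_q$ up to an overall position shift, and (ii) the local species ordering $\tau_x$, whose $q$-weight sums, by \eqref{qBin2}, to $\binom{k_x}{k^{(1)}_x,\dots,k^{(n)}_x}_q$; note $\binom{m_x}{k^{(1)}_x,\dots,k^{(n)}_x}_q=\binom{m_x}{k_x}_q\binom{k_x}{k^{(1)}_x,\dots,k^{(n)}_x}_q$ is exactly the normalizer appearing in $\Lambda_x$. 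The heart of the argument is the identity, valid whenever $\sigma$ lies over $\sigma'$,
\[
\inv(\sigma)=\inv(\sigma')+\sum_x\inv(\tau_x)+c,
\]
with $c$ depending only on the occupied-position datum. This is an instance of \eqref{Sum}: take $H'=S(\mathbf{N})$ (relabelling of equal-species particles) and $H=S(\mathbf{m}(\mathbf{x}'))$ (permutation of particles sharing a coarse site), decompose $\sigma=a\sigma'b$ with $b$ assembled from the $\tau_x$ and $a$ recording the occupied-fine-site choice, and use that $\inv$ is additive across this decomposition (the subgroup $K$ of \eqref{Sum} being the one generated by the within-coarse-site part of the structure). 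Feeding this into the explicit $\Lambda_x$-weights (for $\Lambda$), or summing over the $\Phi$-fiber (for $\Phi$), and cancelling the $\binom{k_x}{k^{(1)}_x,\dots,k^{(n)}_x}_q$ factors against the weights carried by the $q$-exchangeable measures, the surviving dependence on the species data is exactly $q^{\inv}$, while the position-dependent pieces collect into a new position-law $p'$ computed from \eqref{qBin3}; this is precisely the $q$-exchangeable form.

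The step I expect to be the genuine obstacle is making the identity $\inv(\sigma)=\inv(\sigma')+\sum_x\inv(\tau_x)+c$ and the attendant normalization bookkeeping rigorous: one must pin down the Young subgroup $K$ in \eqref{Sum} associated with the coarsening map so that the ``which fine sites'' datum $a$ genuinely splits off into the position-only constant $c$, and then verify that the multinomial normalizers built into each $\Lambda_x$ interlock with the $q$-binomial-theorem contribution of the within-block positions and with the $q$-multinomial weights of the $q$-exchangeable measures, leaving exactly $q^{\inv}$ of the species permutation. Everything else is formal manipulation with \eqref{qBin}--\eqref{qBin3}.
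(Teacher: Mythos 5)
Your proposal is correct and follows essentially the same route as the paper: both reduce the claim to the criterion \eqref{edef}, exploit the factorized structure $\Lambda=\bigotimes_x\Lambda_x$ (and the blockwise fibers of $\Phi$), and rest on the distinguished-coset decomposition with additive inversions from \eqref{Sum}. The one thing to repair is precisely the step you flag as the obstacle: the ``which fine sites are occupied'' datum lives in the position coordinate $\mathbf{x}$, not in the permutation, so there is no nontrivial double-coset factor $a$ and no constant $c$ to pin down --- since the configurations' permutations are already distinguished left-coset representatives for $H'=S(\mathbf{N})$, the relevant decomposition is the single-coset one, $\sigma=\sigma' b$ with $b\in S(\mathbf{m}(\mathbf{x}'))$ the product of the local orderings $\tau_x$, and $\inv(\sigma)=\inv(\sigma')+\sum_x\inv(\tau_x)$ with no correction term. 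Once this is in place the $q$-binomial bookkeeping you anticipate is unnecessary: \eqref{edef} only asks for proportionality to $q^{\inv}$ at a fixed position datum, so the multinomial normalizers and the $q^{x_1+\cdots+x_k}$ factors are absorbed into the new position law $p'$ without being computed, which is exactly how the paper's proof sidesteps \eqref{qBin}--\eqref{qBin3}.
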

\begin{proof}
First, let us show that the fission map $\Lambda$ preserves $q$--exchangeability. We do this by writing $\Lambda$ algebraically. If $(\mathbf{x},\sigma)$ is a particle configuration of ASEP, then 
$$\mathbf{x} \in \mathcal{W}_N^0 := \{(x_1,\ldots,x_N): x_N < \cdots < x_1\} \subseteq \mathbb{Z}^N.$$
Thus, $\mathbf{m}(\mathbf{x})=(1,\ldots,1)$, so $S(\mathbf{m}(\mathbf{x}))=S(N)$, meaning that $D_{H',S(\mathbf{m}(\mathbf{x}))}=D_{H'}$. Therefore, $\Lambda$ is a Markov operator   
$$
\coprod_{\mathbf{x} \in \mathcal{W}_N} \{\mathbf{x}\} \times D_{H', S(\mathbf{m}(\mathbf{x}))  } \rightarrow \mathcal{W}_N^0 \times D_{H'}.
$$
In particular,
$$
\mathrm{Prob}\left(\Lambda(\mathbf{x},\sigma)) = (\mathbf{x},\sigma b) \right) = \frac{q^{\inv(b)}}{Z}
$$
for some normalizing constant $Z$ (it equals $[N_1]_q^! \ldots [N_n]_q^!$, but we do not need to use this). By the uniqueness of the decomposition $a\sigma b$, any element of $D_{H'}$ can be uniquely written as $\sigma b$ for some $\sigma \in D_{H',S(\mathbf{m}(\mathbf{x}))  }$ and $b \in S(\mathbf{m}(\mathbf{x}))  $. Thus, to show that $\Lambda$ preserves $q$--exchangeability, it suffices to show that 
$$
q^{-\inv(\sigma b)} \cdot \operatorname{Prob}_{\Lambda} (\mathbf{y}, \sigma b) = q^{-\inv\left(\sigma^{\prime} b'\right)} \cdot \operatorname{Prob}_{\Lambda} \left(\mathbf{y}, \sigma^{\prime} b^{\prime}\right)
$$
for all $\sigma b,\sigma'b' \in D_{H'}$, where $\operatorname{Prob}_{\Lambda} $ is the pushforward of $\operatorname{Prob}$ under ${\Lambda}.$ To show this, it suffices to show that
$$
q^{-\inv(\sigma b)} q^{\inv(b)} \operatorname{Prob}_{\Lambda} (\mathbf{x},\sigma) = q^{-\inv(\sigma' b')} q^{\inv(b')} \operatorname{Prob}_{\Lambda} (\mathbf{x},\sigma'). 
$$
Using that $\inv(\sigma b) = \inv(\sigma) + \inv(b)$ and $\inv(\sigma' b') = \inv(\sigma') + \inv(b')$, this last equality reduces to the definition of $q$--exchangeability of $\mathrm{Prob}$. Therefore $\Lambda$ preserves $q$--exchangeability.

Now we show that the fusion map $\Phi$ also preserves $q$--exchangeability. If $\hat{H}$ is a Young subgroup containing $H$, then $D_{H',\hat{H}} \subseteq D_{H',H}$. Any $\sigma \in D_{H',H}$ has a unique decomposition $\sigma = \hat{\sigma} b$ where $\hat{\sigma} \in D_{H',\hat{H}}$ and $b\in \hat{H}$. But $b$ has its own unique decomposition $b= \hat{b}h$ where $\hat{b} \in D_H$ and $h\in H$. Therefore $\sigma = \hat{\sigma} \hat{b} h$, but since $\sigma \in D_{H',H}$, this must imply that $h=\mathrm{id}$. Thus, $\sigma$ has a unique decomposition $\sigma=\hat{\sigma}\hat{b}$ for $\hat{\sigma} \in D_{H',\hat{H}}$ and $\hat{b} \in D_H \cap \hat{H}$.

The fusion map is deterministic: if its image is some particle configuration $(\mathbf{x},\hat{\sigma})$ where $\hat{\sigma} \in D_{H',S(\mathbf{m}(\mathbf{x})) }$, then the fiber of this configuration can be described as $\{ (\mathbf{y}, \hat{\sigma} \hat{b}): \hat{b} \in S(\mathbf{m}(\mathbf{x})) \}$ for some $\mathbf{y} \in \mathcal{W}_N^0$. Thus, our $q$--exchangeable measure $\mathrm{Prob}$ satisfies \eqref{edef}, which can now be stated as
$$
q^{-\inv(\hat{\sigma} \hat{ b})} \cdot \operatorname{Prob}_{} (\hat{\mathbf{y}}, \hat{\sigma} \hat{ b}) = q^{-\inv\left(\hat{\sigma}^{\prime} \hat{b}'\right)} \cdot \operatorname{Prob}_{} \left(\mathbf{y}, \hat{\sigma}^{\prime} \hat{b}^{\prime}\right).
$$
Using this, the goal is to prove that
$$
q^{-\inv(\hat{\sigma}) } \cdot \operatorname{Prob}_{\Phi} (\hat{\mathbf{x}}, \hat{\sigma} ) = q^{-\inv\left(\hat{\sigma}^{\prime}\right)} \cdot \operatorname{Prob}_{\Phi} \left(\mathbf{x}, \hat{\sigma}^{\prime}\right).
$$
From the decomposition $\inv(\hat{\sigma } \hat{b}) = \inv(\hat{\sigma}) + \inv(\hat{b})$, it suffices to show that 
$$
\operatorname{Prob}_{\Phi} (\hat{\mathbf{x}}, \hat{\sigma} )  = q^{-\inv(\hat{b})} \operatorname{Prob}_{} (\hat{\mathbf{y}}, \hat{\sigma} \hat{ b}) 
$$
Setting $\hat{\sigma} = \hat{\sigma}'$ above shows that
$$
q^{-\inv( \hat{ b} )} \cdot \operatorname{Prob}_{} (\hat{\mathbf{y}}, \hat{\sigma} \hat{ b}) = q^{-\inv\left(\hat{b}'\right)} \cdot \operatorname{Prob}_{} \left(\mathbf{y}, \hat{\sigma}  \hat{b}^{\prime}\right).
$$
Setting $\hat{b}'=\mathrm{id}$, we see that it suffices to show that
$$
\operatorname{Prob}_{\Phi} (\hat{\mathbf{x}}, \hat{\sigma} )  = \operatorname{Prob}_{} (\hat{\mathbf{y}}, \hat{\sigma} ).  
$$
But this follows immediately from the definition of $\Phi$.

(iv) This follows immediately from (ii) and Lemma \ref{qexc}.

\end{proof}

We define the following continuous--time Markov process on $\hat{S}^{(n)}$, which we call the $n$--species ASEP$(q,\vec{m})$. The jump rates are similar to the jump rates for the $n$--species ASEP$(q,m/2)$. The only difference is that the $m$ should be replaced with $m_x$.

For a given $N$ and $\mathbf{N}=(N_1,\ldots,N_n)$ such that $N_1+\ldots+ N_n=N$, define the probability measure $\pi$ by
$$
\pi(\mathfrak{M}(\mathbf{x},\sigma)) \propto \frac{q^{\inv(\sigma)} q^{x_1+\ldots+x_N}}{q^{(L-1)L/2} \binom{L}{N_1,\ldots,N_n}} ,
$$
where $0\leq x_1 \leq \ldots \leq x_N \leq L-1$.

\begin{theorem}\label{SFA}
(i) The probability measure $\pi\Lambda$ on $S^{(n)}$ is stationary under the dynamics of $n$--species ASEP of a finite lattice. 

(ii) Let $X_t$ evolve as $n$--species ASEP on a finite lattice with initial condition $X_0=\pi\Lambda.$ Then $\phi(X_t)$ is Markov on $\hat{S}^{(n)}$ and evolves as an $n$--species ASEP$(q,\vec{m})$ on a finite lattice. Furthermore, the transition probabilities of $n$--species ASEP$(q,\vec{m})$ are given by $\Lambda P_t \Phi$, where $P_t$ denotes the transition probabilities of $n$--species ASEP.

(iii) The probability measure $\pi$ on $\hat{S}^{(n)}$ is stationary under the dynamics of $n$--species ASEP$(q,\vec{m})$ on a finite lattice.

(iv) The dynamics of the multi--species ASEP$(q,\vec{m})$ preserve $q$--exchangeability, as does the dynamics of the multi--species $q$--Boson.

\end{theorem}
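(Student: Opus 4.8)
The plan is to prove the four parts in the order (i), (ii), (iii), (iv): part (i) supplies the stationary--measure hypothesis needed to invoke Corollary~\ref{MainCor} in part (ii), part (iii) is then a one--line consequence, and part (iv) follows from the generator identity extracted in (ii) together with the fact that $q$--exchangeability is a linear condition. Throughout, $\mathcal{L}$ denotes the generator of $n$--species ASEP$_{1,q}$, written as a sum $\sum_b\mathcal{L}_b$ over nearest--neighbour bonds of the finite lattice on $|\vec m|$ sites underlying $S^{(n)}$.

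For (i), I would observe that $\pi$ is the pushforward under $\Phi$ of the reversible measure $\mu$ of $n$--species ASEP$_{1,q}$ on that finite lattice found in \cite{BS3,KIMRN}, namely $\mu(\mathfrak{s})\propto q^{(\text{sum of particle positions})}q^{\inv(\sigma)}$, and that $\Lambda(\hat{\mathfrak{s}},\cdot)$ is exactly the conditional law of $\mu$ given $\Phi=\hat{\mathfrak{s}}$. Indeed, fixing a fibre $\Phi^{-1}(\hat{\mathfrak{s}})$ amounts to fixing the species profile on each block of $m_x$ sites used to define $\Phi_x$; on that fibre the exponent $(\text{sum of positions})+\inv(\sigma)$ differs from $\sum_x\bigl((\text{sum of local positions in block }x)+\inv(\tau_x)\bigr)$ by a constant, so $\mu$ restricted to the fibre factorizes over blocks, and the block--$x$ factor, normalized via \eqref{qBin3} (equivalently \eqref{qBin}), is precisely $\Lambda_x(\hat{\mathfrak{s}}_x,\cdot)$. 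Hence $\pi\Lambda=(\Phi_*\mu)\Lambda=\mu$, which is stationary (in fact reversible) for $n$--species ASEP$_{1,q}$. The identity $\Lambda\Phi=\mathrm{Id}$ has already been noted.

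For (ii), with (i) in hand Corollary~\ref{MainCor} and Theorem~\ref{RPThm}(a)(i) apply with $\pi_{\hat S}=\pi$ and $\pi_S=\pi\Lambda=\mu$: $\phi(X_t)$ is Markov, started from $\pi$, with transition matrices $Q_t=\Lambda P_t\Phi$; since $n$--species ASEP$_{1,q}$ is irreducible on each sector $S^{(n)}(\mathbf N)$, on the corresponding sector $\hat S^{(n)}(\mathbf N)$ the $Q_t$ form a genuine Markov semigroup with generator $\Lambda\mathcal{L}\Phi$. It remains to identify $\Lambda\mathcal{L}\Phi$ with the generator of $n$--species ASEP$(q,\vec m)$. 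Because $\Phi=\bigotimes_x\Phi_x$, $\Lambda=\bigotimes_x\Lambda_x$ and $\Lambda_y\Phi_y=\mathrm{Id}$, each term $\Lambda\mathcal{L}_b\Phi$ collapses to a computation on at most two adjacent blocks. A bond internal to a block contributes $0$, since $\Phi_x$ records only the block's species profile, which a swap leaves unchanged, while $\mathcal{L}_b$ has vanishing row sums; so only the unique bond joining two adjacent blocks (i.e.\ adjacent $\hat S$--sites $x,x+1$) survives, giving $(\Lambda_x\otimes\Lambda_{x+1})\mathcal{L}_b(\Phi_x\otimes\Phi_{x+1})$. Evaluating this amounts to computing, under the $q$--exchangeable measures $\Lambda_x(\hat{\mathfrak{s}}_x,\cdot)$ and $\Lambda_{x+1}(\hat{\mathfrak{s}}_{x+1},\cdot)$, the $q^{\inv}$--weighted probability that the two boundary local sites carry the species taking part in the swap; carrying out that finite sum using the $q$--Pascal relations $\binom{m-1}{k-1}_q+q^k\binom{m-1}{k}_q=\binom{m}{k}_q$ and $q^{m-k}\binom{m-1}{k-1}_q+\binom{m-1}{k}_q=\binom{m}{k}_q$ reproduces exactly the ASEP$(q,\vec m)$ rates, the $q^{k_x^{(1)}+\cdots+k_x^{(j-1)}}$--type prefactors arising as the weight, in $q^{\inv}$, of the jumping particle crossing the lighter (resp.\ heavier) ones. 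This two--block computation is the main obstacle; it is the inhomogeneous analogue of the fusion computation of \cite{CGRS}. It follows that $Q_t=e^{t\mathcal{L}_{\mathrm{ASEP}(q,\vec m)}}$, so $\phi(X_t)$ is $n$--species ASEP$(q,\vec m)$ with transition probabilities $\Lambda P_t\Phi$.

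For (iii) and (iv): part (iii) is immediate, since by (i) $\mu=\pi\Lambda$ is stationary for $P_t$, whence $\pi Q_t=\pi\Lambda P_t\Phi=\mu P_t\Phi=\mu\Phi=\pi$ (using $\mu\Phi=\Phi_*\mu=\pi$), and $Q_t$ is the $n$--species ASEP$(q,\vec m)$ semigroup by (ii). For (iv), $q$--exchangeability for a fixed $H'$ is the linear condition \eqref{edef}, so the $q$--exchangeable signed measures form a linear subspace $V$, on $S^{(n)}$ or on $\hat S^{(n)}$; Lemma~\ref{qexc} (whose proof is linear--algebraic, hence valid on the linear span) carries these subspaces into one another along $\Lambda$ and $\Phi$, and by \cite{KuanAHP} the $n$--species ASEP$_{1,q}$ preserves $q$--exchangeability, so $\mathcal{L}$ preserves $V$ on $S^{(n)}$. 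Therefore $\mathcal{L}_{\mathrm{ASEP}(q,\vec m)}=\Lambda\mathcal{L}\Phi$ preserves $V$ on $\hat S^{(n)}$, and hence so does $e^{t\mathcal{L}_{\mathrm{ASEP}(q,\vec m)}}$ --- which is precisely the assertion that multi--species ASEP$(q,\vec m)$ preserves $q$--exchangeability. The $q$--Boson case follows by letting all $m_x\to\infty$, under which the generators and the subspace $V$ converge to those of the multi--species $q$--Boson (this is also established directly in \cite{KuanAHP}).
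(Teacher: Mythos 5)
Your proposal is correct and follows essentially the same route as the paper: part (ii) is the same two-block rate computation (the fission maps must place the jumping species at the boundary sites of adjacent blocks, evaluated via the $q$--multinomial identities), and parts (iii) and (iv) are the same one-line consequences of $\Lambda\Phi=\mathrm{Id}$ and of Lemma~\ref{qexc} together with the $q$--exchangeability preservation of \cite{KuanAHP}. The only variation is in (i), where you identify $\pi\Lambda$ directly with the known reversible measure $\mu\propto q^{\sum x_i}q^{\inv(\sigma)}$ of \cite{BS3,KIMRN} via the blockwise factorization, whereas the paper routes through the statement that $\Lambda$ preserves $q$--exchangeability and that the spatial marginal is stationary --- the two arguments rest on the same additivity of positions and inversions across blocks, so this is a difference of presentation rather than of substance.
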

\begin{proof}
(i) By definition, $\pi$ is $q$--exchangeable. By Lemma \ref{qexc}, $\pi\Lambda$ is also $q$--exchangeable. It is already known (from \cite{KuanAHP}) that the $n$--species ASEP preserves $q$--exchangeable measures. Since the measure proportional to $q^{x_1+\ldots+x_N}$ is stationary for single species ASEP, this shows (i).

(ii) Consider the process with transition probabilities $\Lambda P_t \Phi$. We first find the probability of a particle of species $j$ at lattice site $x$ jumping to the right and switching places with a particle of species $i<j$. In order for this jump to happen, three events need to occur. First, when the fission map $\Lambda$ splits the lattice site $x$, it needs to send a particle of species $j$ to the right--most lattice site; second, when the fission map splits the lattice site $x+1$, it needs to send a particle of species $i$ to the left--most lattice site; and third, the particle of species $j$ needs to jump to the right. Thus, using \eqref{qBin3}, the jump rate is
\begin{align*}
 & q\cdot \dfrac{   q^{k_x^{(j+1)} + \ldots + k_x^{(n)}}  \binom{m_x-1}{k_x^{(1)},\ldots, k_x^{(j)} -1, \ldots, k_x^{(n)} }_q  }{   \binom{m_x}{k_x^{(1)},\ldots,k_x^{(n)} }_q  } \cdot  \dfrac{ q^{k_{x+1}^{(1)} + \ldots + k_{x+1}^{(i-1)} }   \binom{m_{x+1}-1}{k_{x+1}^{(1)},\ldots, k_{x+1}^{(i)} -1, \ldots, k_{x+1}^{(n)} }_q  }{   \binom{m_{x+1}}{k_{x+1}^{(1)},\ldots,k_{x+1}^{(n)} }_q  } \\
 &= q\cdot q^{k_x^{(j+1)} + \ldots + k_x^{(n)}} q^{k_{x+1}^{(1)} + \ldots + k_{x+1}^{(i-1)}}  \frac{ (k_x^{(j)} )_q  }{(m_x)_q} \frac{ (k_{x+1}^{(i)})_q }{(m_{x+1})_q},
\end{align*}
which is the right jump rate for the multi--species ASEP$(q,\vec{m})$.

Similarly, in order for a particle of species $j$ at lattice site $x+1$ to jump to the left and switch places with a particle of species $i<j$, we need three events, whose probabilities multiply to
\begin{align*}
&1 \cdot \dfrac{   q^{k_x^{(1)} + \ldots + k_x^{(i-1)}}  \binom{m_x-1}{k_x^{(1)},\ldots, k_x^{(i)} -1, \ldots, k_x^{(n)} }_q  }{   \binom{m_x}{k_x^{(1)},\ldots,k_x^{(n)} }_q  } \cdot  \dfrac{ q^{k_{x+1}^{(j+1)} + \ldots + k_{x+1}^{(n)} }   \binom{m_{x+1}-1}{k_{x+1}^{(1)},\ldots, k_{x+1}^{(j)} -1, \ldots, k_{x+1}^{(n)} }_q  }{   \binom{m_{x+1}}{k_{x+1}^{(1)},\ldots,k_{x+1}^{(n)} }_q  }  \\
&= q^{ k_x^{(1)} + \ldots + k_x^{(i-1)} }  q^{k_{x+1}^{(j+1)} + \ldots + k_{x+1}^{(n)}}  \frac{ (k_x^{(i)} )_q  }{(m_x)_q} \frac{ (k_{x+1}^{(j)})_q }{(m_{x+1})_q},
\end{align*}
which is the left jump rate for the multi--species ASEP$(q,\vec{m})$.

(iii) By part (ii), we need to show that $\pi\Lambda P_t \Phi = \pi$. By (i), $\pi\Lambda P_t = \pi\Lambda$. Since $\Lambda\Phi$ is the identity, the result follows.

\end{proof}

\begin{reemark}
The $q$--exchangeability of the multi--species $q$--Boson had been previously proved in \cite{KuanAHP}, using a direct calculation.
\end{reemark}

\subsubsection{Duality Functionals}

Set
$$
m^{(z)}
= 
\begin{cases}
m_0 + \ldots + m_{z-1}, & n > 0, \\
0, & n=0,\\
-m_{-1} - \ldots - m_z, & n < 0.
\end{cases}
$$
Note that if all $m_z$ are equal to the same value $m$, then $m^{(z)}=mz$ for any $z\in \mathbb{Z}$.

Consider an ASEP$_{1,q}$ on the finite lattice $\{  m^{(z)}, m^{(z)}  + 1, \ldots, m^{(z+1)} - 1 \}$. By Remark \ref{StatDual} above, we have that 
$$
\mathbb{E}[D_{\mathrm{Sch}}(X(\infty),\vec{y})] = \sum_{m^{(z)} \leq x_k < \ldots < x_1 \leq m^{(z+1)}-1} \frac{q^{x_1+ \ldots + x_k} }{q^{km^{(z)}+k(k-1)/2} \binom{m_z}{k}_q} \prod_{i=1}^l 1_{\{ \vec{y} \subseteq \vec{x} \}} q^{ - N_{y_i}(\vec{x}) - y_i}  
$$
does not depend on the value of $\vec{y}=(y_1 > \ldots > y_l)$. In particular, set $x_i=y_i=m^{(z+1)}-i$ for $1 \leq i \leq l$. Then $x_i-y_i=0$ and $-N_{y_i}(\vec{x}) =-i+1$, so this shows that
\begin{align}\label{NoDep}
&\sum_{m^{(z)} \leq x_k < \ldots < x_1 \leq m^{(z+1)}-1} \frac{q^{x_1+ \ldots + x_k} }{q^{km^{(z)}+k(k-1)/2} \binom{m_z}{k}_q} \prod_{i=1}^l 1_{\{ \vec{y} \subseteq \vec{x} \}} q^{ - N_{y_i}(\vec{x}) - y_i} \notag   \\
&=\frac{ q^{-l(l-1)/2}  }{ q^{km^{(z)}} q^{k(k-1)/2} \binom{m_z}{k}_q } \sum_{m^{(z)} \leq x_{k} < \ldots < x_{l+1} \leq m^{(z+1)}-l-1} q^{x_{l+1} + \ldots + x_{k}} \notag \\
 &= \frac{ q^{(k-l) m^{(z)}}}  { q^{k m^{(z)}}  } \cdot q^{-l(l-1)/2} \frac{   q^{(k-l-1)(k-l)/2}}{ q^{k(k-1)/2}   } \frac{ \binom{m_z-l}{k-l}_q   } { \binom{m_z}{k}_q }  \notag \\
&= q^{-lm^{(z)}} q^{-l(k-1)} \frac{ \binom{k}{l}_q}{ \binom{m_z}{l}_q},
\end{align}
where we have used \eqref{qBin}.

\begin{prop}\label{FusedDuality}
For any $\vec{m}$, the $\hat{S} \times \hat{S}$ matrix $\Lambda D_{\mathrm{Sch}}\Phi$ is an inhomogeneous generalization of $D_{\mathrm{CGRS}}$. In particular, setting $\hat{\mathfrak{s}}(x)=k_x$ and $\hat{\mathfrak{s}}'(x)=l_x$,
$$
[\Lambda D_{\mathrm{Sch}}\Phi](\hat{\mathfrak{s}},\hat{\mathfrak{s}}') = \mathrm{const} \cdot \left(\prod_{z < w} q^{-k_zl_w} \right) \left(\prod_{z\in \mathbb{Z}} q^{-l_zm^{(z)}} \frac{\binom{k_z}{l_z}_q}{ \binom{m_z}{l_z}_q} \right).
$$
\end{prop}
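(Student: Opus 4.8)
The plan is to evaluate the $\hat S\times\hat S$ matrix $\Lambda D_{\mathrm{Sch}}\Phi$ entry by entry, using that $\Lambda=\bigotimes_x\Lambda_x$ and $\Phi=\bigotimes_x\Phi_x$ are built blockwise over the unfused lattice, where I write $\{m^{(x)},\dots,m^{(x+1)}-1\}$ for ``block $x$''. Writing $k_x=\hat{\mathfrak s}(x)$ and $l_x=\hat{\mathfrak s}'(x)$, and using that $\Phi$ is deterministic, one has $[\Lambda D_{\mathrm{Sch}}\Phi](\hat{\mathfrak s},\hat{\mathfrak s}')=\sum_{\mathfrak s,\mathfrak s'}\Lambda(\hat{\mathfrak s},\mathfrak s)\,D_{\mathrm{Sch}}(\mathfrak s,\mathfrak s')$, the sum running over unfused $\mathfrak s$ with $k_x$ particles in block $x$ and unfused $\mathfrak s'$ with $l_x$ particles in block $x$. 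Moreover $\Lambda(\hat{\mathfrak s},\cdot)$ is precisely the product over blocks of the reversible ASEP$_{1,q}$ measure $\propto q^{x_1+\cdots+x_{k_x}}$ on ``block $x$'' restricted to the $k_x$-particle sector.

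First I would separate the Schütz functional into an intra-block and an inter-block part. If a dual particle sits at position $x_k$ in block $w$, then $N_{x_k}(\mathfrak s)$ equals the number of $\mathfrak s$-particles of block $w$ lying weakly to the right of $x_k$, plus $\sum_{w'>w}k_{w'}$; and $x_k=m^{(w)}+(\text{within-block position})$. Summing the $\sum_{w'>w}k_{w'}$ terms over the $l_w$ dual particles of each block produces the factor $\prod_{z<w}q^{-k_zl_w}$, and summing the $m^{(w)}$ terms produces $\prod_w q^{-l_wm^{(w)}}$. What is left inside the sum over $\mathfrak s$ then factors over blocks: the block-$w$ contribution is $\sum_{\mathfrak s|_w}\Lambda_w(\mathfrak s|_w)\prod_y 1_{\{\mathfrak s|_w(y)=1\}}q^{-(y-m^{(w)})-N^{(w)}_y(\mathfrak s|_w)}$, which is exactly $\mathbb E_{\pi_w}\!\big[D_{\mathrm{Sch}}(\cdot,\vec y^{(w)})\big]$ for the reversible measure $\pi_w$ of ASEP$_{1,q}$ on the finite lattice ``block $w$'' in the $k_w$-particle sector, with $\vec y^{(w)}$ the positions of $\mathfrak s'$ inside block $w$.

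The crucial point is Remark \ref{StatDual}: since ASEP$_{1,q}$ on the finite lattice ``block $w$'' has a unique stationary measure in each particle-number sector, the expectation $\mathbb E_{\pi_w}[D_{\mathrm{Sch}}(\cdot,\vec y^{(w)})]$ is independent of the positions $\vec y^{(w)}$ and depends only on $k_w,l_w,m_w$. Evaluating it at the convenient choice $\vec y^{(w)}=\{m^{(w+1)}-1,\dots,m^{(w+1)}-l_w\}$ and applying the $q$-binomial theorem \eqref{qBin} reproduces the computation \eqref{NoDep}, which shows this block value is $\binom{k_w}{l_w}_q/\binom{m_w}{l_w}_q$ up to an explicit power of $q$. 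Finally, because the block value does not depend on which $l_w$ slots of block $w$ carry a dual particle, the remaining sum over $\mathfrak s'$ in the fiber $\phi^{-1}(\hat{\mathfrak s}')$ contributes only a combinatorial prefactor; collecting this prefactor, the extracted powers of $q$, and the factor coming from conservation of the total particle number $\sum_x l_x$ into a single constant yields the asserted identity with the displayed product.

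I expect the main obstacle to be exactly this last assembly: keeping the intra/inter-block split clean enough that the surviving expression is honestly a product over single blocks, and checking that everything not of the advertised form collapses to (or is absorbed into) $\mathrm{const}$. The auxiliary identity \eqref{NoDep} is then a routine application of the $q$-binomial theorem once position-independence has been imported from Remark \ref{StatDual}, and the blockwise factorization of $\Lambda$ and $\Phi$, together with the identity $\Lambda\Phi=\mathrm{Id}_{\hat S}$, is what makes the whole reduction go through.
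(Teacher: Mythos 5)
Your proposal follows essentially the same route as the paper's own proof: you factor $\Lambda$ and $\Phi$ blockwise, split $D_{\mathrm{Sch}}$ into inter-block exponential factors and intra-block expectations, and evaluate each block expectation by importing position-independence from Remark \ref{StatDual} and then applying the $q$-binomial theorem at the convenient choice $y_i = m^{(z+1)}-i$ — which is precisely the computation \eqref{NoDep} that the paper's proof invokes. The remaining assembly of conserved-quantity prefactors into $\mathrm{const}$ is likewise the same in both arguments.
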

\begin{proof} By \eqref{NoDep}, $[\Lambda D_{\mathrm{Sch}}\Phi](\hat{\mathfrak{s}},\hat{\mathfrak{s}}')$ equals
\begin{align*}
&\prod_{z\in \mathbb{Z}}  \sum_{m^{(z)} \leq x_{k_z}^{(z)} \leq \cdots \leq x_{k_1}^{(z)} \leq m^{(z+1)}-1} \frac{  q^{x_{k_1}^{(z)} + \ldots x_{k_z}^{(z)} }  }{  q^{ k_zm^{(z+1)} + (k_z-1)k_z/2 } \binom{m_z}{k_z}_q  } \prod_{i=1}^{l_z} 1_{ \{ \vec{y} \subseteq \vec{x} \}} q^{-y_i^{(z)}- N_{y_i^{(z)}}(\vec{x}) }  \\
&= \left( \prod_{z>w} q^{-k_zl_w}\right) \left( \prod_{z\in \mathbb{Z}} q^{-l_zm^{(z)}} q^{-l_z(k_z-1)} \frac{\binom{k_z}{l_z}_q }{   \binom{m_z}{l_z}_q }  \right) \\
&=q^l  \left(\prod_{z \geq w} q^{-k_zl_w} \right) \left(\prod_{z\in \mathbb{Z}} q^{-l_zm^{(z)}} \frac{\binom{k_z}{l_z}_q}{ \binom{m_z}{l_z}_q} \right)\\
&= q^l q^{-kl} \left(\prod_{z < w} q^{-k_zl_w} \right) \left(\prod_{z\in \mathbb{Z}} q^{-l_zm^{(z)}} \frac{\binom{k_z}{l_z}_q}{ \binom{m_z}{l_z}_q} \right).
\end{align*}

\end{proof}







\begin{reemark}
We can check by direct computation that for generic values of $\vec{m}$, $\Lambda D_{\mathrm{Sch}}\Phi$ is not a duality function for ASEP$(q,\vec{m})$.
\end{reemark}

\begin{reemark}
Following the same argument for the $n$--species ASEP$(q,\vec{m})$ would use \eqref{qBin3} instead of \eqref{qBin}. By comparing the binomial terms, it is apparent that the resulting functional would not be the duality function of $n$--species ASEP$(q,m/2)$ found in \cite{KIMRN}.  
\end{reemark}

\subsubsection{The Sch\"{u}tz duality for ASEP}\label{SchASEP}
It was already proven in \cite{Sch97} that ASEP$_{1,q}$ is dual to another ASEP$_{1,q}$ with respect to $D_{\mathrm{Sch}}$. Here, we find a way to find the duality using a bijective argument. This argument will be useful when examining the dynamic models. 

In what follows, we fix integers $r, k, m \ge 1$ and an $r$-tuple $Y = (y_1, y_2, \ldots , y_r)$ of integers such that $0 \le y_r < y_{r - 1} < \cdots < y_1 \le m - 1$. Let $\mathfrak{X} (m)$ denote the set of $X = (x_1, x_2, \ldots , x_k)$ is such that $0 \le x_t < x_{t - 1} < \cdots < x_1 \le m - 1$; for any $X \in \mathfrak{X} (m)$, define $s(X) = \sum_{j = 1}^k x_j$ and $h_X (y_i)$ to be the number of indices $j$ for which $x_j > y_i$.  

Let 
\begin{flalign*}
F(X; Y) = q^{s(X)} \displaystyle\prod_{i = 1}^r q^{-h_X (y_i) - y_i} \textbf{1}_{y_i \in X}; \qquad \mathfrak{S} (Y) = \displaystyle\sum_{X \in \mathfrak{X} (m)} F(X; Y).
\end{flalign*}
The function $F$ is simply the Schutz duality function.

\begin{lemma} 
	
\label{syiyi1}

Assume that $i \in [1, r]$ is an integer such that $y_i > y_{i + 1} + 1$ (here, we set $y_{r + 1} = 0$), and define $Z = (z_1, z_2, \ldots , z_r) = (y_1, y_2, \ldots , y_{i - 1}, y_i - 1, y_{i + 1}, y_{i + 2}, \ldots , y_r)$. Then, $\mathfrak{S} (Y) = \mathfrak{S} (Z)$. 
	
\end{lemma}

\begin{proof} 

Let $\mathcal{B} = \mathcal{B} (y_i)$ denote the set of $X \in \mathfrak{X} (m)$ such that $y_i - 1 \in X$ and $y_i \notin X$. Similarly, let $\mathcal{C} = \mathcal{C} (y_i)$ denote the set of $X \in \mathfrak{X} (m)$ such that $y_i - 1 \notin X$ and $y_i \in X$, and let $\mathcal{D} = \mathcal{D} (y_i)$ denote the set of $X \in \mathfrak{X} (m)$ such that $y_i - 1, y_i \in X$. 

Now, observe that 
\begin{flalign}
\label{scdbd}
\mathfrak{S} (Y) = C (Y) + D (Y); \qquad \mathfrak{S} (Z) = B (Z) + D (Z),
\end{flalign}

\noindent where for any $W \in \{ Y, Z \}$ we have defined 
\begin{flalign*}
B (W) = \displaystyle\sum_{X \in \mathcal{B}} F(X; W); \qquad C (W) = \displaystyle\sum_{X \in \mathcal{C}} F(X; W); \qquad D (W) = \displaystyle\sum_{X \in \mathcal{D}} F(X; W).
\end{flalign*}

Observe that if $X \in \mathcal{D}$ then $F(X; Y) = F(X; Z)$. Indeed, this follows from the fact that $\sum_{j = 1}^r \big( y_j + h_X (y_j) \big) = \sum_{j = 1}^r \big( z_j + h_X (z_j) \big)$, which holds since $y_j = z_j$ and $h_X (y_j) = h_X (z_j)$ for $j \ne i$; $y_i = z_i + 1$; and $h_X (y_i) = h_X (z_i) - 1$. Thus, $D(Y) = D(Z)$. 

Next, to any $X \in \mathcal{C}$ we can associate an $\overline{X} \in \mathcal{B}$ that is obtained by replacing $y_i \in X$ with $y_i - 1$. Similarly, $X$ can be obtained from $\overline{X}$ by replacing $y_i - 1 \in \overline{X}$ with $y_i$. Now, observe that $F(X; Y) = F(\overline{X}; Z)$ for any $X \in \mathcal{C}$. Indeed, this follows from the fact that $s(X) - \sum_{j = 1}^r \big( y_j + h_X (y_j) \big) = s(\overline{X}) - \sum_{j = 1}^r \big( z_j + h_{\overline{X}} (z_j) \big)$, which holds since $s (X) = s(\overline{X}) + 1$; $y_j = z_j$ and $h_X (y_j) = h_{\overline{X}} (z_j)$ for $j \ne i$; $y_i = z_i + 1$; and $h_X (y_i) = h_{\overline{X}} (z_i)$. Summing over $X \in \mathcal{C}$ yields $C (Y) = B (Z)$.

Now the lemma follows from \eqref{scdbd}.  
\end{proof}

\begin{corollary}
	
\label{syindependent} 

The quantity $\mathfrak{S} (Y)$ is independent of $Y$. 

\end{corollary}

\subsubsection{A ``multi--species'' modification}\label{MSM}
For $p<n$, define a Markov operator $\mathcal{Q}$ on $S^{(p)} \times S^{(n)}$, where we assume the lattice is finite. Suppose that $\tilde{\mathbf{y}} = (\tilde{y}_1,\ldots,\tilde{y}_{\tilde{r}})$ and $\mathbf{y}=(y_1,\ldots,y_r)$ for some $\tilde{r} \leq r$.  Write $\tilde{\mathbf{y}} \subseteq \mathbf{y}$ if $\tilde{\mathbf{y}} =  (y_{i_1},\ldots,y_{i_{\tilde{r}}})$ for some $y_{i_{\tilde{r}}} < \ldots < y_{i_1}$. With this ordering, we must have that $i_1 < \ldots < i_{\tilde{r}}$. Now define
$$
\mathcal{Q}(Y,\tilde{Y}) = q^{ - i_1 - \ldots - i_r} 1_{\tilde{Y} \subseteq Y}.
$$
Also define the diagonal matrix $\mathcal{P}$ on $S^{(1)}\times S^{(1)}$ by 
$$
\mathcal{P}(\vec{x},\vec{x}) = q^{-x_1 - \ldots - x_N}
$$

The notation is described pictorially below. 

\begin{center}
\begin{tikzpicture}
\draw (1,2) circle (3pt);
\fill[black] (4,2) circle (3pt);
\draw (5,2) circle (3pt);
\fill[black] (7,2) circle (3pt);
\fill[black] (2,2) circle (3pt);
\fill[black] (3,2) circle (3pt);
\fill[black] (6,2) circle (3pt);
\draw [very thick](0.5,2) -- (7.5,2);
\draw (8,2) node (a) { $ X$};
\draw (8,1) node (b) { $ Y$};
\draw (8,0) node (d) {$ \tilde{Y}$};
\draw [very thick](0.5,1) -- (7.5,1);
\draw (1,1) circle (3pt);
\draw (4,1) circle (3pt);
\draw (5,1) circle (3pt);
\draw (7,1) circle (3pt);
\fill[black] (2,1) circle (3pt);
\fill[black] (3,1) circle (3pt);
\fill[black] (6,1) circle (3pt);
\draw [very thick](0.5,0) -- (7.5,0);
\draw (1,0) circle (3pt);
\draw (4,0) circle (3pt);
\draw (5,0) circle (3pt);
\draw (7,0) circle (3pt);
\fill[red] (2,0) circle (3pt);
\draw (3,0) circle (3pt);
\fill[blue] (6,0) circle (3pt);
\end{tikzpicture}
\end{center}

\begin{theorem}\label{MSD}
(a) Suppose that $Y(t)$ evolves as an $n$--species ASEP$_{1,q}$ (on either a finite lattice with closed boundary conditions, or on the infinite line) with finitely many particles, and $D$ is a duality function between $X(t)$ and $Y(t)$ for some Markov process $X(t)$. Then the function $D\mathcal{Q}^* $ is a duality function between $X(t)$ and a $p$--species ASEP$_{1,q}$.

(b) Now suppose that $Y(t)$ evolves as a single species ASEP$_{q,1}$ on the infinite line. If $X(t)$ is any Markov process which is dual to $Y(t)$ with respect to $D$, then $X(t)$ is dual to ASEP$_{1,q}$ with respect to $D\mathcal{P}$.

\end{theorem}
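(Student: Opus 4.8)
The plan is to handle both parts by the same device: realize the modified function as $D$ composed with a matrix that intertwines two Markov generators, and then quote Remark~\ref{QInter}.

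For part (a), I view $\mathcal{Q}$ as the $S^{(p)}\times S^{(n)}$ matrix defined above and aim to prove the generator intertwining
$$
\mathcal{Q}\,\mathcal{L}_n = \mathcal{L}_p\,\mathcal{Q},
$$
where $\mathcal{L}_n$ and $\mathcal{L}_p$ are the generators of the $n$--species and $p$--species ASEP$_{1,q}$. Granting this, Remark~\ref{QInter} applied with $Y(t)$ the $n$--species ASEP$_{1,q}$ and $W(t)$ the $p$--species ASEP$_{1,q}$ gives $\mathcal{L}_X(D\mathcal{Q}^{*})=(D\mathcal{Q}^{*})\mathcal{L}_p^{*}$, which is the asserted duality, and the two admissible lattices (finite with closed boundaries, or $\mathbb{Z}$) transfer automatically. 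To establish the intertwining I would compare the two sides entrywise. The entries coming from the transition (off--diagonal) part of $\mathcal{L}_n$ correspond to a single swap of two adjacent particles of species $i<j$, and on each side they reduce to a sum over the sub--configurations $\tilde Y\subseteq Y$ weighted by $q^{-(i_1+\cdots+i_{\tilde r})}$; the ratio $q$ between the right-- and left--jump rates of ASEP$_{1,q}$ equals the ratio of these weights before and after the swap, so the two sums agree term by term. Since $\mathcal{Q}$ has constant row sums on each conserved sector and $\mathcal{L}_n\mathbf{1}=\mathcal{L}_p\mathbf{1}=0$, the remaining entries are then forced. In fact this matching is the infinitesimal form of the statement that $\mathcal{Q}$ preserves $q$--exchangeability, so an alternative is to combine Lemma~\ref{qexc} and the known preservation of $q$--exchangeability by multi--species ASEP$_{1,q}$ with a rigidity argument for the multi--species structure.

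For part (b), I apply Remark~\ref{QInter} with $\mathcal{Q}=\mathcal{P}$ (symmetric, being diagonal), $\mathfrak{W}=\mathfrak{Y}=S^{(1)}$, $Y(t)$ the ASEP$_{q,1}$ and $W(t)$ the ASEP$_{1,q}$ on $\mathbb{Z}$, which reduces everything to the intertwining $\mathcal{P}\mathcal{L}_{q,1}=\mathcal{L}_{1,q}\mathcal{P}$. Since $\mathcal{P}$ is diagonal with entry $q^{-x_1-\cdots-x_N}$ at $\vec x$, this is the entrywise identity $\mathcal{L}_{q,1}(\vec x,\vec x')=q^{(x_1+\cdots+x_N)-(x_1'+\cdots+x_N')}\,\mathcal{L}_{1,q}(\vec x,\vec x')$. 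For the off--diagonal entries this is a short local check valid on any lattice: a right jump raises $x_1+\cdots+x_N$ by $1$ and trades the rate $q$ of ASEP$_{1,q}$ for the rate $1$ of ASEP$_{q,1}$, and a left jump is symmetric. The content is in the diagonal entries, where one needs the total exit rates of ASEP$_{1,q}$ and ASEP$_{q,1}$ from a common configuration to agree; writing $b_L$ and $b_R$ for the numbers of particles blocked to the left and to the right, these rates are $(N-b_L)+q(N-b_R)$ and $q(N-b_L)+(N-b_R)$, which coincide exactly when $b_L=b_R$. On $\mathbb{Z}$ this holds because $b_L$ and $b_R$ both count the adjacent pairs of occupied sites; on a finite interval a particle pressed against a wall is blocked on one side only, so $b_L\neq b_R$ in general, which is precisely why the hypothesis is restricted to the infinite line.

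I expect the main obstacle to be the off--diagonal verification in part (a): the index set $\{i_1<\cdots<i_{\tilde r}\}$ of the retained particles can change in several distinct ways under the swap --- both swapping particles retained, both dropped, or exactly one retained --- and one must confirm that the $q$--powers bookkeep correctly in each case. Part (b), once the elementary observation $b_L=b_R$ on $\mathbb{Z}$ is isolated, is essentially a one--line computation.
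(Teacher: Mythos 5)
Your overall architecture is exactly the paper's: both parts are reduced via Remark~\ref{QInter} to a generator intertwining, $\mathcal{Q}\mathcal{L}^{(n)}_{\mathrm{ASEP}}=\mathcal{L}^{(p)}_{\mathrm{ASEP}}\mathcal{Q}$ for (a) and $\mathcal{P}L_{\mathrm{ASEP}_{q,1}}=L_{\mathrm{ASEP}_{1,q}}\mathcal{P}$ for (b). Part (b) matches the paper's proof essentially verbatim; your $b_L=b_R$ bookkeeping is a slightly more explicit justification of the paper's bare assertion that the diagonal entries agree on the infinite line, and it correctly isolates why the finite interval fails. For part (a), however, the paper does not perform any entrywise verification: it observes that the intertwining \emph{is} the statement that multi--species ASEP preserves $q$--exchangeability and cites Proposition 4.5 of \cite{KuanAHP}, full stop. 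Your primary plan --- checking the swap cases directly --- is precisely the content of that cited proposition, and you yourself flag the case analysis (both retained, both dropped, one retained) as the unresolved obstacle, so as written your main route for (a) is incomplete. Your ``alternative'' is the paper's actual proof, but it is stated in a muddled way: Lemma~\ref{qexc} (about $\Lambda$ and $\Phi$) plays no role here, and no additional ``rigidity argument'' is needed --- the identity $\mathcal{Q}\mathcal{L}^{(n)}=\mathcal{L}^{(p)}\mathcal{Q}$ is, on the nose, the preservation statement in the form proved in \cite{KuanAHP}. If you simply promote that citation to your main argument and drop the extraneous ingredients, part (a) is done in one line.
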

\begin{proof}
(a) By Remark \ref{QInter}, it suffices to show that $\mathcal{Q}\mathcal{L}_{\text{ASEP}}^{(n)} = \mathcal{L}_{\text{ASEP}}^{(p)} \mathcal{Q}$. But this is exactly the condition that multi--species ASEP preserves $q$--exchangeability, which had already been proven in Proposition 4.5 of \cite{KuanAHP}. 

(b) Again by Remark \ref{QInter}, we just need to show that for 
satisfies $\mathcal{P}L_{\text{ASEP}_{q,1}} = L_{\text{ASEP}_{1,q}}\mathcal{P}$. First, note that on the infinite line,
$$
L_{\text{ASEP}_{q,1}}(\vec{x},\vec{x}) = L_{\text{ASEP}_{1,q}}(\vec{x},\vec{x}). 
$$
Next, if $\vec{y}$ is a particle configuration obtained from $\vec{x}$ by moving one particle to the right, then
\begin{align*}
[\mathcal{P}L_{\text{ASEP}_{q,1}}] (\vec{x}, \vec{y}) &= \mathcal{P}(\vec{x},\vec{x}) \cdot 1,\\
[L_{\text{ASEP}_{1,q}}\mathcal{P}] (\vec{x}, \vec{y}) &= q \cdot \mathcal{P}(\vec{y},\vec{y}) ,
\end{align*}
which are equal. Similarly,  if $\vec{y}$ is a particle configuration obtained from $\vec{x}$ by moving one particle to the left, then the needed equality $ \mathcal{P}(\vec{x},\vec{x}) \cdot q = 1 \cdot \mathcal{P}(\vec{y},\vec{y}) $, which is true.

\end{proof}

As a corollary, we have new duality functions for ASEP. Note that (a) and (b) below yield the duality results in \eqref{lim1} and \eqref{lim2}, which arose from the limit of dynamic ASEP.

\begin{corollary}\label{newdual}
(a)
Suppose $\mathfrak{s}_t$ evolves as a ASEP$_{q,1}$ and $\vec{x}_t$ evolves as an ASEP$_{1,q}$ with $r$ particles. Then for any $\tilde{r} \leq r$, the function
$$
\sum_{\substack{ I \subseteq \{1,\ldots,r\} \\ \vert I \vert = \tilde{r}}   }  \prod_{i \in I} q^{N_{x_i}(\mathfrak{s})-i+1}
$$
is a duality function between $\mathfrak{s}_t$ and $\vec{x}_t$ on the infinite line.

(b) Suppose $\mathfrak{s}_t$ evolves as a ASEP$_{1,q}$ and $\vec{x}_t$ evolves as an ASEP$_{1,q}$ with $r$ particles. Then for any $\tilde{r} \leq r$, the function
$$
q^{-x_1-\ldots-x_r}\sum_{\substack{ I \subseteq \{1,\ldots,r\} \\ \vert I \vert = \tilde{r}}   }  \prod_{i \in I} q^{-N_{x_i}(\mathfrak{s})+i-1}
$$
is a duality function between $\mathfrak{s}_t$ and $\vec{x}_t$ on the infinite line.

(c) Suppose $\mathfrak{s}_t$ evolves as a ASEP$_{1,q}$ and $\vec{x}_t$ evolves as an ASEP$_{q,1}$ with $r$ particles. Then 
$$
\prod_{k=1}^r 1_{\{\mathsf{s}(x_k)=1\}}q^{ -N_{x_k}(\mathfrak{s})}.
$$ 
is a duality function between $\mathfrak{s}_t$ and $\vec{x}_t$ on the infinite line.

(d) 
Suppose $\mathfrak{s}_t$ evolves as a  ASEP$_{q,1}$ and $\vec{x}_t$ evolves as an ASEP$_{1,q}$ with $r$ particles. Then for any $\tilde{r} \leq r$, the function
$$
1_{\{\mathfrak{s}(x_1)=\cdots \mathfrak{s}(x_k)=1\}}\sum_{\substack{ I \subseteq \{1,\ldots,r\} \\ \vert I \vert = \tilde{r}}   }  \prod_{i \in I} q^{N_{x_i}(\mathfrak{s})-i+1}
$$
is a duality function between $\mathfrak{s}_t$ and $\vec{x}_t$ on the infinite line, and the function
$$
1_{\{\mathfrak{s}(x_1)=\cdots \mathfrak{s}(x_k)=0\}}\sum_{\substack{ I \subseteq \{1,\ldots,r\} \\ \vert I \vert = \tilde{r}}   }  \prod_{i \in I} q^{N_{x_i}(\mathfrak{s})-i+1}
$$
is a duality function between $\mathfrak{s}_t$ and $\vec{x}_t$ either on the infinite line, or on a finite interval with closed boundary conditions. 

\end{corollary}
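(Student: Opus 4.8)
The plan is to obtain all four statements by feeding the explicit single-species ASEP dualities recalled in Section 2 ($D_{\mathrm{Sch}}$, $D_{\mathrm{Kua}}$, and the limiting functions \eqref{lim1} and \eqref{lim2}) into the two transfer mechanisms of Theorem \ref{MSD}, organized by the intertwining bookkeeping of Remark \ref{QInter}. Mechanism one is the $q$-exchangeability operator $\mathcal{Q}$: since multi-species ASEP$_{1,q}$ preserves $q$-exchangeability, $\mathcal{Q}$ intertwines the $p$- and $n$-species generators, so composing a duality function with $\mathcal{Q}^*$ on the dual side turns a duality against an $n$-species ASEP$_{1,q}$ into one against a $p$-species ASEP$_{1,q}$; concretely, taking the $n$-species process to be the labeled $r$-particle ASEP$_{1,q}$ and $p=1$, this operation replaces the dual configuration by its projection while weighting each of the $\binom{r}{\tilde r}$ ways of selecting $\tilde r$ tagged particles by $q^{-i_1-\cdots-i_{\tilde r}}$ — which is exactly how the symmetric sum $\sum_{|I|=\tilde r}$ and the powers $q^{-i+1}$ will appear in (a), (b), (d). Mechanism two is the diagonal operator $\mathcal{P}(\vec x,\vec x)=q^{-x_1-\cdots-x_N}$, which on the infinite line satisfies $\mathcal{P}\mathcal{L}_{\mathrm{ASEP}_{q,1}}=\mathcal{L}_{\mathrm{ASEP}_{1,q}}\mathcal{P}$, so that $\mathcal{P}$ and its inverse flip the chirality of the dual process; this is how one moves between ASEP$_{1,q}$ and ASEP$_{q,1}$ dual processes in the statements.

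In detail, I would first prove (c), which is immediate: $\mathcal{P}^{-1}$ intertwines $\mathcal{L}_{\mathrm{ASEP}_{1,q}}$ with $\mathcal{L}_{\mathrm{ASEP}_{q,1}}$, so by Remark \ref{QInter} the function $D_{\mathrm{Sch}}\,\mathcal{P}^{-1}$ is a duality between the original ASEP$_{1,q}$ and an ASEP$_{q,1}$ dual; the prefactor $q^{-x_k}$ in $D_{\mathrm{Sch}}$ cancels against $q^{x_k}$ from $\mathcal{P}^{-1}$, leaving exactly $\prod_{k=1}^r 1_{\{\mathfrak{s}(x_k)=1\}}q^{-N_{x_k}(\mathfrak{s})}$. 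For (a) I would start from the duality between ASEP$_{q,1}$ and the labeled $r$-particle ASEP$_{1,q}$ underlying \eqref{lim1} (equivalently, apply $\mathcal{P}^{-1}$ on the original side of a Schütz-type duality so that the original process becomes ASEP$_{q,1}$), read it as a duality against an $r$-species ASEP$_{1,q}$, and then apply $\mathcal{Q}$ (Theorem \ref{MSD}(a)) to collapse the dual down to a single-species $r$-particle ASEP$_{1,q}$; tracking the weights $q^{-i_1-\cdots-i_{\tilde r}}$ contributed by $\mathcal{Q}$ against the $q^{N_{x_i}(\mathfrak{s})}$-factors of the Schütz function then yields $\sum_{|I|=\tilde r}\prod_{i\in I}q^{N_{x_i}(\mathfrak{s})-i+1}$ up to an overall constant. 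Then (b) follows from (a) by a global left–right reflection of space, which exchanges ASEP$_{q,1}$ and ASEP$_{1,q}$ for the original process and produces the prefactor $q^{-x_1-\cdots-x_r}$ via $\mathcal{P}$; and (d) follows by rerunning the argument for (a) starting from $D_{\mathrm{Sch}}$ — which carries the indicators $1_{\{\mathfrak{s}(x_k)=1\}}$ — or from $D_{\mathrm{Kua}}$, which carries $1_{\{\mathfrak{s}(x_k)=0\}}$ and, unlike the $\mathcal{P}$-twisted $D_{\mathrm{Sch}}$, remains valid on a finite interval with closed boundary conditions, accounting for the extended range of validity claimed for the second function in (d).

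The routine but error-prone part, and where I expect the only real friction, is the exponent bookkeeping in the $\mathcal{Q}$-collapse: one must check that the rank index $i$ of a tagged particle produced by $\mathcal{Q}$ combines with the Schütz exponent $-x-N_x(\mathfrak{s})$ and with the chirality flip to leave precisely $N_{x_i}(\mathfrak{s})-i+1$ and no residual position dependence — the same cancellation that underlies the passage from $D_{\mathrm{BC}}$ to \eqref{lim1}–\eqref{lim2}. Finally, as a consistency check one notes that summing the function in (a) over $\tilde r$ with signs $(-1)^{\tilde r}$ recovers $\prod_{k=1}^r(q^{k-1}-q^{N_{x_k}(\mathfrak{s})})$, and similarly (b) recovers \eqref{lim2}, since duality functions form a vector space; this is the content of the remark preceding the corollary that (a) and (b) yield \eqref{lim1} and \eqref{lim2}.
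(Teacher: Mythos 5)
Your overall architecture is the paper's: feed the known single--species dualities ($D_{\mathrm{BCS}}$, $D_{\mathrm{Sch}}$, $D_{\mathrm{Kua}}$) into the two mechanisms of Theorem \ref{MSD} --- the $q$--exchangeability operator $\mathcal{Q}$ to produce the subset sums $\sum_{|I|=\tilde{r}}$ and the diagonal operator $\mathcal{P}$ to flip the chirality of the dual process. Parts (a), (c) and (d) match the paper's proof essentially verbatim: (c) is exactly $D_{\mathrm{Sch}}\mathcal{P}^{-1}$; (a) is Theorem \ref{MSD}(a) applied to the $q\to q^{-1}$ image of $D_{\mathrm{BCS}}$ (your ``duality underlying \eqref{lim1}'' is the same object); and (d) is Theorem \ref{MSD}(a) applied to the function of part (c) and to $D_{\mathrm{Kua}}$ respectively, with the finite--lattice validity of the second function inherited from $D_{\mathrm{Kua}}$, just as you say.

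The one step that would fail as written is your derivation of (b) from (a) by ``a global left--right reflection of space.'' Reflection does exchange ASEP$_{q,1}$ and ASEP$_{1,q}$ at the level of generators, but it transforms the duality function by replacing $N_{x_i}(\mathfrak{s})$ (the number of $\mathfrak{s}$--particles weakly to the right of $x_i$) by the number of $\mathfrak{s}$--particles to the \emph{left} of the reflected point; on the infinite line, where the corollary is stated and where $\mathfrak{s}$ may have infinitely many particles, that count can be infinite, and in any case it is not $-N_{x_i}(\mathfrak{s})$. So reflection does not produce the exponents $-N_{x_i}(\mathfrak{s})+i-1$ appearing in (b). The paper's route is the substitution $q\to q^{-1}$ together with a time rescaling by $q$: this flips both chiralities while leaving $N_{x_i}(\mathfrak{s})$ untouched and simply negating every exponent in the function from (a), after which Theorem \ref{MSD}(b) contributes the prefactor $q^{-x_1-\cdots-x_r}$ and restores the dual to ASEP$_{1,q}$. (The same caution applies to your parenthetical suggestion in (a) of applying $\mathcal{P}^{-1}$ ``on the original side'': $q^{x_1+\cdots}$ is ill--defined when the original configuration has infinitely many particles, so the intended input really is the $D_{\mathrm{BCS}}$--type duality itself, not a $\mathcal{P}$--twist of a Sch\"{u}tz--type one.) With the reflection replaced by the $q\to q^{-1}$ substitution, your argument coincides with the paper's.
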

\begin{proof}
(a) After replacing $q\rightarrow q^{-1}$ and applying Theorem \ref{MSD}(a), we need to show that this duality function equals $D_{\text{BCS}}\mathcal{Q}^*$. This follows from an immediate calculation.

(b) Take $q\rightarrow q^{-1}$ in part (a) and rescale time by $q$. Now applying Theorem \ref{MSD}(b)  yields the result. 

(c) This follows by applying Theorem \ref{MSD}(b) to $D_{\mathrm{Sch}}$. 

(d) After substituting $q\rightarrow q^{-1}$ and rescaling time by $q$, the first statement follows from applying Theorem \ref{MSD}(a) to the function in part (c). The second statement follows from applying Theorem \ref{MSD}(a) to $D_{\mathrm{Kua}}$. 

\end{proof}

\begin{reemark}
The result in part (c) follows from \cite{YL19}, where the duality is proven for the six-vertex model; Because the six-vertex model degenerates to ASEP, this implies (c).
\end{reemark}

The applications of Theorem \ref{MSD}(a) above only use that ASEP$_{1,q}$ preserves $q$--exchangeability. Because the same holds for ASEP$(q,m/2)$ and the $q$--Boson, the same arguments as above imply the following:

\begin{prop}
If $\vec{x}_t$ evolves as a $q$--Boson and $\mathfrak{s}_t$ evolves as a space--reversed $q$--Boson, then $\mathfrak{s}_t$ and $\mathfrak{s}_t'$ are dual with respect to the function
$$
\sum_{\substack{ I \subseteq \{1,\ldots,r\} \\ \vert I \vert = \tilde{r}}   }  \prod_{i \in I}  q^{N_{x_i}(\mathfrak{s}) - i + 1}.
$$
and to the function
$$
\prod_{k=1}^r (q^{k-1}-q^{N_{x_k}(\mathfrak{s})}).
$$

If $\vec{x}_t$ evolves as an ASEP$(q,m/2)$ and $\mathfrak{s}_t$ evolves as a space--reversed ASEP$(q,j)$, then $\mathfrak{s}_t$ and $\mathfrak{s}_t'$ are dual with respect to the function
$$
\sum_{\substack{ I \subseteq \{1,\ldots,r\} \\ \vert I \vert = \tilde{r}}   }  \prod_{i \in I}  \frac{  \binom{m-\mathfrak{s}(x_i)}{\mathfrak{s}'(x_i)}_q }{  \binom{m}{\mathfrak{s}'(x_i)}_q } q^{ - i + 1 -\mathfrak{s}'(x_i) \sum_{z \geq x} \mathfrak{s}(z)} 1_{\{ m-  \mathfrak{s}(x_i) \geq \mathfrak{s}'(x_i)  \} }.
$$
\end{prop}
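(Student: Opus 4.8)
The plan is to reprove this proposition by rerunning the proofs of Theorem~\ref{MSD}(a) and Corollary~\ref{newdual} essentially verbatim, replacing the multi--species ASEP$_{1,q}$ by the multi--species $q$--Boson (resp.\ the multi--species ASEP$(q,m/2)$) and the input duality function accordingly. Concretely, I would set up the modification operator $\mathcal{Q}$ exactly as in Section~\ref{MSM} --- $\mathcal{Q}(\vec x,\tilde{\vec x}) = q^{-(i_1+\cdots+i_{\tilde r})}\,1_{\tilde{\vec x}\subseteq\vec x}$, where $\tilde{\vec x}=(x_{i_1},\dots,x_{i_{\tilde r}})$ with $i_1<\cdots<i_{\tilde r}$ --- but now between the (fused) state spaces of the relevant family. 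By Remark~\ref{QInter}, to convert a known duality $D$ between a process $X(t)$ and the single--species process into a new one it suffices to verify the intertwining $\mathcal{Q}\mathcal{L}^{(n)} = \mathcal{L}^{(p)}\mathcal{Q}$ between the $n$--species and $p$--species generators, and this intertwining is precisely the assertion that the multi--species process preserves $q$--exchangeability. For ASEP$(q,\vec m)$ --- hence for ASEP$(q,m/2)$, the case $m_x\equiv m$ --- and for the $q$--Boson, this preservation is Theorem~\ref{SFA}(iv) (and was also shown directly in \cite{KuanAHP} for the $q$--Boson), so the hypothesis of Remark~\ref{QInter} is met in both cases.

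With that in hand I would feed in the seed dualities. For the $q$--Boson, start from $\bar D_{\mathrm{BCS}}$, the duality between a $q$--Boson and a space--reversed $q$--Boson; Remark~\ref{QInter} then gives that $\bar D_{\mathrm{BCS}}\mathcal{Q}^*$ is again a duality between $\mathfrak{s}_t$ and the $r$--particle $q$--Boson $\vec x_t$, and expanding $\mathcal{Q}^*$ identifies it, up to the irrelevant global constant $q^{\tilde r}$, with $\sum_{|I|=\tilde r}\prod_{i\in I} q^{N_{x_i}(\mathfrak{s})-i+1}$. For ASEP$(q,m/2)$ one repeats this with the duality $D_{\mathrm{Kua}}$ of \eqref{Kuad} in place of $\bar D_{\mathrm{BCS}}$, with one coordinate a space--reversed ASEP$(q,m/2)$ and the other recorded in particle variables $x_r\le\cdots\le x_1$; expanding $D_{\mathrm{Kua}}\mathcal{Q}^*$ produces precisely the stated sum $\sum_{|I|=\tilde r}\prod_{i\in I}\frac{\binom{m-\mathfrak{s}(x_i)}{\mathfrak{s}'(x_i)}_q}{\binom{m}{\mathfrak{s}'(x_i)}_q}\,q^{-i+1-\mathfrak{s}'(x_i)\sum_{z\geq x}\mathfrak{s}(z)}\,1_{\{m-\mathfrak{s}(x_i)\geq\mathfrak{s}'(x_i)\}}$.

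The product form $\prod_{k=1}^r(q^{k-1}-q^{N_{x_k}(\mathfrak{s})})$ then comes for free: by the elementary identity used in \eqref{lim1}, it equals $q^{(r-1)r/2}\sum_{I\subseteq\{1,\dots,r\}}(-1)^{|I|}\prod_{i\in I}q^{N_{x_i}(\mathfrak{s})-i+1}= q^{(r-1)r/2}\sum_{t=0}^{r}(-1)^{t}\big(\sum_{|I|=t}\prod_{i\in I}q^{N_{x_i}(\mathfrak{s})-i+1}\big)$, a finite signed linear combination of the duality functions just obtained (one for each $t=\tilde r$); since $\mathcal{L}_XD=D\mathcal{L}_Y^*$ is linear in $D$, any such combination for a fixed pair of processes is again a duality function.

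I expect the only genuine work to be the two ``immediate calculation'' identifications $\bar D_{\mathrm{BCS}}\mathcal{Q}^* = (\text{sum})$ and $D_{\mathrm{Kua}}\mathcal{Q}^* = (\text{sum})$, each up to a global constant --- in particular matching the $q^{-i+1}$ exponents and, for ASEP$(q,m/2)$, tracking how the per--site $q$--binomial factors of $D_{\mathrm{Kua}}$ combine with the subset weight $q^{-(i_1+\cdots+i_{\tilde r})}$ once one reindexes from lattice sites to the enumeration $x_r\le\cdots\le x_1$ of particles. A mild secondary point is to confirm that, for these multi--particle--per--site models, ``preservation of $q$--exchangeability'' is indeed the intertwining $\mathcal{Q}\mathcal{L}^{(n)}=\mathcal{L}^{(p)}\mathcal{Q}$ for the $\mathcal{Q}$ above --- the equivalence used without comment in the proof of Theorem~\ref{MSD}(a) --- but this is bookkeeping with the fused state spaces and the projection $\Pi$, not a new idea.
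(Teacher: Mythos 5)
Your proposal is correct and follows essentially the same route as the paper: the paper derives this proposition by observing that the arguments of Theorem \ref{MSD}(a) and Corollary \ref{newdual} only use preservation of $q$--exchangeability, which holds for the multi--species $q$--Boson and ASEP$(q,m/2)$ by Theorem \ref{SFA}(iv), and then applies the intertwining of Remark \ref{QInter} to the seed dualities $\bar{D}_{\mathrm{BCS}}$ and $D_{\mathrm{Kua}}$, exactly as you describe. Your additional observation that the product form $\prod_{k=1}^r(q^{k-1}-q^{N_{x_k}(\mathfrak{s})})$ is a signed linear combination of the subset sums (via the expansion in \eqref{lim1}) and that duality is linear in $D$ is the intended justification for the second displayed function.
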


\subsection{Symmetric limit}\label{Second}

\subsubsection{Multi--species SSEP$(\vec{m})$}
In the symmetric case, we obtain that the jump rates for a particle of species $j$ at site $x \in \mathcal{G}$ to switch places with a particles of species $i$ (where $i<j$) at site $y \in \mathcal{G}$ are
$$
 p(x,y) \frac{k_x^{(j)}}{m_x} \frac{k_y^{(i)}}{m_y},
$$
where $p$ is the symmetric stochastic matrix on $\mathcal{G}$ from section \ref{alm}.

Note that when all $m_x=m_y$, we obtain a multi--species version of a time--rescaled version of the model from \cite{GKRV}. Also note that all jump rates are bounded above by $1$. 

\subsubsection{Open boundary conditions when $m_x\rightarrow\infty$}\label{opeen}
Consider a subset of $\mathcal{G}$ which we denote $\partial \mathcal{G}$. Suppose that $m_x=M$ for all $x \in \partial \mathcal{G}$. 

Then the jump rate for a particle of species $j$ located at $x \in \partial \mathcal{G}$ to switch places with a particle of species $i$ (for $i<j$) located at $y \notin \partial\mathcal{G}$ is
$$
p(x,y) \frac{k_x^{(j)}}{M} \frac{k_y^{(i)}}{m_y}
$$
and the jump rate for a particle of species $i$ located at $x \in \partial \mathcal{G}$ to switch places with a particle of species $j$ (for $i<j$) located at $y \notin \partial \mathcal{G}$ is
$$
p(x,y) \frac{k_x^{(i)}}{M} \frac{k_y^{(j)}}{m_y} .
$$
The particles away from $\partial \mathcal{G}$ interact as a multi--species SEP$(\vec{m})$.

In particular, if there is only one species of particle, and there are $\alpha_x M$ particles located at $x \in \partial \mathcal{G}$, then in the $M\rightarrow \infty$ limit one obtains a process which we call the SEP$(\vec{m})$ on $\mathcal{G}-\partial\mathcal{G}$ with open boundary conditions. The jump rate for a particle from $y\notin \partial\mathcal{G}$ to $x\in \partial\mathcal{G}$ is given by $p(y,x)(1-\alpha_x)k_y/m_y$, and the jump rate for a particle from $x\in \partial\mathcal{G}$ to $y\notin \partial\mathcal{G}$ is given by $p(x,y)\alpha_x(m_y-k_y)/m_y$. (Recall that $p$ is symmetric, so $p(x,y)=p(y,x)$). See Figure \ref{open} for an example in the case of SSEP. Each $x\in \partial \mathcal{G}$ is viewed as a reservoir with infinitely many particles. More generally, if there are $n$ species of particles and $\alpha^{(j)}M$ particles of species $j$ located at $x$, then as $M\rightarrow\infty$ one obtains an $n$--species SEP$(\vec{m})$ with open boundary conditions.

Note that if we restrict to the open boundary SEP$(\vec{m})$ with only finitely many particles, then all $\alpha_x\rightarrow 0$, so particles can exit $\mathcal{G} - \partial \mathcal{G}$ into $\partial \mathcal{G}$, but cannot enter $\mathcal{G}$ from $\partial \mathcal{G}$.

The open boundary conditions described here are a generalization of the ``boundary--driven'' symmetric exclusion process described in section 4.4 of \cite{GKRV}. There, if a particle is located at $x \in \partial \mathcal{G}$, there is a unique $y\in \mathcal{G}$ such that $p(x,y)=1$. Here, we allow the full range of values for $p(x,y)$, which allows for a wider range of open boundary conditions. These open boundary conditions are the same as those of Theorem 4.1 of \cite{CGRConsistent}, which considered some initial conditions. 

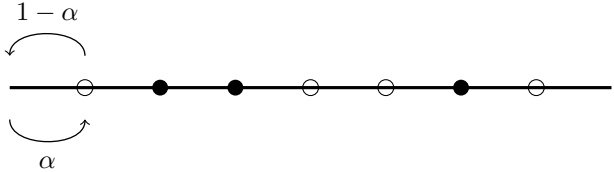
\begin{figure}
\caption{In this example, there are open boundary conditions. Particles enter at rate $\alpha$ and exit at rate $1-\alpha$. }
\begin{center}
\begin{tikzpicture}
\draw [very thick](0,0) -- (8,0);
\draw (1,0) circle (3pt);
\draw (4,0) circle (3pt);
\draw (5,0) circle (3pt);
\draw (7,0) circle (3pt);
\fill[black] (2,0) circle (3pt);
\fill[black] (3,0) circle (3pt);
\fill[black] (6,0) circle (3pt);
\draw (1,0.3) node (a) { };
\draw (0,-0.3) node (a') { };
\draw (0,0.3) node (b) { };
\draw (1,-0.3) node (c) {};
\draw (0.5,1) node  {$1-\alpha$};
\draw (0.5,-1) node {$\alpha$};
\draw (a) edge[out=90,in=90,->, line width=0.5pt] (b);
\draw (a') edge[out=-90,in=-90,->, line width=0.5pt] (c);
\end{tikzpicture}
\end{center}
\label{open}
\end{figure}

\subsubsection{Duality}

\begin{prop}\label{symm}
(a) For any $\vec{m}$, the $\hat{S}\times \hat{S}$ matrix $\Lambda D_{\mathrm{Spi}} \Phi$ is an inhomogeneous generalization of $D_{\mathrm{GKRV}}$, up to a constant factor. In particular,
$$
[\Lambda D_{\mathrm{Spi}} \Phi ](\mathfrak{s},\mathfrak{s}') = \prod_{x \in \mathcal{G}} \frac{  \binom{\mathfrak{s}(x)}{\mathfrak{s}'(x)} }{  \binom{m_x}{\mathfrak{s}'(x)} } 1_{\{\mathfrak{s}(x) \geq \mathfrak{s}'(x) \}}
$$
(b) Suppose that $m_y=M$ for $y \in \partial \mathcal{G}$. Then
$$
[\Lambda D_{\mathrm{Spi}} \Phi ](\mathfrak{s},\mathfrak{s}') = \prod_{ y \in \partial \mathcal{G} } \frac{  \mathfrak{s}(y) (  \mathfrak{s}(y)  -1) \cdots ( \mathfrak{s}(y)  -  \mathfrak{s}'(y)  + 1) }{M(M-1)\cdots ( M-\mathfrak{s}'(y) + 1 )}  \prod_{x  \in \mathcal{G}-\partial\mathcal{G}} \frac{  \binom{\mathfrak{s}(x)}{\mathfrak{s}'(x)} }{  \binom{m_x}{\mathfrak{s}'(x)} } 1_{\{\mathfrak{s}(x) \geq \mathfrak{s}'(x) \}} .
$$
In particular, if $\mathfrak{s}(y) = \alpha_y M$, then in the $M\rightarrow\infty$ limit the function becomes
$$
\prod_{ y \in \partial \mathcal{G} } \alpha_y^{\mathfrak{s}'(y)}  \prod_{x  \in \mathcal{G}-\partial\mathcal{G}} \frac{  \binom{\mathfrak{s}(x)}{\mathfrak{s}'(x)} }{  \binom{m_x}{\mathfrak{s}'(x)} } 1_{\{\mathfrak{s}(x) \geq \mathfrak{s}'(x) \}}.
$$

\end{prop}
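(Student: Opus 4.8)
The plan is to derive part (a) as the $q\to 1$ specialization of Proposition \ref{FusedDuality}, and then obtain part (b) from it by an elementary rewriting together with an asymptotic.

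\emph{Part (a).} The first observation is that the fission map $\Lambda$, the fusion map $\Phi$, and the relevant duality function attached to SSEP$(\vec m)$ are all the $q=1$ specializations of the corresponding objects for ASEP$(q,\vec m)$. Indeed, setting $q=1$ in the definition of $\Lambda_x$ turns it into the uniform law on the $\binom{m_x}{k_x}$ ways of placing $k_x$ (indistinguishable) particles into the $m_x$ sub-sites of $x$, which is exactly the product-Bernoulli (hence, conditionally on the particle count, uniform) stationary measure of SEP on that sub-lattice, as the symmetric construction requires; $\Phi$ is the same deterministic map in both cases; and setting $q=1$ in $D_{\mathrm{Sch}}(\mathfrak s,\vec x)=\prod_k \mathbf 1_{\mathfrak s(x_k)=1}\,q^{-x_k-N_{x_k}(\mathfrak s)}$ collapses it to $\prod_k \mathbf 1_{\mathfrak s(x_k)=1}=D_{\mathrm{Spi}}$. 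On a finite graph with finitely many particles every matrix in sight is finite and depends on $q$ rationally with no pole at $q=1$, so the limit $q\to 1$ commutes with the matrix products and $\Lambda D_{\mathrm{Spi}}\Phi=\lim_{q\to 1}\Lambda D_{\mathrm{Sch}}\Phi$.

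Now I would invoke Proposition \ref{FusedDuality}, which evaluates $\Lambda D_{\mathrm{Sch}}\Phi$ as a dynamically constant factor times $\big(\prod_{z<w}q^{-k_zl_w}\big)\big(\prod_z q^{-l_zm^{(z)}}\binom{k_z}{l_z}_q/\binom{m_z}{l_z}_q\big)$. Letting $q\to 1$, every factor $q^{-k_zl_w}$ and $q^{-l_zm^{(z)}}$ tends to $1$ and each $q$-binomial degenerates to the ordinary binomial, carrying along its support condition $k_z\ge l_z$, which survives as the indicator $\mathbf 1_{\{\mathfrak s(x)\ge\mathfrak s'(x)\}}$. This yields $[\Lambda D_{\mathrm{Spi}}\Phi](\mathfrak s,\mathfrak s')=\mathrm{const}\cdot\prod_{x\in\mathcal G}\binom{\mathfrak s(x)}{\mathfrak s'(x)}\big/\binom{m_x}{\mathfrak s'(x)}\cdot\mathbf 1_{\{\mathfrak s(x)\ge\mathfrak s'(x)\}}$, i.e.\ the inhomogeneous $D_{\mathrm{GKRV}}$, proving (a); since all the data are local to each super-site, the same formula holds on a general complete graph $\mathcal G$. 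Equivalently one can argue directly: $\Lambda=\bigotimes_x\Lambda_x$, $\Phi=\bigotimes_x\Phi_x$ and $D_{\mathrm{Spi}}=\prod_x\mathbf 1_{\{\cdot\ge\cdot\}}$ all factor over $\mathcal G$, so the computation reduces to a single-site binomial identity; I would use whichever route is shorter. (That this matrix is then a self-duality function for SSEP$(\vec m)$ is the separate content of Theorems \ref{aaaa}--\ref{bbbb} via the Rogers--Pitman machinery of Section 3.)

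\emph{Part (b).} With $m_y=M$ for every $y\in\partial\mathcal G$, substitute into (a). At a boundary site $y$ the factor $\binom{\mathfrak s(y)}{\mathfrak s'(y)}/\binom{M}{\mathfrak s'(y)}\cdot\mathbf 1_{\{\mathfrak s(y)\ge\mathfrak s'(y)\}}$ equals, after cancelling $\mathfrak s'(y)!$, the ratio of descending products $\mathfrak s(y)(\mathfrak s(y)-1)\cdots(\mathfrak s(y)-\mathfrak s'(y)+1)\big/\big(M(M-1)\cdots(M-\mathfrak s'(y)+1)\big)$, the indicator being absorbed since the numerator vanishes when $\mathfrak s(y)<\mathfrak s'(y)$; this is the first displayed identity. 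For the $M\to\infty$ limit put $\mathfrak s(y)=\alpha_yM$: the numerator $\prod_{j=0}^{\mathfrak s'(y)-1}(\alpha_yM-j)$ and the denominator $\prod_{j=0}^{\mathfrak s'(y)-1}(M-j)$ have the same number $\mathfrak s'(y)$ of factors and are asymptotic to $(\alpha_yM)^{\mathfrak s'(y)}$ and $M^{\mathfrak s'(y)}$ respectively, so the ratio tends to $\alpha_y^{\mathfrak s'(y)}$, giving the second displayed formula. Since the convergence is pointwise and, for fixed particle numbers, the state space is finite, the limiting function is again a duality function for the $M\to\infty$ open-boundary SEP$(\vec m)$, which is how (b) feeds into Theorems \ref{bbbb} and \ref{Opened}.

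The step I expect to be the crux is the bookkeeping in (a): one must be sure that the prefactor in Proposition \ref{FusedDuality} is genuinely constant along the dynamics, so that it survives the $q\to 1$ limit as an honest constant and does not secretly absorb a $\prod_x\binom{m_x}{\mathfrak s'(x)}$-type term, equivalently — in the factorized approach — that the single-site fission kernel is correctly normalized to a probability measure, so that the per-site count produces the $\binom{m_x}{\mathfrak s'(x)}$ in the denominator rather than just $\binom{\mathfrak s(x)}{\mathfrak s'(x)}$ in the numerator. Everything after that, including all of (b), is routine.
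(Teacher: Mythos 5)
Your proposal is correct and follows essentially the same route as the paper, which proves (a) by taking $q\to 1$ in Proposition \ref{FusedDuality} and (b) by a direct calculation; you have simply filled in the details that the paper leaves implicit (the commutation of the $q\to 1$ limit with the finite matrix products, the degeneration of the $q$-binomials together with their support condition, and the factorization over sites that lets the one-dimensional statement of Proposition \ref{FusedDuality} transfer to a general graph $\mathcal{G}$). The bookkeeping concern you flag at the end is handled exactly as you suspect: the prefactor in Proposition \ref{FusedDuality} is $q^{l}q^{-kl}$, which depends only on the conserved particle counts and tends to $1$, so nothing is absorbed into it.
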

\begin{proof}
(a) This follows immediately by taking $q\rightarrow 1$ in Proposition \ref{FusedDuality}.

(b) This follows from a direct calculation. 

\end{proof}

We next show how the SEP$(\vec{m})$ fits into the general framework of Proposition \ref{RPInter}.

\begin{prop}\label{aaaa} (a) Let $\tilde{D}^{(n)} = \Lambda D_{\mathrm{Spi}}^{(n)} \Phi$. Let $Q_t^{(n)}$ denote the semigroup of transition probabilities of $n$--species SEP$(\vec{m})$ on a finite lattice. Suppose that $\pi$ is uniform measure supported on particle configurations where there is at most particle one of each species. Similarly, assume that $\mathfrak{s}$ is a particle configuration with at most one particle of each species. Then
$$
\pi Q_t^{(n)} \tilde{D}^{(n)}  = \pi \tilde{D}^{(n)} Q_t^{(n)*}.
$$


(b) Suppose that $\hat{\mathfrak{s}}, \hat{\mathfrak{s}}'\in \hat{S}^{(3)}$, where $\hat{\mathfrak{s}}$ only contains particles of species $3$ and $1$, while $\hat{\mathfrak{s}}'$ only contains particles of species $2$ and $0$. Let $k$ denote the number of species $3$ particles, let $l$ denote the number of species $1$ particles, and let $m=\sum_x m_x$, which we assume to be finite. Let $\Pi$ be the partition of $\{0,1,\ldots,2m-1\}$ into the four blocks $\{0,1,\ldots,m-l-1\}, \{m-l,\ldots,2m-k-l\}, \{2m-k-l+1,\ldots,2m-k-1\}, \{2m-k,\ldots, 2m-1\}.$ Then
$$
[{\Pi}^* \tilde{D}^{(n)} \Pi](\hat{\mathfrak{s}},\hat{\mathfrak{s}}') = \mathrm{const} \cdot D_{\mathrm{GKRV}}(\hat{\mathfrak{s}},\hat{\mathfrak{s}}'),
$$
where on the right--hand--side $\hat{\mathfrak{s}}$ and $\hat{\mathfrak{s}}'$ are viewed as single species particle configurations.

\end{prop}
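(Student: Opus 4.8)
The plan is to read part (a) as an application of Proposition \ref{RPInter} to the symmetric exclusion process (equivalently, the interchange process) underlying SEP$(\vec m)$, and to read part (b) as a direct combinatorial computation that evaluates the matrix $\tilde D^{(n)}=\Lambda D_{\mathrm{Spi}}^{(n)}\Phi$ after the conjugation by the specific partition $\Pi$.

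For part (a): apply Proposition \ref{RPInter} with $X_t$ the $n$-species SEP$(\vec m)$ (with semigroup $P_t$) and $D=D_{\mathrm{Spi}}^{(n)}$. Most hypotheses are routine to check. The conditions of Theorem \ref{RPThm} hold because $\Lambda\Phi=\mathrm{Id}$ is part of the construction and $\pi\Lambda$ is the uniform measure on the relevant component of $S^{(n)}$, which is reversible — hence stationary — for the interchange dynamics (alternatively this is the $q\to 1$ specialization of Theorem \ref{SFA}(i)). Reversibility of $X_t$ holds with $V=\mathrm{Id}$, since the generator is a symmetric matrix ($p$ is symmetric and the dynamics merely transposes particle labels), so $P_t=P_t^*$. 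The proportionality $\Lambda=\mathfrak c\Phi^*$ is exactly the phenomenon described in Remark \ref{Constant2}: the restriction to configurations with at most one particle of each species is what puts us in the regime where every fiber $\phi^{-1}(\hat{\mathfrak s})$ has the same cardinality, and at $q=1$ the kernel $\Lambda$ is uniform on fibers. Finally $P_tD_{\mathrm{Spi}}^{(n)}=D_{\mathrm{Spi}}^{(n)}P_t^*$ is the multi-species Spitzer self-duality of the $n$-species SEP (the $q\to 1$, $j=1$ case of the multi-species $D_{\mathrm{Sch}}$ results, or a one-line check on generators).

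The only hypothesis that needs a genuine argument is $\pi_SD\Phi\Lambda=\pi_SD$ with $\pi_S=\pi\Lambda$. I would prove it by equivariance. The measure $\pi_S$ is uniform, hence invariant under the group $\prod_x S(m_x)$ that permutes sites within each fused block, and $D_{\mathrm{Spi}}^{(n)}(\mathfrak s,\mathfrak t)=\prod_z 1_{\{\mathfrak s(z)\ge\mathfrak t(z)\}}$ transforms covariantly under the simultaneous action on both of its arguments. Consequently the function $\mathfrak t\mapsto[\pi_SD](\mathfrak t)$ is invariant under within-block permutations of $\mathfrak t$, i.e. it factors through $\phi$. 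Since $[\Phi\Lambda](\mathfrak t,\mathfrak s')=1_{\{\phi(\mathfrak t)=\phi(\mathfrak s')\}}\,\lvert\phi^{-1}(\phi(\mathfrak t))\rvert^{-1}$ is precisely the conditional expectation onto $\phi$-pullbacks, it fixes every function of the form $f\circ\phi$, which gives $\pi_SD\Phi\Lambda=\pi_SD$ and completes part (a) via Proposition \ref{RPInter}.

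For part (b): first record the explicit shape of $\tilde D^{(n)}$ for $n=2m-1$, namely the multi-species generalization of the formula in Proposition \ref{symm}(a) — a product over lattice sites of a ratio of (multi)nomial coefficients in the species occupation numbers at that site, multiplied by domination indicators. Then unfold $[\Pi^*\tilde D^{(n)}\Pi](\hat{\mathfrak s},\hat{\mathfrak s}')$ as the double sum $\sum_{\hat{\mathfrak t}\in\Pi^{-1}(\hat{\mathfrak s})}\sum_{\hat{\mathfrak t}'\in\Pi^{-1}(\hat{\mathfrak s}')}\tilde D^{(n)}(\hat{\mathfrak t},\hat{\mathfrak t}')$; because $\tilde D^{(n)}$ factorizes over sites, so does this sum. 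At a site $x$ the local sum ranges over all ways of distributing the $k_x^{(3)}$ species-$3$ particles of $\hat{\mathfrak s}$ among the $k$ sub-species pooled into the last block of $\Pi$, the $k_x^{(1)}$ species-$1$ particles among the $m-k+1$ sub-species of the second block, and similarly the $l_x^{(2)}$ particles of $\hat{\mathfrak s}'$. The block sizes $m-l,\ m-k+1,\ l-1,\ k$ — expressed entirely through the global totals $k,l,m$ — are exactly the values that make each local sum collapse via the Vandermonde identity $\sum_j\binom{a}{j}\binom{b}{c-j}=\binom{a+b}{c}$, leaving $\prod_x\binom{k_x^{(3)}}{l_x^{(2)}}\binom{m_x}{l_x^{(2)}}^{-1}1_{\{k_x^{(3)}\ge l_x^{(2)}\}}$ times a site-independent constant; setting $\mathfrak s(x):=k_x^{(3)}$ and $\mathfrak s'(x):=l_x^{(2)}$ this is $\mathrm{const}\cdot D_{\mathrm{GKRV}}(\hat{\mathfrak s},\hat{\mathfrak s}')$, and the multi-species-duality computation in Section 4 (the display showing that $\bar\Pi^*D^{(n,n)}\Pi$ is a duality function in the symmetric case) confirms a posteriori that the resulting function is indeed a self-duality for SEP$(\vec m)$.

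I expect the main obstacle to be the combinatorial bookkeeping in part (b): correctly enumerating the $\Pi$-fibers, keeping straight which occupation numbers are summed over and which are frozen, and verifying that the particular block sizes force a clean telescoping rather than a merely formal identity. A secondary point requiring care is, in part (a), pinning down the correct restricted state spaces so that $\Lambda=\mathfrak c\Phi^*$ genuinely holds with a single constant and so that the multi-species Spitzer self-duality invoked is available on exactly that space.
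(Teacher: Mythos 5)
Your part (a) is essentially the paper's proof: both routes verify the hypotheses of Proposition \ref{RPInter} with $D=D_{\mathrm{Spi}}^{(n)}$, using $\Lambda\Phi=\mathrm{id}$, stationarity of $\pi\Lambda$ (the $q\to1$ case of Theorem \ref{SFA}), reversibility, and the constancy of the fiber sizes $\vert\phi^{-1}(\mathfrak{s})\vert=\prod_x m_x!$ in the one-particle-per-species setting to get $\Lambda=\mathfrak{c}\Phi^*$. The only divergence is the verification of $\pi_S D\Phi\Lambda=\pi_S D$: you argue that $[\pi_S D]$ is invariant under within-block site permutations, hence factors through $\phi$ and is fixed by the conditional-expectation operator $\Phi\Lambda$; the paper instead notes that $\Phi\Lambda$ only mixes configurations within a fiber of $\phi$ and that $[\pi D^{(n)}]$ is constant there by Remark \ref{StatDual} (a stationary measure paired with a duality function does not depend on the second argument). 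Both are valid; the paper's is shorter and uses less of the specific structure of $D_{\mathrm{Spi}}$.

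Part (b) is where you genuinely depart from the paper, and where there is a gap. The paper's mechanism involves no binomial identity at all: because the blocks of $\Pi$ interlace (every refined species-$2$ label of $\hat{\mathfrak{s}}'$ lies strictly between the refined species-$1$ and species-$3$ labels of $\hat{\mathfrak{s}}$), the fine-level domination indicators reduce to the coarse ones \emph{independently of which refinement is chosen}, so $\tilde{D}^{(n)}(\hat{\mathfrak{t}},\hat{\mathfrak{t}}')$ takes a single common value over all preimage pairs on which $\Pi$ is supported. The sum is then that common value times a preimage count ($k!(m-k)!$ and $l!(m-l)!$), which is conserved and absorbed into the constant, and the common value is identified with $D_{\mathrm{GKRV}}$ by the argument of Proposition \ref{symm}(a). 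Your Vandermonde collapse instead presupposes that the summands vary with the refinement and that the local sums telescope; you assert, but do not show, that the block sizes $m-l$, $m-k+1$, $l-1$, $k$ force this, and you flag it yourself as the main obstacle. Moreover, your site-by-site factorization of the preimage sum is in tension with the restriction, needed both for part (a) and for the explicit product formula for $\tilde{D}^{(n)}$ you invoke, to configurations with exactly one particle of each fine species: that global constraint couples the sites, so the ``local sums'' are not independent. The cleanest repair is to prove the constancy-on-fibers statement directly from the interlacing of the blocks, which renders the binomial identity unnecessary.
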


\begin{proof} (a) We just need to check that the conditions of Proposition \ref{RPInter} hold. First we need to check that the conditions of Theorem \ref{RPThm} hold. It is immediate that $\Lambda\Phi = \mathrm{id}$ and $n$--species SSEP is reversible, so it just remains to show that
$$
\pi \Lambda P_t = \pi \Lambda P_t \Phi \Lambda.
$$
By Theorem \ref{SFA}, $\pi\Lambda$ is stationary, so the left--hand--side is $\pi\Lambda P_t = \pi \Lambda$. The right--hand--side is $\pi \Lambda \Phi \Lambda$, which equals $\pi\Lambda$ because $\Lambda\Phi$ is the identity.

Next we just need to show that $\Lambda^* = \mathfrak{c}\Phi$ and $\pi D^{(n)}\Phi \Lambda = \pi D^{(n)}$. To see that $\Lambda^* = \mathfrak{c}\Phi$, we just need (by Remark \ref{Constant}) that $\mathfrak{c}^{-1} := \vert \phi^{-1}(\mathfrak{s})\vert$ is the same value for every $\mathfrak{s}$. This indeeds hold when there is exactly one particle of each species, where $\vert \phi^{-1}(\mathfrak{s}) \vert = \prod_x m_x!$. 

To see that $\pi D^{(n)}\Phi \Lambda = \pi D^{(n)}$, we first write it equivalently as
$$
\sum_{\mathfrak{s}'} [\pi D^{(n)}] ( \mathfrak{s}' )   [\Phi\Lambda] ( \mathfrak{s}' , \mathfrak{s}) = [\pi D^{(n)}](\mathfrak{s}).
$$
Because $\Lambda\Phi$ is the identity, the expression $[\Phi\Lambda] ( \mathfrak{s}' , \mathfrak{s}) = \Lambda(\phi(\mathfrak{s}'),\mathfrak{s})  $ can only be nonzero if $\phi(\mathfrak{s}') = \phi(\mathfrak{s})$. Therefore it suffices to prove that
$$
\sum_{\mathfrak{s}' \in \phi^{-1}(\phi(\mathfrak{s}))} [\pi D^{(n)}] ( \mathfrak{s}' )   \Lambda( \phi(\mathfrak{s}) , \mathfrak{s}) = [\pi D^{(n)}](\mathfrak{s}),
$$
which simplifies to 
$$
\mathfrak{c}\sum_{\mathfrak{s}' \in \phi^{-1}(\phi(\mathfrak{s}))}  [\pi D^{(n)}] ( \mathfrak{s}' )    =  [\pi D^{(n)}](\mathfrak{s}).
$$
Because the sum is over $\mathfrak{c}^{-1}$ terms, it suffices to show that
$$
[\pi D^{(n)}](\mathfrak{s}) = [\pi D^{(n)}](\mathfrak{s}'). 
$$
But this follows from Remark \ref{StatDual}.

(b) Because $\hat{\mathfrak{s}}$ only contains particles of species $3$ and $1$, while $\hat{\mathfrak{s}}'$ only contains particles of species $2$ and $0$, for all $\hat{\mathfrak{s}}^{(n)}$ and $\hat{\mathfrak{s}}'^{(n)}$ such that $\Pi^*(\hat{\mathfrak{s}}, \hat{\mathfrak{s}}^{(n)})$ and $\Pi( \hat{\mathfrak{s}}'^{(n)}, \hat{\mathfrak{s}}')$ are nonzero, the duality function $D(\hat{\mathfrak{s}}^{(n)}, \hat{\mathfrak{s}}'^{(n)})$ takes the same value. The number of such $\hat{\mathfrak{s}}^{(n)}$ is $k!(m-k)!$, and the number of such $\hat{\mathfrak{s}}'^{(n)}$ is $l!(m-l)!$, which are constant under the dynamics. The value of $D(\hat{\mathfrak{s}}^{(n)}, \hat{\mathfrak{s}}'^{(n)})$ is the same as $D_{\text{GKRV}}(\hat{\mathfrak{s}}^{(n)}, \hat{\mathfrak{s}}'^{(n)})$, by the same argument as in Proposition \ref{symm}(a).

\end{proof}

By comparing Theorem \ref{SFA} and Proposition \ref{symm} in light of Proposition \ref{RPInter}, it is natural to make the following duality Ansatz (note that this is a generalization of both Theorem 4.6 of \cite{GKRV} and Theorem 4.1 of \cite{CGRConsistent}):

\begin{theorem}\label{bbbb}
Set all $m_x=m$ for $x \in \mathcal{G} - \partial \mathcal{G}$. 
Let $\mathfrak{s}_t$ evolve as an open SEP($m/2)$ on $\mathcal{G} - \partial \mathcal{G}$, with particle reservoirs at $\partial \mathcal{G}$, as described in section \ref{opeen}. Let $\mathfrak{s}'_t$ evolve as SEP$(\vec{m})$ on $\mathcal{G}$ with finitely many particles, also described in section \ref{opeen}. Then $\mathfrak{s}_t$ and $\mathfrak{s}'_t$ are dual with respect to the function
$$
\prod_{ y \in \partial \mathcal{G} } \alpha_y^{\mathfrak{s}'(y)}  \prod_{x  \in \mathcal{G}-\partial\mathcal{G}} \frac{  \binom{\mathfrak{s}(x)}{\mathfrak{s}'(x)} }{  \binom{m}{\mathfrak{s}'(x)} } 1_{\{\mathfrak{s}(x) \geq \mathfrak{s}'(x) \}}.
$$
\end{theorem}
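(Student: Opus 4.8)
The plan is to prove the duality directly from its generator characterization $\mathcal{L}_X D = D\mathcal{L}_Y^{*}$, where $\mathcal{L}_X$ generates the open SEP$(m/2)$ on $\mathcal{G}-\partial\mathcal{G}$ (with reservoirs at $\partial\mathcal{G}$), $\mathcal{L}_Y$ generates the SEP$(\vec m)$ on $\mathcal{G}$ with finitely many particles, i.e. the $M\to\infty$ absorbing limit of Section \ref{opeen}, and $D$ is the proposed function. Proposition \ref{symm}(b) already identifies $D$ with the $M\to\infty$ limit of $\Lambda D_{\mathrm{Spi}}\Phi$, which is what suggests the Ansatz; as noted in the remark preceding the theorem, the Rogers--Pitman machinery only produces the $\pi$--weighted identity of Proposition \ref{aaaa}, so the full, unweighted duality must be checked by hand. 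The key structural observation is that $D(\mathfrak{s},\mathfrak{s}')=\prod_{y\in\partial\mathcal{G}}\alpha_y^{\mathfrak{s}'(y)}\prod_{x\in\mathcal{G}-\partial\mathcal{G}}d(\mathfrak{s}(x),\mathfrak{s}'(x))$, with $d(a,b)=\binom{a}{b}/\binom{m}{b}$ for $a\geq b$ and $d(a,b)=0$ otherwise, is a product over sites, while both generators are sums of two--site terms indexed by the bonds $\{x,z\}$ of $\mathcal{G}$ weighted by $p(x,z)$; hence $\mathcal{L}_X D = D\mathcal{L}_Y^{*}$ splits into one identity per bond, and I only need to treat three cases: bulk bonds $\{x,z\}\subseteq\mathcal{G}-\partial\mathcal{G}$, boundary bonds $\{x,y\}$ with $x\in\mathcal{G}-\partial\mathcal{G}$ and $y\in\partial\mathcal{G}$, and bonds inside $\partial\mathcal{G}$.

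For a bulk bond, both $\mathcal{L}_X$ and $\mathcal{L}_Y$ contribute the identical two--site term of the homogeneous SEP$(m/2)$ (all bulk sites carry the same $m$), acting on the two factors $d(\mathfrak{s}(x),\mathfrak{s}'(x))\,d(\mathfrak{s}(z),\mathfrak{s}'(z))$ with every other factor inert, so the bond identity is exactly the two--site identity that underlies the self--duality function $D_{\mathrm{GKRV}}$; this is Theorem 4.2(b) of \cite{GKRV} (equivalently, the $q\to1$ limit of the ASEP$(q,m/2)$ self--duality $D_{\mathrm{CGRS}}$), so nothing new is needed. For a bond inside $\partial\mathcal{G}$ there is no contribution from $\mathcal{L}_X$ (these bonds are not present), and on the $\mathcal{L}_Y$ side the relevant rate $p(y,y')\,\mathfrak{s}'(y)(M-\mathfrak{s}'(y'))/M^{2}$ vanishes as $M\to\infty$ because the dual has only finitely many particles; so the bond identity is $0=0$. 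Passing to these limiting generators is harmless: on the finite dual configuration space only finitely many rates are nonzero and they converge, and $D$ converges pointwise, so one could equally prove the inhomogeneous SEP$(\vec m)$ self--duality on all of $\mathcal{G}$ at finite $M$ by the same bond analysis and then let $M\to\infty$.

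The real content is the boundary bond $\{x,y\}$. In the limit, on the $\mathfrak{s}$--side a particle enters $x$ from the reservoir at rate $p(x,y)\alpha_y(m-\mathfrak{s}(x))/m$ and leaves $x$ at rate $p(x,y)(1-\alpha_y)\mathfrak{s}(x)/m$; on the $\mathfrak{s}'$--side a particle moves from $x$ to the absorbing site $y$ at rate $p(x,y)\mathfrak{s}'(x)/m$ and the reverse rate is $0$. Writing $a=\mathfrak{s}(x)$ and $b=\mathfrak{s}'(x)$ and factoring out the part of $D$ untouched by these moves — in particular the factor $\alpha_y^{\mathfrak{s}'(y)}$, which on the $\mathfrak{s}'$--side gets multiplied by $\alpha_y$ exactly when $b$ drops to $b-1$ — the bond identity reduces, after dividing by $p(x,y)/m$, to
\begin{equation*}
\alpha_y(m-a)\bigl(d(a+1,b)-d(a,b)\bigr)+(1-\alpha_y)\,a\bigl(d(a-1,b)-d(a,b)\bigr)=b\bigl(\alpha_y\,d(a,b-1)-d(a,b)\bigr).
\end{equation*}
Comparing coefficients of $\alpha_y^{0}$ and $\alpha_y^{1}$ reduces this to two elementary identities: the $\alpha_y$--free part is $a\binom{a-1}{b}=(a-b)\binom{a}{b}$, and the coefficient of $\alpha_y$ follows from Pascal's rule $\binom{a+1}{b}-\binom{a}{b}=\binom{a}{b-1}$ together with $b\binom{a}{b}=(a-b+1)\binom{a}{b-1}$ and $\binom{m}{b}=\tfrac{m-b+1}{b}\binom{m}{b-1}$. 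Summing the three types of bond identities over all bonds of $\mathcal{G}$ gives $\mathcal{L}_X D = D\mathcal{L}_Y^{*}$, which is the asserted duality. The main obstacle is the boundary analysis: one must set up the $M\to\infty$ limiting boundary generators correctly and track the reservoir weights $\alpha_y$ simultaneously with the binomial factors — this is precisely where the exponent $\alpha_y^{\mathfrak{s}'(y)}$ in $D$ is forced — whereas the bulk identity is already in the literature.
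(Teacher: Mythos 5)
Your proposal is correct and follows essentially the same route as the paper: reduce the generator identity $\mathcal{L}_XD=D\mathcal{L}_Y^*$ to the closed-boundary self-duality of \cite{GKRV} on the bulk plus a single boundary-bond identity, which is exactly the paper's equation \eqref{Need} written in your $d(a,b)$ notation. The only difference is cosmetic: you verify that identity uniformly by matching the coefficients of $\alpha_y^0$ and $\alpha_y^1$ via Pascal-type binomial identities (which is arguably cleaner), whereas the paper splits into four cases according to the relative sizes of $\mathfrak{s}(x)$ and $\mathfrak{s}'(x)$ and divides by $D(\mathfrak{s},\mathfrak{s}')$ where it is nonzero.
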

\begin{proof}
Let $L$ be the generator of $\mathfrak{s}_t$ and let $L'$ be the generator of $\mathfrak{s}'_t$. If $D(\mathfrak{s},\mathfrak{s}')$ denote the duality function, then the goal is to prove that
\begin{equation}\label{TwoSite}
LD (\mathfrak{s},\mathfrak{s}')= DL'^*(\mathfrak{s},\mathfrak{s}')
\end{equation}
for every $\mathfrak{s},\mathfrak{s}'$. Let $\mathfrak{s}'_{x\rightarrow y}$ indicate the particle configuration $\mathfrak{s}'$ after a particle has jumped from lattice site $x$ to $y$. Let $\mathfrak{s}'_{x-}$ denote the particle configuration after a particle from site $x$ has been removed, and similarly let $\mathfrak{s}'_{x+}$ denote the particle configuration after a particle has been added to site $x$. Then \eqref{TwoSite} is equivalent to 
\begin{multline*}
L(\mathfrak{s},\mathfrak{s})D(\mathfrak{s},\mathfrak{s}') 
+ \sum_{ x,y \in ( \mathcal{G}- \partial\mathcal{G})^2  } L(\mathfrak{s},{\mathfrak{s}}_{x\rightarrow y}  )D(\mathfrak{s}_{x\rightarrow y},\mathfrak{s}') + \sum_{x \in \mathcal{G} - \partial \mathcal{G}} \left( L(    \mathfrak{s},\mathfrak{s}_{x-}     )D(  \mathfrak{s}_{x-} , \mathfrak{s}'   ) + L(    \mathfrak{s},\mathfrak{s}_{x+}     )D(  \mathfrak{s}_{x+} , \mathfrak{s}'   ) \right) \\
= D(\mathfrak{s},\mathfrak{s}')L'( \mathfrak{s}' ,\mathfrak{s}') + \sum_{x,y \in \mathcal{G}} D(\mathfrak{s}, \mathfrak{s}'_{x\rightarrow y}) L'(\mathfrak{s}', \mathfrak{s}'_{x\rightarrow y}).
\end{multline*}
Let $L_{\mathrm{cl}}$ and $L'_{\mathrm{cl}}$ denote the generators on $\mathcal{G}- \partial \mathcal{G}$ with closed boundary conditions. Since $L_{\mathrm{cl}}D = DL_{\mathrm{cl}}^{'*}$, it suffices to prove that
\begin{multline*}
L(\mathfrak{s},\mathfrak{s})D(\mathfrak{s},\mathfrak{s}')  - L_{\mathrm{cl}}(\mathfrak{s} ,\mathfrak{s})D(\mathfrak{s},\mathfrak{s}') + \sum_{x \in \mathcal{G} - \partial \mathcal{G}} \left( L(    \mathfrak{s},\mathfrak{s}_{x-}     )D(  \mathfrak{s}_{x-} , \mathfrak{s}'   ) + L(    \mathfrak{s},\mathfrak{s}_{x+}     )D(  \mathfrak{s}_{x+} , \mathfrak{s}'   ) \right) \\
= D(\mathfrak{s},\mathfrak{s}')L'( \mathfrak{s}' ,\mathfrak{s}') - D(\mathfrak{s},\mathfrak{s}')L_{\mathrm{cl}}'( \mathfrak{s}' ,\mathfrak{s}')  + \sum_{ \substack{ x,y \in \mathcal{G}  \\ (x,y) \notin (\mathcal{G} - \partial \mathcal{G})^2 }  } D(\mathfrak{s}, \mathfrak{s}'_{x\rightarrow y}) L'(\mathfrak{s}', \mathfrak{s}'_{x\rightarrow y}) .
\end{multline*}
Now using that
\begin{align*}
- L(\mathfrak{s},\mathfrak{s}) &= -L_{\mathrm{cl}}(\mathfrak{s},\mathfrak{s}) + \sum_{x \in \mathcal{G} - \partial \mathcal{G}} \left( L(    \mathfrak{s},\mathfrak{s}_{x-}     ) + L(    \mathfrak{s},\mathfrak{s}_{x+}     ) \right)\\
- L'(\mathfrak{s}',\mathfrak{s}') &= -L'_{\mathrm{cl}}(\mathfrak{s}',\mathfrak{s}') + \sum_{ \substack{ x,y \in \mathcal{G}  \\ (x,y) \notin (\mathcal{G} - \partial \mathcal{G})^2 }  }  L'(\mathfrak{s}', \mathfrak{s}'_{x\rightarrow y}),
\end{align*}
it suffices to show that
\begin{multline*}
\sum_{x \in \mathcal{G} - \partial \mathcal{G}} \left( L(    \mathfrak{s},\mathfrak{s}_{x-}     )[ D(  \mathfrak{s}_{x-} , \mathfrak{s}'   ) -D(\mathfrak{s},\mathfrak{s}') ]+ L(    \mathfrak{s},\mathfrak{s}_{x+}     )[D(  \mathfrak{s}_{x+} , \mathfrak{s}'   ) -D(\mathfrak{s},\mathfrak{s}') ]\right) \\
=  \sum_{ \substack{ x,y \in \mathcal{G}  \\ (x,y) \notin (\mathcal{G} - \partial \mathcal{G})^2 }  } [D(\mathfrak{s}, \mathfrak{s}'_{x\rightarrow y}) - D(\mathfrak{s},\mathfrak{s}')] L'(\mathfrak{s}', \mathfrak{s}'_{x\rightarrow y})
\end{multline*}
Since particles can not exit the sink sites in the $\mathfrak{s}'$ process, the summation on the right--hand--side can be replaced with $x\in \mathcal{G}- \partial\mathcal{G}, y\in \partial\mathcal{G}$. Now using that
\begin{align*}
L(    \mathfrak{s},\mathfrak{s}_{x-}     )= \sum_{y \in \partial\mathcal{G}} p(x,y)(1-\alpha_y)& \frac{\mathfrak{s}(x)}{m}, \quad \quad  \quad \quad L(    \mathfrak{s},\mathfrak{s}_{x+}     )= \sum_{y \in \partial\mathcal{G}} p(y,x) \alpha_y \frac{m-\mathfrak{s}(x)}{m},\\
& L'(\mathfrak{s}', \mathfrak{s}'_{x\rightarrow y}) = p(x,y)\frac{\mathfrak{s}'(x)}{m},
\end{align*}
we see that it suffices to show that
\begin{multline*}
\sum_{ \substack{x \in \mathcal{G} - \partial \mathcal{G} \\ y \in \partial \mathcal{G}  } } p(x,y) \left( (1-\alpha_y)\mathfrak{s}(x) [ D(  \mathfrak{s}_{x-} , \mathfrak{s}'   ) -D(\mathfrak{s},\mathfrak{s}') ]+ \alpha_y(m-\mathfrak{s}(x))[D(  \mathfrak{s}_{x+} , \mathfrak{s}'   ) -D(\mathfrak{s},\mathfrak{s}') ]\right) \\
=  \sum_{ \substack{x \in \mathcal{G} - \partial \mathcal{G} \\ y \in \partial \mathcal{G}  } } p(x,y) [D(\mathfrak{s}, \mathfrak{s}'_{x\rightarrow y}) - D(\mathfrak{s},\mathfrak{s}')] \mathfrak{s}'(x),
\end{multline*}
where we used that $p(x,y)=p(y,x)$. Thus, it suffices to show that 
\begin{equation}\label{Need}
(1-\alpha_y)\mathfrak{s}(x) [ D(  \mathfrak{s}_{x-} , \mathfrak{s}'   ) -D(\mathfrak{s},\mathfrak{s}') ]+ \alpha_y(m-\mathfrak{s}(x))[D(  \mathfrak{s}_{x+} , \mathfrak{s}'   ) -D(\mathfrak{s},\mathfrak{s}') ] = [D(\mathfrak{s}, \mathfrak{s}'_{x\rightarrow y}) - D(\mathfrak{s},\mathfrak{s}')] \mathfrak{s}'(x)
\end{equation}
for all $x \in \mathcal{G} - \partial \mathcal{G} , y \in \partial \mathcal{G}$.

Note that we have demonstrated that the duality result does not depend on the actual values of $p(x,y)$, as long as $p$ is symmetric. Therefore, the situation reduces to that of Theorem 4.6 of \cite{GKRV}, since that theorem ultimately uses an identity equivalent to \eqref{Need}. For completeness, we finish the remainder of the proof here as well. 

So far we have not used the actual expression for the duality function $D$. To incorporate this information, we split up into various cases. Note that in the factorized form of $D$, we can cancel out all contributions from vertices other than $x$ and $y$.

\underline{Case 1: $\mathfrak{s}'(x) > \mathfrak{s}(x)+1$}

In this case, the value of $D$ is zero throughout \eqref{Need}, so it is immediate that the identity holds.

\underline{Case 2: $\mathfrak{s}'(x) = \mathfrak{s}(x)+1$}

In this case, $D(  \mathfrak{s}_{x-} , \mathfrak{s}'   )  = D(\mathfrak{s},\mathfrak{s}') = 0$, so it suffices to show that
$$
\alpha_y(m-\mathfrak{s}(x))\frac{ \binom{\mathfrak{s}(x) + 1}{\mathfrak{s}'(x)} }{ \binom{m}{\mathfrak{s}'(x)}} = \alpha_y  \frac{ \binom{\mathfrak{s}(x) }{\mathfrak{s}'(x) - 1} }{ \binom{m}{\mathfrak{s}'(x)-1}} \mathfrak{s}'(x).
$$
This is true by the definition of binomials.

\underline{Case 3: $\mathfrak{s}'(x) = \mathfrak{s}(x)$}

In this case, $D(  \mathfrak{s}_{x-} , \mathfrak{s}'   ) =0,  D(\mathfrak{s},\mathfrak{s}') \neq 0$, so it suffices to show that
$$
-(1-\alpha_y)\mathfrak{s}(x) D(\mathfrak{s},\mathfrak{s}') + \alpha_y(m-\mathfrak{s}(x))\left(\frac{D(  \mathfrak{s}_{x+} , \mathfrak{s}'   )}{D(\mathfrak{s},\mathfrak{s}')}  -1\right) D(\mathfrak{s},\mathfrak{s}')  = \left(\frac{D(\mathfrak{s}, \mathfrak{s}'_{x\rightarrow y})}{D(\mathfrak{s},\mathfrak{s}')}-1 \right)  D(\mathfrak{s},\mathfrak{s}') \mathfrak{s}'(x),
$$
which, by substituting the binomials, is equivalent to 
$$
-(1-\alpha_y)\mathfrak{s}(x) + \alpha_y(m-\mathfrak{s}(x))\left( \mathfrak{s}(x) + 1 - 1\right)   = \left( \alpha_y(m-\mathfrak{s}'(x)+1)-1 \right)   \mathfrak{s}'(x),
$$
which can be seen immediately to hold.

\underline{Case 4: $\mathfrak{s}'(x) < \mathfrak{s}(x)$}

Now, none of the $D$ terms in \eqref{Need} are zero, so we directly plug in 
$$
(1-\alpha_y)\mathfrak{s}(x) \left[ \frac{\mathfrak{s}(x) - \mathfrak{s}'(x)}{\mathfrak{s}(x)}-1 \right]+ \alpha_y(m-\mathfrak{s}(x))\left[ \frac{\mathfrak{s}(x)+1}{\mathfrak{s}(x)+1-\mathfrak{s}'(x)} - 1 \right] = \left( \alpha_y \frac{m-\mathfrak{s}'(x)+1}{\mathfrak{s}(x) - \mathfrak{s}'(x)+1} -1 \right)   \mathfrak{s}'(x),
$$
which simplifies to
$$
\mathfrak{s}'(x) + (m-\mathfrak{s}(x))\left[ \frac{\mathfrak{s}(x)+1}{\mathfrak{s}(x)+1-\mathfrak{s}'(x)} - 1 \right] = \mathfrak{s}'(x)\frac{m-\mathfrak{s}'(x)+1}{\mathfrak{s}(x) - \mathfrak{s}'(x)+1}.
$$
Multiplying through by $\mathfrak{s}(x) - \mathfrak{s}'(x)+1$ verifies the identity, completing the proof.

\end{proof}

As an application of the duality, we have the two results for the open SEP$(\vec{m})$: the first about hydrodynamic limit and the second about stationary measures. Here, we taken $\mathcal{G} = \{0,1,2,\ldots\}$ and $\partial \mathcal{G} = \{0\}$. Set $\alpha=\alpha_0 \in (0,1]$ and $p(x,x+1) = p(x+1,x) = \gamma \in (0,1/2]$ for all $x\geq 1$, while $p(0,1)=p(1,0)=1$. By rescaling time by $\gamma^{-1}$, which does not affect the duality result, so that the jump rates in the bulk are equal to $1$, we can view the entrance rates as $\gamma^{-1}\alpha$ and the exit rates as $\gamma^{-1}(1 - \alpha)$. Note that in the previous works of \cite{Ohk17}, there was a duality result which required extra sites called ``copying sites'' whenever the sum of the entrance and exit rates was not equal to $1$. We do not require such sites here.

Recall that $\mathrm{erfc}(z)$ is the complementary error function defined by 
$$
\mathrm{erfc}(z) = \frac{2}{\sqrt{\pi}}\int_z^{\infty} e^{-t^2}dt,
$$
and that its integral is
$$
\int_x^{\infty} \mathrm{erfc}(z)dz = \frac{e^{-x^2}}{\sqrt{\pi}} - x \cdot \mathrm{erfc}(x).
$$

\begin{theorem}\label{Opened}
(a) Set $\mathfrak{s}_0(x)=0$ for all $x$, and let $\rho_t(x)$ be the density profile of $\mathfrak{s}_t$, i.e. 
$$
\rho_t(x) = \frac{1}{m}\left( \mathbb{P}( \mathfrak{s}_t(x)=1) + 2 \cdot \mathbb{P}( \mathfrak{s}_t(x)=2) + \ldots + m \cdot \mathbb{P}(\mathfrak{s}_t(x)=m) \right).
$$

In the hydrodynamic limit,
\begin{align*}
\lim_{L\rightarrow \infty} \rho_ {\gamma^{-1} m\tau L}\left( \lfloor \chi L^{1/2} \rfloor \right) &= \alpha \cdot \mathrm{erfc}\left( \frac{\chi}{\sqrt{2\tau}}\right)\\
\mathcal{N}(\chi,\tau):=\lim_{L\rightarrow \infty} \mathbb{E}[m^{-1} N_{\chi L^{1/2}}(\mathfrak{s}_{\gamma^{-1} m \tau L})] &= \alpha \cdot \frac{\sqrt{2 \tau}}{\sqrt{\pi}} \exp\left( - \frac{\chi^2}{2\tau} \right) - \alpha \cdot \chi \cdot \mathrm{erfc}\left( \frac{\chi}{\sqrt{2\tau}}\right), 
\end{align*}
and $\mathcal{N}_{\alpha}(\chi,\tau)$ solves the $(1+1)$--dimensional heat equation 
$$
 \frac{\partial \mathcal{N}(\chi,\tau)}{ \partial \tau}=  \frac{1}{2}\frac{\partial^2 \mathcal{N}(\chi,\tau)}{\partial \chi^2} 
$$
on $[0,\infty)$ with initial condition $\mathcal{N}(\chi,0) = 0$ and Neumann boundary condition $\partial_{\chi}\mathcal{N}(\chi,\tau)\Big|_{\chi=0} = -\alpha $.

(b) The unique stationary measure of the process $\mathfrak{s}_{t}$, started from any initial condition, is the i.i.d. product of Bernoulli measures of parameter $\alpha$. In other words, 
$$
\mathbb{P}( \mathfrak{s}_{\infty}(x_1) =1, \ldots, \mathfrak{s}_{\infty}(x_d) =1)  = \alpha^d
$$
for every $(x_1,\ldots,x_d)$.

\end{theorem}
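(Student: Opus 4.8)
The plan is to deduce both parts from the duality of Theorem~\ref{bbbb} between $\mathfrak{s}_t$ (open SEP$(m/2)$ on $\{1,2,\dots\}$ with a reservoir at $0$ that injects at rate proportional to $\alpha$) and its dual $\mathfrak{s}'_t$ (SEP$(\vec m)$ on $\{0,1,2,\dots\}$ with finitely many particles and, since there $\alpha'\to 0$, an \emph{absorbing} reservoir at $0$; away from $0$ the dual particles perform interacting nearest--neighbour random walks). Throughout, $D$ denotes the duality function of Theorem~\ref{bbbb}.

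\textbf{Part (a).} First I would take $\mathfrak{s}_0\equiv 0$ and let $\mathfrak{s}'_t$ carry a single particle started at a bulk site $z\ge 1$. Then $D(\mathfrak{s},\delta_z)=\binom{\mathfrak{s}(z)}{1}\big/\binom{m}{1}=\mathfrak{s}(z)/m$, while $D(\mathbf{0},\mathfrak{s}')=\alpha^{\mathfrak{s}'(0)}\prod_{x\ge 1}\mathbf{1}_{\mathfrak{s}'(x)=0}$, so the single dual particle contributes $\alpha$ exactly when it has been absorbed at $0$ and $0$ otherwise. The duality identity therefore collapses to
$$
\rho_t(z)=\alpha\,\mathbb{P}_z\big(\text{the dual walk has reached the reservoir }\{0\}\text{ by time }t\big),
$$
and the corresponding formula for $\mathbb{E}[m^{-1}N_x(\mathfrak{s}_t)]$ follows by summation over $z\ge x$. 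Next I would apply an invariance principle: under the diffusive rescaling $z=\lfloor\chi L^{1/2}\rfloor$, $t=\gamma^{-1}m\tau L$, the killed walk on $\{1,2,\dots\}$ converges to Brownian motion on $[0,\infty)$ absorbed at the origin (the normalization being fixed by this time scale), and the reflection principle identifies the hitting probability as $\mathrm{erfc}(\chi/\sqrt{2\tau})$. This yields $\rho\to\alpha\,\mathrm{erfc}(\chi/\sqrt{2\tau})$ and, after passing the sum to an integral, $\mathcal{N}(\chi,\tau)=\alpha\int_\chi^\infty\mathrm{erfc}(y/\sqrt{2\tau})\,dy$, which is exactly the stated closed form by the quoted antiderivative of $\mathrm{erfc}$. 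Finally, since $\mathcal{N}=\int_\chi^\infty\rho_\tau$ and $\rho_\tau(\chi)=\alpha\,\mathrm{erfc}(\chi/\sqrt{2\tau})$ solves the heat equation, differentiating under the integral sign gives $\partial_\tau\mathcal{N}=\tfrac12(-\partial_\chi\rho_\tau)=\tfrac12\partial_\chi^2\mathcal{N}$, with $\mathcal{N}(\chi,0)=0$ and $\partial_\chi\mathcal{N}\big|_{\chi=0}=-\rho_\tau(0)=-\alpha$; alternatively these three facts are checked directly on the explicit formula.

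\textbf{Part (b).} Fix sites $x_1<\dots<x_d$ in $\{1,2,\dots\}$ (more generally, $j_i$ dual particles at $x_i$), let $\mathfrak{s}'_0$ be the corresponding dual configuration, and start $\mathfrak{s}_t$ from an arbitrary $\mathfrak{s}_0$. The duality reads
$$
\mathbb{E}_{\mathfrak{s}_0}\!\Big[\textstyle\prod_{i}\binom{\mathfrak{s}_t(x_i)}{j_i}\big/\binom{m}{j_i}\,\mathbf{1}_{\mathfrak{s}_t(x_i)\ge j_i}\Big]=\mathbb{E}_{\mathfrak{s}'_0}\big[D(\mathfrak{s}_0,\mathfrak{s}'_t)\big].
$$
On the half--line the single--particle walk is recurrent, so each of the finitely many dual particles is a.s.\ absorbed at $0$; hence $\mathfrak{s}'_t\to(\sum_i j_i)\delta_0$ and $D(\mathfrak{s}_0,\mathfrak{s}'_t)\to\alpha^{\sum_i j_i}$ pointwise. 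As $|D|\le 1$, dominated convergence gives that the $t\to\infty$ limit of the left side equals $\prod_i\alpha^{j_i}$, \emph{independently of $\mathfrak{s}_0$}. Thus every subsequential stationary limit of $\mathfrak{s}_t$ has the same joint factorial moments as the i.i.d.\ product measure whose marginal at each site is $\mathrm{Binomial}(m,\alpha)$ (equivalently $m$ independent Bernoulli($\alpha$) slots per site); since these moments are bounded and determine the law on any finite window, the stationary measure is unique, equals this product measure, and---because the limit does not depend on $\mathfrak{s}_0$---is reached from every initial condition. Specializing to the normalization of the displayed formula gives $\mathbb{P}(\mathfrak{s}_\infty(x_1)=1,\dots,\mathfrak{s}_\infty(x_d)=1)=\alpha^d$.

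\textbf{Main obstacle.} The substantive step is the invariance principle in part (a): one must show that the killed nearest--neighbour walk on $\{1,2,\dots\}$ converges, under the diffusive scaling, to Brownian motion with an \emph{absorbing} boundary at $0$, and in particular that the atypical hopping rate between site $1$ and the reservoir (of order $1/m$, versus the bulk rate) does not change the limiting boundary condition---the boundary layer has width $O(1)$, negligible at scale $L^{1/2}$, so absorption at $0$ persists---and this requires a genuine (if routine) tightness-plus-finite-dimensional-convergence argument, together with a Gaussian-tail bound in $z$ to justify passing the sum defining $N_x$ to an integral. The remaining ingredients---the $t\to\infty$ exchange in (b), recurrence and a.s.\ absorption, and the PDE verification---are standard.
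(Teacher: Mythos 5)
Your proposal is correct and follows essentially the same route as the paper: part (a) reduces $\rho_t$ via the single-particle dual to the absorption probability of a killed walk, passes to Brownian motion under diffusive scaling (the paper handles the anomalous boundary rate by citing a slow-bond invariance principle, where you give the boundary-layer heuristic), applies the reflection principle, and integrates $\mathrm{erfc}$; part (b) uses a.s.\ absorption of the finitely many dual particles at the reservoir together with duality to identify the limit independently of the initial condition. Your added care about moment-determinacy in (b) and the tightness issues in (a) fills in details the paper leaves implicit, but it is not a different argument.
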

\begin{proof}

(a) Let $S_t$ denote a continuous--time simple random walk on $\mathbb{Z}>0$ with jump rates of exponential waiting $1$, and initial condition $S_t=x$. Suppose the jump rates from $1$ to the sink site $0$ are given by $\gamma^{-1}$. Applying the duality result (Theorem \ref{bbbb}) when the dual process $\mathfrak{s}_t'$ consists of a single particle, we have
$$
\rho_t(x) = \mathbb{E}_{\mathfrak{s}}[ D(\mathfrak{s}_t, \mathfrak{s}' )] = \mathbb{E}_{\mathfrak{s}'}[D(\mathfrak{s}_{}, \mathfrak{s}'_{t} )] =  \alpha  \cdot \mathbb{P}_x\left( \inf_{0 \leq s \leq \gamma t/m} S_s \leq 0\right) .
$$
The $\gamma /m$ coefficient occurs because the jump rates of SSEP$(m/2)$ are slower than for SSEP by a factor of $\gamma/m$. 

In the $L\rightarrow\infty$ limit, we can approximate $S_t$ by a Brownian motion, even with the different jump rates across $1$ to $0$; see \cite{SlowBond}. So by the reflection principle, in the limit we obtain
$$
\frac{\alpha}{\sqrt{ 2\tau \pi}} \int_{\chi}^{\infty} e^{-x^2/(2\tau)}dx = \frac{\alpha }{\sqrt{ \pi}} \int_{\chi/\sqrt{2 \tau}}^{\infty} e^{-t^2}dt = \alpha \cdot \mathrm{erfc}\left( \frac{\chi}{\sqrt{2\tau}}\right).
$$

Since 
$$
N_x(\mathfrak{s}_t) = \sum_{y \geq x} ( 1_{\{\mathfrak{s}_t(y)=1\}}  +2 \cdot 1_{\{\mathfrak{s}_t(y)=2\}} + \ldots + m \cdot 1_{\{\mathfrak{s}_t(y)=m\}} ),
$$
the expression for $\mathcal{N}(\chi,\tau)$ can be found by integrating $\mathrm{erfc}$. By direction computation, it can be checked that $\mathcal{N}(\chi,\tau)$ solves the heat equation with the specified initial and boundary conditions.

(b) Let $\mathfrak{s}'$ denote the particle configuration with particles located exactly at $x_1,\ldots,x_d$. By the duality result,
$$
\mathbb{E}_{\mathfrak{s}}[D(\mathfrak{s}_{\infty},\mathfrak{s}')] = \mathbb{E}_{\mathfrak{s}'}[D(\mathfrak{s}_{}, \mathfrak{s}'_{\infty} )].
$$
For the SSEP in one dimension, it is a classical result that with probability $1$, $\mathfrak{s}_t'(0)=d$ for sufficiently large $t$ (in other words, all $d$ particles have entered lattice site $0$). Therefore, 
$$
\mathbb{E}_{\mathfrak{s}}[D(\mathfrak{s}_{\infty},\mathfrak{s}')] = \mathbb{P}( \mathfrak{s}_{\infty}(x_1) =1, \ldots, \mathfrak{s}_{\infty}(x_d) =1)  = \alpha^d.
$$
\end{proof}

\begin{figure}
\caption{The left image shows $\mathcal{N}_{0.5}(\tau,\chi)$ for $\tau=0.1,0.2,0.3,\ldots,0.9$. The right image shows $\mathcal{N}_{\alpha}(0.5,\chi)$ for $\alpha=0.2,0.5,0.9$. In both images, the variable $\chi$ is plotted on the $x$--axis.}
\begin{center}
\includegraphics[height=5cm]{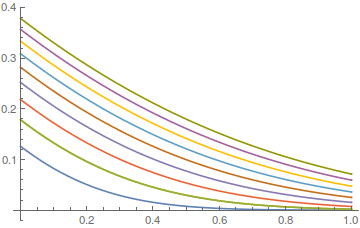}\includegraphics[height=5cm]{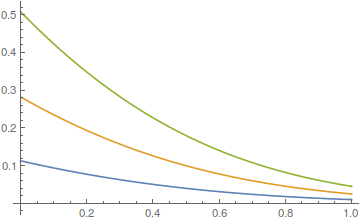}
\end{center}
\end{figure}

We can also consider the SSEP($\vec{m}$) on $\{1,\ldots,L\}$, with boundary sites at $\{0\}$ and $\{L+1\}$. If $p$ is chosen so that $p(x,x+1)=p(x+1,x)=1/2$ for $1 \leq x \leq L-1$, and $p(0,1)=p(1,0)=l, p(L,L+1)=p(L+1,L)=r$, then we obtain an open SEP$(m/2)$ on the finite lattice. Note that $l$ is not require to equal $r$, so the boundary conditions here generalize those in \cite{GKRV}. Also note that when all $m_x$ equal $1$, and the process reduces to SSEP, we obtain all possible boundary conditions. 

We conclude with a multi--species version of Proposition \ref{symm}.
\begin{prop}\label{symmM}
Suppose that $\mathcal{G}=\{0,1,2,\ldots\}$, with  $m_0=M$ and $m_x=1$ for $x\neq 0$. Also suppose that $\mathfrak{s}(0) = (\alpha^{(1)}M, \ldots, \alpha^{(n)}M)$. Then as $M\rightarrow \infty$,
$$
[\Lambda D_{\mathrm{Spi}}^{(n)}\Phi](\mathfrak{s},\mathfrak{s}') = \mathrm{const} \cdot (\alpha_1 + \ldots + \alpha_n)^{l_0^{(1)}} (\alpha_2 + \ldots + \alpha_n)^{l_0^{(2)}} \cdots (\alpha_n)^{l_0^{(n)}}\prod_{x > 0} 1_{\{\mathfrak{s}(x) \geq \mathfrak{s}'(x) \}},
$$
where $\mathfrak{s}'(0) = (l_0^{(1)},\ldots,l_0^{(n)})$.

\end{prop}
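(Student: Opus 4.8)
The plan is to run the argument of Proposition~\ref{symm}(b): factor $\Lambda D^{(n)}_{\mathrm{Spi}}\Phi$ over lattice sites and then evaluate the $M\to\infty$ limit of the single nontrivial factor, which sits at the reservoir site $0$. First I would note that, unlike $D^{(n,p)}_{\mathrm{Sch}}$ for $q\neq1$ (whose factor $q^{-x-N^{\,\cdot}_{x}(\cdot)}$ is non-local and generates the cross-terms $\prod_{z<w}q^{-k_zl_w}$ of Proposition~\ref{FusedDuality}), the function $D^{(n)}_{\mathrm{Spi}}$ is simply a product of indicators $\prod_i 1_{\{\mathfrak{t}_i\geq\mathfrak{t}'_i\}}$ over the subsites $i$. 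Since $\Lambda=\bigotimes_x\Lambda_x$, $\Phi=\bigotimes_x\Phi_x$, and each $\Lambda_x,\Phi_x$ acts within the block of $m_x$ subsites comprising lattice site $x$, the matrix $\Lambda D^{(n)}_{\mathrm{Spi}}\Phi$ factors as a tensor product over $x\in\mathcal{G}$ of site-local matrices $\Lambda_x D^{(n)}_{\mathrm{Spi},x}\Phi_x$. For every $x>0$ we have $m_x=1$, so $\Lambda_x=\Phi_x=\mathrm{id}$ and the factor is $1_{\{\mathfrak{s}(x)\geq\mathfrak{s}'(x)\}}$; these supply the product $\prod_{x>0}1_{\{\mathfrak{s}(x)\geq\mathfrak{s}'(x)\}}$ in the statement, so the whole problem reduces to the factor at $x=0$.

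Put $\vec{k}_0=(k_0^{(1)},\dots,k_0^{(n)})=(\alpha^{(1)}M,\dots,\alpha^{(n)}M)$, $\vec{l}_0=(l_0^{(1)},\dots,l_0^{(n)})$, and $\alpha^{(0)}:=1-\alpha^{(1)}-\cdots-\alpha^{(n)}$. At $q=1$ the kernel $\Lambda_0(\vec{k}_0,\cdot)$ is the uniform measure on $\phi_0^{-1}(\vec{k}_0)$, i.e.\ on the configurations in $\{0,1,\dots,n\}^{M}$ having exactly $k_0^{(j)}$ coordinates equal to $j$ for each $j$; equivalently, it is the unique stationary measure of the $n$-species exclusion process on the complete graph on those $M$ subsites, restricted to the sector of species counts $\vec{k}_0$. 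Because $D^{(n)}_{\mathrm{Spi}}$ is a self-duality function for that process (it is the $q\to1$ limit of the multi-species Sch\"utz duality $D^{(n,p)}_{\mathrm{Sch}}$), Remark~\ref{StatDual}, applied exactly as in the proof of Proposition~\ref{aaaa}(a), shows that $\mathbb{E}_{\mathfrak{t}\sim\Lambda_0(\vec{k}_0,\cdot)}[D^{(n)}_{\mathrm{Spi},0}(\mathfrak{t},\mathfrak{t}^{\ast})]$ is the same for every representative $\mathfrak{t}^{\ast}\in\phi_0^{-1}(\vec{l}_0)$; hence, up to the overall normalization absorbed into $\mathrm{const}$, the factor at $x=0$ equals $\mathbb{P}(\mathfrak{t}\geq\mathfrak{t}^{\ast}\text{ coordinatewise})$ for $\mathfrak{t}$ uniform on the $\vec{k}_0$-sector and any fixed $\mathfrak{t}^{\ast}$.

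Next I would choose $\mathfrak{t}^{\ast}$ to place its $l_0^{(j)}$ particles of species $j$ on a fixed block $B_j$ of coordinates, so that ``$\mathfrak{t}\geq\mathfrak{t}^{\ast}$'' becomes ``$\mathfrak{t}_i\geq j$ for $i\in B_j$'' for each $1\leq j\leq n$ and is vacuous at all other coordinates. Consequently $\mathbb{P}(\mathfrak{t}\geq\mathfrak{t}^{\ast})$ depends on $\mathfrak{t}$ only through its values on the fixed set $S^{\ast}=B_1\cup\cdots\cup B_n$ of cardinality $\sum_{j\geq1}l_0^{(j)}$. The restriction $(\mathfrak{t}_i)_{i\in S^{\ast}}$ is a sampling without replacement of fixed size $|S^{\ast}|$ from a population of $M$ items whose type fractions are $\alpha^{(0)},\dots,\alpha^{(n)}$, so as $M\to\infty$ its law converges in total variation to that of $|S^{\ast}|$ i.i.d.\ variables taking the value $j$ with probability $\alpha^{(j)}$. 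Since such a variable is $\geq j$ with probability $\alpha^{(j)}+\cdots+\alpha^{(n)}$, it follows that
$$
\lim_{M\to\infty}\big[\Lambda_0 D^{(n)}_{\mathrm{Spi},0}\Phi_0\big](\vec{k}_0,\vec{l}_0)=\prod_{j=1}^{n}\big(\alpha^{(j)}+\cdots+\alpha^{(n)}\big)^{l_0^{(j)}},
$$
and multiplying by the $x>0$ factors and the constant recovers the asserted formula.

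The one point that goes beyond a transcription of Proposition~\ref{symm}(b) is the evaluation of this $M\to\infty$ limit, and I expect it to be the (modest) crux: one needs the independence-from-representative property to collapse the fiber sum to a single probability, and then the elementary convergence of sampling without replacement to an i.i.d.\ product, which is exactly what makes the prefactors combine into a finite limit rather than diverging. (Alternatively, one could write the finite-$M$ factor as a ratio of $(n+1)$-part multinomial coefficients summed over the sizes of the overlaps between the blocks $B_j$ and the species classes of $\mathfrak{t}$, and pass to the limit; the probabilistic route above is shorter.)
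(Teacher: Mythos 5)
Your proof is correct and lands on the same formula, but the key computation is organized differently from the paper's. The paper fixes a fission $\mathfrak{t}$ of $\mathfrak{s}(0)$ and directly \emph{counts} the configurations $\mathfrak{t}'$ in the fiber $\phi_0^{-1}(\vec{l}_0)$ with $D_{\mathrm{Spi}}=1$, sandwiching the count between two products of binomial coefficients (the lower bound subtracting the finitely many overlaps $o_j$) whose ratios to $\binom{k_0^{(j)}+\cdots+k_0^{(n)}}{l_0^{(j)}}$ tend to $1$; you instead fix a representative $\mathfrak{t}^\ast$ of the $\vec{l}_0$-fiber and compute the \emph{average} of $D_{\mathrm{Spi}}$ over the uniform measure $\Lambda_0(\vec{k}_0,\cdot)$, identifying it as a sampling-without-replacement probability that converges to an i.i.d.\ product. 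These are the two natural ways of evaluating the same double sum (summing over the $\mathfrak{t}'$-fiber for each $\mathfrak{t}$ versus averaging over $\mathfrak{t}$ for each $\mathfrak{t}^\ast$), and both handle the divergent prefactor $\lvert\phi_0^{-1}(\vec{l}_0)\rvert\sim M^{\sum_j l_0^{(j)}}/\prod_j l_0^{(j)}!$ identically, by absorbing it into the constant. One small economy you could make: the representative-independence you derive from Remark \ref{StatDual} (which requires knowing that the complete-graph multi-species SEP is self-dual with respect to $D^{(n)}_{\mathrm{Spi}}$ and has a unique stationary measure on each sector, neither of which the paper states explicitly for general graphs) follows immediately from the permutation invariance of the uniform measure $\Lambda_0(\vec{k}_0,\cdot)$ under relabelings of the $M$ subsites, since $D^{(n)}_{\mathrm{Spi}}$ is a product of single-subsite indicators; that shortcut keeps your argument entirely elementary.
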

\begin{proof}
We only need to find the value of $\Lambda_0(\mathfrak{s}(0),\mathfrak{s}'(0))$. Set $k_0^{(j)} = \alpha^{(j)}M$. Given a configuration of the $k_0^{(j)}$ particles of species $j$ ($1 \leq j \leq n$) on the lattice $(-M+1,\ldots,-1,0)$, we need to count the number of possible configurations of the $l_0^{(j)}$ particles. The $l_0^{(1)}$ particles of species $1$ can be placed in any of the $k_0^{(1)} + \ldots +  k_0^{(n)}$ locations. Suppose that $o_2$ of those locations contains particles of species $\geq 2$ in the ``$k$'' configuration. Then there are $k_0^{(2)} + \ldots + k_0^{(n)} - o_2$ locations to place the $l_0^{(2)}$ particles of species $2$. Arguing analogously for higher species particles, we have a lower bound of
$$
\binom{k^{(1)}_0 + \ldots + k^{(n)}_0}{l^{(1)}_0} \binom{k^{(2)}_0 + \ldots + k^{(n)}_0-o_2 }{l^{(2)}_0} \binom{k^{(3)}_0 + \ldots + k^{(n)}_0-o_3 }{l^{(3)}_0} \cdots \binom{k^{(n)}_0-o_n}{l^{(n)}_0}, 
$$
where $o_j \leq l_0^{(1)} + \ldots + l_0^{(j-1)}$; and an upper bound of
$$
\binom{k^{(1)}_0 + \ldots + k^{(n)}_0 }{l^{(1)}_0} \binom{k^{(2)}_0 + \ldots + k^{(n)}_0 }{l^{(2)}_0 } \binom{k^{(3)}_0 + \ldots + k^{(n)}_0 }{l^{(3)}_0} \cdots \binom{k^{(n)}_0}{l^{(n)}_0}  .
$$
Since all $l_0^{(j)}$ and $o_j$ are finite, in the $M\rightarrow \infty$ limit we have
$$
(\alpha_1 + \ldots + \alpha_n)^{l_0^{(1)}} (\alpha_2 + \ldots + \alpha_n)^{l_0^{(2)}} \cdots (\alpha_n)^{l_0^{(n)}}.
$$

The denominator $\binom{M}{k_0^{(1)},\ldots,k_0^{(n)}}$ becomes a constant under the dynamics when $M\rightarrow\infty$, completing the proof.
\end{proof}

However, the function of the previous proposition is not a duality function for the multi--species SSEP with open boundary conditions.

\subsection{ASEP with open boundary conditions}
Having the SSEP with open boundary conditions as motivation, we revisit the ASEP with open boundary conditions. Suppose that $\ms$ and $\ms'$ are functions on $\{-1,-2,-3,\ldots\}$ such that $\ms(x),\ms'(x) \in \{0,1\}$ for $x < 0$. In this case, the Sch\"{u}tz duality function is 
$$
D_{\mathrm{Sch}}(\mathfrak{s}, \ms')=\prod_{x \leq -1} 1_{\{\mathfrak{s}(x) \geq \ms'(x) \}} q^{\ms'(x)(-N_{x}(\mathfrak{s})-x)},
$$
and similarly
$$
D_{\text{Kua}}(\mathfrak{s}, \ms')=\prod_{x \leq -1} 1_{\{1-\mathfrak{s}(x) \geq \ms'(x) \}} q^{-\ms'(x)N_{x}(\mathfrak{s})}.
$$
Defining $\Pi$ to be the charge reversal permutation $$\Pi(\ms,\ms')= 1_{\{\ms(x)= 1 -\ms'(x) \text{ for all } x\}},$$ 
we have that (see also Section 3.6 of \cite{KIMRN} )
$$
D_{\text{Kua}} = \Pi D_{\mathrm{Sch}}. 
$$
The duality function  in Corollary \ref{newdual}(c) is now
$$
[D_{\mathrm{Sch}}\mathcal{P}^{-1}](\mathfrak{s}, \ms')=\prod_{x \leq -1} 1_{\{\mathfrak{s}(x) \geq \ms'(x) \}} q^{-\ms'(x)N_{x}(\mathfrak{s})}.
$$
Note that  \begin{equation}\label{One} 1_{\{\mathfrak{s}(x) \geq \ms'(x) \}} q^{\ms'(x)(-N_x(\ms)-x)}=1_{\{\mathfrak{s}(x) \geq \ms'(x) \}}  \text{ for all } \ms,\ms' \text{ and for } x=-1. \end{equation}

Let $L_{l,r}$ denote the generator of the process with left jump rates $l$ and right jump rates $r$, where particles may enter lattice site $-1$ at rate $1$. Similarly, let $L'_{l,r}$ denote the generator of the process with left jump rates $l$ and right jump rates $r$, where particles may exit lattice site $-1$ at rate $1$.

\begin{lemma} For any $l,r$,
\begin{equation}\label{A}
\Pi L'_{l,r} \Pi = L_{r,l}.
\end{equation}
Furthermore,
\begin{equation}\label{B}
\mathcal{P} L'_{q,1} = L_{1,q}' \mathcal{P}.
\end{equation}
\end{lemma}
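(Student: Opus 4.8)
The plan is to prove both identities by comparing matrix entries directly, reading the entries of $L_{l,r},L'_{l,r}$ off from their transition rates and using the combinatorial descriptions of $\Pi$ and $\mathcal P$.

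For \eqref{A}: $\Pi$ is the permutation matrix of the particle--hole involution $\ms\mapsto\bar{\ms}$, $\bar{\ms}(x)=1-\ms(x)$, so $\Pi^2=\mathrm{Id}$ and $[\Pi A\Pi](\ms,\mathfrak{t})=A(\bar{\ms},\bar{\mathfrak{t}})$ for any matrix $A$. Thus \eqref{A} is equivalent to $L'_{l,r}(\bar{\ms},\bar{\mathfrak{t}})=L_{r,l}(\ms,\mathfrak{t})$ for all $\ms,\mathfrak{t}$, which I would check transition type by transition type: under the involution a bulk leftward jump of a particle (rate $l$) becomes a bulk rightward jump, which has rate $l$ in $L_{r,l}$; a bulk rightward jump (rate $r$) becomes a bulk leftward jump, rate $r$ in $L_{r,l}$; and the departure of a particle from site $-1$ becomes the arrival of a particle at site $-1$, at the same boundary rate. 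The diagonal entries then agree automatically, as both sides are generators and their off--diagonal entries have just been matched bijectively.

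For \eqref{B}: since $\mathcal P$ is diagonal with $\mathcal P(\ms,\ms)=q^{-\sum_i x_i}$ (the $x_i$ being the occupied sites of $\ms$), conjugation by $\mathcal P$ fixes the diagonal and rescales the $(\ms,\mathfrak{t})$ off--diagonal entry by $\mathcal P(\ms,\ms)/\mathcal P(\mathfrak{t},\mathfrak{t})$. The first step is to record the effect of each elementary move on $\sum_i x_i$: a leftward jump lowers it by $1$, a rightward jump raises it by $1$, and removing the particle at $-1$ raises it by $1$. Hence $\mathcal P(\ms,\ms)/\mathcal P(\mathfrak{t},\mathfrak{t})$ equals $q^{-1}$, $q$, $q$ on these three transition types, which matches the ratio of the $L'_{1,q}$-rate to the $L'_{q,1}$-rate (left rates $1$ vs.\ $q$; right rates $q$ vs.\ $1$; and the exit at $-1$ weighted like the rightward jump off the lattice that it replaces). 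This settles the off--diagonal part. For the diagonal part one must see that $L'_{q,1}$ and $L'_{1,q}$ have equal total outgoing rates from each $\ms$; their difference equals $(q-1)\bigl(\#\{\text{available bulk left jumps from }\ms\}-\#\{\text{available bulk right jumps from }\ms\}-\mathbf{1}_{\ms(-1)=1}\bigr)$, and this quantity is $0$: decomposing $\ms$ into maximal blocks of consecutive particles, each block contributes exactly one available leftward jump and one available rightward jump, the only exception being the block (if present) ending at site $-1$, which contributes no bulk rightward jump.

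I expect the only genuine point of care to be the boundary site $-1$: in \eqref{A} one needs the particle--hole map to interchange the ``exit'' and ``entrance'' mechanisms with matching rates, and in \eqref{B} one needs the exit at $-1$ to be weighted so that it transforms under $\mathcal P$ exactly like the bulk rightward jump it stands in for --- equivalently, so that the total-rate identity above holds. Away from $-1$, both statements reduce to a routine symmetry (resp.\ diagonal-conjugation) check on the ASEP bulk rates.
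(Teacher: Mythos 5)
Your calculation is correct and is precisely the ``direct calculation'' the paper invokes for this lemma: conjugation by the particle--hole involution, matched transition type by transition type, for \eqref{A}; and for \eqref{B} the comparison of off--diagonal entries via the ratio $\mathcal P(\ms,\ms)/\mathcal P(\mathfrak t,\mathfrak t)$ together with the equality of total out--rates on the diagonal.

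The boundary issue you isolate is indeed the crux, and you have resolved it in the only way that makes \eqref{B} true: the exit rate at $-1$ in $L'_{l,r}$ must be read as the right jump rate $r$ (hence $1$ for $L'_{q,1}$ but $q$ for $L'_{1,q}$), consistent with the exit being the rightward jump into the reservoir at $0$. If one instead takes the definition literally, with exit rate $1$ for every $l,r$, then \eqref{B} fails: for the exit transition $\ms\mapsto\ms_-$ (removal of the particle at $-1$) one has $\mathcal P(\ms,\ms)/\mathcal P(\ms_-,\ms_-)=q$ while the ratio of the two exit rates is $1$, and correspondingly the total out--rates of $L'_{q,1}$ and $L'_{1,q}$ from any $\ms$ with $\ms(-1)=1$ differ by $q-1$, so the diagonal entries do not match either. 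Your block decomposition giving $\#L-\#R=1_{\{\ms(-1)=1\}}$ is exactly what shows that the corrected convention simultaneously repairs both defects, since the deficit of one bulk rightward jump is compensated by the exit move carrying the right--jump weight. Identity \eqref{A} is insensitive to this choice, as you note, since the involution exchanges the entrance and exit mechanisms wholesale.
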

\begin{proof}
Both identities follow from a direct calculation. Note that for an infinite line, equation \eqref{B} is similar to the identity in the proof of Theorem \ref{MSD}(b).
\end{proof}


\begin{theorem}\label{OD}
We have the duality relations

\begin{align}
L_{1,q}D_{\mathrm{Sch}} &= D_{\mathrm{Sch}}(L_{1,q}')^* \label{C}, \\
L_{q,1}' \Pi D_{\mathrm{Sch}} &= \Pi D_{\mathrm{Sch}} (L_{1,q}')^*  \label{D} , \\
L_{q,1}' \Pi D_{\mathrm{Sch}} \mathcal{P}^{-1} &=  \Pi D_{\mathrm{Sch}}  \mathcal{P}^{-1}  (L_{q,1}')^* \label{E}, \\
L_{1,q} D_{\mathrm{Sch}} \mathcal{P}^{-1} &= D_{\mathrm{Sch}}  \mathcal{P}^{-1}  (L_{q,1}')^*. \label{F}
\end{align}
\end{theorem}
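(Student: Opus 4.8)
The plan is to prove \eqref{C} directly, by reducing to the classical Sch\"{u}tz self-duality of ASEP$_{1,q}$ plus a boundary computation at the site $-1$, and then to deduce \eqref{D}, \eqref{E} and \eqref{F} from \eqref{C} purely formally, using the conjugation identities \eqref{A} and \eqref{B} of the preceding lemma together with the fact that $\Pi$ is an involution and $\mathcal{P}$ is diagonal (hence symmetric). Each of the four deductions is an instance of the intertwining bookkeeping of Remark \ref{QInter}.

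\textbf{Proof of \eqref{C}.} I would write $L_{1,q} = L_{1,q}^{\mathrm{cl}} + B$ and $L'_{1,q} = L_{1,q}^{\mathrm{cl}} + B'$, where $L_{1,q}^{\mathrm{cl}}$ is the generator of ASEP$_{1,q}$ on $\{-1,-2,-3,\ldots\}$ with no flux through site $-1$, $B$ encodes entry into site $-1$ at rate $1$, and $B'$ encodes exit from site $-1$ at rate $1$. On the half-line the interior relation $L_{1,q}^{\mathrm{cl}} D_{\mathrm{Sch}} = D_{\mathrm{Sch}}(L_{1,q}^{\mathrm{cl}})^{*}$ follows from the Sch\"{u}tz duality recalled above, by passing to the limit of finite intervals with closed boundary conditions. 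Hence \eqref{C} reduces to the boundary identity $B D_{\mathrm{Sch}} = D_{\mathrm{Sch}}(B')^{*}$. Expanding both sides, the entry move $\mathfrak{s}(-1)\colon 0\to 1$ of the primal must be matched against the exit move $\mathfrak{s}'(-1)\colon 1\to 0$ of the dual; many of the resulting terms vanish because of the factor $1_{\{\mathfrak{s}(-1)\ge \mathfrak{s}'(-1)\}}$, and the surviving configurations (a short case analysis on $\mathfrak{s}(-1),\mathfrak{s}'(-1)\in\{0,1\}$, in the style of Cases~1--4 in the proof of Theorem \ref{bbbb}) are checked by direct substitution of the formula for $D_{\mathrm{Sch}}$, keeping track of the shift of each $N_x(\mathfrak{s})$ caused by adding or removing the particle at $-1$. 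The crucial simplification is \eqref{One}: the $x=-1$ factor of $D_{\mathrm{Sch}}$ collapses to the bare indicator $1_{\{\mathfrak{s}(-1)\ge\mathfrak{s}'(-1)\}}$, so the $q$-weight contributed at the boundary site is trivial.

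\textbf{Deduction of \eqref{D}, \eqref{E}, \eqref{F}.} Since $\Pi^{2}=\mathrm{Id}$, the identity \eqref{A} with $(l,r)=(q,1)$ rearranges to $\Pi L_{1,q} = L'_{q,1}\Pi$; left-multiplying \eqref{C} by $\Pi$ and using $\Pi D_{\mathrm{Sch}} = D_{\mathrm{Kua}}$ gives $L'_{q,1}\Pi D_{\mathrm{Sch}} = \Pi D_{\mathrm{Sch}}(L'_{1,q})^{*}$, which is \eqref{D}. Since $\mathcal{P}$ is diagonal we have $\mathcal{P}^{*}=\mathcal{P}$, so transposing \eqref{B} yields $(L'_{q,1})^{*}\mathcal{P} = \mathcal{P}(L'_{1,q})^{*}$ and hence $(L'_{1,q})^{*}\mathcal{P}^{-1} = \mathcal{P}^{-1}(L'_{q,1})^{*}$. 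Right-multiplying \eqref{C} by $\mathcal{P}^{-1}$ then gives $L_{1,q}D_{\mathrm{Sch}}\mathcal{P}^{-1} = D_{\mathrm{Sch}}\mathcal{P}^{-1}(L'_{q,1})^{*}$, which is \eqref{F}; and right-multiplying \eqref{D} by $\mathcal{P}^{-1}$ gives $L'_{q,1}\Pi D_{\mathrm{Sch}}\mathcal{P}^{-1} = \Pi D_{\mathrm{Sch}}\mathcal{P}^{-1}(L'_{q,1})^{*}$, which is \eqref{E} (alternatively, \eqref{E} follows from \eqref{F} by left-multiplication by $\Pi$ and the same rearrangement of \eqref{A}).

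\textbf{Main obstacle.} Everything after \eqref{C} is transpose bookkeeping with the two conjugation identities. The only real work is the boundary identity $B D_{\mathrm{Sch}} = D_{\mathrm{Sch}}(B')^{*}$ inside the proof of \eqref{C}: one must correctly identify the boundary operators $B$ and $B'$ and verify that the shift of the interior counts $N_x(\mathfrak{s})$ produced by the entry/exit move at $-1$ is exactly compensated, a compensation that \eqref{One} makes transparent at the boundary site itself but which still requires the careful case analysis indicated above.
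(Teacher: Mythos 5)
Your proposal is correct and follows essentially the same route as the paper: the paper likewise uses \eqref{A} and \eqref{B} to show that all four identities are mutually equivalent (you derive \eqref{D}, \eqref{E}, \eqref{F} from \eqref{C}, the paper equivalently reduces everything to \eqref{C} first), and then proves \eqref{C} by subtracting off the closed-boundary generators, invoking the interior Sch\"{u}tz duality, and verifying the remaining boundary identity at site $-1$ by the same four-case analysis on $\mathfrak{s}(-1),\mathfrak{s}'(-1)\in\{0,1\}$ with \eqref{One} supplying the key cancellation. No substantive difference.
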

\begin{proof}
By \eqref{B}, the equations \eqref{C} and \eqref{F} are equivalent. Similarly by $\eqref{B}$, the equations \eqref{D} and \eqref{E} are equivalent. By \eqref{A}, the equations \eqref{C} and \eqref{D} are equivalent; similarly, \eqref{E} and \eqref{F} are equivalent. Thus, it suffices to show one of \eqref{C}--\eqref{F}; we will show \eqref{C}.

Let $\ms'_{-}$ denote the particle configuration $\ms'$ after a particle from site $-1$ has been removed, and let $\ms_{+}$ denote the particle configuration $\ms$ after a particle from site $-1$ has been added. By an identical argument as in the proof of Theorem \ref{bbbb}, it suffices to show that
\begin{equation}\label{Need}
 L(\ms,\ms_+)[D(\ms_+,\ms') - D(\ms,\ms')]  = [D(\ms,\ms'_{-}) - D(\ms,\ms')] L'(\ms' ,\ms'_{-}),
\end{equation}
where $L,L'$ are the generators of the processes $\ms_t,\ms'_t$. We consider four cases:

\underline{Case 1: $\ms(-1)=0, \ms'(-1)=1$}. 
Then $D(\ms,\ms')=0$, so it suffices to prove
$$
 L(\ms,\ms_+)D(\ms_+,\ms')   = D(\ms,\ms'_{-}) L'(\ms' ,\ms'_{-}).
 $$
By \eqref{One} and the definition of $L,L',D$, this equation is equivalent to 
$$
 \prod_{x \leq - 2} 1_{\{\mathfrak{s}(x) \geq \ms'(0) \}} q^{N_{x}(\mathfrak{s})-x} 
 =\prod_{x \leq -2} 1_{\{\mathfrak{s}(x) \geq \ms'(0) \}} q^{N_{x}(\mathfrak{s})-x}.
$$
which can be immediately seen to be true.

\underline{Case 2: $\ms(-1)=1, \ms'(-1)=1$}. In this case, $L(\ms,\ms_+)=0$, so it suffices that 
$$
D(\ms,\ms'_{-}) = D(\ms,\ms')
$$
This is again true by \eqref{One}.

\underline{Case 3: $\ms(-1)=1, \ms'(-1)=0$}. In this case, both sides of \eqref{Need} equal $0$, because the $L$ terms equal $0$.

\underline{Case 4: $\ms(-1)=0, \ms'(-1)=0$}. In this case,
$$
D(\ms_+,\ms') - D(\ms,\ms') = D(\ms,\ms'_{-}) - D(\ms,\ms') =0.
$$

\end{proof}
 
We now explain how the duality function arises fits into the larger framework of this paper. Consider an ASEP$(q,\vec{m})$ on $\{0,-1,-2,-3,\ldots\}$, where $m_0=M$ and $m_{x} = 1$ for all $x \leq -1$. Suppose there are $\alpha M$ particles at $0$, where $\alpha \in (0,1)$. Then as $M\rightarrow \infty$, the jump rates from $0$ to $-1$ converge to 
$$
\lim_{M\rightarrow \infty} \frac{ 1- q^{\alpha M}}{1- q^M}=1,
$$
and the jump rates from $-1$ to $0$ converge to 
$$
\lim_{M\rightarrow \infty} q \cdot q^{(1-\alpha)M}\frac{ 1- q^{\alpha M}}{1- q^M} =0.
$$
Thus, one obtains the process with generator $L_{1,q}$. Similarly, the $M\rightarrow \infty$ of a space--reversed ASEP$(q,\vec{m})$ with $\alpha M$ particles at $0$ is the process with generator $L_{q,1}'$.

Note that there is no dependence on $\alpha$ in the limit. This stands in contrast to the symmetric case, where the original process and entrance rates $1$ and the dual process had entrance rates $1-\alpha$ and exit rates $\alpha$. There, the contribution of the boundary site $0$ to the duality function appears through the term $\alpha^{\mathfrak{s}'(0)}$. When $\alpha=1$, this term can simply be removed.

If one takes the $M\rightarrow\infty$ limit of the ASEP$(q,\vec{m})$ duality function in $\Lambda D_{\textrm{Sch}}\Phi$ from Proposition \ref{FusedDuality}, the result is simply $\infty$, due to the term $q^{-\ms(0)\ms'(0)}$. However, with the symmetric case as motivation, one could simply remove this term from the duality. To account for the space reversal between $L_{1,q}$ and $L_{q,1}'$, we replace $D_{\mathrm{Sch}}$ with $D_{\text{Kua}}$ and multiply by $\mathcal{P}^{-1}$; this results in the duality  \eqref{F}.

We now proceed to the hydrodynamic limit. This requires the following lemma, which is essentially summation by parts:

\begin{lemma}
Define the function
$$
H(\mathfrak{s},x) = q^{-N_x(\mathfrak{s})}.
$$
Let $\mathfrak{s}_t$ evolve as an open ASEP with generator $L_{1,q}$, and let $x(t)$ evolve as an open ASEP with generator $L'_{q,1}$. Then
$$
\mathbb{E}_{\mathbf{0}}[H(\mathfrak{s}_t,x)] = 1 + (q^{-1}-1)\mathbb{P}_{\mathbf{0}}(\mathfrak{s}_t(-1)=1) + (1-q) \sum_{z=x}^{-2} \mathbb{E}_z[D_{\text{Sch}}\mathcal{P}^{-1}(\mathbf{0},x(t))].
$$
The initial condition $\mathbf{0}$ indicates a configuration with no particles. 
\end{lemma}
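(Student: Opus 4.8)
The plan is to reduce the statement to the duality relation \eqref{F} of Theorem \ref{OD} by first establishing a purely deterministic summation-by-parts identity that expresses $H(\ms,x)=q^{-N_x(\ms)}$ in terms of the single-particle specialization of $D_{\mathrm{Sch}}\mathcal{P}^{-1}$. Recall that for a dual configuration consisting of a single particle at $y\le -1$ one has $[D_{\mathrm{Sch}}\mathcal{P}^{-1}](\ms,y)=1_{\{\ms(y)=1\}}\,q^{-N_y(\ms)}$. Using $N_y(\ms)=N_{y+1}(\ms)+1_{\{\ms(y)=1\}}$, a one-line computation gives
$$
H(\ms,y)-H(\ms,y+1)=q^{-N_{y+1}(\ms)}\bigl(q^{-1_{\{\ms(y)=1\}}}-1\bigr)=(1-q)\,[D_{\mathrm{Sch}}\mathcal{P}^{-1}](\ms,y),
$$
where the two cases $\ms(y)=0$ and $\ms(y)=1$ are checked separately (in the latter case one uses $q^{-N_{y+1}(\ms)}=q\cdot q^{-N_y(\ms)}$, and in the former both sides vanish). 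Telescoping this from $y=x$ to $y=-2$ and using that on the lattice $\{-1,-2,\dots\}$ one has $N_{-1}(\ms)=1_{\{\ms(-1)=1\}}$, hence $H(\ms,-1)=1+(q^{-1}-1)1_{\{\ms(-1)=1\}}$, yields the identity
$$
H(\ms,x)=1+(q^{-1}-1)1_{\{\ms(-1)=1\}}+(1-q)\sum_{y=x}^{-2}[D_{\mathrm{Sch}}\mathcal{P}^{-1}](\ms,y),
$$
valid for every configuration $\ms$ on $\{-1,-2,\dots\}$ with finitely many particles.

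Next I would evaluate this identity at $\ms=\ms_t$ with $\ms_0=\mathbf 0$ and take $\mathbb E_{\mathbf 0}$. Since only finitely many particles have entered by any finite time, $N_x(\ms_t)<\infty$ almost surely and all the quantities involved are integrable, so
$$
\mathbb E_{\mathbf 0}[H(\ms_t,x)]=1+(q^{-1}-1)\mathbb P_{\mathbf 0}(\ms_t(-1)=1)+(1-q)\sum_{y=x}^{-2}\mathbb E_{\mathbf 0}\bigl[[D_{\mathrm{Sch}}\mathcal{P}^{-1}](\ms_t,y)\bigr].
$$
Finally, for each fixed $y$ the duality \eqref{F}, namely $L_{1,q}D_{\mathrm{Sch}}\mathcal{P}^{-1}=D_{\mathrm{Sch}}\mathcal{P}^{-1}(L'_{q,1})^{*}$, gives $\mathbb E_{\mathbf 0}\bigl[[D_{\mathrm{Sch}}\mathcal{P}^{-1}](\ms_t,y)\bigr]=\mathbb E_{y}\bigl[[D_{\mathrm{Sch}}\mathcal{P}^{-1}](\mathbf 0,x(t))\bigr]$, with $x(t)$ evolving under $L'_{q,1}$ from a single particle at $y$; renaming the summation index $y\to z$ produces exactly the claimed formula.

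The computation is essentially bookkeeping, and the main point requiring care is the termwise passage from the generator identity of Theorem \ref{OD} to the expectation identity integrated against the initial state $\mathbf 0$: one must verify that $\mathbb E_{\mathbf 0}[H(\ms_t,x)]$ and each $\mathbb E_{\mathbf 0}[[D_{\mathrm{Sch}}\mathcal{P}^{-1}](\ms_t,y)]$ are finite so that the duality may be applied and the finite sum interchanged with the expectation. This follows from stochastically dominating the number of particles present in $\ms_t$ by the number of entrance events in $[0,t]$, which is Poisson with finite exponential moments, so that $q^{-N_x(\ms_t)}$ is integrable.
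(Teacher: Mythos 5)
Your argument is correct and is essentially the paper's own proof: the same summation-by-parts identity $H(\ms,y)-H(\ms,y+1)=(1-q)[D_{\mathrm{Sch}}\mathcal{P}^{-1}](\ms,y)$, the same telescoping to $H(\ms,-1)$, and the same application of the duality \eqref{F} term by term. The only addition is your explicit integrability check via Poisson domination of entrance events, which the paper leaves implicit.
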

\begin{proof}
Using the identity
$$
H(\ms,x) - H(\ms,x+1) = 1_{\{\ms(x)=1\}} q^{-N_x(\ms)}(1-q) = D_{\mathrm{Sch}}\mathcal{P}^{-1}(\ms,x) (1-q)
$$
leads to a telescoping sum
$$
H(\ms,x) = H(\ms,-1) + (1-q) \sum_{z=x}^{-2} D_{\text{Sch}}\mathcal{P}^{-1}(\ms,z).
$$
Thus, by the duality \eqref{F},
$$
\mathbb{E}_{\mathbf{0}}[H(\ms_t,x)] = \mathbb{E}_{\mathbf{0}}[ H(\ms_t,-1)  ] + (1-q) \sum_{z=x}^{-2} \mathbb{E}_z[D_{\text{Sch}}\mathcal{P}^{-1}(\mathbf{0},x(t))].
$$
And finally,
\begin{align*}
\mathbb{E}_{\mathbf{0}}[ H(\ms_t,-1)  ]  &= q^{-1} \mathbb{P}_{\mathbf{0}}(\ms_t(-1)=1) + 1 \cdot \mathbb{P}_{\mathbf{0}}(\ms_t(-1)=0) \\ &= 1 + (q^{-1}-1)\mathbb{P}_{\mathbf{0}}(\mathfrak{s}_t(-1)=1).
\end{align*}
\end{proof}

\begin{theorem}\label{Addend}
In the hydrodynamic limit, setting $\sigma^2=3q-q^2$ and $c=1-q$,
$$
\mathfrak{h}(\zeta,\tau):= \lim_{L\rightarrow \infty} L^{-1} \mathbb{E}_{\mathbf{0}}[ q^{-N_{\zeta L}(\ms_{\tau L})}] = (1-q) \int_0^{\vert \zeta \vert} \int_0^{\tau} \frac{\xi}{\sigma\sqrt{2 \pi t^3}} e^{ - \frac{(\xi - ct)^2}{2\sigma^2 t} } dt d\xi,
$$
which solves the heat equation with convection
$$
\mathfrak{h}_t = \frac{\sigma^2}{2} \mathfrak{h}_{xx} - c \mathfrak{h}_x
$$
with Neumann boundary conditions $\partial_{\zeta} \mathfrak{h}(0,\tau)=-1$ and initial conditions $\mathfrak{h}(\zeta,0)=0$.
\end{theorem}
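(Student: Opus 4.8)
The plan is to feed the duality identity of the preceding Lemma into a law-of-large-numbers / invariance-principle argument for the auxiliary single walker, and then to check the stated convection--diffusion equation by direct computation. First I would specialize the Lemma to $x=\lfloor \zeta L\rfloor$ and $t=\tau L$ and divide by $L$:
$$
L^{-1}\mathbb{E}_{\mathbf 0}\!\big[q^{-N_{\zeta L}(\mathfrak{s}_{\tau L})}\big]
= L^{-1}\Big(1+(q^{-1}-1)\mathbb{P}_{\mathbf 0}(\mathfrak{s}_{\tau L}(-1)=1)\Big)
+(1-q)\,L^{-1}\!\!\sum_{z=\lfloor\zeta L\rfloor}^{-2}\mathbb{E}_z\!\big[D_{\mathrm{Sch}}\mathcal{P}^{-1}(\mathbf 0,x(\tau L))\big].
$$
The first parenthesis lies in $[1,q^{-1}]$, so it is $O(L^{-1})$ and vanishes in the limit; the limit is governed entirely by the sum.

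Next I would evaluate the summand probabilistically. Since $N_x(\mathbf 0)=0$ for every $x$, we have $D_{\mathrm{Sch}}\mathcal{P}^{-1}(\mathbf 0,\mathfrak{s}')=\prod_{x\le-1}\mathbf 1_{\{\mathfrak{s}'(x)=0\}}$, i.e.\ it is the indicator that $\mathfrak{s}'$ is the empty configuration. Hence $\mathbb{E}_z[D_{\mathrm{Sch}}\mathcal{P}^{-1}(\mathbf 0,x(\tau L))]$ is exactly the probability that the single particle of the $L'_{q,1}$--process started at site $z$ has left the half-line through the boundary site $-1$ by time $\tau L$. Writing $z=-m$, this is $\mathbb{P}(\theta_m\le\tau L)$ for the exit time $\theta_m$ of the associated continuous-time walk started a distance $m$ from the exit.

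The heart of the argument is the scaling limit of this walk. I would establish an invariance principle: under the hydrodynamic rescaling, the walk started at $-\xi L$, together with its exit time, converges to a Brownian motion with drift $c=1-q$ and variance parameter $\sigma^2=3q-q^2$, together with its first passage time $T_\xi$ to level $\xi$; consequently
$$
\mathbb{E}_{-\xi L}\!\big[D_{\mathrm{Sch}}\mathcal{P}^{-1}(\mathbf 0,x(\tau L))\big]\ \longrightarrow\ \mathbb{P}(T_\xi\le\tau)=\int_0^\tau \frac{\xi}{\sigma\sqrt{2\pi t^3}}\,e^{-(\xi-ct)^2/(2\sigma^2 t)}\,dt,
$$
the last equality being the inverse-Gaussian density of $T_\xi$. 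Together with a uniform-in-$z$ bound on the exit probabilities --- which lets me treat $L^{-1}\sum_{m=2}^{|\zeta|L}\mathbb{P}(\theta_m\le\tau L)$ as a Riemann sum and pass to the integral --- this produces
$$
\mathfrak{h}(\zeta,\tau)=(1-q)\int_0^{|\zeta|}\mathbb{P}(T_\xi\le\tau)\,d\xi=(1-q)\int_0^{|\zeta|}\int_0^\tau\frac{\xi}{\sigma\sqrt{2\pi t^3}}\,e^{-(\xi-ct)^2/(2\sigma^2 t)}\,dt\,d\xi,
$$
the claimed formula. I expect this step to be the main obstacle: pinning down the drift and, in particular, the exact variance parameter $\sigma^2$, which is sensitive to the exit dynamics at the boundary site, and obtaining enough uniformity to exchange the limit with the sum.

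Finally I would verify the initial/boundary value problem directly from the integral representation. Differentiating in $\zeta$ expresses $\partial_\zeta\mathfrak{h}$ as a constant multiple of $\mathbb{P}(T_{|\zeta|}\le\tau)$, which, since $T_0=0$, produces the Neumann boundary condition at $\zeta=0$, while the empty inner $t$-integral at $\tau=0$ gives $\mathfrak{h}(\zeta,0)=0$. For the equation itself one uses that $F(\xi,\tau):=\mathbb{P}(T_\xi\le\tau)$, regarded as a function of the level $\xi$ and the time $\tau$, solves $F_\tau=\tfrac{\sigma^2}{2}F_{\xi\xi}-cF_\xi$ (which follows from the reflection formula for the running maximum of a drifted Brownian motion, or from a Feynman--Kac representation); integrating this in $\xi$ against the constant $1-q$ gives $\mathfrak{h}_\tau=\tfrac{\sigma^2}{2}\mathfrak{h}_{\zeta\zeta}-c\,\mathfrak{h}_\zeta$. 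This concluding verification is routine.
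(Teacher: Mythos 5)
Your proposal follows the paper's proof essentially step for step: both reduce the claim via the summation-by-parts lemma to the first-passage probability of a single biased walker, pass to a drifted Brownian motion with the same $c$ and $\sigma^2$, identify the inverse-Gaussian first-passage density, and convert the Riemann sum into the double integral before verifying the PDE. The only cosmetic difference is at the Neumann boundary condition, where you invoke continuity of first-passage times (using $T_0=0$) while the paper handles the $t=0$ singularity by rewriting the integral as $1-\int_\tau^\infty$ before interchanging the limit; both routes give the same conclusion.
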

\begin{proof}
Note that
$$
\mathbb{E}_z[D_{\text{Sch}}\mathcal{P}^{-1}(\mathbf{0},x(t))] = \mathbb{P}_0 \left( \sup_{0 \leq s \leq t} S_s \geq \vert z \vert \right),
$$
where $S_t$ is a continuous--time random walk with right jump rates $1$ and left jump rates $q$. In the hydrodynamic limit $t = \tau L$ and $\vert z \vert= \xi L$, we have
$$
\mathbb{P}_0 \left( \sup_{0 \leq s \leq t} S_s \geq \vert z \vert \right) \rightarrow \mathbb{P}_0\left( \sup_{[0,\tau]} W_t \geq \xi \right),
$$
where $W_t = \sigma B_t + ct$ for $c=1-q$ and $\sigma^2 =  1 + q - (1-q)^2 = 3q-q^2 $. Now recall the fact that
$$
\mathbb{P}_0 \left(\sup_{[0,\tau]} W_t \geq \xi \right) =  \int_0^{\tau} \frac{  \xi}{\sigma \sqrt{2 \pi t^3}} e^{ - \frac{(\xi - (1-q)t)^2}{2\sigma^2 t} } dt 
$$
In applying the previous lemma, the summation becomes an integral, thus showing the double integral form of $\mathfrak{h}(\zeta,\tau)$. It can be checked directly that it satisfies the heat equation with convection.

To show the Neumann boundary conditions, it remains to show that
$$
\lim_{\zeta \rightarrow 0} \int_0^{\tau} \frac{\zeta}{\sigma\sqrt{2 \pi t^3}} e^{ - \frac{(\zeta - (1-q)t)^2}{2\sigma^2t} } dt  =1.
$$
Note that due to the singularity at $t=0$, the limit and the integral cannot be interchanged. However, since the integrand is the probability distribution function of a random variable, we can write
$$
\lim_{\zeta \rightarrow 0} \int_0^{\tau} \frac{\zeta}{\sigma\sqrt{2 \pi t^3}} e^{ - \frac{(\zeta - (1-q)t)^2}{2\sigma^2t} } dt  = 1- \lim_{\zeta \rightarrow 0} \int_{\tau}^{\infty} \frac{\zeta}{\sigma\sqrt{2 \pi t^3}} e^{ - \frac{(\zeta - (1-q)t)^2}{2\sigma^2 t} } dt .
$$
And now the integral and limit can be switched, to obtain $1-0=1$.

The initial conditions are immediate to verify. 
\end{proof}

\begin{reemark}
The heat equation (with or without convection) can be transformed into the Burger's equation via the Hopf--Cole transformation. See, for example, section 3 of \cite{SB02} for the full calculations. The transformation $N_x(\ms) \mapsto q^{-N_x(\ms)}$ is often called a discrete Hopf--Cole transformation.
\end{reemark}

\begin{reemark}
When $q=1$, the function $\mathfrak{h}$ becomes identically zero. This is due to the different scaling regimes of SSEP and ASEP in Theorems \ref{Opened} and \ref{Addend}.
\end{reemark}

\begin{reemark}
We mention a few other recent results concerning open ASEP. The paper \cite{barraquand2018} finds GOE fluctuations of the open ASEP, but with different boundary conditions; there, particles enter at rate $1/2$ and exit at rate $q/2$. In \cite{CSOpen}, the authors consider open ASEP in the \textit{weakly} asymmetric scaling regime and show convergence to the stochastic Heat equation with Robin boundary conditions.
\end{reemark}

\begin{figure}
\begin{center}
\includegraphics[height=3cm]{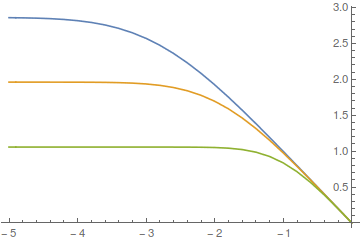}
\end{center}
\caption{For $q=0.1$, the image shows $\mathfrak{h}(\zeta,\tau)$ for $\tau=1,2,3$.}
\end{figure}

\section{Dynamic ASEP$(q,{m})$}

\subsection{Some new $q$--identities}
\begin{lemma}\label{Prev}
\begin{align*}
\binom{m}{k}_q(\alpha + q^{m+k}) + q^{m-k+1}\binom{m}{k-1}_q(\alpha+q^{k-1}) &= \binom{m+1}{k}_q(\alpha + q^m),\\
\binom{m-1}{k-1}_q(\alpha + q^{m+k-1}) + q^k\binom{m-1}{k}_q(\alpha + q^{k-1}) &= \binom{m}{k}_q(\alpha + q^{2k-1}).
\end{align*}
\end{lemma}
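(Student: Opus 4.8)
The plan is to use that both identities are affine-linear in $\alpha$, so it suffices to match the coefficient of $\alpha$ and the constant ($\alpha^0$) term separately in each. This reduces the lemma to four scalar $q$-binomial identities, each of which is one of the two Pascal-type recurrences
$$
\binom{m-1}{k-1}_q + q^k \binom{m-1}{k}_q = \binom{m}{k}_q \qquad \text{and} \qquad q^{m-k}\binom{m-1}{k-1}_q + \binom{m-1}{k}_q = \binom{m}{k}_q
$$
recalled in the $q$-notation subsection, possibly after the relabelling $m \mapsto m+1$ and after cancelling a common power of $q$.

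Concretely, for the first identity, comparing coefficients of $\alpha$ gives $\binom{m}{k}_q + q^{m+1-k}\binom{m}{k-1}_q = \binom{m+1}{k}_q$, which is the second recurrence with $m$ replaced by $m+1$; comparing the $\alpha^0$ parts and dividing through by $q^m$ gives $q^k\binom{m}{k}_q + \binom{m}{k-1}_q = \binom{m+1}{k}_q$, which is the first recurrence with $m$ replaced by $m+1$. For the second identity, the $\alpha$-coefficients give $\binom{m-1}{k-1}_q + q^k\binom{m-1}{k}_q = \binom{m}{k}_q$ directly, and the $\alpha^0$ parts, after dividing by $q^{2k-1}$, give $q^{m-k}\binom{m-1}{k-1}_q + \binom{m-1}{k}_q = \binom{m}{k}_q$ directly.

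The only work is bookkeeping the exponents: checking that $q^{m-k+1}q^{k-1}=q^m$ in the first identity, that $q^{k}q^{k-1}=q^{2k-1}$ in the second, and that the leftover power of $q$ on each right-hand side factors out cleanly so that a standard recurrence is exposed. There is no genuine obstacle here. As an alternative one could write every $q$-binomial as a ratio of $q$-factorials and clear denominators, but splitting by powers of $\alpha$ keeps the computation to a couple of lines and makes the role of the two standard recurrences transparent; it is also worth remarking that these two identities are exactly the algebraic input required to verify the intertwining and duality relations for the dynamic ASEP$(q,m)$ later in the section, which is why they are isolated as a lemma.
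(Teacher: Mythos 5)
Your proposal is correct and follows essentially the same route as the paper: split each identity by powers of $\alpha$ and reduce the resulting four scalar equalities to the two $q$-Pascal recurrences (with $m\mapsto m+1$ for the first identity). The exponent bookkeeping you describe checks out, so nothing is missing.
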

\begin{proof}
In the first identity, the $\alpha$ coefficient is
$$
\binom{m}{k}_q + q^{m-k+1}\binom{m}{k-1}_q =  \binom{m+1}{k}_q
$$
and the constant coefficient is
$$
q^{m+k}\binom{m}{k}_q  + q^m\binom{m}{k-1}_q = q^m \binom{m+1}{k}_q.
$$
Both of these follow from \eqref{qBin}. The proof of the second identity is identical.
\end{proof}

\begin{lemma}\label{Prev1}
For $m\geq k$,
$$
\sum_{y=k-1}^{m-1} \dfrac{q^y \binom{y}{k-1}_q}{(\alpha +q ^y) \cdots (\alpha + q^{y+k})} = \frac{q^{k-1}\binom{m}{k}_q}{(\alpha+q^{k-1})(\alpha+q^m)\cdots (\alpha + q^{m+k-1})}.
$$
\end{lemma}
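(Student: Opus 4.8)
\textbf{Proof plan for Lemma \ref{Prev1}.}

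The natural approach is induction on $m \geq k$. The plan is to fix $k$ and induct on $m$, using the first identity of Lemma \ref{Prev} (with $\alpha$ playing the role it already plays there) as the algebraic engine of the inductive step. First I would check the base case $m = k$: the left-hand side reduces to the single term $y = k-1$, namely $\dfrac{q^{k-1}\binom{k-1}{k-1}_q}{(\alpha + q^{k-1})\cdots(\alpha + q^{2k-1})} = \dfrac{q^{k-1}}{(\alpha + q^{k-1})\cdots(\alpha + q^{2k-1})}$, while the right-hand side is $\dfrac{q^{k-1}\binom{k}{k}_q}{(\alpha + q^{k-1})(\alpha + q^k)\cdots(\alpha + q^{2k-1})}$; since $\binom{k-1}{k-1}_q = \binom{k}{k}_q = 1$ and the denominators match (both are the product $(\alpha+q^{k-1})(\alpha+q^k)\cdots(\alpha+q^{2k-1})$, with $k+1$ factors), the base case holds.

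For the inductive step, assume the formula for $m$ and compute the sum up to $m$ (i.e. for $m+1$ in place of $m$) by peeling off the top term $y = m$:
$$
\sum_{y=k-1}^{m} \dfrac{q^y \binom{y}{k-1}_q}{(\alpha +q ^y) \cdots (\alpha + q^{y+k})} = \frac{q^{k-1}\binom{m}{k}_q}{(\alpha+q^{k-1})(\alpha+q^m)\cdots (\alpha + q^{m+k-1})} + \dfrac{q^m \binom{m}{k-1}_q}{(\alpha +q ^m) \cdots (\alpha + q^{m+k})}.
$$
Both terms share the factors $(\alpha + q^m)\cdots(\alpha + q^{m+k-1})$ in the denominator; putting them over the common denominator $(\alpha+q^{k-1})(\alpha+q^m)(\alpha+q^{m+1})\cdots(\alpha+q^{m+k})$, the numerator becomes
$$
q^{k-1}\binom{m}{k}_q(\alpha + q^{m+k}) + q^m\binom{m}{k-1}_q(\alpha + q^{k-1}).
$$
Factoring out $q^{k-1}$, this is $q^{k-1}\left[ \binom{m}{k}_q(\alpha + q^{m+k}) + q^{m-k+1}\binom{m}{k-1}_q(\alpha + q^{k-1}) \right]$, and the bracket is exactly the left-hand side of the first identity in Lemma \ref{Prev}, hence equals $\binom{m+1}{k}_q(\alpha + q^m)$. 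The factor $(\alpha + q^m)$ cancels against the corresponding denominator factor, leaving
$$
\frac{q^{k-1}\binom{m+1}{k}_q}{(\alpha+q^{k-1})(\alpha+q^{m+1})\cdots (\alpha + q^{m+k})},
$$
which is the claimed formula with $m+1$ in place of $m$, completing the induction.

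I do not anticipate a serious obstacle here; the only points requiring care are the bookkeeping of denominator ranges (making sure the telescoping of factors $(\alpha + q^j)$ is aligned correctly when peeling the top term) and verifying that the exponent shift $q^m\binom{m}{k-1}_q = q^{k-1}\cdot q^{m-k+1}\binom{m}{k-1}_q$ produces precisely the second summand of Lemma \ref{Prev}'s first identity. Both are routine once the common denominator is set up explicitly, so the proof is essentially a direct application of the already-established $q$-binomial recursion.
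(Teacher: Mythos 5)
Your proof is correct and follows essentially the same route as the paper: induction on $m$ with base case $m=k$, peeling off the $y=m$ term, combining over a common denominator, and invoking the first identity of Lemma \ref{Prev} to produce the factor $\binom{m+1}{k}_q(\alpha+q^m)$ that cancels against $(\alpha+q^m)$ in the denominator. The bookkeeping of the denominator ranges and the exponent shift $q^m = q^{k-1}\cdot q^{m-k+1}$ is handled exactly as in the paper's argument.
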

\begin{proof}
Proceed by induction $m$. When $m=k$, the identity is clear.

Assuming the lemma holds for $m$, then Lemma \ref{Prev} implies 
\begin{multline*}
 \frac{q^{k-1}\binom{m}{k}_q}{(\alpha+q^{k-1})(\alpha+q^m)\cdots (\alpha + q^{m+k-1})} + \dfrac{q^m \binom{m}{k-1}_q}{(\alpha +q ^m) \cdots (\alpha + q^{m+k})} \\
 =  \frac{q^{k-1}}{(\alpha+q^{k-1})(\alpha+q^m)\cdots (\alpha + q^{m+k})} \left( \binom{m}{k}_q(\alpha + q^{m+k}) + q^{m-k+1}\binom{m}{k-1}_q(\alpha+q^{k-1}) \right) \\
 = \frac{q^{k-1}\binom{m+1}{k}_q}{(\alpha+q^{k-1})(\alpha+q^{m+1})\cdots (\alpha + q^{m+k})}.
\end{multline*}
as needed.
\end{proof}

\begin{lemma}\label{Prev2}
For $n \geq 0,$
$$
\sum _ { r = 0 } ^ { n - 1 } \frac {  { q ^r} } { \left( \alpha + q ^ { r } \right) \left( \alpha + q ^ { r + 1 } \right) } = \frac { 1 - q ^ { n } } { ( 1 - q ) ( \alpha + 1 ) \left( \alpha + q ^ { n } \right) }.
$$
\end{lemma}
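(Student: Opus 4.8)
The plan is to recognize the summand as a telescoping difference. First I would observe the elementary partial-fraction identity
$$
\frac{1}{\alpha+q^r} - \frac{1}{\alpha+q^{r+1}} = \frac{(\alpha+q^{r+1}) - (\alpha+q^r)}{(\alpha+q^r)(\alpha+q^{r+1})} = \frac{q^r(q-1)}{(\alpha+q^r)(\alpha+q^{r+1})},
$$
which rearranges to
$$
\frac{q^r}{(\alpha+q^r)(\alpha+q^{r+1})} = \frac{1}{q-1}\left(\frac{1}{\alpha+q^r} - \frac{1}{\alpha+q^{r+1}}\right).
$$

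Next I would sum this over $r$ from $0$ to $n-1$; the right-hand side telescopes, leaving
$$
\sum_{r=0}^{n-1}\frac{q^r}{(\alpha+q^r)(\alpha+q^{r+1})} = \frac{1}{q-1}\left(\frac{1}{\alpha+1} - \frac{1}{\alpha+q^n}\right) = \frac{1}{q-1}\cdot\frac{q^n-1}{(\alpha+1)(\alpha+q^n)} = \frac{1-q^n}{(1-q)(\alpha+1)(\alpha+q^n)},
$$
which is the claimed formula. (An alternative is a one-line induction on $n$: the base case $n=0$ gives $0$ on both sides, and the inductive step amounts to the same partial-fraction manipulation combining $\frac{1-q^n}{(1-q)(\alpha+1)(\alpha+q^n)}$ with the new term $\frac{q^n}{(\alpha+q^n)(\alpha+q^{n+1})}$; but the telescoping argument is cleaner and avoids the bookkeeping.)

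There is essentially no obstacle here: the only thing to be careful about is the degenerate case $q=1$, where both sides should be interpreted as limits (the left side becomes $n/(\alpha+1)^2$ and the right side $\lim_{q\to 1}(1-q^n)/(1-q) \cdot 1/((\alpha+1)(\alpha+1)) = n/(\alpha+1)^2$), so the identity persists; since the lemma is used only for $0<q<1$ this point can be omitted. This lemma will feed into the evaluation of sums arising in the dynamic $\mathrm{ASEP}(q,m)$ construction, playing the analogous role for the dynamic models that \eqref{qBin} played in the non-dynamic case.
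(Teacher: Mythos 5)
Your proof is correct, and it is essentially the paper's argument: the paper simply states that the identity follows by a straightforward induction on $n$, and the inductive step is precisely the partial-fraction combination that your telescoping makes explicit. Nothing further is needed.
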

\begin{proof}
This is a straightforward induction on $n$.
\end{proof}

\begin{theorem}\label{qaBin}
\begin{multline*}
\sum_{0 \leq x_k < \cdots < x_1 \leq m-1} \frac { q ^ { x _ { 1 } } } { \left( \alpha + q ^ { x _ { 1 } } \right) \left( \alpha + q ^ { x _ { 1 } + 1 } \right) } \cdots  \frac { q ^ { x _ { k } } } { \left( \alpha + q ^ { x _ { k } + 2k-2 } \right) \left( \alpha + q ^ { x _ { k } + 2k-1 } \right) }  \\
= \frac{q ^ { ( k - 1 ) k / 2 } \binom{m}{k}_ { q }}{\left( \alpha + q ^ { k - 1 } \right) \dots \left( \alpha + q ^ { 2 k - 2 } \right) \left( \alpha + q ^ { m } \right) \cdots \left( \alpha + q ^ { m + k-1 } \right)}.
\end{multline*}
\end{theorem}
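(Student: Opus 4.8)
The plan is to prove, by induction on $k$, the following shifted generalization. For integers $k \ge 0$, $s \ge 0$ and $m \ge k$, set
$$
T_k^{(s)}(m) := \sum_{0 \le x_k < \cdots < x_1 \le m-1} \prod_{j=1}^k \frac{q^{x_j}}{(\alpha + q^{x_j + s + 2j - 2})(\alpha + q^{x_j + s + 2j - 1})}.
$$
The claim is that
$$
T_k^{(s)}(m) = \frac{q^{k(k-1)/2}\binom{m}{k}_q}{\prod_{i=k-1}^{2k-2}(\alpha + q^{s+i}) \cdot \prod_{i=m}^{m+k-1}(\alpha + q^{s+i})},
$$
and the theorem is the case $s = 0$. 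The base case $k = 0$ is trivial (both sides equal $1$), and $k = 1$ is Lemma \ref{Prev2} applied with $\alpha$ replaced by $\alpha q^{-s}$ (after clearing powers of $q$).

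For the inductive step, fix $k \ge 1$ and $s \ge 0$ and sum over the largest index $x_1$ last. For fixed $x_1 \in \{k-1,\dots,m-1\}$ the $j=1$ factor is $q^{x_1}/[(\alpha + q^{x_1 + s})(\alpha + q^{x_1+s+1})]$, and, relabeling $y_i := x_{i+1}$, the remaining sum over $0 \le y_{k-1} < \cdots < y_1 \le x_1 - 1$ is exactly $T_{k-1}^{(s+2)}(x_1)$ (and vanishes for $x_1 < k-1$). Hence
$$
T_k^{(s)}(m) = \sum_{x_1 = k-1}^{m-1} \frac{q^{x_1}}{(\alpha + q^{x_1+s})(\alpha + q^{x_1+s+1})}\, T_{k-1}^{(s+2)}(x_1).
$$
Substituting the inductive value of $T_{k-1}^{(s+2)}(x_1)$ and pulling the constant factor $q^{(k-1)(k-2)/2}/\prod_{i=k}^{2k-2}(\alpha + q^{s+i})$ out of the sum, the remaining $x_1$-dependent denominator $(\alpha+q^{x_1+s})(\alpha+q^{x_1+s+1})\prod_{i=x_1+2}^{x_1+k}(\alpha+q^{s+i})$ telescopes into the $(k+1)$-term block $\prod_{j=0}^{k}(\alpha + q^{x_1 + s + j})$, so that
$$
T_k^{(s)}(m) = \frac{q^{(k-1)(k-2)/2}}{\prod_{i=k}^{2k-2}(\alpha + q^{s+i})} \sum_{y=k-1}^{m-1} \frac{q^{y}\binom{y}{k-1}_q}{\prod_{j=0}^{k}(\alpha + q^{y+s+j})}.
$$
The inner one-dimensional sum is Lemma \ref{Prev1} with $\alpha$ replaced by $\alpha q^{-s}$, hence equals $q^{k-1}\binom{m}{k}_q \big/ \big[(\alpha + q^{s+k-1})\prod_{i=m}^{m+k-1}(\alpha + q^{s+i})\big]$. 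Combining this with the prefactor and using $q^{(k-1)(k-2)/2} q^{k-1} = q^{k(k-1)/2}$ together with $(\alpha + q^{s+k-1})\prod_{i=k}^{2k-2}(\alpha + q^{s+i}) = \prod_{i=k-1}^{2k-2}(\alpha + q^{s+i})$ gives exactly the claimed formula, closing the induction.

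\textbf{Main obstacle.} The only real work is the bookkeeping in the inductive step: one must check that the "tail" product $\prod_{i=x_1+2}^{x_1+k}(\alpha + q^{s+i})$ from $T_{k-1}^{(s+2)}(x_1)$ combines with the two new factors $(\alpha+q^{x_1+s})(\alpha + q^{x_1+s+1})$ into the block $\prod_{j=0}^{k}(\alpha+q^{x_1+s+j})$, and that the powers of $q$ and the shifted $q$-binomials assemble correctly; once the factors are matched, the content is precisely Lemma \ref{Prev1} (which rests on Lemma \ref{Prev}). Each auxiliary lemma is a rational identity in $\alpha$, so the substitution $\alpha \mapsto \alpha q^{-s}$ is legitimate; alternatively one may prove the shifted versions of Lemmas \ref{Prev2} and \ref{Prev1} directly, repeating their inductions on $m$ verbatim. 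A fully self-contained variant avoids Lemma \ref{Prev1}: split $T_k^{(s)}(m+1)$ according to whether $x_1 = m$, yielding $T_k^{(s)}(m+1) = T_k^{(s)}(m) + q^m[(\alpha+q^{m+s})(\alpha+q^{m+s+1})]^{-1} T_{k-1}^{(s+2)}(m)$, and then a double induction on $(k,m)$ reduces the step to the first identity of Lemma \ref{Prev}.
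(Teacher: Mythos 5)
Your proof is correct and follows essentially the same route as the paper: induction on $k$, summing over $x_1$ last, with Lemma \ref{Prev2} as the base case and Lemma \ref{Prev1} evaluating the resulting one-dimensional sum. The only difference is that you make the shifted induction hypothesis explicit via the parameter $s$, which the paper's proof applies implicitly (its inner sum is the $(k-1)$-case with exponents shifted by $2$); this is a welcome clarification of the bookkeeping but not a different argument.
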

\begin{proof}
Proceed by induction on $k$. The case when $k=1$ is exactly Lemma \ref{Prev2}.

Now assume the theorem holds for some value of $k-1$. Then by Lemma \ref{Prev1} and the induction hypothesis,
\begin{align*}
&\sum _ { x _ { 1 } = k - 1 } ^ { m - 1 } \frac { q ^ { x _ { 1 } } } { \left( \alpha + q ^ { x _ { 1 } } \right) \left( \alpha + q ^ { x_1 + 1 } \right) } \sum_{0 \leq x_k < \cdots < x_2 \leq x_1-1} \frac{q^{x_2}}{(\alpha + q^{x_2+2})(\alpha + q^{x_2+3})} \cdots \\
& \quad \quad = \sum _ { x _ { 1 } = k - 1 } ^ { m - 1 } \frac { q ^ { x _ { 1 } } } { \left( \alpha + q ^ { x _ { 1  } } \right) \left( \alpha + q ^ { x_1 + 1 } \right) } \cdot  \frac{q ^ { ( k - 2 ) (k-1) / 2 } \binom{x_1}{k-1}_ { q }}{\left( \alpha + q ^ { k  } \right) (\alpha + q^{k+1})\dots \left( \alpha + q ^ { 2 k - 2 } \right) \left( \alpha + q ^ { x_1+2 } \right) \cdots \left( \alpha + q ^ { x_1 + k } \right)}\\
& \quad \quad = \frac { q ^ { ( k - 2 ) ( k - 1 ) / 2 } } { \left( \alpha + q ^ { k } \right) \left( \alpha + q ^ { k + 1 } \right) \cdot \cdots \left( \alpha + q ^ { 2 k - 2 } \right) } \cdot  \frac{q^{k-1}\binom{m}{k}_q}{(\alpha+ q^{k-1})(\alpha + q^m)\cdots (\alpha + q^{m+k-1})} \\
& \quad \quad  = \frac{q ^ { ( k - 1 ) k / 2 } \binom{m}{k}_ { q }}{\left( \alpha + q ^ { k - 1 } \right) \dots \left( \alpha + q ^ { 2 k - 2 } \right) \left( \alpha + q ^ { m } \right) \cdots \left( \alpha + q ^ { m + k-1 } \right)},
\end{align*}
as needed.

\end{proof}

\subsection{Stationary measures of dynamic ASEP}

We now define the dynamic ASEP on a finite interval $\{0,\ldots,m\}$ with \textit{closed} boundary conditions. The state space is the set of functions $s:\{0,\ldots,m\} \rightarrow \mathbb{Z}$ such that $s(x+1) = s(x) \pm 1$ for all $0 \leq x \leq m$. The jump rates are the same as in the infinite lattice case, but we also require that $s(0)$ and $s(m)$ be fixed, unchanging values. 

For any $h\in \mathbb{Z}$, define the probability measure $\mathbb{P}_{q,\alpha,h,m}$ on the state space by its marginals
\begin{align*}
\mathbb{P}_{q,\alpha,h,m}(s(m)=h) &= 1 \\
\mathbb{P}_{q,\alpha,h,m}(s(x-1)=s(x) + 1) &= \frac{q^{s(x)}}{\alpha + q^{s(x)}} \text{ for all } 1 \leq x \leq m,\\
\mathbb{P}_{q,\alpha,h,m}(s(x-1)=s(x) - 1) &= \frac{\alpha}{\alpha + q^{s(x)}} \text{ for all } 1 \leq x \leq m.
\end{align*}
Letting $\mathbf{1}$ denote the constant function $\mathbf{1}(x)=1$ for all $x$, there is the shift
\begin{equation}\label{Shift}
\mathbb{P}_{q,\alpha,h,m}(s) = \mathbb{P}_{q,\alpha q^{-h},0,m}(s - h\mathbf{1}).
\end{equation}

Let $k$ be the integer satisfying $s_m+k - (m-k)=s_0$. Given $s(x)$, let $X_s\subseteq \{0,\ldots,m-1\}$ denote the set of values $x$ such that $s_{x+1}=s_x-1$. Order the elements of $X_s$ by $0 \leq x_k < \ldots < x_1 \leq m-1$. 

\begin{center}
\begin{tikzpicture}
\draw (-0.3,2) node {$s_0=2$};
\draw (1.5,3.2) node {$s_1=3$};
\draw (3.5,0.8) node {$s_3=1$};
\draw (5.5,3.2) node {$s_5=3$};
\draw (6.5,1.8) node {$s_6=2$};
\draw (8.3,3) node {$s_7=3$};
\draw (2,0.3) node {$x_3$};
\draw (3,0.3) node {$x_2$};
\draw (6,0.3) node {$x_1$};
\draw [very thick](0,0) -- (8,0);
\draw (1,0) circle (3pt);
\draw (4,0) circle (3pt);
\draw (5,0) circle (3pt);
\draw (7,0) circle (3pt);
\fill[black] (2,0) circle (3pt);
\fill[black] (3,0) circle (3pt);
\fill[black] (6,0) circle (3pt);
\draw [thick](0.5,2) -- (1.5,3) -- (3.5,1) -- (5.5,3) -- (6.5,2) -- (7.5,3);
\end{tikzpicture}
\end{center}

The pushforward of $\mathbb{P}_{q,\alpha,h,m}$ under the map $s\mapsto X_s$ is a probability measure on $2^{\{0,\ldots,m-1\}}$. By \eqref{Shift}, $\mathbb{P}_{q,\alpha,h,m}$ and $\mathbb{P}_{q,\alpha q^{-h},0,m}$ are pushed forward to the same measure. By a slight abuse of notation, let $\mathbb{P}_{q,\alpha,m}$ denote the probability measure on $2^{\{0,\ldots,m-1\}}$ which is the pushforward of $\mathbb{P}_{q,\alpha,0,m}$. The number of indices (whether three or four) will indicate which probability measure is being referenced. 

\begin{prop}\label{Factors}
Fix $q,\alpha$ and $m$. 

(a) For any fixed $0 \leq k \leq m$ and any sequence $0 \leq x_k < \ldots < x_1 \leq m-1$, the measure $\mathbb{P}_{q,\alpha,m}$ on $2^{\{0,\ldots,m-1\}}$ satisfies
$$
\mathbb{P}_{q,\alpha,m}(X = \{x_k,\ldots,x_1\} ) =  
(-\alpha^{-1}q;q^{-1})_{2k-m} \prod_{l=1}^k \frac{q^{x_l+2(l-1)} \cdot q^m \alpha }{(\alpha q^m + q^{x_l+2(l-1)})(\alpha q^m + q^{x_l+2l-1})} 
$$
and
$$
\mathbb{P}_{q,\alpha,m}(\vert X\vert=k) = \frac{\alpha}{1 + \alpha} \cdot \frac{q \alpha}{1 + q\alpha} \cdot \cdots \cdot  \frac{ q^{ m-2k-1} \alpha}{1 + q^{ m-2k-1} \alpha } \cdot Z^{(\alpha)}_{k,m},
$$
where
$$
Z^{(\alpha)}_{k,m} =  \frac{ \binom{m}{k}_ { q }}{ \left( 1+ \alpha q^{m-k+1} \right) \cdots \left( 1 + \alpha q^{ m - 2 k + 2 } \right) \left(1+ \alpha^{-1}  \right) \cdots \left( 1 + \alpha^{-1}q ^ { k-1 } \right)}.
$$
Here, $(a;q)_r$ is the $q$--Pochhamer symbol defined by
$$
(a;q)_r 
= \displaystyle
\begin{cases}
\prod_{j=0}^{r-1}(1-aq^{j}),  \quad r >0\\[10pt]
\prod_{j=1}^{-r} (1-a q^j)^{-1}, \quad r<0.
\end{cases}
$$

(b) The normalizing factors $Z^{(\alpha)}_{k,m}$ satisfy

$$
\frac{Z^{(\alpha)}_{k-l,m-l}}{Z^{(\alpha)}_{k,m}} = \frac{ \binom{m-l}{k-l}_q }{ \binom{m}{k}_q } (1 + \alpha q^{m-2(k-1)}) \cdots ( 1 + \alpha q^{m-2(k-l)}) (1 + \alpha^{-1}q^{k-1}) \cdots (1 + \alpha^{-1}q^{k-l}) 
$$

and
$$
\frac{Z^{(\alpha q^{-l})}_{k-l,m-l}}{Z^{(\alpha)}_{k,m}} = \frac{ \binom{m-l}{k-l}_q}{\binom{m}{k}_q}(1+\alpha^{-1})\cdots (1+\alpha^{-1} q^{l-1}) ( 1 + \alpha q^{m-k+1}) \cdots ( 1 + \alpha q^{m-k-l+2}) .
$$

(c) Additionally,
\begin{align*}
\mathbb{P}_{q,\alpha,m}(x_k=0 \big| \vert X\vert=k) = \frac{\binom{m-1}{k-1}_q(\alpha q^m + q^{m+k-1})}{\binom{m}{k}_q(\alpha q^m + q^{2k-1})}, \quad & \mathbb{P}_{q,\alpha,m}(x_k>0 \big| \vert X\vert=k)  = \frac{\binom{m-1}{k}_q q^k(\alpha q^m + q^{k-1})}{\binom{m}{k}_q(\alpha q^m + q^{2k-1})},\\
\mathbb{P}_{q,\alpha,m}(x_1=m-1 \big| \vert X\vert=k) = \frac{q^{m-k} \binom{m-1}{k-1}_q}{\binom{m}{k}_q} \frac{\alpha q^m + q^{k-1}}{\alpha q^m + q^{m-1}}, & \quad \mathbb{P}_{q,\alpha,m}(x_1< m-1 \big| \vert X\vert=k) = \frac{\binom{m-1}{k}_q}{\binom{m}{k}_q} \frac{\alpha q^m + q^{m+k-1}}{\alpha q^m+ q^{m-1}}.
\end{align*}
\end{prop}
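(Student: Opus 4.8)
The plan is to reduce everything to a direct computation with the measure $\mathbb{P}_{q,\alpha,0,m}$, viewed as a Markov chain on the values $s(m),s(m-1),\dots,s(0)$ with deterministic start $s(m)=0$ and the prescribed one-step laws, and then to deduce (b) and (c) by algebra with the $q$-identities of the preceding subsection. The first step is the observation that, since $s(m)=0$ is deterministic and a configuration $s$ is determined by its down-set $X_s\subseteq\{0,\dots,m-1\}$, unwinding the one-step weights gives
$$
\mathbb{P}_{q,\alpha,m}\bigl(X_s=\{x_k<\dots<x_1\}\bigr)=\frac{\alpha^{\,m-k}\,q^{\sum_{l=1}^k s(x_l+1)}}{\prod_{z=1}^{m}\bigl(\alpha+q^{s(z)}\bigr)},
$$
because each site $z\in\{1,\dots,m\}$ contributes a weight with denominator $\alpha+q^{s(z)}$ and numerator $q^{s(z)}$ if $z-1\in X_s$ and $\alpha$ otherwise, and from the lattice-path structure $s(x_l+1)=x_l+2l-1-m$, so the numerator exponent becomes an explicit function of the $x_l$.

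The crux of part (a) is then the evaluation of $\prod_{z=1}^m(\alpha+q^{s(z)})$ as a function of $X_s$. I would do this by breaking the path $s(m),s(m-1),\dots,s(1)$ into the maximal runs of up-steps and down-steps delimited by the elements of $X_s$, evaluating each run's contribution as a ratio of $q$-Pochhammer symbols $(-\alpha^{-1}q^{\bullet};q)_{\bullet}$, and telescoping across the runs; the run lying below the minimum of the path contributes the global prefactor $(-\alpha^{-1}q;q^{-1})_{2k-m}$, with the negative-index convention covering the regime $2k<m$. This proves the first display. Summing it over all $0\le x_k<\dots<x_1\le m-1$, after extracting the $\alpha^{\bullet}q^{\bullet}$ prefactors, leaves precisely a shifted instance of Theorem~\ref{qaBin} (shift $\alpha\mapsto\alpha q^{-m}$, equivalently substitute $x_l\mapsto x_l+m$); collecting the resulting $q$-Pochhammer and $q$-binomial factors yields the product $\prod_r\frac{q^r\alpha}{1+q^r\alpha}$ times $Z^{(\alpha)}_{k,m}$, which is the second display.

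For part (b) I would substitute the closed form of $Z^{(\alpha)}_{k,m}$ into both ratios: the factor $\binom{m-l}{k-l}_q/\binom{m}{k}_q$ is immediate, and the remaining products of $(1+\alpha q^{\bullet})$ and $(1+\alpha^{-1}q^{\bullet})$ factors are matched by re-indexing (in the second identity, the base change $\alpha\mapsto\alpha q^{-l}$ turns each $1+\alpha^{-1}q^{\bullet}$ into $1+\alpha^{-1}q^{l+\bullet}$). For part (c), each conditional probability is a ratio: $\mathbb{P}_{q,\alpha,m}(x_k=0\mid|X|=k)$ is the sum of the first display of (a) over configurations with $x_k=0$, divided by $\mathbb{P}_{q,\alpha,m}(|X|=k)$; with $x_k=0$ fixed the numerator is again a $(k-1)$-fold shifted instance of Theorem~\ref{qaBin}, and dividing by $\mathbb{P}_{q,\alpha,m}(|X|=k)$ and simplifying with Lemma~\ref{Prev} and the two Pascal-type $q$-binomial recurrences stated in the $q$-notation subsection gives the stated expression; the cases $x_1=m-1$ and $x_1<m-1$ are handled identically (using the symmetry $x_l\mapsto m-1-x_{k+1-l}$ of the summation range), and that each complementary pair in (c) sums to $1$ is a consistency check following from the same recurrences.

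The main obstacle is the denominator evaluation in part (a): pinning down the prefactor $(-\alpha^{-1}q;q^{-1})_{2k-m}$ exactly — signs, the base $q^{-1}$ versus $q$, and the regime $2k\le m$ where the symbol passes to a negative index — and arranging the shift so that the summation over $\{x_l\}$ closes onto Theorem~\ref{qaBin}. Once part (a) is in hand, parts (b) and (c) are routine bookkeeping with the $q$-binomial recurrences.
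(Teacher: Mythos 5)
Your proposal is correct and follows essentially the same route as the paper: expand the path measure site by site into up-/down-step weights, reorganize the product run-by-run around the elements of $X_s$ to isolate the $(-\alpha^{-1}q;q^{-1})_{2k-m}$ prefactor and the per-$x_l$ factors, sum via Theorem \ref{qaBin} with $\alpha$ replaced by $\alpha q^m$, and then obtain (b) by direct substitution into the closed form of $Z^{(\alpha)}_{k,m}$ and (c) by fixing the extreme coordinate, re-indexing to a $(k-1)$-fold instance of the same sum, and handling the complements with Lemma \ref{Prev}. The only caveat is bookkeeping you already flagged (the direction of the parameter shift and the exact indexing of the Pochhammer prefactor), which is where the paper's computation also concentrates its effort.
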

\begin{proof}
(a) We consider only when $2k-m<0$; if $2k-m\geq0$ the proof is similar. By looking from right to left, we see that $\mathbb{P}_{q,\alpha,m}(X = \{x_k,\ldots,x_1\} )$ equals
\begin{align*}
& \frac{\alpha}{\alpha + 1} \cdot \frac{\alpha}{\alpha + q^{-1}} \cdot \cdots \cdot  \frac{\alpha}{\alpha + q^{x_1-m+1}}  \times \frac{q^{x_1-m}}{\alpha + q^{x_1-m}} \\
& \frac{\alpha}{\alpha + q^{x_1-m+1}} \cdot  \frac{\alpha}{\alpha + q^{x_1-m}} \cdot \cdots \cdot \frac{\alpha}{\alpha + q^{x_2-m+3}} \times \frac{q^{x_2-m+2}}{\alpha + q^{x_2-m+2}}\\
& \cdots \\
& \frac{\alpha}{\alpha + q^{x_k-m+2k-1}} \frac{\alpha}{\alpha + q^{x_k-m+2k-2}} \cdot \cdots \cdot \frac{\alpha}{\alpha + q^{k-(m-k)+1}},   
\end{align*}
which can be re--arranged as 
\begin{multline*}
\frac{\alpha}{\alpha +1} \cdot \frac{\alpha}{\alpha+q^{-1}} \cdot \cdots \cdot  \frac{\alpha}{\alpha + q^{k-(m-k)+1}}\\
\times \frac{q^{x_1} \cdot q^m \alpha}{(\alpha q^m + q^{x_1})(\alpha q^m + q^{x_1+1})} \cdots \frac{q^{x_k+2(k-1)} \cdot q^m\alpha }{(\alpha q^m + q^{x_k+2(k-1)})(\alpha q^m + q^{x_k+2k-1})} .
\end{multline*}
This shows the first part. By Theorem \ref{qaBin},
\begin{multline*}
 \sum_{0 \leq x_k < \ldots < x_1 \leq m-1}      \frac{q^{x_1}}{(\alpha q^m + q^{x_1})(\alpha q^m + q^{x_1+1})} \cdots \frac{q^{x_k}}{(\alpha q^m + q^{x_k+2(k-1)})(\alpha q^m + q^{x_k+2k-1})}            \\
= \frac{q ^ { ( k - 1 ) k / 2 } \binom{m}{k}_ { q }}{\left( \alpha q^m + q ^ { k - 1 } \right) \dots \left( \alpha q^m+ q ^ { 2 k - 2 } \right) \left( \alpha q^m+ q ^ { m } \right) \cdots \left( \alpha q^m+ q ^ { m + k-1 } \right)},
 \end{multline*}
which leads to the second part. 

(b) By part (a), it is immediate that
$$
\frac{Z^{(\alpha)}_{k-1,m-1}}{Z^{(\alpha)}_{k,m}} = \frac{ \binom{m-1}{k-1}_q }{ \binom{m}{k}_q } (1 + \alpha q^{m-2k+2})(1 + \alpha^{-1}q^{k-1}),
$$
By induction, the first statement of (b) follows. The proof of the second statement is similar.

(c) By part (a), 
\begin{align*}
\mathbb{P}_{q,\alpha,m}(x_k=0 \big| \vert X\vert=k) &= Z_{k,m}^{-1} \cdot \frac{q^{2k-2} q^m \alpha }{(\alpha q^m + q^{2k-2})(\alpha q^m + q^{2k-1})} \\
\times  & \sum_{1 \leq x_{k-1} < \cdots < x_1 \leq m-1}  \frac{q^{x_1} \cdot q^m \alpha }{(\alpha q^m + q^{x_1})(\alpha q^m + q^{x_1+1})} \cdots \frac{q^{x_{k-1}+2(k-2)}\cdot q^m \alpha}{(\alpha q^m + q^{x_{k-1}+2(k-2)})(\alpha q^m + q^{x_{k-1}+2k-3})} \\
&= Z_{k,m}^{-1} \cdot \frac{q^{2k-2}q^m\alpha}{(\alpha q^m + q^{2k-2})(\alpha q^m + q^{2k-1})} \\
\times & \sum_{0 \leq x_{k-1} < \cdots < x_1 \leq m-2}  \frac{q^{x_1+1} \cdot q^m \alpha }{(\alpha q^m + q^{x_1+1})(\alpha q^m + q^{x_1+2})} \cdots \frac{q^{x_{k-1}+2(k-2)+1} \cdot q^m \alpha}{(\alpha q^m + q^{x_{k-1}+2k-3})(\alpha q^m + q^{x_{k-1}+2k-2})}.
\end{align*}
By (a), this equals
$$
\frac{Z_{k-1,m-1}}{Z_{k,m}} \cdot \frac{q^{2k-2}q^m\alpha}{(\alpha q^m + q^{2k-2})(\alpha q^m + q^{2k-1})} \sum_{0 \leq x_{k-1} < \cdots < x_1 \leq m-2} \mathbb{P}_{q,\alpha,m-1}( X = \{x_{k-1},\ldots,x_1\} \big| \vert X\vert = k-1),
$$
Therefore,
\begin{align*}
\mathbb{P}_{q,\alpha,m}(x_k=0 \big| \vert X\vert=k)  = \frac{Z_{k-1,m-1}}{Z_{k,m}} \cdot  \frac{q^{2k-2}q^m\alpha}{(\alpha q^m + q^{2k-2})(\alpha q^m + q^{2k-1})} .
\end{align*}
The $Z$ terms simplify from part (b). Multiplying the numerator and denominator by $q^{2k-2}$ shows that most of the terms will cancel. What remains will yield
$$
\mathbb{P}_{q,\alpha,m}(x_k=0 \big| \vert X\vert=k)  = \frac{ \binom{m-1}{k-1}_q }{ \binom{m}{k}_q } \cdot \frac{ \alpha q^m  + q^{m+k-1}}{\alpha q^m + q^{2k-1}}  ,
$$
which equals the desired expression. Applying the second equality in Lemma \ref{Prev} to 
$$
\mathbb{P}_{q,\alpha,m}(x_k>0 \big| \vert X\vert=k) = 1 - \mathbb{P}_{q,\alpha,m}(x_k=0 \big| \vert X\vert=k) 
$$
yields the second equality of part (c). The proof for the third and fourth equalities are similar.

\end{proof}

\subsection{Stochastic fusion construction of dynamic ASEP$(q,\vec{m})$}
Fix positive integers $m(j)$. The state space of dynamic ASEP$(q,\vec{m})$ consists of maps $s:\mathbb{Z} \rightarrow \mathbb{Z}$ such that for all $x\in \mathbb{Z}$,
$$
\vert s(x+1)-s(x) \vert \leq m(x) \text{  and  } s(x+1)-s(x) \in m(x) + 2\mathbb{Z}.
$$
Given $s(x)$, let $k_s(x)$ be the solution to $s(x) - k_s(x) + (m({x})-k_s(x))=s(x+1)$. See the example in Figure \ref{bpp}.

\begin{figure}\label{bpp}
\caption{Here, $(m(0),m(1),\ldots,m(6)) = (1,3,1,2,2,3,1)$ and $(k(0),\ldots,k(6))=(0,2,1,1,0,3,0)$.}
\begin{center}
\begin{tikzpicture}
\draw (-0.3,2) node {$s(0)=2$};
\draw (1.5,3.2) node {$s(1)=3$};
\draw (3.5,0.8) node {$s(3)=1$};
\draw (5.5,3.2) node {$s(5)=3$};
\draw (6.5,-0.2) node {$s(6)=0$};
\draw (8.3,1) node {$s(7)=1$};
\draw [very thick](0,-3) -- (8,-3);
\draw (1,-3) circle (3pt);
\fill[black] (2,-3) circle (3pt);
\fill[black] (2,-2.5) circle (3pt);
\draw (2,-2) circle (3pt);
\fill[black] (3,-3) circle (3pt);
\fill[black] (4,-3) circle (3pt);
\draw (4,-2.5) circle (3pt);
\draw (5,-3) circle (3pt);
\draw (5,-2.5) circle (3pt);
\fill[black] (6,-3) circle (3pt);
\fill[black] (6,-2.5) circle (3pt);
\fill[black] (6,-2) circle (3pt);
\draw (7,-3) circle (3pt);
\draw [thick](0.5,2) -- (1.5,3) -- (2.5,2) -- (3.5,1) -- (4.5,1) -- (5.5,3) -- (6.5,0) -- (7.5,1);
\end{tikzpicture}
\end{center}
\end{figure}
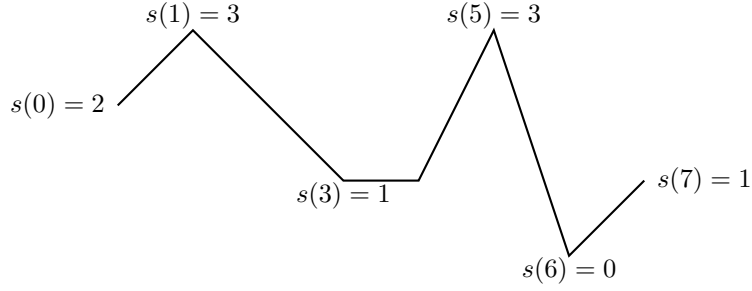

The jump rates will depend on the dynamic parameter $\alpha$. If $s,s'$ are two states such that $s(y)=s'(y)$ for all $y\neq x$, the jump rates from $s$ to $s'$ are
\begin{multline*}
=\frac{1 + {\alpha} q^{-s(x)} }{1 + {\alpha} q^{-s(x \pm 1)}  } \cdot  \mathbb{P}_{q,\alpha q^{-s(x)},m(x-1)}(  y_1 = m(x-1)-1 | \vert Y\vert = k_s(x-1) )  \cdot \mathbb{P}_{q,\alpha q^{-s(x+1)},m(x)}(y_{k_s(x)}>0 | \vert Y \vert = k_s(x)) \\
=\frac{1 + {\alpha} q^{-s(x)} }{1 + {\alpha} q^{-s(x \pm 1)}  }\left[ \frac{q^{m(x-1)-k_s(x-1)} \binom{m(x-1)-1}{k_s(x-1)-1}_q}{ \binom{m(x-1)}{k_s(x-1)}_q}  \cdot \frac{\alpha q^{m(x-1)-s(x)} + q^{k_s(x-1)-1}}{\alpha q^{m(x-1)-s(x)} + q^{m(x-1)-1}}\right] \left[ \frac{\binom{m(x)-1}{k_s(x)}_q}{ \binom{m(x)}{k_s(x)}_q} \frac{q^{k_s(x)} (\alpha q^{m(x)-s(x+1)} + q^{k_s(x)-1}  ) }{ \alpha q^{m(x)-s(x+1)} + q^{2k_s(x)-1}} \right] , 
\end{multline*}
if $s'(x) = s(x) + 2$ and 
\begin{multline*}
\frac{q(1+\alpha q^{-s(x)})}{1 + \alpha q^{-s(x \pm 1)}}  \cdot  \mathbb{P}_{q,\alpha q^{-s(x)},m(x-1)}( y_1 < m(x-1)-1 | \vert Y\vert = k_s(x-1) )   \cdot \mathbb{P}_{q,\alpha q^{-s(x+1)},m(x)}(y_{k_s(x)}=0 | \vert Y \vert = k_s(x)) \\
=\frac{q(1+\alpha q^{-s(x)})}{1 + \alpha q^{-s(x \pm 1)}} \left[ \frac{\binom{m(x-1)-1}{k_s(x-1)}_q}{\binom{m(x-1)}{k_s(x-1)}_q} \cdot \frac{\alpha q^{m(x-1)-s(x)} + q^{m(x-1)+k_s(x-1)-1}}{\alpha q^{m(x-1)-s(x)} + q^{m(x-1)-1}} \right] \left[ \frac{  \binom{m(x)-1}{k_s(x)-1}_q  }{ \binom{m(x)}{k_s(x)}_q } \frac{ \alpha q^{m(x)-s(x+1)} + q^{m(x)+k_s(x)-1}   }{ \alpha q^{m(x)-s(x+1)} + q^{2k_s(x)-1}} \right],  
\end{multline*}
if $s'(x)=s(x)-2$. Note that these simplify, respectively, to 

\begin{multline}\label{Line1}
q^{m(x-1)-k_s(x-1)+k_s(x)} \cdot \frac{1-q^{k_s(x-1)}}{ 1 - q^{m(x-1)}} \cdot \frac{1-q^{m(x)-k_s(x)}}{1-q^{m(x)}} \cdot \frac{1 + {\alpha} q^{-s(x)} }{1 + {\alpha} q^{-s(x \pm 1)}  } \\
\quad \quad \quad \quad \times  \frac{\alpha q^{m(x-1)-s(x)} + q^{k_s(x-1)-1}}{\alpha q^{m(x-1)-s(x)} + q^{m(x-1)-1}}    \frac{\alpha q^{m(x)-k_s(x)} + q^{k_s(x)-1}   }{ \alpha q^{m(x)-k_s(x)} + q^{2k_s(x)-1}}  \\
\end{multline}
and
\begin{multline}\label{Line2}
q\frac{1-q^{m(x-1)-k_s(x-1)}}{1-q^{m(x-1)}} \cdot \frac{1-q^{k_s(x)}}{1-q^{m(x)}} \cdot \frac{1+\alpha q^{-s(x)}}{1 + \alpha q^{-s(x \pm 1)}} \\
\times  \frac{\alpha q^{-s(x)} + q^{k_s(x-1)-1}}{\alpha q^{-s(x)} + q^{-1}}  \frac{ \alpha q^{m(x)-k_s(x)} + q^{m(x)+k_s(x)-1}   }{ \alpha q^{m(x)-k_s(x)} + q^{2k_s(x)-1}} .
\end{multline}

If $\alpha= 0$, then the jump rates become
\begin{align*}
&\frac{1-q^{k_s(x-1)}}{1-q^{m(x-1)}} \cdot \frac{1-q^{m(x)-k_s(x)}}{1-q^{m(x)}},\\
 q^{m(x) -k_s(x) + k_s(x-1)+1} & \cdot \frac{1-q^{m(x-1)-k_s(x-1)}}{1-q^{m(x-1)}} \cdot \frac{1-q^{k_s(x)}}{1-q^{m(x)}} ,
\end{align*}
which are the left and right jump rates of ASEP$(q,\vec{m})$. When $\alpha \rightarrow \infty$, then the jump rates become
\begin{align*}
&q^{m(x-1)-k_s(x-1)+k_s(x)+1} \cdot \frac{1-q^{k_s(x-1)}}{ 1 - q^{m(x-1)}} \cdot \frac{1-q^{m(x)-k_s(x)}}{1-q^{m(x)}}, \\
& \frac{1-q^{m(x-1)-k_s(x-1)}}{1-q^{m(x-1)}} \cdot \frac{1-q^{k_s(x)}}{1-q^{m(x)}} ,
\end{align*}
which are the right and left jump rates of ASEP$(q,\vec{m})$. In other words, the dynamical ASEP$(q,\vec{m})$ interpolates between the usual ASEP$(q,\vec{m})$ and its space reversal.

\subsubsection{Degeneration to dynamic SSEP$(\vec{m})$}
Now take the symmetric limit, as in Section \ref{dynSym}. Recall that we make the substitution $\tilde{s}(x) = s(x) - \log_q \vert \alpha\vert$. The first lines in \eqref{Line1} and \eqref{Line2} yield the dynamical SSEP jump rates. For the second lines, note that the shift in $s(x)$ does not change the values of $m(x)$ or $k_s(x)$. Therefore one of the two terms in the second lines will converges to $1$. Combining all the terms, the jump rates become
$$
\frac{k_s(x-1)}{m(x-1)} \frac{m(x) - k_s(x)}{m(x)} \frac{s(x) - \lambda}{s(x\pm 1) - \lambda} \frac{s(x)-\lambda -1 - m(x-1)+k_s(x-1)}{s(x)-\lambda -1}
$$
and
$$
\frac{m(x-1) - k_s(x-1)}{m(x-1)} \frac{k_s(x)}{m(x)}\frac{s(x) - \lambda}{s(x\pm 1) - \lambda} \frac{s(x)-\lambda -1 + k_s(x-1)}{s(x)-\lambda -1} .
$$
Note that substituting $k_s(y)\mapsto m(y)-k_s(y)$ for $y=x,x-1$ switches the two jump rates.

We note that another way to the symmetric limit is to set $\alpha = -q^{\lambda}$ and take the $q\rightarrow 1$ limit. In this limit, the jump rates become
$$
\frac{k_s(x-1)}{m(x-1)} \frac{m(x) - k_s(x)}{m(x)} \frac{s(x) - \lambda}{s(x\pm 1) - \lambda} \frac{s(x)-\lambda -1 - m(x-1)+k_s(x-1)}{s(x)-\lambda -1} \frac{-m(x)+2k_s(x)-\lambda - 1 }{-m(x)+3k_s(x)-\lambda - 1}
$$
and
$$
\frac{m(x-1) - k_s(x-1)}{m(x-1)} \frac{k_s(x)}{m(x)}\frac{s(x) - \lambda}{s(x\pm 1) - \lambda} \frac{s(x)-\lambda -1 + k_s(x-1)}{s(x)-\lambda -1} \frac{2k_s(x) -\lambda -1   }{3k_s(x) - m(x) - \lambda - 1} .
$$
The existence of two different symmetric limits highlights one of the differences between dynamic SSEP and dynamic SSEP$(\vec{m})$, which is that the former is preserved under the simultaneous transformations $\alpha\mapsto \alpha q, s(x) \mapsto s(x)+1$, while the latter is not.

If all $m(x)=1$, then the first jump rates are only nonzero if $k_s(x-1)=1$ and $k_s(x)=0$, and we obtain the dynamic SSEP jump rate. Similarly, the second jump rates are only nonzero if $k_s(x-1)=0$ and $k_s(x)=1$, and we again obtain the dynamic SSEP jump rate. 

If $\lambda \rightarrow -\infty$, we recover the usual SSEP$(\vec{m})$.

\subsubsection{Degeneration to dynamic $q$--Boson}\label{diffe}
Fix a state $s$ of dynamic ASEP$(q,\vec{m})$. Suppose that all $m(x)$ are taken to infinity, and all $k_s(x)$ are also taken to infinity in such a way that $m(x)-2k_s(x)$ is finite. This means that the function $s(x)$ is still well--defined in the limit.  In this limit, we have
\begin{multline*}
\mathbb{P}_{q,\alpha q^{-s(x)},m(x-1)}(  y_1 = m(x-1)-1 | \vert Y\vert = k_s(x-1) )  \rightarrow \frac{q^{-1}}{\alpha q^{-s(x)} + q^{-1}}, \\
\quad  \quad \mathbb{P}_{q,\alpha q^{-s(x+1)},m(x)}(y_{k_s(x)}>0 | \vert Y \vert = k_s(x)) \rightarrow  \frac{1}{1 + \alpha q^{m(x)-2k_s(x)+1-s(x+1)}} = \frac{1}{1 + \alpha q^{1-s(x)}},
\end{multline*}
so the jump rate from $s(x)$ to $s(x)+2$ is zero. Likewise,
$$
 \mathbb{P}_{q,\alpha,m(x-1)}( y_1 < m(x-1)-1 | \vert Y\vert = k_s(x-1) )  \rightarrow \frac{\alpha q^{-s(x)}  }{\alpha q^{-s(x)} + q^{-1}}, \quad\mathbb{P}_{q,\alpha,m(x)}(y_{k_s(x)}=0 | \vert Y \vert = k_s(x))   \rightarrow 1,
$$
so the jump rate from $s(x)$ to $s(x)-2$ is
$$
q \cdot \frac{1+\alpha q^{-s(x)}}{1 + \alpha q^{-s(x)+1}} \cdot \frac{\alpha q^{-s(x)}  }{\alpha q^{-s(x)} + q^{-1}}  .
$$





\subsection{A duality Ansatz for finite--lattice dynamic ASEP} \label{Dynamical} 

Here, we model a duality Ansatz using a similar argument from section \ref{SchASEP}.

Fix integers $t, r, m > 0$. Let $X = (x_1, x_2, \ldots,  x_t)$ and $Y = (y_1, y_2, \ldots , y_r)$ be sets of integers such $0 \le x_{i + 1} < x_i \le m -1$ and $0 \le y_{j + 1} < y_j \le m - 1$ for each $1 \le i \le t - 1$ and $0 \le j \le r - 1$. 

For any finite set $V$ of integers and integer $u$, define $h_B (u)$ to be the number of $v \in V$ greater than or equal to $u$; similarly, define $n_V (u)$ to be the number of $v \in V$ less than $u$. Observe that $h_V (u) + n_V (u) = |S|$ for any $u \in \mathbb{Z}$. Further denote 
\begin{flalign*}
h_V (u) = u - 2 |V| + 2 h_V (u), 
\end{flalign*}
and  for any set $V$ of nonnegative integers and for any integers $a \le b$, let $\omega_V (a, b)$ denote the number of $v \in Y$ such that $a < y \le b$. 
\noindent Now define 
\begin{flalign*}
F(X; Y) = \pi (X) \displaystyle\prod_{k = 1}^r T_k (X; Y) \displaystyle\prod_{x_j > y_k} P_{j, k} (X; Y), 
\end{flalign*}

\noindent where 
\begin{flalign*}
& \pi (X) = \prod_{j = 1}^t \displaystyle\frac{q^{x_j}}{(\alpha + q^{x_j + 2 (j - k) - 2}) (\alpha + q^{x_j + 2(j - k) - 1})}; \\
T_k (X; Y) = q^{-y_k} & (\alpha + q^{-s_X (y_k) }) (\alpha + q^{-s_X (y_k) + 1 }) \textbf{1}_{y_k \in X}; \qquad P_{j, k} (X; Y) = \displaystyle\frac{\alpha + q^{U + 1}}{q (\alpha + q^{U-1})}, 
\end{flalign*}

\noindent where $U = U_{j, k} (X; Y)$ is the function
\begin{equation}\label{ujk}
U_{j,k}(X;Y) = 
\begin{cases}
s_X(x_j) + \omega_Y(y_k,x_j), & \text{ if } x_j \notin Y,\\
\infty, & \text{ if }x_j \in Y
\end{cases}
\end{equation}

\begin{prop}
The value of $F(X;Y)$ does not depend on $Y$. 
\end{prop}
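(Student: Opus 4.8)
The strategy mirrors the bijective argument used for the Sch\"utz duality in Lemma~\ref{syiyi1} and Corollary~\ref{syindependent}, but now the weights are the dynamical analogues built from $\mathbb{P}_{q,\alpha,m}$. The plan is to show that $\mathfrak{S}(Y) := \sum_{X \in \mathfrak{X}(m)} F(X;Y)$ is invariant under lowering a single coordinate $y_i$ to $y_i - 1$, provided $y_i > y_{i+1} + 1$ (with $y_{r+1}=0$); together with the obvious fact that one can always reach the minimal configuration $Y_{\min} = (r-1, r-2, \ldots, 1, 0)$ by a sequence of such moves, this forces $\mathfrak{S}(Y)$ to be independent of $Y$. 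So fix $i$ and set $Z = (y_1, \ldots, y_{i-1}, y_i - 1, y_{i+1}, \ldots, y_r)$; the goal is $\mathfrak{S}(Y) = \mathfrak{S}(Z)$.

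First I would partition $\mathfrak{X}(m)$ according to the occupancy of the two sites $y_i - 1$ and $y_i$ by $X$: let $\mathcal{B}$ be the $X$ with $y_i-1 \in X, y_i \notin X$; $\mathcal{C}$ the $X$ with $y_i - 1 \notin X, y_i \in X$; $\mathcal{D}$ the $X$ with both in $X$; and note that any $X$ missing both contributes $0$ to both sums because of the indicator $\mathbf{1}_{y_i \in X}$ in $T_i$ (for $Y$) resp.\ $T_i$ evaluated at $y_i - 1$ (for $Z$). Writing $\mathfrak{S}(Y) = C(Y) + D(Y)$ and $\mathfrak{S}(Z) = B(Z) + D(Z)$ with the evident summands, it then suffices to prove (i) $D(Y) = D(Z)$, i.e.\ $F(X;Y) = F(X;Z)$ for $X \in \mathcal{D}$; and (ii) the pairing bijection $\mathcal{C} \to \mathcal{B}$ given by $X \mapsto \overline{X} := (X \setminus \{y_i\}) \cup \{y_i - 1\}$ satisfies $F(X;Y) = F(\overline{X};Z)$.

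For (i), with both $y_i-1$ and $y_i$ in $X$, lowering $y_i$ to $y_i-1$ changes neither $\mathbf{1}_{y_k \in X}$ nor, crucially, the statistics $s_X(y_k)$ and $\omega_Y(y_k, x_j)$ that are \emph{not} indexed by $i$ (the site $y_i$ is occupied and the site $y_i - 1$ is occupied, so moving the ``wall'' between them does not alter how many elements of $X$ lie to the left or right of any other $y_k$). The only genuine change is in the $i$-th factor: $q^{-y_i} \to q^{-(y_i-1)}$, and $s_X(y_i) = y_i - 2\lvert X \rvert + 2 h_X(y_i)$ vs.\ $s_X(y_i - 1) = (y_i - 1) - 2\lvert X \rvert + 2 h_X(y_i - 1)$, where $h_X(y_i-1) = h_X(y_i) + 1$ since $y_i - 1 \in X$; hence $s_X(y_i - 1) = s_X(y_i) + 1$. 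One then checks that $q^{-y_i}(\alpha + q^{-s_X(y_i)})(\alpha + q^{-s_X(y_i)+1})$ is unchanged when $y_i \mapsto y_i - 1$ and $s_X(y_i) \mapsto s_X(y_i) + 1$, which is immediate; and that the $\omega$-dependent factors $P_{j,i}$ involving the index $i$ drop out because $x_j \in X$ with $x_j = y_i$ puts $U_{j,i} = \infty$ — actually since $y_i, y_i - 1 \in X$ the relevant $P$ factors are those with $x_j > y_i$, and for those $\omega_Y(y_i, x_j) = \omega_Z(y_i - 1, x_j)$ (lowering the lower endpoint by one while $y_i - 1 \in X$ but $y_i - 1$ is not counted as it equals the new $z_i$ — one must be careful here, but the count of $v \in Y$, resp.\ $Z$, strictly between the lower bound and $x_j$ is the same because the only moved element is $y_i$ itself, which is excluded from $\omega$ by the strict inequality on the low end in one case and included in the other; tallying this is the routine bookkeeping). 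Also $\pi(X)$ is unchanged since it does not reference $Y$ at all except through the offset ``$k$'' which I would track carefully.

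For (ii), the passage $X \to \overline{X}$ shifts exactly one element of $X$ down by one, so $s_{\overline{X}}(u) = s_X(u)$ for all $u$ outside the interval affected, and one computes the effect on $\pi$, on $T_i$, and on each $P_{j,i}$ exactly as in the non-dynamical case: $\pi(X)/\pi(\overline{X})$, the ratio of $T_i$ factors, and the ratios of the $P$ factors all combine, via the elementary $q$-algebra, to $1$. The main obstacle I anticipate is \emph{not} any single identity but the combinatorial accounting of $U_{j,k}$, $\omega_Y$, $h_X$, and the shifting offset $2(j-k)$ in $\pi(X)$ under the two operations, since $j$ (an index into $X$) and $k$ (an index into $Y$) both shift in subtle ways when occupancies at adjacent sites change; getting every exponent right is where the proof lives. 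Once (i) and (ii) are in hand, $\mathfrak{S}(Y) = \mathfrak{S}(Z)$ follows by adding \eqref{scdbd}-type identities, and induction on $\sum_k y_k$ down to $Y_{\min}$ completes the argument. I would also remark that setting $\alpha = 0$ recovers Lemma~\ref{syiyi1}, which is a useful consistency check on the bookkeeping.
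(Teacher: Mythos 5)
Your plan follows the paper's proof exactly in outline: reduce to the elementary move $y_i \mapsto y_i - 1$, split the terms according to whether $y_i - 1$ and $y_i$ belong to $X$, keep $\overline{X} = X$ when both sites are occupied, and otherwise pair $X$ with the shifted set $\overline{X}$. This is the same decomposition $\mathfrak{S}(Y) = C(Y) + D(Y)$, $\mathfrak{S}(Z) = B(Z) + D(Z)$ used in Lemma \ref{syiyi1}, carried over implicitly in the dynamical proof.

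The gap is in your case (i). The claim that $q^{-y_i}(\alpha + q^{-s_X(y_i)})(\alpha + q^{-s_X(y_i)+1})$ is unchanged under $y_i \mapsto y_i - 1$, $s_X(y_i) \mapsto s_X(y_i) + 1$ is false: writing $s = s_X(y_i)$,
\[
\frac{T_i(X;Y)}{T_i(X;Z)} = \frac{q^{-y_i}\,(\alpha + q^{-s})(\alpha + q^{-s+1})}{q^{-y_i+1}\,(\alpha + q^{-s-1})(\alpha + q^{-s})} = \frac{\alpha + q^{-s+1}}{q\,(\alpha + q^{-s-1})},
\]
which equals $1$ only when $\alpha = q^{-s}$. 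The identity $F(X;Y)=F(X;Z)$ holds for a reason opposite to the one you suggest: lowering $y_i$ to $z_i = y_i - 1$ makes the pair $(x_a, z_i)$ with $x_a = y_i$ satisfy $x_a > z_i$, so a \emph{new} factor $P_{a,i}(X;Z)$ enters the product $\prod_{x_j > z_k} P_{j,k}$ on the $Z$ side, and it is exactly this factor that absorbs the leftover ratio above (equivalently, $U_{a,i}(X;Z) = -s_X(x_a)$, the first condition in \eqref{1conditionu}). Your assertion that the $P$-factors involving the index $i$ ``drop out'' points the wrong way, and the bookkeeping you defer as routine --- that $U_{j,k}$ is unchanged for every other pair, and, in case (ii), that $\pi(X)/\pi(\overline{X})$ exactly cancels the ratio of the $i$-th $T$-factors so that only the condition $U_{j,k}(X;Y)=U_{j,k}(\overline{X};Z)$ remains --- is where the whole proof lives. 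As written the argument does not close; you would need to redo case (i) along these lines before the induction down to $Y_{\min}$ can be invoked.
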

\begin{proof}
As above, suppose that $1 \le i \le r$ is some index such that $y_i \in Y$ but $y_i - 1 \notin Y$. Let $Z = (z_1, z_2, \ldots , z_r)$ denote the set obtained by setting $z_i = y_i - 1$ and $z_j = y_j$ for any $j \in [1, r] \setminus \{ j \}$. 

Now, assume that $y_i = x_a$ for some indices $i \in [1, r]$ and $a \in [1, t]$, and define $\overline{X} = (\overline{x}_1, \overline{x}_2, \ldots , \overline{x}_k)$ according to the following two cases. First, if $y_i - 1 \in X$, set $\overline{X} = X$. Second, if $y_i - 1 \notin X$, then $\overline{X}$ is obtained from $X$ by replacing $y_i \in X$ with $y_i - 1$. 

We would like to show that $F(X; Y) = F (\overline{X}; Z)$. If $y_i - 1 \in X$, then this is equivalent to 
\begin{flalign*}
\displaystyle\prod_{k = 1}^r q^{-y_k} & (\alpha + q^{-s_X (y_k) }) (\alpha + q^{-s_X (y_k) + 1}) \textbf{1}_{y_k \in X} \displaystyle\prod_{x_j > y_k} \displaystyle\frac{\alpha + q^{U_{j, k} (X; Y) + 1}}{q (\alpha + q^{U_{j, k} (X; Y) - 1})} \\
& = \displaystyle\prod_{k = 1}^r q^{- z_k} (\alpha + q^{-s_X (z_k) }) (\alpha + q^{-s_X (z_k) + 1}) \textbf{1}_{z_k \in X} \displaystyle\prod_{x_j > z_k} \displaystyle\frac{\alpha + q^{U_{j, k} (X; Z) + 1}}{q (\alpha + q^{U_{j, k} (X; Z) - 1})},
\end{flalign*}

\noindent since $\overline{X} = X$. Since $z_i = y_i - 1$ and $z_j = y_j$ for each $j \ne i$, the above is equivalent to 
\begin{flalign*}
\displaystyle\frac{\alpha + q^{-s_X (y_i) +1}}{q (\alpha + q^{-s_X (y_i)-1})} &  \displaystyle\prod_{x_j > y_k} \displaystyle\frac{\alpha + q^{U_{j, k} (X; Y) + 1}}{q (\alpha + q^{U_{j, k} (X; Y) - 1})} \\
& = \displaystyle\frac{\alpha + q^{U_{a, i} (X; Z) + 1}}{q (\alpha + q^{U_{a, i} (X; Z) - 1})} \displaystyle\prod_{\substack{x_j > z_k \\ (j, k) \ne (a, i)}} \displaystyle\frac{\alpha + q^{U_{j, k} (X; Z) + 1}}{q (\alpha + q^{U_{j, k} (X; Z) - 1})},
\end{flalign*}

\noindent where we have used the fact that $s_X (y_i) = s_X (y_i - 1) - 1$. Thus, the condition  $F(X;Y)=F(\bar{X};Z)$ (for $y_i-1\in X$) holds if
\begin{flalign}
\label{1conditionu}
\begin{aligned}
& U_{j, k} (X; Y) = - s_X (x_j), \qquad \quad	 \text{if $x_j = y_k + 1$, $y_k \in X$, and $y_k + 1 \notin Y$}; \\
& U_{j, k} (X; Y) = U_{j, k} (X, Z), \quad \text{if $(x_j, z_k) \ne (y_i, y_i - 1)$}. 
\end{aligned} 
\end{flalign}
We can immediately check that this holds.

Now, suppose that $y_i - 1 \notin X$. Then, the condition $F(X; Y) = F(\overline{X}; Z)$ is equivalent to 
\begin{flalign*}
\displaystyle\frac{\pi (X)}{\pi (\overline{X})} \displaystyle\frac{(\alpha + q^{-s_X (y_i) + 1}) (\alpha + q^{-s_X (y_i) })}{q (\alpha + q^{-s_{\overline{X}} (z_i) + 1}) (\alpha + q^{-s_{\overline{X}} (z_i) })} &  \displaystyle\prod_{x_j > y_k} \displaystyle\frac{\alpha + q^{U_{j, k} (X; Y) + 1}}{q (\alpha + q^{U_{j, k} (X; Y) - 1})}  =  \displaystyle\prod_{\overline{x}_j > z_k} \displaystyle\frac{\alpha + q^{U_{j, k} (X; Z) + 1}}{q (\alpha + q^{U_{j, k} (\overline{X}; Z) - 1})}.
\end{flalign*}
Using the fact that $x_a = y_i$ and $\overline{x}_a = y_i - 1 = z_i$, we deduce that 
\begin{flalign*} 
\displaystyle\frac{\pi (X)}{\pi (\overline{X})} & = \displaystyle\frac{q (\alpha + q^{\overline{x}_a + 2 (a - k) - 1}) (\alpha + q^{\overline{x}_a + 2 (a - k) - 2})}{(\alpha + q^{x_a + 2 (a - k) - 1}) (\alpha + q^{x_a + 2 (a - k) - 2})} = \displaystyle\frac{q (\alpha + q^{-s_{\overline{X}} (z_i) +1}) (\alpha + q^{-s_{\overline{X}} (z_i) })}{(\alpha + q^{- s_X (y_i) + 1}) (\alpha + q^{-s_X (y_i) })},
\end{flalign*}

\noindent so the above is equivalent to 
\begin{flalign*}
 \displaystyle\prod_{x_j > y_k} \displaystyle\frac{\alpha + q^{U_{j, k} (X; Y) + 1}}{q (\alpha + q^{U_{j, k} (X; Y) - 1})}  =  \displaystyle\prod_{x_j > z_k} \displaystyle\frac{\alpha + q^{U_{j, k} (X; Z) + 1}}{q (\alpha + q^{U_{j, k} (\overline{X}; Z) - 1})}.
\end{flalign*}

\noindent Thus, it suffices to show that
\begin{flalign}
\label{2conditionu}
U_{j, k} (X; Y) = U_{j, k} (\overline{X}; Z), \quad \text{for all $1 \le j, k \le r$.} 
\end{flalign}
But this follows immediately as well.
\end{proof}

It turns out, however, that this does not produce a duality. Despite this, it is still plausible that the dynamic ASEP$(q,m/2)$ has the same weak asymmetry limit as the dynamic ASEP. In \cite{CGMDyn}, it was proven that the dynamic ASEP converges to the solution of the space--time Ornstein--Uhlenbeck equation. Therefore, one can ask:

\textbf{Question:} Does the dynamic ASEP$(q,m/2)$ also converge to the solution of the space--time Ornstein--Uhlenbeck equation?

\section{Appendix co--authored with Amol Aggarwal}
{\color{black} In this appendix, we describe the algebraic symmetry underlying the interacting particle systems, as well as its relationship to stochastic vertex models.}

\subsection{Multi--species dynamic ASEP}

We make a few comments about the construction of a multi--species dynamic ASEP. If we wish to maintain the Markov projection property, it is natural to consider $n$--tuples of functions $s:\mathbb{Z}\rightarrow \mathbb{Z}$; that is, $(s^{(0)}, s^{(1)}, \ldots, s^{(n)})$ such that $s^{(0)}(x) \leq \ldots \leq s^{(n)}(x)$ for all $x\in \mathbb{Z}$. Furthermore, for every $x$, there is a unique $j \in  \{0,\ldots,n\}$ such that $s^{(i)}(x+1)-s^{(i)}(x)=1$ for $0\leq i \leq j$ and $s^{(i)}(x+1)-s^{(i)}(x)=1$ for $i>j$.  This $j$ corresponds to a species $j$ particle at lattice site $x$.  See Figure \ref{dASEP}.

If we wish to define jump rates between different states, then there are up to $n$ different possibilities, corresponding to the $n$ different species. See e.g. Figure \ref{PJ}. We also note an algebraic reason for the Markov projection property: the $n$--species ASEP can be constructed from the Drinfeld--Jimbo quantum group $\mathcal{U}_q(\mathfrak{gl}_{n+1})$ (see \cite{KIMRN}), and by standard representation the Lie algebra $\mathfrak{gl}_{n+1}$ is built out of copies of $\mathfrak{sl}_2$, and this fact corresponds to the Markov projection property. However, this no longer holds for the Felder--Varchenko elliptic quantum group.

Another construction of multi--species dynamic models comes from section 5 of \cite{ABBDynamic} (see, in particular Figure 17 of that paper). There, the dynamic parameter only changes based on the number of particles, not the species of the particle.  So, for instance, $\Pi = \{ \{0,1\},\{2,\ldots,n\}\}$ would not define a Markov projection. {\color{black} However, it is possible that the method of \cite{ABBDynamic}, but with the underlying algebra as $E_{\eta,\tau}(\mathfrak{sl}_n)$ rather than $\mathcal{U}_q(\mathfrak{gl}_{n+1})$, would produce a multi--species dynamic ASEP with the Markov projection property. We do not pursue this direction here.}

\begin{figure}
\caption{State space and jumps of a multi--species dynamic ASEP.}
\label{dASEP}
\begin{center}
\begin{tikzpicture}[scale=0.7]

\foreach \y in {0,1,2,3,4}{
\draw [dashed](\y+1.5,-7) -- (\y+1.5,7);
\draw [very thick](0 ,-6) -- (8 ,-6);
\fill[blue] (1 ,-6) circle (3pt);
\fill[yellow] (4 ,-6) circle (3pt);
\fill[violet] (5 ,-6) circle (3pt);
\fill[red] (2 ,-6) circle (3pt);
\fill[green] (3 ,-6) circle (3pt);
\fill[orange] (6 ,-6) circle (3pt);
\draw [purple, thick](0.5 ,-4) -- (6.5 ,2);
\draw [blue, thick](0.5 ,-2) -- (1.5 ,-1)  -- (2.5 ,0) -- (4.5,2) -- (5.5,1) -- (6.5 ,2);
\draw [green, thick](0.5 ,0) -- (1.5 ,-1) -- (2.5 ,0) -- (3.5 ,1) -- (4.5 ,2) -- (5.5 ,1) -- (6.5 ,2);
\draw [yellow, thick](0.5 ,2) -- (1.5 ,1) -- (2.5 ,2) --  (3.5,1) -- (4.5,2) -- (5.5 ,1) -- (6.5 ,2);
\draw [orange, thick](0.5 ,4) -- (1.5 ,3) -- (2.5 ,4) -- (5.5 ,1) -- (6.5 ,2);
\draw [red, thick](0.5 ,6) -- (1.5 ,5) -- (2.5 ,6) -- (6.5 ,2);

\draw (2.2,-5.5) node (a) { };
\draw (1.8,-5.5) node (a') { };
\draw (1,-5.5) node (b) { };
\draw (3,-5.5) node (c) {};
\draw (a') edge[out=90,in=90,->, line width=0.5pt] (b);
\draw (a) edge[out=90,in=90,->, line width=0.5pt] (c);

}
\end{tikzpicture}

\begin{tikzpicture}[scale=0.7]

\foreach \y in {0,1,2,3,4}{
\draw [dashed](\y+1.5,-7) -- (\y+1.5,7);
\draw [very thick](0 ,-6) -- (8 ,-6);
\fill[blue] (1 ,-6) circle (3pt);
\fill[yellow] (4 ,-6) circle (3pt);
\fill[violet] (5 ,-6) circle (3pt);
\fill[green] (2 ,-6) circle (3pt);
\fill[red] (3 ,-6) circle (3pt);
\fill[orange] (6 ,-6) circle (3pt);
\draw [purple, thick](0.5 ,-4) -- (6.5 ,2);
\draw [blue, thick](0.5 ,-2) -- (1.5 ,-1)  -- (2.5 ,0) -- (4.5,2) -- (5.5,1) -- (6.5 ,2);
\draw [green, thick](0.5 ,0) -- (1.5 ,-1) -- (2.5 ,0) -- (3.5 ,1) -- (4.5 ,2) -- (5.5 ,1) -- (6.5 ,2);
\draw [yellow, thick](0.5 ,2) -- (1.5 ,1) -- (2.5 ,0) --  (3.5,1) -- (4.5,2) -- (5.5 ,1) -- (6.5 ,2);
\draw [orange, thick](0.5 ,4) -- (1.5 ,3) -- (2.5 ,2) -- (3.5,3) -- (5.5 ,1) -- (6.5 ,2);
\draw [red, thick](0.5 ,6) -- (1.5 ,5) -- (2.5 ,4) -- (3.5,5) -- (6.5 ,2);
}

\foreach \y in {0,1,2,3,4}{
\draw [dashed](\y+10+1.5,-7) -- (\y+10+1.5,7);
\draw [very thick](0+10 ,-6) -- (8+10 ,-6);
\fill[red] (1+10 ,-6) circle (3pt);
\fill[yellow] (4+10 ,-6) circle (3pt);
\fill[violet] (5+10 ,-6) circle (3pt);
\fill[blue] (2+10 ,-6) circle (3pt);
\fill[green] (3+10 ,-6) circle (3pt);
\fill[orange] (6+10 ,-6) circle (3pt);
\draw [purple, thick](0.5+10 ,-4) -- (6.5+10 ,2);
\draw [blue, thick](0.5+10 ,-2) -- (1.5+10 ,-1)  -- (2.5+10 ,0) -- (4.5+10,2) -- (5.5+10,1) -- (6.5+10 ,2);
\draw [green, thick](0.5+10 ,0) -- (1.5+10 ,1) -- (2.5+10 ,0) -- (3.5+10 ,1) -- (4.5+10 ,2) -- (5.5+10 ,1) -- (6.5+10 ,2);
\draw [yellow, thick](0.5+10 ,2) -- (1.5+10 ,3) -- (2.5+10 ,2) --  (3.5+10,1) -- (4.5+10,2) -- (5.5+10 ,1) -- (6.5+10 ,2);
\draw [orange, thick](0.5+10 ,4) -- (1.5+10 ,5) -- (2.5+10 ,4) -- (5.5+10 ,1) -- (6.5+10 ,2);
\draw [red, thick](0.5+10 ,6) -- (1.5+10 ,7) -- (2.5+10 ,6) -- (6.5+10 ,2);
}

\end{tikzpicture}
\end{center}

\end{figure}
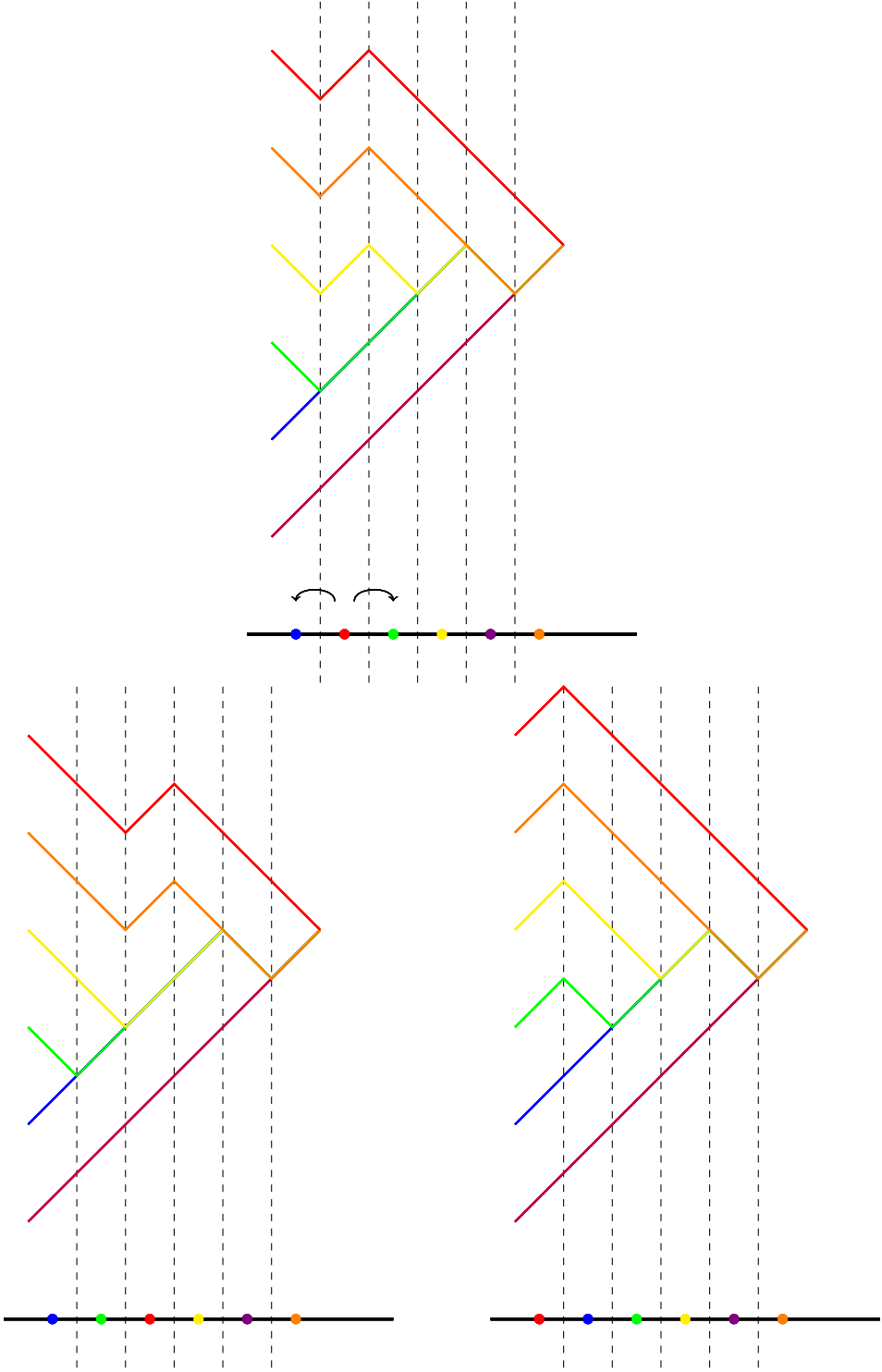

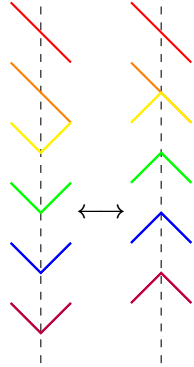
\begin{figure}
\caption{Possible Jumps}
\begin{center}
\begin{tikzpicture}[scale=0.4]
\foreach \x in {0,4}{
\draw [dashed](0+\x,-4) -- (0+\x,8);
\draw [red,thick] (-1+\x,8) --(1+\x,6);
\draw [orange,thick] (-1+\x,6) --(1+\x,4);
\draw [yellow,thick] (-1,4) -- (0,3) -- (1,4);
\draw [green,thick] (-1,2) -- (0,1) -- (1,2);
\draw [blue,thick] (-1,0) -- (0,-1) -- (1,0);
\draw [purple,thick] (-1,-2) -- (0,-3) -- (1,-2);
\draw (2,1) node {$\longleftrightarrow$};
\draw [yellow,thick] (3,4) -- (4,5) -- (5,4);
\draw [green,thick] (3,2) -- (4,3) -- (5,2);
\draw [blue,thick] (3,0) -- (4,1) -- (5,0);
\draw [purple,thick] (3,-2) -- (4,-1) -- (5,-2);
}
\end{tikzpicture}
\end{center}
\label{PJ}
\end{figure}

\subsection{Fusion}\label{alg}
Here, we sketch a description of fusion from a representation theoretic perspective as well as a probabilistic interpretation. Consider a two--dimensional vector space $V$, with basis vectors $\vert 0 \rangle$ and $\vert 1 \rangle$. The $m$--fold tensor product $V^{\otimes m}$ is a $2^m$--dimensional vector space with a vacuum vector 
$$
\Omega := \vert \underbrace{0,0,\ldots,0}_m\rangle.
$$
If $\vert 0 \rangle$ is viewed as a hole and $\vert 1\rangle$ as a particle, then $\Omega$ represents the particle configuration consisting of zero particles on a one--dimensional lattice with $m$ sites. 

For any interacting particle system on a finite lattice which satisfies particle number conservation, the vacuum state $\Omega$ is trivially a stationary measure. If the state space is restricted to particle configurations where the number of particles is a fixed number $k$, there is a unique stationary measure. For many examples of interacting particle systems, there is a creation operator which constructs the $k$--particle stationary measure from the $(k-1)$--particle stationary measure. In other words, the creation operator picks a lattice site at random and creates a particle at that site if possible; and this operator applied to the $(k-1)$--particle stationary measure yields the $k$--particle stationary measure. By repeatedly applying the creation operator to the vacuum vector, one finds all of the $k$--particle stationary measures for $0\leq k \leq m$.
One could also begin with the state $\vert 1,1,\ldots,1\rangle$ consisting entirely of particles, and apply annihilation operators to construct the $k$--particle stationary measure from the $(k+1)$--particle stationary measure. By the uniqueness of the $k$--particle stationary measure, the creation and annihilation operators must yield the same measures. In particular, there must be some consistency relations between the creation and annihilation operators that depend on the number of particles. 

These consistency relations can be formalized as the relations in an algebra generated by the creation, annihilation and number operators. For ASEP, the algebra is the Drinfeld--Jimbo quantum group $\mathcal{U}_q(\mathfrak{sl}_2)$, and for dynamic ASEP the algebra is the Felder--Varchenko elliptic quantum group $E_{\eta,\tau}(\mathfrak{sl}_2)$. (It is usually easier to start with the algebraic structure and construct the particle system, rather than constructing the algebra from the particle system). If $V_m$ is defined to be the submodule of $V^{\otimes m}$ generated by the vacuum vector $\Omega$, then $V_m$ must be $(m+1)$--dimensional (if the particle system preserves particle number), with a basis vector corresponding to each $k$--particle stationary measure for $0\leq k \leq m$. The module $V_m$ is also the spin $m/2$ representation of the algebra, which is used in the higher spin vertex models and the ASEP$(q,m/2)$. 

{\color{black} The embedding of $V_m$ into $V^{\otimes m}$ can be defined via $R$--matrices. We describe this for $\mathcal{U}_q(\mathfrak{sl}_2)$Let $s_m$ denote the symmetrizer of $V^{\otimes m}$ onto $V_m$, and let $R(z)$ denote the $4\times 4$ matrix acting on $V \otimes V$ defined by 
$$
R(z)=\left(\begin{array}{cccc}{1} & {0} & {0} & {0} \\ {0} & {\frac{1-z}{q^{2}-z}} & {\frac{q^{2}-1}{q^{2}-z}} & {0} \\ {0} & {\frac{z\left(q^{2}-1\right)}{q^{2}-z}} & {\frac{q^{2}(1-z)}{q^{2}-z}} & {0} \\ {0} & {0} & {0} & {1}\end{array}\right)
$$
Let $P$ be the operator on $V\otimes V$ that permutes the ordering, i.e. $P(v\otimes w) = w\otimes v$, and set $\check{R}(z) = R(z) \circ P.$ Then $s_m$ can be defined inductively by (see \cite{Jimbo1986})
$$
s_{m+1} =\frac{1}{q^{m+1}-q^{-m-1}} s_{m}^{} \check{R}_{m, m+1}\left(q^{ m}\right) s_{m}^{},
$$
where we have used the usual spin chain notation, in which the subscripts $\hat{R}_{ij}$ indicate that $\hat{R}$ is acting on the $i$th and $j$th components of the tensor product. Moreover, $R(z)$ satisfies 
$$ 
R_{0 m}(z) R_{0, m-1}\left(z q\right) \cdots R_{01}\left(z q^{m-1}\right) s_{m}= s_{m} R_{0 m}\left(z q^{m-1}\right) R_{0, m-1}\left(z q^{m-2}\right) \cdots R_{01}(z),
$$
which is often called \textit{fusion}. On the left (resp. right) hand side, the power of $q$ increases (resp. decreases) by $1$ at each step; similarly, in the blocking reversible measures for ASEP (resp. space--reversed ASEP), the power of $q$ increases (resp. decreases) by $1$ whenever a particle jumps one step to the right. This is no coincidence: note that $\check{R}(1 + \epsilon) = \mathrm{Id} + \epsilon L_{\text{ASEP}}$, where $L_{\text{ASEP}}$ is the generator of ASEP. The degeneration of the stochastic six vertex model to ASEP corresponds to taking $z\rightarrow 1$; note, however, that the corresponding degeneration for higher spin vertex models does not produce a matrix with non--negative off--diagonal entries \cite{CorPetCMP}.}

If one defines $\Lambda:V_m\rightarrow V^{\otimes m}$ and $\Phi: V^{\otimes m} \rightarrow V_m$ to be the inclusion and symmetrizer maps, then these are the fission and fusion maps of Figure \ref{LP}. Note that it is immediate that $ \Lambda \Phi$ is the identity on $V_m$. Additionally, the algebra elements define an intertwining between $\Lambda$ and $\Phi$. In particular, we claim that for any algebra element $a$, $\Lambda a = \Lambda a \Phi \Lambda$ on $V_m$. This is simply because $\Phi\Lambda$ is the identity on $\mathrm{im}(\Lambda)$ and $a$ sends $\mathrm{im}(\Lambda)$ to itself. The two equations $\Lambda\Phi = \mathrm{id}$ and $\Lambda a  = \Lambda a \Phi \Lambda$ are the two equations in Rogers--Pitman intertwining.  {\color{black} The equation defining fusion suggests that $\Lambda$ should be defined according to the reversible measures of the process, as was done in this paper. }

\subsection{Higher spin dynamical stochastic vertex weights}\label{hsdsvw}
As introduced in the previous section, fusion applies to vertex weights coming from the elliptic quantum group $E_{\tau, \eta} (\mathfrak{sl}_2)$. Here, we implement this procedure and compare the resulting weights to those introduced in Definition 1.1 of \cite{AmolIRF}. We will see that the weights here (see \eqref{rw} below) differ from those previous weights, but only up to a conjugating factor. We were also unable to realize our weights as direct special cases of those given by Theorem 4.6 of \cite{ABBDynamic}. Still, it should be possible to obtain our weights by applying the stochasticization procedure of \cite{ABBDynamic} to the transposed (as opposed to the original) $R$-matrix entries of $E_{\tau, \eta} (\mathfrak{sl}_2)$, but we will not pursue this here.

Set $\theta(z)=\sin(\pi z)$. Define the two functions 
$$
\alpha(\lambda,w,\eta,\tau) = \frac{\theta(w)\theta(\lambda+2 \eta)}{\theta(w-2\eta)\theta(\lambda)}, \quad \quad\beta(\lambda,w,\eta,\tau) =  \frac{\theta(-w-\lambda)\theta(2 \eta)}{\theta(w-2\eta)\theta(\lambda)} 
$$
More generally, define
\begin{align*}
\alpha^{(r)}(\lambda,w) & := \alpha(\lambda,w) \alpha(\lambda+2\eta,w+2\eta) \cdots \alpha(\lambda + 2(r-1)\eta, w+2(r-1)\eta) = \frac{\theta(w+2(r-1)\eta)\theta(\lambda+2r\eta)}{\theta(w-2\eta)\theta(\lambda)} \\
\beta^{(r)}(\lambda,w) & = \frac{\theta( - w - \lambda  -2(r-1)\eta )\theta(2r\eta)}{\theta(w-2\eta)\theta(\lambda)},
\end{align*}
and set $\alpha^{(0)}(\lambda,w)=1,\beta^{(0)}(\lambda,w)=0$ by convention.

Fix a positive integer $m$. For $0\leq k \leq m$, define functions $a^{(k)}_{\pm},b^{(k)}_{\pm}$ by 
\begin{align*}
a_+^{(k)}(\lambda,w) &= \alpha^{(k)}(\lambda,w)\\
a_-^{(k)}(\lambda,w) &= \alpha^{(m-k)}(-\lambda,w)\\
b_+^{(k)}(\lambda,w) &= \beta^{(k)}(\lambda,w)\\
b_-^{(k)}(\lambda,w) &= \beta^{(m-k)}(-\lambda,w)
\end{align*}

Now fix a positive integer $l$. Given $0 \leq j \leq l$, let $\mathcal{I}_j$ be the set
$$
\mathcal{I}_j := \{ (i_1,\ldots,i_l): i_1 + \ldots + i_l = j\} \subset \{0,1\}^l.
$$
Let $\vec{i}$ denote an element of $\mathcal{I} := \{0,1\}^l$. For $1 \leq r \leq l$, define the map $p_r^{(j')}$ on $\mathcal{I}$ by 
$$
p^{(j')}_r(\vec{i})=
\begin{cases}
 i_1 + \ldots + i_r, \quad & 1 \leq r \leq l-j' \\
 i_1 + \ldots + i_{l-j'} + (i_{l-j'+1} -1 ) + \ldots + (i_r-1) , \quad & l-j' < r \leq l
\end{cases}
$$
Also define
$$
h_r(\vec{i})= | \{ s: 1 \leq s \leq r \text{ and } i_s = 0 \} | -  | \{ s: 1 \leq s \leq r \text{ and } i_s = 1 \} |
$$
For each $\vec{i} \in \mathcal{I}$, and for $1\leq r\leq l$, define the symbol $\epsilon_{r,j'}(\vec{i})$ by 
$$
 \epsilon_{r,j'}(\vec{i}) = 
\begin{cases}
a_+, \quad & \text{ if } 1 \leq r \leq l-j' \text{ and } i_r = 0 \\
b_+, \quad & \text{ if } 1 \leq r \leq l-j' \text{ and } i_r = 1 \\
a_-, \quad & \text{ if } l-j' < r \leq l \text{ and } i_r = 1 \\
b_-, \quad & \text{ if } l-j' < r \leq l \text{ and } i_r = 0 
\end{cases}
$$
Given $0\leq j,j'\leq l$ and $0 \leq k,k'\leq m$ such that $j+k = j'+k'$, consider the sum
$$
R_{lm}(j',k';j,k) := \sum_{ \vec{i} \in \mathcal{I}_j} \ep_{1,j'}^{(k')}(\lambda,w) \ep_{2,j'}^{  (k'-p_1^{(j')}) }  (\lambda - 2\eta h_1,w-2\eta) \cdots \epsilon_{l,j'}^{(k'-p_{l-1}^{  (j')  } )  }(\lambda - 2\eta h_{l-1}, w - 2(l-1)\eta).
$$
In the summation, the dependence of $\epsilon_r, p_r^{(j')}$ and $h_r$ on $\vec{i}$ has been suppressed for convenience of notation. This sum is an equivalent expression of the fusion identity of the previous section. Also note that for $\vec{i} \in \mathcal{I}_j$,
$$
k' - p_l^{(j')}(\vec{i}) = k' - ( j - j') = k
$$
For example,
\begin{align*}
R_{22}(1,1;1,1)&=a_+^{(1)}(\lambda,w)a_-^{(1)}(\lambda -2\eta,w-2\eta) + b_+^{(1)}(\lambda,w) b_-^{(0)}(\lambda+2\eta,w-2\eta)\\
&= \alpha^{(1)}(\lambda,w)\alpha^{(1)}(-(\lambda - 2\eta),w-2\eta) + \beta^{(1)}(\lambda,w) \beta^{(2)}(-(\lambda + 2\eta),w-2\eta) , \\ 
R_{22}(1,1;2,0) &= b_+^{(1)}(\lambda,w) a_-^{(0)}(\lambda + 2\eta, w-2\eta) \\
&= \beta^{(1)}(\lambda,w) \alpha^{(2)}( -(\lambda +2\eta)  , w - 2\eta ) \\
R_{22}(1,1;0,2) &= a_+^{(1)}(\lambda,w) b_-^{(1)}(\lambda - 2\eta, w-2\eta)\\
&= \alpha^{(1)}(\lambda,w) \beta^{(1)}( -(\lambda -2\eta)  , w - 2\eta ) 
\end{align*}
One can check that
$$
R_{22}(1,1;1,1) + R_{22}(1,1;2,0) + R_{22}(1,1;0,2) = 1
$$

More generally, $R_{lm}(j',k';j,k)$ corresponds to the plaquette given by 

\begin{center}
\begin{tikzpicture}
\draw (2,0) node {$j$};
\draw (-2,0) node {$j'$};
\draw (0,2) node {$k$};
\draw (0,-2) node {$k'$};
\draw (0.75,-0.75) node {$\lambda$};
\draw [very thick](-1.5,0) -- (1.5,0) ;
\draw [very thick](0,-1.5) -- (0,1.5) ;
\end{tikzpicture}
\end{center}

Let us now explicitly evaluate these weights, which we will call the \textit{$q$-Jackson specialization} (similar to the \textit{$q$-Hahn specialization} giving rise to the $q$-Hahn TASEP in Section 6.6 of \cite{BP}) of the fused R-matrix elements $R_{lm} (j', k'; j, k)$. 

In what follows, we fix a parameter $\eta \in \mathbb{C}$ with $\Im \eta > 0$. For any $z \in \mathbb{C}$, we set $f(z) = \sin (\pi z)$. Furthermore, for any integer $k \ge 0$, we denote $[z]_k = \prod_{j = 0}^{k - 1} f(z - 2 \eta j)$ and $g (k) = [2 \eta k]_k f (2 \eta)^{-k}$.

Recall that there is an explicit relationship between the $R$-matrix elements $R_{lm} (j', k'; j, k)$ (implicitly dependent on an additional parameter $w \in \mathbb{C}$) and the fused weights $W_J \big( i_1, j_1; i_2, j_2 \b| v, \lambda, \Lambda \big)$ given by Definition 3.6 of \cite{AmolIRF}. Specifically, we have that 
\begin{flalign}
\label{rw}
R_{lm} (j', k'; j, k) = \displaystyle\frac{g (k')}{g(k)} W_J (k, j; k', j'  \b| v, \lambda, \Lambda),	
\end{flalign}

\noindent where 
\begin{flalign} 
\label{lambdam}
\Lambda = m; \qquad J = l; \qquad v = w + 2 \eta (m - 2l). 
\end{flalign}

\noindent We are interested in the case when the weights on the right side of \eqref{rw} simplify (which is called the $q$-Jackson specialization); this occurs when $v = - \eta \Lambda$ or, equivalently, when $w + 3 \eta m = 4 \eta l$. In that case, Theorem 3.10 of \cite{AmolIRF} yields  
\begin{flalign*}
W_J \big( i_1, j_1; i_2, j_2 \b| v, \lambda\big) & = f (2 \eta)^{i_2 - i_1} \displaystyle\frac{\textbf{1}_{i_1 \ge j_2} \big[ 2 \eta J \big]_J}{\big[ 2 \eta j_1 \big]_{j_1} \big[ 2 \eta (J - j_1) \big]_{J - j_1}} \displaystyle\frac{\big[ 2 \eta \Lambda \big]_{i_1}}{\big[ 2 \eta \Lambda \big]_{i_2}} \displaystyle\frac{ \big[ 2 \eta i_1 \big]_{j_2} \big[ 2 \eta (\Lambda - i_1) \big]_{J - j_2} }{\big[ 2 \eta \Lambda \big]_J} \\
& \qquad \times \displaystyle\frac{\big[ \lambda + 2 \eta i_2 \big]_{J - j_1} \big[ \lambda + 2 \eta (i_2 + j_1 - \Lambda - 1) \big]_{j_1}}{\big[ \lambda + 2 \eta j_1 \big]_{J - j_1} \big[ \lambda + 2 \eta (2j_1 - J - 1) \big]_{j_1} }, 
\end{flalign*}

\noindent if $i_1 + j_1 = i_2 + j_2$, and is equal to $0$ otherwise. Applying \eqref{rw} (and replacing the quadruple $(j', k'; j, k)$ with $(j_2, i_2; j_1, i_1)$, which we always assume to satisfy $i_1 + j_1 = i_2 + j_2$ in the below), we find that
\begin{flalign}
\label{ridentity}
\begin{aligned}
 R_J (j_2, i_2; j_1, j_1) & = \displaystyle\frac{g (i_2)}{g(i_1)} W_J (i_1, j_1; i_2, j_2),\\
& = \displaystyle\frac{[2 \eta i_2]_{i_2}}{[2 \eta i_1]_{i_1}} \displaystyle\frac{\textbf{1}_{i_1 \ge j_2} \big[ 2 \eta J \big]_J}{\big[ 2 \eta j_1 \big]_{j_1} \big[ 2 \eta (J - j_1) \big]_{J - j_1}} \displaystyle\frac{\big[ 2 \eta \Lambda \big]_{i_1}}{\big[ 2 \eta \Lambda \big]_{i_2}} \displaystyle\frac{ \big[ 2 \eta i_1 \big]_{j_2} \big[ 2 \eta (\Lambda - i_1) \big]_{J - j_2} }{\big[ 2 \eta \Lambda \big]_J} \\
& \qquad \times \displaystyle\frac{\big[ \lambda + 2 \eta i_2 \big]_{J - j_1} \big[ \lambda + 2 \eta (i_2 + j_1 - \Lambda - 1) \big]_{j_1}}{\big[ \lambda + 2 \eta j_1 \big]_{J - j_1} \big[ \lambda + 2 \eta (2j_1 - J - 1) \big]_{j_1} }. 
\end{aligned}
\end{flalign}

\noindent We now reparameterize 
\begin{flalign}
\label{qskappa1} 
q = e^{- 4 \pi \textbf{i} \eta}; \quad s = e^{2 \pi \textbf{i} \eta \Lambda}; \quad \kappa = e^{2 \pi \textbf{i} \lambda}.
\end{flalign} 

\noindent Observe that
\begin{flalign*}
f (a) = \displaystyle\frac{e^{\pi \textbf{i} a} - e^{- \pi \textbf{i} a}}{2 \textbf{i}} = \displaystyle\frac{\textbf{i}}{2 A^{1 / 2}} \big( 1 - A \big) = \displaystyle\frac{A^{1 / 2}}{2 \textbf{i}} \big( 1 - A^{-1} \big), \quad \text{where $A = e^{2 \pi \textbf{i} a}$,}
\end{flalign*}

\noindent for any $a \in \mathbb{C}$. This gives 
\begin{flalign}
\label{akhypergeometric}
[a]_k = \left( \displaystyle\frac{\textbf{i}}{2 A^{1 / 2}} \right)^k q^{- \binom{k}{2} / 2} \big( A; q \big)_k = \left( \displaystyle\frac{A^{1 / 2}}{2 \textbf{i}} \right)^k q^{\binom{k}{2} / 2} \big( q^{1 - k} A^{-1}; q \big)_k.
\end{flalign}

\noindent Repeatedly applying \eqref{akhypergeometric} to \eqref{ridentity} yields 
\begin{flalign*}
R_J (j_2, i_2; j_1, j_1) & = (-1)^{j_1} q^{\binom{j_1}{2} - i_2 J} \textbf{1}_{i_1 \ge j_2} \displaystyle\frac{(q; q)_{i_2}}{(q; q)_{i_1}} \displaystyle\frac{(q; q)_J}{(q; q)_{j_1} (q; q)_{J - j_1}} \displaystyle\frac{(s^2; q)_{i_1}}{(s^2; q)_{i_2}} \\ 
& \quad \times \displaystyle\frac{(q^{i_1 - j_2 + 1}; q)_{j_2} (s^2 q^{i_1}; q)_{J - j_2}}{(s^2; q)_J} \displaystyle\frac{(q^{i_2 + j_1 + 1 - J} \kappa; q)_{J - j_1} (q^{i_2} s^2 \kappa; q)_{j_1}}{(q^{2j_1 - J + 1} \kappa; q)_{J - j_1} (q^{j_1 - J} \kappa; q)_{j_1}} \\
& = q^{J (j_1 - i_2) }\textbf{1}_{i_1 \ge j_2}  \displaystyle\frac{(q; q)_{i_2}}{(q; q)_{i_2 - j_1}} \displaystyle\frac{(q^{-J}; q)_{j_1}}{(q; q)_{j_1}} \displaystyle\frac{(s^2 q^J; q)_{i_2 - j_1}}{(s^2; q)_{i_2}}  \displaystyle\frac{(q^{i_2 + j_1 + 1 - J} \kappa; q)_{J - j_1} (q^{i_2} s^2 \kappa; q)_{j_1}}{(q^{2j_1 - J + 1} \kappa; q)_{J - j_1} (q^{j_1 - J} \kappa; q)_{j_1}}.
\end{flalign*}

\noindent Denoting 
\begin{flalign}
\label{qskappa2} 
Q = q^{-1}; \qquad S = s^{-1}; \qquad K = q^{2 i_2} s^2 \kappa,
\end{flalign}

\noindent we obtain 
\begin{flalign*}
R_{lm} (j_2, i_2; j_1, j_1) & = (S^2 Q^J)^{j_1} \displaystyle\frac{(Q; Q)_{i_2}}{(Q; Q)_{i_2 - j_1} (Q; Q)_{j_1}} \displaystyle\frac{(Q^{-J}; Q)_{j_1} (S^2 Q^J; Q)_{i_2 - j_1}}{(S^2; Q)_{i_2}} \\
& \qquad \times  \displaystyle\frac{(Q^{i_2} S^2 K; q)_{J - j_1} (Q^{i_2 - j_1 + 1} K; q)_{j_1}}{(Q^{2i_2 -j_1} S^2 K; q)_{J - j_1} (Q^{2i_2 - 2j_1 + J + 1} S^2 K; q)_{j_1}},
\end{flalign*}

\noindent which is a reparameterization of the function $\varphi$ given by Definition 1.2 of \cite{AmolIRF}.  

To see this reparametrization, set $A = S^2 Q^J$ and $B = S^2$. Replacing $(j_2, i_2; i_1, j_2)$ with $(j', k'; j, k)$, \eqref{lambdam}, \eqref{qskappa1}, and \eqref{qskappa2} imply that 
\begin{flalign*}
Q & = e^{4 \pi \textbf{i} \eta}; \quad S = e^{- 2 \pi \textbf{i} \eta \Lambda} = e^{- 2 \pi \textbf{i} \eta m}; \quad K = e^{2 \pi \textbf{i} (\lambda + 2 \eta \Lambda - 4 i_2 \eta)} = e^{2 \pi \textbf{i} (\lambda + 2 m \eta - 4 k' \eta)}; \\
 & \qquad \qquad A  = e^{4 \pi \textbf{i} \eta ( J - \Lambda)} = e^{4 \pi \textbf{i} \eta ( l - m)}; \qquad B = e^{- 4 \pi \textbf{i} \eta \Lambda} = e^{- 4 \pi \textbf{i} \eta m}.
\end{flalign*}

\noindent Then, 
\begin{flalign*}
R_{lm} (j', k'; j, k) = \varphi_{Q, A, B, K} (j \b| k'),  
\end{flalign*}

\subsubsection{The dynamical $q$--Boson weights}
Here, we will analyze a certain continuous time limit of the dynamical $q$-Hahn Boson model. We will see that the situation differs from the non--dynamical case, where the $q$--Boson process can be obtained both as a limit of ASEP$(q,j)$ and the $q$--Hahn Boson. 

Let us fix parameters $q, a, b, \kappa \in \mathbb{C}$. For all nonnegative integers $i, j$ (including $i = \infty$ if $|q| < 1$), define 
	\begin{flalign*}
	\varphi \big( j \b| i \big) = \varphi_{q, a, b, \kappa} \big( j \b| i \big) = a^j \displaystyle\frac{\textbf{1}_{j \le i} (q; q)_{i}}{(q; q)_{j} (q; q)_{i - j}} \displaystyle\frac{(b / a; q)_{j} (a; q)_{i - j}}{ (b; q)_{i}} \displaystyle\frac{(q^{i} b \kappa; q)_{i - j} (q^{i - j + 1} \kappa; q)_{j}}{(q^{i - j} a \kappa; q)_{i - j} (q^{2i - 2j + 1} a \kappa; q)_{j}}. 
	\end{flalign*}

\noindent Following Section 2.2 of \cite{KuanCMP} or Section 1 of \cite{KMMO}, we change variables $\lambda = \frac{b}{a}$ and $\mu = b$ to find that 
\begin{flalign*}
\varphi \big( j \b| i \big) = \left( \displaystyle\frac{\mu}{\lambda} \right)^j \displaystyle\frac{\textbf{1}_{j \le i} (q; q)_{i}}{(q; q)_{j} (q; q)_{i - j}} \displaystyle\frac{(\lambda; q)_{j} (\mu / \lambda; q)_{i - j}}{ (\mu; q)_{i}} \displaystyle\frac{(q^{i} \mu \kappa; q)_{i - j} (q^{i - j + 1} \kappa; q)_{j}}{(q^{i - j} \mu \kappa / \lambda; q)_{i - j} (q^{2i - 2j + 1} \mu \kappa / \lambda; q)_{j}}. 
\end{flalign*}	

Of interest in both Section 2.2 of \cite{KuanCMP} and equation (42) of \cite{KMMO} was the continuous time limit of the quantity $\varphi (i \b| j)$, obtained by setting $\lambda = 1 - \varepsilon$, scaling time by $\varepsilon^{-1}$, and letting $\varepsilon$ tend to $0$. Then, the probabilities $\varphi (i \b| j)$ become rates $\Gamma (i \b|  j) = - \frac{\partial}{\partial \lambda} \varphi (i \b| j)$, evaluated at $\lambda = 1$. This gives
\begin{flalign*}
\Gamma \big( j \b| i \big) = \mu^j \displaystyle\frac{\textbf{1}_{j \le i} (q; q)_{i}}{(q; q)_{j} (q; q)_{i - j}} \displaystyle\frac{(q; q)_{j - 1} (\mu; q)_{i - j}}{ (\mu; q)_{i}} \displaystyle\frac{(q^{i} \mu \kappa; q)_{i - j} (q^{i - j + 1} \kappa; q)_{j}}{(q^{i - j} \mu \kappa; q)_{i - j} (q^{2i - 2j + 1} \mu \kappa; q)_{j}},
\end{flalign*} 

\noindent if $j \ne 0$, and 
\begin{flalign*}
\Gamma \big( 0 \b| i \big) &= \mu  \displaystyle\sum_{j = 0}^{i - 1} \left( \displaystyle\frac{q^{i + j} \kappa}{1 - \mu q^{i + j} \kappa} - \displaystyle\frac{q^j}{1 - \mu q^j} \right)\\
&= \mu  \displaystyle\sum_{j = 0}^{i - 1} \left( \displaystyle\frac{q^{i + j} \kappa-q^j}{(1 - \mu q^{i + j} \kappa)(1-\mu q^j)} \right)
\end{flalign*} 

\noindent The $\kappa = 0$ case of these identities match with a special case of equation (43) of \cite{KMMO}.

Letting $\mu$ tend to $0$ and rescaling time by $\mu^{-1}$, only the rates in the cases $j = 0$ and $j = 1$ remain nonzero; denote these by $\Psi (0)$ and $\Psi (1)$, respectively. Then, 
\begin{flalign*}
\Psi (0)  = \displaystyle\frac{(q^i - 1)(1 - q^i \kappa)}{1 - q}; \qquad \Psi ( 1 ) = \displaystyle\frac{(1 - q^i) (1 - q^i \kappa)}{1 - q}.
\end{flalign*}  
Note that these differ from the jump rates in Section \ref{diffe}.

\bibliographystyle{alpha}
\bibliography{WriteUp5}

\def\cprime{$'$} \def\Dbar{\leavevmode\lower.6ex\hbox to 0pt{\hskip-.23ex
  \accent"16\hss}D} \def\cprime{$'$}
\begin{thebibliography}{KMMO16}

\bibitem[ABB18]{ABBDynamic}
Amol {Aggarwal}, Alexei {Borodin}, and Alexey {Bufetov}.
\newblock {Stochasticization of Solutions to the Yang-Baxter Equation}.
\newblock {\em arXiv e-prints}, page arXiv:1810.04299, Oct 2018.

\bibitem[Agg18]{AmolIRF}
Amol Aggarwal.
\newblock Dynamical stochastic higher spin vertex models.
\newblock {\em Selecta Mathematica}, Mar 2018.

\bibitem[And98]{AndrewsBook}
G.E. Andrews.
\newblock {\em The Theory of Partitions}.
\newblock Cambridge University Press, Cambridge, 1998.

\bibitem[BB19]{BorodinBufetovCP}
Alexei {Borodin} and Alexey {Bufetov}.
\newblock {Color-position symmetry in interacting particle systems}.
\newblock {\em arXiv e-prints}, page arXiv:1905.04692, May 2019.

\bibitem[BC18]{BorCorIMRN}
Alexei Borodin and Ivan Corwin.
\newblock Dynamic asep, duality, and continuous q?1-hermite polynomials.
\newblock {\em International Mathematics Research Notices}, page rnx299, 2018.

\bibitem[BCS14]{BCSDuality}
Alexei Borodin, Ivan Corwin, and Tomohiro Sasamoto.
\newblock From duality to determinants for $q$-{TASEP} and {ASEP}.
\newblock {\em Ann. Probab.}, 42(6):2314--2382, 11 2014.

\bibitem[{Bor}17]{BorodinDyn}
A.~{Borodin}.
\newblock {Symmetric elliptic functions, IRF models, and dynamic exclusion
  processes}.
\newblock {\em ArXiv e-prints}, January 2017.

\bibitem[BP16]{BP}
Alexei Borodin and Leonid Petrov.
\newblock Higher spin six vertex model and symmetric rational functions.
\newblock {\em Selecta Mathematica}, pages 1--124, 2016.

\bibitem[BS15a]{BS2}
Vladimir Belitsky and Gunter~M Sch{\"u}tz.
\newblock Quantum algebra symmetry and reversible measures for the {ASEP} with
  second-class particles.
\newblock {\em Journal of Statistical Physics}, 161(4):821--842, 2015.

\bibitem[BS15b]{BS}
Vladimir Belitsky and Gunter~M Sch{\"u}tz.
\newblock Self-duality for the two-component asymmetric simple exclusion
  process.
\newblock {\em Journal of Mathematical Physics}, 56(8), 2015.

\bibitem[BS18]{BS3}
V.~Belitsky and G.M. Schütz.
\newblock Self-duality and shock dynamics in the n-species priority {ASEP}.
\newblock {\em Stochastic Processes and their Applications}, 128(4):1165 --
  1207, 2018.

\bibitem[BW18]{BWColor}
Alexei {Borodin} and Michael {Wheeler}.
\newblock {Coloured stochastic vertex models and their spectral theory}.
\newblock {\em arXiv e-prints}, page arXiv:1808.01866, Aug 2018.

\bibitem[Car85]{Carter}
R.~Carter.
\newblock {\em Finite Groups of Lie Type: Conjugacy Classes and Complex
  Characters}.
\newblock John Wiley and Sons, 1985.

\bibitem[CGM19]{CGMDyn}
Ivan {Corwin}, Promit {Ghosal}, and Konstantin {Matetski}.
\newblock {Stochastic PDE limit of the dynamic ASEP}.
\newblock {\em arXiv e-prints}, page arXiv:1906.04069, Jun 2019.

\bibitem[CGR19]{CGRConsistent}
Gioia {Carinci}, Cristian {Giardin{\`a}}, and Frank {Redig}.
\newblock {Consistent particle systems and duality}.
\newblock {\em arXiv e-prints}, page arXiv:1907.10583, Jul 2019.

\bibitem[CGRS16]{CGRS}
Gioia Carinci, Cristian Giardin{\`a}, Frank Redig, and Tomohiro Sasamoto.
\newblock A generalized asymmetric exclusion process with
  ${U}_q(\mathfrak{sl}_2)$ stochastic duality.
\newblock {\em Probability Theory and Related Fields}, 166(3):887--933, Dec
  2016.

\bibitem[CP16]{CorPetCMP}
Ivan Corwin and Leonid Petrov.
\newblock Stochastic higher spin vertex models on the line.
\newblock {\em Communications in Mathematical Physics}, 343(2):651--700, 2016.

\bibitem[CST16]{CST}
I.~{Corwin}, H.~{Shen}, and L.-C. {Tsai}.
\newblock {ASEP(q,j) converges to the KPZ equation}.
\newblock {\em ArXiv e-prints}, February 2016.

\bibitem[EFd19]{SlowBond}
Dirk {Erhard}, Tertuliano {Franco}, and Diogo~S. {da Silva}.
\newblock {The Slow Bond Random Walk and the Snapping Out Brownian Motion}.
\newblock {\em arXiv e-prints}, page arXiv:1905.08084, May 2019.

\bibitem[GKRV09]{GKRV}
Cristian Giardin\'{a}, Jorge Kurchan, Frank Redig, and Kiamars Vafayi.
\newblock Duality and hidden symmetries in interacting particle systems.
\newblock {\em J. Stat. Phys}, 135:25--55, 2009.

\bibitem[GO09]{GO}
Alexander Gnedin and Grigori Olshanski.
\newblock A q-analogue of de finetti's theorem.
\newblock {\em The Electronic Journal of Combinatorics}, 16(1), 2009.

\bibitem[GO10]{GO2}
Alexander Gnedin and Grigori Olshanski.
\newblock q -exchangeability via quasi-invariance.
\newblock {\em Ann. Probab.}, 38(6):2103--2135, 11 2010.

\bibitem[IS11]{IS}
Takashi Imamura and Tomohiro Sasamoto.
\newblock Current moments of 1{D} {ASEP} by duality.
\newblock {\em Journal of Statistical Physics}, 142(5):919--930, Mar 2011.

\bibitem[Jim86]{Jimbo1986}
Michio Jimbo.
\newblock A $q$-analogue of ${U}(gl({N}+1))$, {H}ecke algebra, and the
  {Y}ang-{B}axter equation.
\newblock {\em Letters in Mathematical Physics}, 11(3):247--252, Apr 1986.

\bibitem[KMMO16]{KMMO}
Atsuo Kuniba, Vladimir~V. Mangazeev, Shouya Maruyama, and Masato Okado.
\newblock Stochastic {$R$} matrix for ${U}_q({A}_n^{(1)})$.
\newblock {\em Nuclear Physics B}, 913:248--277, 2016.

\bibitem[KS60]{KS60}
John~G. Kemeny and J.~Laurie Snell.
\newblock {\em Finite Markov Chains}.
\newblock Springer, 1960.

\bibitem[Kua16]{KuanJPhysA}
Jeffrey Kuan.
\newblock Stochastic duality of {ASEP} with two particle types via symmetry of
  quantum groups of rank two.
\newblock {\em Journal of Physics A: Mathematical and Theoretical}, 49(11):29,
  2016.

\bibitem[Kua17]{KIMRN}
Jeffrey Kuan.
\newblock A multi-species {ASEP}$(q,j)$ and $q$-{TAZRP} with stochastic
  duality.
\newblock {\em International Mathematics Research Notices},
  2018(17):5378--5416, 2017.

\bibitem[Kua18]{KuanCMP}
Jeffrey Kuan.
\newblock An algebraic construction of duality functions for the stochastic
  ${U}_q({A}_n^{(1)})$ vertex model and its degenerations.
\newblock {\em Communications in Mathematical Physics}, 359(1):121--187, Apr
  2018.

\bibitem[Kua19]{KuanAHP}
Jeffrey Kuan.
\newblock Probability distributions of multi-species q-tazrp and asep as double
  cosets of parabolic subgroups.
\newblock {\em Annales Henri Poincar\'{e}}, 20(4):1149--1173, April 2019.

\bibitem[Lig76]{Ligg76}
Thomas~M. Liggett.
\newblock Coupling the simple exclusion process.
\newblock {\em Annals of Probability}, 4(3):339--356, 1976.

\bibitem[Lin]{YL19}
Yier Lin.
\newblock Markov duality for stochastic six vertex model.
\newblock {\em arXiv e-prints}.

\bibitem[{Lin}19]{YLKPZ}
Yier {Lin}.
\newblock {KPZ equation limit of stochastic higher spin six vertex model}.
\newblock {\em arXiv e-prints}, page arXiv:1905.11155, May 2019.

\bibitem[Mar18]{Martin18}
James~B. Martin.
\newblock Stationary distributions of the multi-type asep.
\newblock {\em ArXiv e-prints}, 10 2018.

\bibitem[Mat15]{Matsui2015}
Chihiro Matsui.
\newblock Multi-state asymmetric simple exclusion processes.
\newblock {\em Journal of Statistical Physics}, 158(1):158--191, Jan 2015.

\bibitem[MGP68]{MGP68}
Carolyn~T. MacDonald, Julian~H. Gibbs, and Allen~C. Pipkin.
\newblock Kinetics of biopolymerization on nucleic acid templates.
\newblock {\em Biopolymers}, 6(1):1--25, 1968.

\bibitem[NSS04]{Nachtergaele2004}
Bruno Nachtergaele, Wolfgang Spitzer, and Shannon Starr.
\newblock Ferromagnetic ordering of energy levels.
\newblock {\em Journal of Statistical Physics}, 116(1):719--738, Aug 2004.

\bibitem[Ohk17]{Ohk17}
J~Ohkubo.
\newblock On dualities for {SSEP} and {ASEP} with open boundary conditions.
\newblock {\em Journal of Physics A: Mathematical and Theoretical},
  50(9):095004, jan 2017.

\bibitem[RP81]{lrjp1}
L.~C.~G. Rogers and J.~W. Pitman.
\newblock Markov functions.
\newblock {\em Ann. Probab.}, 9(4):573--582, 1981.

\bibitem[Sch97]{Sch97}
Gunter~M Sch{\"u}tz.
\newblock Duality relations for asymmetric exclusion processes.
\newblock {\em Journal of Statistical Physics}, 86(5/6):1265--1287, 1997.

\bibitem[Spi70]{Spit70}
Frank Spitzer.
\newblock Interaction of {M}arkov processes.
\newblock {\em Advances in Mathematics}, 5(2):246--290, 1970.

\bibitem[SW98]{SW98}
Tomohiro Sasamoto and Miki Wadati.
\newblock Exact results for one--dimensional totally asymmetric diffusion
  models.
\newblock {\em Journal of Physics A}, 31(28):6057--6071, 1998.

\bibitem[Tak]{Take15}
Yoshihiro Takeyama.
\newblock Algebraic construction of multi--species $q$--{B}oson system.

\bibitem[Yau04]{HTYauAnnals}
Horng-Tzer Yau.
\newblock <tex-math>$(\text{log}\ t)^{2/3}$</tex-math> law of the two
  dimensional asymmetric simple exclusion process.
\newblock {\em Annals of Mathematics}, 159(1):377--405, 2004.

\end{thebibliography}

\end{document}